% This is, as of January 18, 2019
% the final version of our paper entitled
% Cyclotomic double affine Hecke algebras
\documentclass[12pt]{amsart}
\textwidth=15.5cm \oddsidemargin=1cm
\evensidemargin=1cm
\sloppy
\usepackage{amsfonts,amssymb,amscd,amsmath,latexsym,amsbsy,tikz-cd}

\usepackage[all,cmtip]{xy}
\usepackage{color}
\usepackage{verbatim}
\usepackage{version}
\DeclareSymbolFont{symbolsC}{U}{pxsyc}{m}{n}
\SetSymbolFont{symbolsC}{bold}{U}{pxsyc}{bx}{n}
\DeclareFontSubstitution{U}{pxsyc}{m}{n}
\DeclareMathSymbol{\medcirc}{\mathbin}{symbolsC}{7}

\numberwithin{equation}{section}
\newtheorem{theorem}{Theorem}[section]
\newtheorem{lemma}[theorem]{Lemma}
\newtheorem{proposition}[theorem]{Proposition}
\newtheorem{corollary}[theorem]{Corollary}
\theoremstyle{definition}
\newtheorem{definition}[theorem]{Definition}

\newtheorem{example}[theorem]{Example}

\newtheorem{conjecture}[theorem]{Conjecture}
\newtheorem{remark}[theorem]{Remark}

%%%%%%%%%%%%%%% symbols shorthand %%%%%%%%%%%
%\renewcommand{\text}[1]{\mbox{{\rm #1}}}

\newcommand{\Tr}{\text{Tr}}%{\text{Tr}\,}
\newcommand{\id}{\text{id}}

\newcommand{\Ker}{\text{Ker\,}}

\newcommand{\Hom}{\text{Hom}}
\newcommand{\Ad}{\text{Ad}}

\newcommand{\X}{{\bold X}}
\newcommand{\D}{{\bold D}}
\newcommand{\Y}{{\bold Y}}

\newcommand{\DAHA}{{H}\mkern-6mu{H}}
\newcommand{\bDAHA}{{\Bbb H}\mkern-6mu{\Bbb H}}

\newcommand{\ben}{\begin{enumerate}}
\newcommand{\een}{\end{enumerate}}

\newcommand{\CC}{{\mathbb{C}}}
\newcommand{\ZZ}{{\mathbb{Z}}}

\theoremstyle{plain}

\newtheorem*{sol}{Solution}

\theoremstyle{definition}

\theoremstyle{remark}

\newcommand{\solu}[1]{\begin{sol}{\bf (\ref{#1})}}

\def\Ad{\mathop{\mathrm{Ad}}\nolimits}

\def\Hom{\mathrm{Hom}}
\def\Ker{\mathrm{Ker}}

\def\GL{\mathop{GL}}

%%%%%%%%%%%%%%%%%%%%%%%%%%
%                        %
%Finkelberg's newcommands%
%                        %
%%%%%%%%%%%%%%%%%%%%%%%%%%

\newcommand\nc{\newcommand}
\nc{\unl}{\underline}
\nc{\ol}{\overline}
\nc{\on}{\operatorname}
\nc{\iso}{{\stackrel{\sim}{\longrightarrow}}}

\nc{\BA}{{\mathbb{A}}}
\nc{\BC}{{\mathbb{C}}}
\nc{\BD}{{\mathbb{D}}}
\nc{\BF}{{\mathbb{F}}}
\nc{\BG}{{\mathbb{G}}}
\nc{\BM}{{\mathbb{M}}}
\nc{\BN}{{\mathbb{N}}}
\nc{\BO}{{\mathbb{O}}}
\nc{\BQ}{{\mathbb{Q}}}
\nc{\BP}{{\mathbb{P}}}
\nc{\BR}{{\mathbb{R}}}
\nc{\BZ}{{\mathbb{Z}}}
\nc{\BS}{{\mathbb{S}}}

\nc{\CA}{{\mathcal{A}}}
\nc{\CB}{{\mathcal{B}}}
\nc{\CalC}{{\mathcal C}}
\nc{\CalD}{{\mathcal D}}
\nc{\CE}{{\mathcal{E}}}
\nc{\CF}{{\mathcal{F}}}
\nc{\CG}{{\mathcal{G}}}
\nc{\CH}{{\mathcal{H}}}
\nc{\CI}{{\mathcal{I}}}
\nc{\CK}{{\mathcal{K}}}
\nc{\CL}{{\mathcal{L}}}
\nc{\CM}{{\mathcal{M}}}
\nc{\CN}{{\mathcal{N}}}
\nc{\CO}{{\mathcal{O}}}
\nc{\CP}{{\mathcal{P}}}
\nc{\CQ}{{\mathcal{Q}}}
\nc{\CR}{{\mathcal{R}}}
\nc{\CS}{{\mathcal{S}}}
\nc{\CT}{{\mathcal{T}}}
\nc{\CU}{{\mathcal{U}}}
\nc{\CV}{{\mathcal{V}}}
\nc{\CW}{{\mathcal{W}}}
\nc{\CX}{{\mathcal{X}}}
\nc{\CY}{{\mathcal{Y}}}
\nc{\CZ}{{\mathcal{Z}}}

\nc{\fa}{{\mathfrak{a}}}
\nc{\fb}{{\mathfrak{b}}}
\nc{\fg}{{\mathfrak{g}}}
\nc{\fgl}{{\mathfrak{gl}}}
\nc{\fh}{{\mathfrak{h}}}
\nc{\fj}{{\mathfrak{j}}}
\nc{\fl}{{\mathfrak{l}}}
\nc{\fm}{{\mathfrak{m}}}
\nc{\fn}{{\mathfrak{n}}}
\nc{\fu}{{\mathfrak{u}}}
\nc{\fp}{{\mathfrak{p}}}
\nc{\frr}{{\mathfrak{r}}}
\nc{\fs}{{\mathfrak{s}}}
\nc{\ft}{{\mathfrak{t}}}
\nc{\fw}{{\mathfrak{w}}}
\nc{\fz}{{\mathfrak{z}}}

\nc{\fA}{{\mathfrak{A}}}
\nc{\fB}{{\mathfrak{B}}}
\nc{\fD}{{\mathfrak{D}}}
\nc{\fE}{{\mathfrak{E}}}
\nc{\fF}{{\mathfrak{F}}}
\nc{\fG}{{\mathfrak{G}}}
\nc{\fI}{{\mathfrak{I}}}
\nc{\fJ}{{\mathfrak{J}}}
\nc{\fK}{{\mathfrak{K}}}
\nc{\fL}{{\mathfrak{L}}}
\nc{\fM}{{\mathfrak{M}}}
\nc{\fN}{{\mathfrak{N}}}
\nc{\frP}{{\mathfrak{P}}}
\nc{\fQ}{{\mathfrak Q}}
\nc{\fR}{{\mathfrak R}}
\nc{\fS}{{\mathfrak S}}
\nc{\fT}{{\mathfrak{T}}}
\nc{\fU}{{\mathfrak{U}}}
\nc{\fW}{{\mathfrak{W}}}
\nc{\fZ}{{\mathfrak{Z}}}

\nc{\ba}{{\mathbf{a}}}
\nc{\bb}{{\mathbf{b}}}
\nc{\bc}{{\mathbf{c}}}
\nc{\bd}{{\mathbf{d}}}
%\nc{\be}{{\mathbf{e}}}
\nc{\bi}{{\mathbf{i}}}
\nc{\bj}{{\mathbf{j}}}
\nc{\bn}{{\mathbf{n}}}
\nc{\bp}{{\mathbf{p}}}
\nc{\bq}{{\mathbf{q}}}
\nc{\bu}{{\mathbf{u}}}
\nc{\bv}{{\mathbf{v}}}
\nc{\bw}{{\mathbf{w}}}
\nc{\bx}{{\mathbf{x}}}
\nc{\by}{{\mathbf{y}}}
\nc{\bz}{{\mathbf{z}}}

\nc{\bA}{{\mathbf{A}}}
\nc{\bB}{{\mathbf{B}}}
\nc{\bC}{{\mathbf{C}}}
\nc{\bD}{{\mathbf{D}}}
\nc{\bE}{{\mathbf{E}}}
\nc{\bK}{{\mathbf{K}}}
\nc{\bH}{{\mathbf{H}}}
\nc{\bM}{{\mathbf{M}}}
\nc{\bN}{{\mathbf{N}}}
\nc{\bO}{{\mathbf{O}}}
\nc{\bQ}{{\mathbf Q}}
\nc{\bS}{{\mathbf{S}}}
\nc{\bT}{{\mathbf{T}}}
%\nc{\bV}{{\mathbf{V}}}
\nc{\bW}{{\mathbf{W}}}
\nc{\bX}{{\mathbf{X}}}
\nc{\bP}{{\mathbf{P}}}
\nc{\bZ}{{\mathbf{Z}}}

\nc{\sA}{{\mathsf{A}}}
\nc{\sB}{{\mathsf{B}}}
\nc{\sC}{{\mathsf{C}}}
\nc{\sD}{{\mathsf{D}}}
\nc{\sF}{{\mathsf{F}}}
\nc{\sK}{{\mathsf{K}}}
\nc{\sM}{{\mathsf{M}}}
\nc{\sO}{{\mathsf{O}}}
\nc{\sQ}{{\mathsf{Q}}}
\nc{\sP}{{\mathsf{P}}}
\nc{\sV}{{\mathsf{V}}}
\nc{\sW}{{\mathsf{W}}}
\nc{\sZ}{{\mathsf{Z}}}

\nc{\sfb}{{\mathsf{b}}}
\nc{\sfc}{{\mathsf{c}}}
\nc{\sd}{{\mathsf{d}}}
\nc{\sg}{{\mathsf{g}}}
\nc{\sk}{{\mathsf{k}}}
\nc{\sfl}{{\mathsf{l}}}
\nc{\sfp}{{\mathsf{p}}}
\nc{\sr}{{\mathsf{r}}}
\nc{\st}{{\mathsf{t}}}
\nc{\sfu}{{\mathsf{u}}}
\nc{\sz}{{\mathsf{z}}}

\nc{\Fl}{{{\mathcal F}\ell}}
\nc{\CHH}{{\CH\!\!\CH}}
%
%
%End of Finkelberg's newcommands
%
%

\pagestyle{plain}

\begin{document}

\title{Cyclotomic double affine Hecke algebras}

\author{Alexander Braverman}
\address{Department of Mathematics, University of Toronto,
and Perimeter Institute of Theoretical Physics,
Waterloo, Ontario, Canada, N2L 2Y5;
Skolkovo Institute of Science and Technology}
  %40 St. George Street, Toronto, Ontario, Canada M5S 2E4}
\email{braval@math.toronto.edu}

\author{Pavel Etingof}
\address{Department of Mathematics, Massachusetts Institute of Technology,
Cambridge, MA 02139, USA}
\email{etingof@math.mit.edu}

\author{Michael Finkelberg}
\address{National Research University Higher School of Economics,
Russian Federation,
Department of Mathematics, 6 Usacheva st., Moscow 119048;
Skolkovo Institute of Science and Technology;
Institute for Information Transmission Problems of RAS}

\email{fnklberg@gmail.com}

\maketitle

%\vskip .05in
\centerline{\sf With an appendix by Hiraku Nakajima and Daisuke Yamakawa}
\vskip .10in

\vskip .05in
\centerline{\bf To Ivan Cherednik with admiration}
\vskip .05in

\begin{abstract} We show that the partially spherical cyclotomic rational Cherednik algebra (obtained from the full rational Cherednik algebra by averaging out the cyclotomic part of the underlying reflection group) has four other descriptions: (1) as a subalgebra of the degenerate DAHA of type A given by generators; (2) as an algebra given by generators and relations; (3) as an algebra of differential-reflection operators preserving some spaces of functions; 
(4) as equivariant Borel-Moore homology of a certain variety. Also, we define a new $q$-deformation of this algebra, which we call {\it cyclotomic DAHA}. Namely, we give a $q$-deformation of each of the above four descriptions of the partially spherical  rational Cherednik algebra, replacing differential operators with difference operators, degenerate DAHA with DAHA, and homology with K-theory, and show that they give the same algebra. In addition, we show that spherical cyclotomic DAHA 
are quantizations of certain multiplicative quiver and bow varieties, which may be interpreted as K-theoretic Coulomb branches of a framed quiver gauge theory. Finally, we apply cyclotomic DAHA to prove new flatness results for various kinds of spaces of $q$-deformed quasiinvariants.  
\end{abstract} 

\section{Introduction}
\label{sec1}

Let $N\ge 0,l\ge 0$ be integers, $c_0,...,c_{l-1},\hbar,k$ be parameters, and $c=(c_0,...,c_{l-1})$.
Let $\bDAHA_N^{l,{\rm cyc}}(c,\hbar,k)$ be the cyclotomic rational Cherednik algebra attached to the complex reflection
group $W=S_N\ltimes (\Bbb Z/l\Bbb Z)^N$. Let $\bold p$ be the symmetrizer of the subgroup
$(\Bbb Z/l\Bbb Z)^N$, and $\bDAHA_N^{l,{\rm psc}}(c,\hbar,k):=\bold p \bDAHA_N^{l,{\rm cyc}}(c,\hbar,k)\bold p$ be the corresponding
partially spherical subalgebra.

In this paper we give a geometric interpretation of $\bDAHA_N^{l,{\rm psc}}$ as the equivariant Borel-Moore homology of a certain variety ${\mathcal R}={\mathcal R}(N,l)$ equipped with a group action. This allows us to define a natural $q$-deformation $\DAHA_N^l$ of $\bDAHA_N^{l,{\rm psc}}$ in terms of the equivariant K-theory of ${\mathcal R}(N,l)$, which we call the {\it cyclotomic double affine Hecke algebra} (DAHA). 

The existence of this $q$-deformation may seem somewhat surprising from the viewpoint of classical algebraic theory of DAHA (\cite{Ch1}), since typically DAHA are attached to crystallographic reflection groups (Weyl groups), while the group $W$ is not crystallographic for $l\ge 3$. Yet, we also give a purely algebraic definition of cyclotomic DAHA. Namely, we characterize the cyclotomic DAHA as the subalgebra of the usual Cherednik's DAHA for $GL_N$ generated by certain elements, and also as the subalgebra preserving certain spaces of functions. Finally, we present cyclotomic DAHA by generators and relations. These three descriptions also make sense in the trigonometric limit $q\to 1$ (for partially spherical cyclotomic rational Cherednik algebras). We note that for $l=1$, cyclotomic DAHA essentially appeared in \cite{BF}.

We also connect cyclotomic DAHA with multiplicative quiver and bow varieties. Namely, we show that the spherical cyclotomic DAHA $\bold e\DAHA_N^l(Z,1,t)\bold e$ (where $\bold e$ is the symmetrizer of the finite Hecke algebra) is commutative, and its spectrum for generic parameters is isomorphic to the algebra of regular functions on the multiplicative quiver variety for the cyclic quiver of length $l$ with dimension vector $(N,...,N)$; hence $\bold e\DAHA_N^l(Z,q,t)\bold e$ is a quantization of this variety. In particular, we show that this multiplicative quiver variety is connected, and that $\DAHA_N^l(Z,1,t)$ is an Azumaya algebra of degree $N!$ over this variety. We also show that if $t$ is not a root of unity then the algebra $\bold e\DAHA_N^l(Z,1,t)\bold e$ is an integrally closed Cohen-Macaulay domain isomorphic to the center ${\mathcal Z}(\DAHA_N^l(Z,1,t))$, while $\DAHA_N^l(Z,1,t)\bold e$ is a Cohen-Macaulay module over this algebra. 

Finally, we provide some applications of cyclotomic DAHA to the theory of quasiinvariants. Namely, we show
that natural $q$-deformations of various classes of spaces of quasi-invariants are flat, and therefore free modules over the algebra of 
symmetric polynomials. We also introduce a new type of quasiinvariants (namely, twisted quasiinvariants) and their $q$-deformation, and prove 
the freeness property for them. 

We note that the degenerate cyclotomic DAHA were studied in a way similar to ours by 
R.~Kodera and H.~Nakajima in \cite{KN}. In fact, their paper was one of the starting points for our work.

The paper is organized as follows. In Section~\ref{sec2} 
we develop the theory of partially spherical cyclotomic
rational Cherednik algebras as subalgebras in trigonometric (degenerate) DAHA, and give their presentation.
In Section~\ref{sec3} we define cyclotomic DAHA as subalgebras of DAHA, 
and study their properties. We also give a presentation of cyclotomic DAHA, which allows us to find various 
bases in them and prove flatness results. In Section~\ref{sec5} we give a geometric description of cyclotomic DAHA and 
their degenerate versions in terms
of equivariant K-theory and Borel-Moore homology, and apply it to proving 
flatness of these algebras. In Section~\ref{sec4} we relate the spherical subalgebra of cyclotomic DAHA
at $q=1$ with certain multiplicative quiver and bow varieties; the latter
are isomorphic to the $K$-theoretic Coulomb branch of framed quiver gauge
theories of affine type $A$. We also study the properties of the spectrum of the spherical cyclotomic DAHA for $q=1$. 
In Section~\ref{sec6} we give applications of cyclotomic DAHA to 
proving flatness of $q$-deformation of various spaces of quasi-invariants.
Finally, Appendix~\ref{apend}, written by H.~Nakajima and D.~Yamakawa, explains
the relations between multiplicative bow varieties and (various
versions of) multiplicative quiver varieties for a cyclic quiver.

{\bf Acknowledgements.} The work of P.E.\ was partially supported by the NSF
grant DMS-1502244.
%The research of M.F.\ was supported by the grant RSF-DFG 16-41-01013.
The work of M.F. has been funded within the framework of the HSE University Basic Research Program
and the Russian Academic Excellence Project `5-100'.
We are grateful to J.~Stokman for useful discussions and reference~\cite{BF}; to O. Chalykh for his comments cited in Remarks \ref{cha1}, \ref{cha2}; to 
J. F. van Diejen and S. Ruijsenaars for Remark \ref{ruij}; to E. Rains for explaining the connection with \cite{GKV} and \cite{R1,R2} (Remark \ref{rai}) and pointing out that Lemma \ref{rank1lem} and Theorem \ref{charac}(i) need the assumption that $k\notin \Bbb Z+1/2$; 
to B.~Webster for sharing~\cite{W} with us prior to its publication and explaining its results (see Remark \ref{KNW});
to J.~Kamnitzer for explaining the KLR-type construction of the convolution algebras
of Section~\ref{triples} to us; and to H.~Nakajima for explaining to us the results 
of \cite{KN} (see Remark \ref{KNW}). Also, Sections~\ref{bow} and~\ref{Coul} are due to H.~Nakajima's
patient explanations.

\section{Degenerate cyclotomic DAHA}
\label{sec2}

\subsection{Notation}
In this paper, we will consider many different algebras depending on parameters.
So let us clarify our conventions.

First of all, if an algebra depends on parameters, we will list the
parameters explicitly when they are given numerical values, and
omit them when they are indeterminates (i.e., we work over a commutative base algebra
generated by them). Also, throughout the paper, we will use the following notation, to be defined below.

$\bullet$ $\DAHA_{N,{\rm deg}}(\hbar, k)$: the degenerate (trigonometric) DAHA, Definition \ref{trigdaha};

$\bullet$ $\DAHA_{N,{\rm deg}}^l(z,\hbar,k)$, $z:=(z_1,...,z_l)$: the degenerate cyclotomic DAHA, %\linebreak 
Definition \ref{dcycldaha};

$\bullet$ $\bDAHA_N^{l,{\rm cyc}}(c,\hbar,k)$, $c:=(c_0,...,c_{l-1})$: the cyclotomic rational Cherednik algebra for the group
$S_n\ltimes (\Bbb Z/l\Bbb Z)^n$, Definition \ref{cyclcher};

$\bullet$ $\bDAHA_N^{l,{\rm psc}}(c,\hbar,k):=\bold p \bDAHA_N^{l,{\rm cyc}}(c,\hbar,k)\bold p$: the partially spherical cyclotomic rational Cherednik algebra, Subsection \ref{compa};

$\bullet$ $\DAHA_N(q,t)$: Cherednik's DAHA, Definition \ref{DAHAdef};

$\bullet$ $\DAHA_N^{\rm formal}(\hbar,k)$: formal Cherednik's DAHA over $\Bbb C[[\varepsilon]]$, with $q=e^{\varepsilon \hbar}$ and $t=e^{-\varepsilon k}$, Subsection \ref{dfd};

$\bullet$ $\DAHA_N^l(Z,q,t)$, $Z:=(Z_1,...,Z_l)$: the cyclotomic DAHA, Definition \ref{cyclotomDAHA};

$\bullet$ $\DAHA_N^{l,{\rm formal}}(z,\hbar,k)$, the formal cyclotomic DAHA, with $q=e^{\varepsilon \hbar}$, $Z_i=e^{\varepsilon z_i}$ and $t=e^{-\varepsilon k}$, Definition \ref{cyclotomDAHA};

$\bullet$ $\DAHA_N^{\rm rat}(\hbar,k)$, the rational Cherednik algebra for $S_N$, Example \ref{ratdaha};

$\bullet$ $\CHH^l_{N,{\rm deg}}$: the geometric version of the degenerate cyclotomic DAHA, $H^{\BC^\times\times T(W)_\CO\times\CP\rtimes\BC^\times}_\bullet(\CR)$, Section~\ref{sec5};

$\bullet$ $\CHH^l_N$: the geometric version of the cyclotomic DAHA, %\linebreak 
$K^{\BC^\times\times T(W)_\CO\times\CP\rtimes\BC^\times}(\CR)$, Section~\ref{sec5}.

\subsection{Degenerate (trigonometric) DAHA}

Let $\hbar,k$  be variables.

\begin{definition}\label{trigdaha} The degenerate (or trigonometric) double affine Hecke algebra (DAHA) $\DAHA_{N,{\rm deg}}$ is
generated over $\Bbb C[\hbar,k]$ by $\pi^{\pm 1}, s_0,...,s_{N-1}, y_1,...,y_N$
with defining relations
$$
s_i^2=1,\ s_is_{i+1}s_i=s_{i+1}s_is_{i+1},\ s_is_j=s_js_i\text{ if }i-j\ne \pm1,\
\pi s_i=s_{i+1}\pi,
$$
$$
[y_i,y_j]=0,\ \pi y_i=y_{i+1}\pi\text{ if }i\ne N,\ \pi y_N=(y_1-\hbar)\pi,
$$
$$
s_iy_i=y_{i+1}s_i+k,\ \text{ if }i\ne 0, s_0y_N=(y_1-\hbar)s_0+k,
$$
$$
[s_i,y_j]=0\text{ if }i-j\ne \pm 1,
$$
where addition is mod $N$.
\end{definition}

\begin{proposition}\label{trigDAHA} The algebra $\DAHA_{N,{\rm deg}}$ is generated by $S_N\ltimes \Bbb Z^N$
(generated by $s_i$ and invertible commuting elements $X_1,...,X_N$) and elements $y_1,...,y_N$
with commutation relations
\begin{gather*}
s_iy_i=y_{i+1}s_i+k,\ s_iy_j=y_js_i, j\ne i,i+1,\\
[y_i,y_j]=0,\\
[y_i,X_j]=kX_js_{ij},\ i>j,\\
[y_i,X_j]=kX_is_{ij},\ i<j,\\
[y_i,X_i]=\hbar X_i-k\sum_{r<i}X_rs_{ir}-k\sum_{r>i}X_is_{ir},
\end{gather*}
and the relations of $S_N\ltimes \Bbb Z^N$, where $s_{ij}$ is the transposition of $i$ and $j$.
Namely, the transition between the two definitions is given by the formulas
$$
\pi=X_1s_1...s_{N-1},\ s_0=X_N^{-1}X_1s_{1N}.
$$
\end{proposition}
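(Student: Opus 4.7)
The plan is to show that the two presentations define isomorphic algebras by constructing mutually inverse homomorphisms via the stated substitutions $\pi = X_1 s_1 \cdots s_{N-1}$ and $s_0 = X_N^{-1} X_1 s_{1N}$, together with the classical inverse formulas $X_1 = \pi s_{N-1} \cdots s_1$ and $X_i = s_{i-1} \cdots s_1 X_1 s_1 \cdots s_{i-1}$ for $i \ge 2$. The first step is purely group-theoretic: in the subalgebra generated by the Coxeter-type generators, the standard presentation of the extended affine symmetric group $S_N \ltimes \Bbb Z^N$ shows that the two sets of generators are mutually expressible via these formulas. One checks directly from $\pi s_i = s_{i+1}\pi$ that the $X_i$ so defined commute, satisfy $s_j X_j s_j = X_{j+1}$ and $[s_j, X_i] = 0$ for $i \ne j, j+1$, and that $X_N^{-1} X_1 s_{1N}$ coincides with $s_0$.

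Under this identification the relations involving the $y_i$ and the finite reflections $s_1, \ldots, s_{N-1}$ in the two presentations are identical. The substantive task is to match the cross-relations between the $y_i$ and the $X_j$. Starting from Definition~\ref{trigdaha}, I would compute
\begin{equation*}
[y_1, X_1] = y_1 \pi s_{N-1} \cdots s_1 - \pi s_{N-1} \cdots s_1 y_1 = \pi(y_N + \hbar) s_{N-1} \cdots s_1 - \pi s_{N-1} \cdots s_1 y_1,
\end{equation*}
using $y_1 \pi = \pi(y_N + \hbar)$ from $\pi y_N = (y_1 - \hbar)\pi$. Pushing $y_N$ rightward through $s_{N-1} s_{N-2} \cdots s_1$ one step at a time via $s_j y_j = y_{j+1} s_j + k$ (for $j < N$) and $[s_j, y_i] = 0$ otherwise, then collecting the $k$-correction terms and identifying each partial product of the form $s_{N-1} \cdots \widehat{s_r} \cdots s_1$ with $X_1 s_{1r}$, produces exactly $[y_1, X_1] = \hbar X_1 - k \sum_{r > 1} X_1 s_{1r}$. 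Conjugating by $s_{i-1} \cdots s_1$ (and inducting on $i$, using $X_{j+1} = s_j X_j s_j$ together with the braid relations and the $s$-$y$ intertwining relations) produces all remaining cases $[y_i, X_j]$ for $i < j$, $i > j$ and $i = j$.

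For the converse direction, set $\pi := X_1 s_1 \cdots s_{N-1}$ and $s_0 := X_N^{-1} X_1 s_{1N}$ in the algebra of Proposition~\ref{trigDAHA}, and verify $\pi s_i = s_{i+1}\pi$, $\pi y_i = y_{i+1}\pi$ for $i \ne N$, $\pi y_N = (y_1 - \hbar)\pi$, $s_0^2 = 1$, $s_0 y_N = (y_1 - \hbar)s_0 + k$, and $[s_0, y_j] = 0$ for $j \ne 1, N$ by direct computation from the commutation relations of Proposition~\ref{trigDAHA}. For example, to check $\pi y_N \pi^{-1} = y_1 - \hbar$ one expands $X_1 (s_1 \cdots s_{N-1}) y_N (s_{N-1} \cdots s_1) X_1^{-1}$: pushing $y_N$ through the inner reduced word turns it into $y_1$ modulo $k$-correction terms, and the subsequent conjugation by $X_1$ uses $[y_1, X_1]$ to supply the $-\hbar$ shift together with cancellation of the remaining $k$-terms.

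The main obstacle is the combinatorial bookkeeping of the $k$-correction terms that arise when pushing $y_j$ through the reduced expression $s_{N-1} \cdots s_1$ for $X_1$, and correctly identifying the resulting sums as the right-hand side of the diagonal relation $[y_i, X_i] = \hbar X_i - k \sum_{r < i} X_r s_{ir} - k \sum_{r > i} X_i s_{ir}$. Once this identification is in hand, both directions of the isomorphism follow by routine verification on generators.
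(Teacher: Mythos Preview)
Your proposal is correct and follows exactly the approach the paper indicates: the paper's own proof is simply ``The proposition is standard, and the proof is by a direct computation; see e.g.\ \cite{Su}, Section 2,'' so you have in fact supplied more detail than the paper does. The only minor slip is the indexing in your identification of the correction terms: one finds $\pi\, s_{N-1}\cdots \widehat{s_r}\cdots s_1 = X_1\, s_1\cdots s_r s_{r-1}\cdots s_1 = X_1\, s_{1,r+1}$, so the sum over $r=1,\dots,N-1$ becomes $\sum_{j>1} X_1 s_{1j}$ as required.
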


\begin{proof} The proposition is standard, and the proof is by a direct computation; see e.g. \cite{Su}, Section 2.
\end{proof}

\begin{remark}\label{trigDAHA1} The commutation relations between $y_i$ and $X_j$
in Proposition \ref{trigDAHA} can be replaced by the relations
\begin{equation}\label{firrel}
[y_i,\prod_j X_j]=\hbar\prod_j X_j\text{ or }
[\sum_i y_i,X_j]=\hbar X_j,
\end{equation}
and
\begin{equation}\label{secrel}
[y_2,X_1]=kX_1s_1.
\end{equation}
Indeed, the relation for $[y_i,X_j]$, $i\ne j$ can be obtained from \eqref{secrel} by the action of $S_N$, and then the relation
for $[y_i,X_i]$ can be obtained by using one of the relations \eqref{firrel}.
\end{remark}

Note that $\DAHA_{N,{\rm deg}}$ is a bigraded algebra:
$\deg(X_i)=(1,0)$, $\deg(y_i)=\deg(\hbar)=\deg(k)=(0,1)$, $\deg(s_i)=(0,0)$.
Moreover, the PBW theorem for the degenerate DAHA, which follows from the existence of its
polynomial representation (see Subsection \ref{pr}) implies that $\DAHA_{N,{\rm deg}}$
is a free bigraded module over $\Bbb C[\hbar,k]$.

Also, we have the specialization $\DAHA_{N,{\rm deg}}(\hbar,k)$ of the degenerate DAHA at $\hbar,k\in \Bbb C$, which is defined by the same generators and relations,
where $\hbar, k$ are numerical. The bigrading on $\DAHA_{N,{\rm deg}}$ induces a grading on this specialization
defined by $\deg(X_i)=1$, $\deg(y_i)=\deg(s_j)=0$ and an increasing filtration $F^\bullet$ compatible with this grading,
defined by $\deg(X_i)=\deg(s_j)=0$, $\deg(y_i)=1$.

\subsection{The polynomial representation}\label{pr}
Let $D_i$ be the {\it rational Dunkl operators}
$$
D_i:=\hbar\partial_i-\sum_{j\ne i}\frac{k}{X_i-X_j}(1-s_{ij}),
$$
where $\partial_i$ is the derivative with respect to $X_i$, and
$s_{ij}$ is the permutation of $i$ and $j$ (so $s_i=s_{i,i+1}$).
Define the {\it trigonometric Dunkl operators} by the formula
$$
D_i^{\rm trig}:=X_iD_i-k\sum_{j<i}s_{ij}.
$$

The following well known proposition is due to Cherednik (\cite{Ch2}; see also \cite{Su}, Proposition 3.1).

\begin{proposition}\label{chere} We have a representation $\rho$
of $\DAHA_{N,{\rm deg}}(\hbar,k)$ on $\bold P:=\Bbb C[X_1^{\pm 1},...,X_N^{\pm 1}]$,
defined by
$$
\rho(s_i)=s_i\text{ for }i\ne 0,\ \rho(\pi)=X_1s_1...s_{N-1},
$$
$$
\rho(s_0)=s_{1N}X_1^{-1}X_N,
$$
$$
\rho(y_i)=D_i^{\rm trig}.
$$
\end{proposition}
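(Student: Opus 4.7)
The plan is to verify the defining relations of $\DAHA_{N,\rm deg}(\hbar,k)$, but via the equivalent presentation of Proposition \ref{trigDAHA} (and its further simplification in Remark \ref{trigDAHA1}), which is much more economical than checking the $\pi,s_0,\ldots,s_{N-1}$ relations directly. On $\mathbf{P}=\mathbb{C}[X_1^{\pm1},\ldots,X_N^{\pm1}]$ the group $S_N\ltimes\mathbb{Z}^N$ acts obviously (permutations of variables together with multiplication by the $X_i$), so all group relations are for free; the formulas $\pi = X_1 s_1\cdots s_{N-1}$ and $s_0 = X_N^{-1}X_1 s_{1N}$ then automatically give the claimed action of $\pi,s_0$, and the translation between the two presentations is exactly the content of Proposition \ref{trigDAHA}.

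The heart of the verification is thus (i) the commutativity $[D_i^{\rm trig},D_j^{\rm trig}]=0$, (ii) the intertwining relation $s_iD_i^{\rm trig} = D_{i+1}^{\rm trig}s_i + k$ for $1\le i\le N-1$, and (iii) the two relations isolated in Remark \ref{trigDAHA1}, namely $[y_2,X_1]=kX_1 s_1$ and $[\sum_i y_i,X_j]=\hbar X_j$. All of these will follow from the well-known base identities
\[
[D_i,X_i]=\hbar - k\sum_{r\neq i}s_{ir},\qquad [D_i,X_j]=k\, s_{ij}\ \ (i\neq j),\qquad \sigma D_i\sigma^{-1}=D_{\sigma(i)},\qquad [D_i,D_j]=0,
\]
for the rational Dunkl operators. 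These are classical and I would simply cite them.

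For (ii), I would write $D_i^{\rm trig}=X_iD_i - k\sum_{j<i}s_{ij}$, use $s_iD_i=D_{i+1}s_i$ and $s_iX_i=X_{i+1}s_i$ to rewrite $s_i X_i D_i$ as $X_{i+1}D_{i+1}s_i$, and then collect the sum-of-transposition corrections via $s_i s_{ij}=s_{i+1,j}s_i$ for $j<i$; the missing $j=i$ term $s_{i+1,i}s_i = 1$ produces exactly the constant $+k$. For (iii), $[y_2,X_1]=X_2[D_2,X_1]+[X_2,X_1]D_2-k[s_{21},X_1]$ reduces via $[D_2,X_1]=ks_{21}$ and $[s_{21},X_1]=(X_2-X_1)s_{21}$ to $kX_1 s_1$; and $[\sum_i y_i,X_j]$ simplifies because the off-diagonal contributions $X_i[D_i,X_j]=kX_i s_{ij}$ cancel telescopically against the $-k\sum_{a<b}[s_{ab},X_j]=-k\sum_{b\neq j}(X_b-X_j)s_{jb}$ pieces, leaving only the $\hbar X_j$ coming from the diagonal term $X_j[D_j,X_j]$.

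The only genuinely hard input is (i), Cherednik's commutativity $[D_i^{\rm trig},D_j^{\rm trig}]=0$, which is most cleanly proved by conjugating the commutative rational Dunkl operators by a suitable formal expression (or equivalently by building them into an $\operatorname{End}(\mathbf{P})$-valued flat connection) — I would cite \cite{Ch2} or \cite[Prop.~3.1]{Su} for this step rather than reproduce the calculation. Once (i) is invoked, (ii) and (iii) are short direct checks as sketched, and by Proposition \ref{trigDAHA} combined with Remark \ref{trigDAHA1} this completes the verification of all defining relations of $\DAHA_{N,\rm deg}(\hbar,k)$.
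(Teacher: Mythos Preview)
Your approach is correct and close in spirit to the paper's, but organized differently. The paper verifies the relations of Definition~\ref{trigdaha} directly (the $\pi y_i=y_{i+1}\pi$ and $\pi y_N=(y_1-\hbar)\pi$ relations, together with $s_iy_i=y_{i+1}s_i+k$, $[y_i,y_j]=0$, and $[s_i,y_j]=0$), whereas you pass through the equivalent presentation of Proposition~\ref{trigDAHA} and Remark~\ref{trigDAHA1}, replacing the $\pi$-relations by the $[y_i,X_j]$-relations. This is a legitimate and arguably more transparent route, since the $X_i$ are tautological on $\mathbf P$. Note also that the paper does not outsource the commutativity $[D_i^{\rm trig},D_j^{\rm trig}]=0$: it gives a four-line direct computation using $[D_i,D_m]=0$ and $[D_i,X_m]=ks_{im}$, so your citation here replaces a calculation that is actually quite short.

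One small omission: to invoke Remark~\ref{trigDAHA1} (which derives all $[y_i,X_j]$ from $[y_2,X_1]=kX_1s_1$ ``by the action of $S_N$''), you implicitly need the relations $[s_i,y_j]=0$ for $j\ne i,i+1$, not just the intertwining relation (ii). This is an easy check from the formula $D_j^{\rm trig}=X_jD_j-k\sum_{a<j}s_{ja}$ (conjugation by $s_i$ fixes $X_jD_j$ and permutes the terms of the sum among themselves), but it should be mentioned for completeness.
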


\begin{proof} The relations involving only $\pi$ and $s_i$ are easy.
Let us prove that $[\rho(y_i),\rho(y_m)]=0$ for $i<m$. Using that $[D_i,X_m]=ks_{im}$ and $[D_i,D_m]=0$, we get
$$
[\rho(y_i),\rho(y_m)]=
$$
$$
kX_is_{im}D_m-kX_ms_{im}D_i-k[X_iD_i,s_{im}]+k^2\sum_{i<j}[s_{ij},s_{im}+s_{jm}].
$$
The first three summands cancel, and the last summand is zero, as desired.

Let us prove the commutation relations between $\rho(\pi)$ and $\rho(y_i)$.
For $i<N$ we have
$$
\rho(\pi) \rho(y_i)=X_1s_1...s_{N-1}(X_iD_i-k\sum_{j<i}s_{ij})=
$$
$$
X_1(X_{i+1}D_{i+1}-k\sum_{j<i}s_{j+1,i+1})s_1...s_{N-1}=
$$
$$
(X_{i+1}D_{i+1}-k\sum_{j<i}s_{j+1,i+1})X_1s_1...s_{N-1}-kX_{i+1}s_{1,i+1}s_1...s_{N-1}=\rho(y_{i+1})\rho(\pi).
$$
Also $\rho(\pi^n)=X_1...X_n$, so $\rho(\pi^n) \rho(y_i)=(\rho(y_i)-\hbar)\rho(\pi^n)$,
which implies that the relation $\pi y_N=(y_1-\hbar)\pi$ is preserved.

Let us show that the relation $s_iy_i=y_{i+1}s_i+k$ for $i\ne 0$ is preserved. We have
$$
\rho(s_i)\rho(y_i)=s_{i,i+1}(X_iD_i-k\sum_{j<i}s_{ij})=
$$
$$
(X_{i+1}D_{i+1}-k\sum_{j<i}s_{i+1,j})s_{i,i+1}=\rho(y_{i+1})\rho(s_i)+k.
$$
The relation $s_0y_N-(y_1-\hbar)s_0+k$ is obtained from the previous relation by conjugation by $\pi$.
Finally, the relation $[s_i,y_j]=0$ for $i-j\ne \pm 1$ is easy for $i\ne 0$, and for $i=0$ is obtained from the case $i=1$ by conjugation
by $\pi$. The proposition is proved.
\end{proof}

\begin{definition}
The representation $\bold P$ of $\DAHA_{N,{\rm deg}}(\hbar,k)$ is called the {\it polynomial representation}.
\end{definition}

It is easy to see that the polynomial representation is faithful when $\hbar\ne 0$.
Moreover, replacing $\hbar \partial_i$ with momentum variables $p_i$, we can make it faithful in the limit $\hbar=0$ (see \cite{EM}, 2.10).

Let ${\mathcal D}_N$ be the algebra of differential operators
in $X_1,...,X_N$ with poles at $X_i=0$ and $X_i=X_j$. Then the algebra $\Bbb CS_N\ltimes \mathcal{D}_N$ acts naturally on $\bold P[\Delta^{-1}]$,
where
$$
\Delta:=\prod_{i<j}(X_i-X_j),
$$
and
$\rho$ defines an inclusion
$$
\DAHA_{N,{\rm deg}}(1,k)\hookrightarrow \Bbb CS_N\ltimes \mathcal{D}_N.
$$
We will use this inclusion to view $\DAHA_{N,{\rm deg}}(1,k)$ as a subalgebra of $\Bbb CS_N\ltimes \mathcal{D}_N$.
Note that the filtration $F^\bullet$ on $\DAHA_{N,{\rm deg}}(1,k)$ is induced under this inclusion by the order filtration on differential operators.

Let $\psi_\kappa$ be the automorphism of the algebra $\Bbb C S_N\ltimes {\mathcal D}_N$ fixing
$X_i$ and sending $s_i$ to $-s_i$ and $\partial_i$ to
$\partial_i+\sum_{j\ne i}\frac{\kappa}{X_i-X_j}$, i.e., conjugation
by $|\Delta|^\kappa {\rm sign}(\Delta)$ on the real locus.

\begin{lemma}\label{prese0}
 The algebra $\psi_{2k}(\DAHA_{N,{\rm deg}}(1,k))\subset \Bbb CS_N\ltimes \mathcal{D}_N$ preserves $\bold P$. In other words,
 $\DAHA_{N,{\rm deg}}(1,k)$ preserves $|\Delta|^{2k} {\rm sign}(\Delta)\bold P$.
\end{lemma}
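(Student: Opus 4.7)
The plan is to reduce to checking that each algebra generator of $\DAHA_{N,\mathrm{deg}}(1,k)$, after applying $\psi_{2k}$, preserves $\bold P$. First I would note that since $\psi_{2k}$ is by definition conjugation by $|\Delta|^{2k}\text{sign}(\Delta)$ on the real locus, for any operator $A\in \Bbb CS_N\ltimes\mathcal{D}_N$ we have $\psi_{2k}(A)(\bold P)\subseteq \bold P$ if and only if $A$ preserves $|\Delta|^{2k}\text{sign}(\Delta)\bold P$. Consequently the two formulations in the statement are equivalent, and it suffices to prove the first.

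By Proposition~\ref{trigDAHA}, $\DAHA_{N,\mathrm{deg}}(1,k)$ is generated as an algebra by the elements $X_i^{\pm 1}$, the simple reflections $s_i$ (for $1\le i\le N-1$), and the trigonometric Dunkl operators $y_i$. The easy generators dispatch quickly: $\psi_{2k}(X_i^{\pm 1})=X_i^{\pm 1}$ and $\psi_{2k}(s_i)=-s_i$ manifestly preserve $\bold P$. Moreover, since $\psi_{2k}$ sends each simple reflection to its negative, by multiplicativity $\psi_{2k}$ acts on $w\in S_N$ by $\text{sign}(w)\,w$; in particular every transposition $s_{ij}$ is sent to $-s_{ij}$, a fact I will use below.

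The substantive case is $y_i=X_i\partial_i-k\sum_{j\ne i}\frac{X_i}{X_i-X_j}(1-s_{ij})-k\sum_{j<i}s_{ij}$. Applying $\psi_{2k}$, which fixes $X_j$, sends $\partial_i$ to $\partial_i+\sum_{j\ne i}\frac{2k}{X_i-X_j}$, and sends each $s_{ij}$ to $-s_{ij}$, the singular $1/(X_i-X_j)$ terms combine to give
\[
\psi_{2k}(y_i)=X_i\partial_i+k\sum_{j\ne i}\frac{X_i(1-s_{ij})}{X_i-X_j}+k\sum_{j<i}s_{ij}.
\]
The main (and only) obstacle is to see that this operator in fact takes $\bold P$ to $\bold P$, despite the apparent poles on the diagonals $X_i=X_j$ in the middle summand. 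This is resolved by the standard divided-difference observation: for any $f\in \bold P$, the Laurent polynomial $f-s_{ij}f$ vanishes on the hyperplane $X_i=X_j$ and is therefore divisible in $\bold P$ by $X_i-X_j$. Hence $\frac{X_i(1-s_{ij})f}{X_i-X_j}\in\bold P$, so $\psi_{2k}(y_i)$ preserves $\bold P$, completing the verification for all generators and hence the proof.
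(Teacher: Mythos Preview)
Your proof is correct and follows essentially the same approach as the paper: reduce to generators, dispatch $X_i^{\pm1}$ and $s_i$ immediately, and then verify that the Dunkl-type generator preserves $\bold P$ after applying $\psi_{2k}$ by observing that the apparent pole along $X_i=X_j$ is cancelled because $(1-s_{ij})f$ is divisible by $X_i-X_j$. The only cosmetic difference is that the paper checks $\psi_{2k}(D_1)$ (the rational Dunkl operator) rather than $\psi_{2k}(y_i)$ directly; since $y_i=X_iD_i-k\sum_{j<i}s_{ij}$ and the extra terms are trivially handled, this amounts to the same computation.
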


\begin{proof} We need to check that
$$
\psi_{2k}(D_1)=D_1-\sum_{j\ne 1}\frac{2k}{X_1-X_j}=
$$
$$
\partial_1-\sum_{j\ne 1}\frac{k}{X_1-X_j}(1+s_{ij})+\sum_{j\ne 1}\frac{2k}{X_1-X_j}=
\partial_1+\sum_{j\ne 1}\frac{k}{X_1-X_j}(1-s_{ij})
$$
preserves $\bold P$, which is straightforward.
\end{proof}

Let $\bold e=\frac{1}{N!}\sum_{s\in S_N}s$ be the symmetrizer of $S_N$. The algebra %\linebreak 
$\bold e\DAHA_{N,{\rm deg}}(\hbar,k)\bold e$ is called the {\it spherical subalgebra}
of $\DAHA_{N,{\rm deg}}(\hbar,k)$; it has the polynomial representation $\bold e\bold P=\Bbb C[X_1^{\pm 1},...,X_N^{\pm 1}]^{S_N}$. Let
$H:=\bold e (\sum_i y_i^2)\bold e\in \bold e\DAHA_{N,{\rm deg}}(\hbar,k)\bold e$. This element acts on the polynomial representation by the trigonometric Calogero-Moser Hamiltonian.

\begin{lemma}\label{generat} The algebra $A:=\bold e\DAHA_{N,{\rm deg}}(1,k)\bold e$ is generated by $H$ and  $\Bbb C[X_1^{\pm 1},...,X_N^{\pm 1}]^{S_N}$.
\end{lemma}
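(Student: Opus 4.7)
The strategy is a filtered‑to‑graded reduction to a Poisson‑algebra computation. Equip $A$ with the order filtration $F^\bullet$ on differential operators inherited via the embedding $\DAHA_{N,{\rm deg}}(1,k)\hookrightarrow\Bbb CS_N\ltimes\mathcal{D}_N$ of Proposition~\ref{chere}, under which $\deg(X_i^{\pm1})=\deg(s_i)=0$ and $\deg(y_i)=1$. By the PBW theorem for the trigonometric DAHA, $\gr A$ is canonically isomorphic to the graded Poisson algebra
$$\Bbb C[X_1^{\pm1},\ldots,X_N^{\pm1},p_1,\ldots,p_N]^{S_N},$$
where $p_i:=\sigma(y_i)$ and the bracket is $\{p_i,X_j\}=\delta_{ij}X_i$, $\{p_i,p_j\}=\{X_i,X_j\}=0$. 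Under this identification $\sigma(H)=\sum_ip_i^2$, while every $f\in R:=\Bbb C[X_1^{\pm1},\ldots,X_N^{\pm1}]^{S_N}$ is its own symbol.

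Let $B\subseteq A$ be the subalgebra generated by $H$ and $R$; then $\gr B\subseteq\gr A$ is a Poisson subalgebra (commutators in $B$ give the Poisson bracket of symbols) containing $\sum_ip_i^2$ and $R$, and it suffices to prove $\gr B=\gr A$. By the classical polarization theorem, $\gr A$ is generated as a commutative algebra by the polarized power sums
$$e_{a,b}:=\sum_{i=1}^N X_i^ap_i^b,\qquad (a,b)\in\ZZ\times\ZZ_{\geq0}\setminus\{(0,0)\},$$
and a short direct calculation in the symplectic bracket yields the master identity
$$\{e_{a,b},e_{c,d}\}=(bc-ad)\,e_{a+c,\,b+d-1}.$$
Since $e_{a,0}\in R$ for $a\neq0$ and $e_{0,2}=\sigma(H)$, the specialization $\{e_{0,2},e_{a,b}\}=2a\,e_{a,b+1}$ permits an induction on $b$ placing every $e_{a,b}$ with $a\neq 0$ into $\gr B$. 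The brackets
$$\{e_{1,b_1},e_{-1,b_2}\}=-(b_1+b_2)\,e_{0,\,b_1+b_2-1}\qquad(b_1,b_2\geq1)$$
then produce the remaining ``axial'' generators $e_{0,m}$ for all $m\geq1$, so every polarized power sum lies in $\gr B$ and hence $\gr B=\gr A$.

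Finally, the implication $\gr B=\gr A\Rightarrow B=A$ is the routine filtered lift: given $a\in F^nA$ one chooses $b\in F^nB$ with $\sigma_n(b)=\sigma_n(a)$, notes that $a-b\in F^{n-1}A$, and iterates, using the base case $F^0A=R\subset B$. I expect the essential obstacle to be the last combinatorial step: the symbols $e_{0,m}$ cannot be obtained by repeatedly bracketing $\sigma(H)$ against $R$ (which only produces $e_{a,b}$ with $a\neq0$), and one must instead bracket two previously‑constructed non‑$R$ generators of the form $e_{\pm1,*}$, which is where the nontriviality of the symplectic structure is actually used.
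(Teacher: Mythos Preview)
Your proof is correct and follows essentially the same route as the paper's: pass to the associated graded Poisson algebra $\Bbb C[X_1^{\pm1},\dots,X_N^{\pm1},p_1,\dots,p_N]^{S_N}$, invoke Weyl's theorem that the polarized power sums $F_{r,s}=\sum_i X_i^r p_i^s$ generate, obtain all $F_{r,s}$ with $r\neq 0$ from $\{H_0,F_{r,s-1}\}=2rF_{r,s}$, and then recover the axial elements $F_{0,s}$ by bracketing two already-constructed generators. The only cosmetic difference is that the paper uses $F_{0,s}=\frac{1}{s+1}\{F_{-1,s+1},F_{1,0}\}$ (one factor in $R$) whereas you use $\{e_{1,b_1},e_{-1,b_2}\}$ with $b_1,b_2\ge 1$; both work.
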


\begin{proof} The algebra $A$ has a filtration $F^\bullet$ given by $\deg(X_i)=\deg(s_j)=0$, $\deg(y_i)=1$, and ${\rm gr}(A)=\Bbb C[X_1^{\pm 1},...,X_N^{\pm 1},p_1,...,p_N]^{S_N}$
with Poisson bracket corresponding to the symplectic form $\sum_i dp_i\wedge dX_i/X_i$. Since the symbol of $H$ is $H_0:=\sum_i p_i^2$,
it suffices to check that ${\rm gr}(A)$ is Poisson generated by $\Bbb C[X_1^{\pm 1},...,X_N^{\pm 1}]^{S_N}$ and $H_0$. For this it suffices to show that the Poisson algebra $B$ generated by these elements contains
$F_{r,s}:=\sum_i X_i^rp_i^s$, where $r$ is any integer and $s$ a nonnegative integer, as such elements generate ${\rm gr}(A)$ as a commutative algebra (by a theorem of H.~Weyl). The Poisson bracket is given by $\lbrace{p_j,X_i\rbrace}=\delta_{ij}X_i$, and $X_i$ are pairwise Poisson commutative, as are $p_j$. So for $r\ne 0$
$$
F_{r,s}=\frac{1}{2r}\lbrace{\sum_i p_i^2,F_{r,s-1}\rbrace},
$$
and $F_{r,0}=\sum_i X_i^r\in  \Bbb C[X_1^{\pm 1},...,X_N^{\pm 1}]^{S_N}\subset B$. So it remains to show that $F_{0,s}\in B$. For this, it suffices to note that
$$
F_{0,s}=\frac{1}{s+1}\lbrace{F_{-1,s+1},F_{1,0}\rbrace}.
$$
\end{proof}

\subsection{Degenerate cyclotomic DAHA}

\begin{definition}\label{dcycldaha} Let $l\in \Bbb Z_{\ge 0}$, $z_1,...,z_l\in \Bbb C$, and $z=(z_1,...,z_l)$. The degenerate cyclotomic DAHA is
the subalgebra $\DAHA_{N,{\rm deg}}^l(z,\hbar,k)$ of $\DAHA_{N,{\rm deg}}(\hbar,k)$ generated by $s_i$, $i=1,...,N-1$,
$y_i$, $i=1,...,N$, $\pi$, and the element
$$
\pi_-:=\pi^{-1}\prod_{i=1}^l (y_1-z_i).
$$
Similarly, if $z_i$, $\hbar$, and $k$ are variables, we define $\DAHA_{N,{\rm deg}}^l$ to be the subalgebra of $\DAHA_{N,{\rm deg}}[z_1,...,z_l]$ generated by the same elements.
\end{definition}

Note that by this definition
$$
\DAHA_{N,{\rm deg}}^0(\hbar,k)=\DAHA_{N,{\rm deg}}(\hbar,k),\ \DAHA_{N,{\rm deg}}^{l'}(z',\hbar,k)\subset \DAHA_{N,{\rm deg}}^l(z,\hbar,k)
$$
if $l'\ge l$ and $z'\supset z$ as a multiset.

For $u\in \Bbb C$, let $\phi_u$ be the automorphism
of $\Bbb C S_N\ltimes {\mathcal D}_N$ which preserves $X_i,s_i$, and sends
$\partial_i$ to $\partial_i+uX_i^{-1}$
(i.e., conjugation by $(\prod_i X_i)^u$).

\begin{proposition}\label{prese} The algebras $\phi_{z_i}(\DAHA_{N,{\rm deg}}^l(z,1,k))$, $i\in [1,l]$, preserve the subspace $\bold P_+:=\Bbb C[X_1,...,X_N]\subset \bold P$.
In other words, the algebra $\DAHA_{N,{\rm deg}}^l(z,1,k)$ preserves $(\prod_j X_j)^{z_i}\bold P_+$ for all $i$.
\end{proposition}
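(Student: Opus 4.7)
The plan is to reformulate the claim via the evident observation that $\phi_u$ is conjugation by $(\prod_j X_j)^u$ on $\BC S_N \ltimes \CD_N$: indeed $(\prod_j X_j)^{-u} \partial_i (\prod_j X_j)^u = \partial_i + u X_i^{-1}$, and $X_i, s_i$ are manifestly fixed. Consequently the statement $\phi_{z_i}(\DAHA^l_{N,\mathrm{deg}}(z,1,k))\cdot \bold P_+ \subset \bold P_+$ is equivalent to $\DAHA^l_{N,\mathrm{deg}}(z,1,k)\cdot (\prod_j X_j)^{z_i} \bold P_+ \subset (\prod_j X_j)^{z_i} \bold P_+$. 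This second formulation reduces to a check on each of the generators $s_j$, $\pi$, $y_j$, $\pi_-$ listed in Definition~\ref{dcycldaha}.

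For the generators $s_j$ ($j\ne 0$), $\pi$, and $y_j$ this is routine: under $\rho$ these act respectively by $s_j$, by $X_1 s_1\cdots s_{N-1}$, and by $X_jD_j - k\sum_{r<j}s_{rj}$, all of which preserve $\bold P_+$ (the Dunkl operator $D_j$ preserves polynomials because $(1-s_{jr})g$ is divisible by $X_j-X_r$). The symmetric factor $(\prod_m X_m)^{z_i}$ commutes with $s_j$, $X_j$, and with all the reflections, so the only interaction to track is the single Leibniz term
\[
X_j\partial_j\cdot(\textstyle\prod_m X_m)^{z_i}g \;=\; (\textstyle\prod_m X_m)^{z_i}\bigl(z_i g + X_j\partial_j g\bigr),
\]
which stays in $(\prod_m X_m)^{z_i}\bold P_+$. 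In particular I obtain the identity $y_1\cdot(\prod_m X_m)^{z_i} = (\prod_m X_m)^{z_i}\cdot(y_1+z_i)$ that is needed below.

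The real content is the generator $\pi_- = \pi^{-1}\prod_{j=1}^l(y_1-z_j)$, since $\pi^{-1}$ alone (acting as $s_{N-1}\cdots s_1 X_1^{-1}$) does \emph{not} preserve polynomials. Using the commutation from the previous paragraph I pull the symmetric factor through:
\[
\prod_{j=1}^l(y_1-z_j)\cdot(\textstyle\prod_m X_m)^{z_i}g \;=\; (\textstyle\prod_m X_m)^{z_i}\cdot \prod_{j=1}^l\bigl(y_1+z_i-z_j\bigr) g.
\]
The factor with $j=i$ is exactly $y_1$, so this equals $(\prod_m X_m)^{z_i}\cdot y_1\tilde g$ where $\tilde g := \prod_{j\ne i}(y_1+z_i-z_j)g\in\bold P_+$. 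Now I use the key structural fact that for the index $1$ the sum $\sum_{r<1}s_{r1}$ is empty, so $y_1 = X_1 D_1$; since $D_1$ preserves $\bold P_+$, the image $y_1\tilde g = X_1\cdot(D_1\tilde g)$ is divisible by $X_1$. Writing $y_1\tilde g = X_1 h$ with $h\in\bold P_+$ and applying $\rho(\pi^{-1}) = s_{N-1}\cdots s_1 X_1^{-1}$ together with the $S_N$-invariance of $(\prod_m X_m)^{z_i}$ gives
\[
\pi_-\cdot(\textstyle\prod_m X_m)^{z_i}g \;=\; (\textstyle\prod_m X_m)^{z_i}\cdot s_{N-1}\cdots s_1 h \;\in\; (\textstyle\prod_m X_m)^{z_i}\bold P_+,
\]
as desired.

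The main (really only) obstacle is arranging the last step correctly; everything else is bookkeeping. The definition of $\pi_-$ is engineered so that the product $\prod_j(y_1-z_j)$ supplies an $X_1$ to cancel the $X_1^{-1}$ inside $\pi^{-1}$, and this cancellation is what forces the specific form $\pi^{-1}\prod(y_1-z_j)$ rather than an arbitrary polynomial in $y_1$; the shift symmetry $y_1\mapsto y_1+z_i$ is precisely what converts the factor $(y_1-z_i)$ into the divisibility-producing operator $y_1 = X_1 D_1$.
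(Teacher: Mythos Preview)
Your proof is correct and follows essentially the same approach as the paper's. The paper works in the $\phi_{z_i}$-picture, observing that $\phi_{z_i}(y_1)=y_1+z_i$ and hence $\phi_{z_i}(\pi_-)=\pi^{-1}y_1\prod_{j\ne i}(y_1-z_j+z_i)$, then noting that $\pi^{-1}y_1=s_{N-1}\cdots s_1 D_1$ preserves $\bold P_+$; you carry out the equivalent computation in the conjugate picture, pulling the symmetric factor $(\prod_m X_m)^{z_i}$ through and isolating the same factor $y_1=X_1D_1$ to cancel the $X_1^{-1}$ in $\pi^{-1}$.
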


\begin{proof} We only need to show that $\pi^{-1}\phi_{z_i}((y_1-z_1)...(y_1-z_l))$
preserves $\bold P_+$. Note that $y_1=X_1D_1$, so $\phi_{z_i}(y_1)=y_1+z_i$.
Thus, we need to check that $\pi^{-1}y_1\prod_{j\ne i}(y_1-z_j+z_i)$ preserves ${\bold P}_+$. But this holds since $\pi^{-1}y_1=s_{N-1}...s_1D_1$
preserves ${\bold P}_+$ (and, of course, so does $y_1$).
\end{proof}

A similar result holds when $z_i,k$ are variables.

\begin{theorem}\label{charac}
(i) (\cite{GKV}) If $k\notin \Bbb Z+1/2$ then $\DAHA_{N,{\rm deg}}(1,k)$ is the algebra of all elements of the algebra $S_N\ltimes {\mathcal D}_N$
which preserve $\bold P$ and $|\Delta|^{2k} {\rm sign}(\Delta)\bold P$.

(ii) Suppose $z_i-z_j$ are not integers
and $k\in \Bbb C$ is Weil generic\footnote{We don't know if Theorem \ref{charac} (ii),(iii) actually fails for any value of $k\notin \Bbb Z+1/2$.} (i.e., outside a countable set). Then the algebra $\DAHA_{N,{\rm deg}}^l(z,1,k)$ is the subalgebra of all elements of
$\DAHA_{N,{\rm deg}}(1,k)$ which preserve $(\prod_j X_j)^{z_i}\bold P_+$ for all $i$.

(iii) Under the assumption of (ii),
the algebra $\DAHA_{N,{\rm deg}}^l(z,1,k)$ is the subalgebra of all elements of
$S_N\ltimes {\mathcal D}_N$
which preserve $\bold P$, $|\Delta|^\kappa {\rm sign}(\Delta)\bold P$, and $(\prod_j X_j)^{z_i}\bold P_+$ for all $i$.
\end{theorem}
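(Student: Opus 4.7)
The plan is to handle the three parts in order, with the heart of the argument being part (ii).

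For part (i), one direction is immediate: Lemma \ref{prese0} (combined with the obvious fact that the generators of $\DAHA_{N,{\rm deg}}(1,k)$ in the polynomial representation preserve $\bold P$) gives $\DAHA_{N,{\rm deg}}(1,k)\subseteq\mathcal{A}$, where $\mathcal{A}$ denotes the subalgebra of $\mathbb{C}S_N\ltimes\mathcal{D}_N$ preserving both $\bold P$ and $|\Delta|^{2k}\mathrm{sign}(\Delta)\bold P$. For the reverse inclusion I would induct on the order of $A\in\mathcal{A}$ as a differential operator. Passing to the symbol in the graded algebra $\mathbb{C}S_N\ltimes\mathcal{O}(T^*U)$ (with $U=(\mathbb{C}^\times)^N\setminus\{X_i=X_j\}$), the condition $A\cdot\bold P\subseteq\bold P$ forces each coefficient to have only simple poles along $X_i=X_j$ with residues proportional to $1-s_{ij}$ (exactly the Dunkl shape), while preservation of $|\Delta|^{2k}\mathrm{sign}(\Delta)\bold P$ pins the constant of proportionality to equal $k$. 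Matching the top-order symbol against a product of Dunkl operators and Laurent polynomials, subtracting, and iterating reduces to the trivial base case $\mathbb{C}S_N\ltimes\mathbb{C}[X^{\pm 1}]\subset\DAHA_{N,{\rm deg}}(1,k)$.

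For part (ii), Proposition \ref{prese} gives $\DAHA_{N,{\rm deg}}^l(z,1,k)\subseteq\mathcal{B}$, where $\mathcal{B}$ is the subalgebra of $\DAHA_{N,{\rm deg}}(1,k)$ preserving all $(\prod_j X_j)^{z_i}\bold P_+$. For the reverse, a direct calculation using $S_N$-invariance of $\prod_j X_j$ and the formula $y_j=X_jD_j-k\sum_{l<j}s_{lj}$ yields the intertwining $y_j\cdot(\prod_l X_l)^{z_i}=(\prod_l X_l)^{z_i}(y_j+z_i)$, so preservation of $(\prod_j X_j)^{z_i}\bold P_+$ is equivalent to $\phi_{z_i}(A)\cdot\bold P_+\subseteq\bold P_+$, where $\phi_{z_i}$ is the algebra automorphism of $\DAHA_{N,{\rm deg}}(1,k)$ fixing $X_j,s_i$ and sending $y_j\mapsto y_j+z_i$. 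Since $\phi_{z_i}$ is $X$-homogeneous, I may reduce to $A$ homogeneous of some $X$-degree $m$ and expand it in the PBW basis $X^\alpha w\,p_{\alpha,w}(y)$. I then induct on the "negative part" $\sum_{j:\alpha_j<0}(-\alpha_j)$: terms with $\alpha\ge 0$ componentwise lie in $\DAHA_{N,{\rm deg}}^l$ because each $X_j$ is expressible in terms of $\pi$ and the $s_i$ (via $X_1=\pi\,s_{N-1}\cdots s_1$ and its $S_N$-conjugates). For a term with some $\alpha_j<0$, the non-polynomial contributions from $\phi_{z_i}(X^\alpha w\,p_{\alpha,w}(y))\cdot f$ with $f\in\bold P_+$ must cancel, which for each $i$ imposes a polynomial vanishing condition on $p_{\alpha,w}$. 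Running this condition over all $l$ values $z_1,\dots,z_l$ forces $p_{\alpha,w}(y_1)$ to be right-divisible by $\prod_i(y_1-z_i)$; this factor can then be absorbed into a $\pi_-$, producing a representative of strictly smaller negative part and closing the induction. The hypotheses that $z_i-z_j\notin\mathbb{Z}$ and that $k$ is Weil generic enter precisely to guarantee that the $l$ vanishing conditions are linearly independent and that no spurious obstruction arises from accidental divisibilities in the PBW algebra.

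Statement (iii) is then immediate from (i) and (ii): preservation of $\bold P$ and $|\Delta|^{2k}\mathrm{sign}(\Delta)\bold P$ places $A$ in $\DAHA_{N,{\rm deg}}(1,k)$ by (i), and the extra preservation assumptions then place it in $\DAHA_{N,{\rm deg}}^l(z,1,k)$ by (ii). The principal obstacle throughout is the divisibility/reduction step in (ii): one must show that the simultaneous preservation constraints from the $l$ twists $\phi_{z_i}$ are jointly strong enough to force the required right-divisibility by $\prod_i(y_1-z_i)$ of every negative-$X$-degree PBW $y$-polynomial, rather than by some weaker polynomial. This is exactly where the genericity hypotheses are used in an essential way, and it is the most delicate translation from the analytic preservation condition on $\bold P_+$ to the algebraic membership statement in $\DAHA_{N,{\rm deg}}^l$.
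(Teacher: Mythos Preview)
Your reduction of (iii) to (i) and (ii) is correct and matches the paper. The problems lie in (i) and especially (ii), where your approach diverges substantially from the paper's and contains a real gap.

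\textbf{Part (ii): the divisibility claim fails.} Your key assertion is that the preservation conditions force each individual PBW coefficient $p_{\alpha,w}(y)$ (with some $\alpha_j<0$) to be right-divisible by $\prod_i(y_1-z_i)$, so that a factor of $\pi_-$ can be extracted. This is false already for $N=2$, $l=1$, $z_1=0$. The element $\pi_- s_1\in\DAHA_{2,\mathrm{deg}}^1(0,1,k)$ expands in your PBW ordering as
\[
\pi_- s_1 \;=\; s_1X_1^{-1}y_1s_1 \;=\; X_2^{-1}\cdot 1\cdot y_2 \;+\; X_2^{-1}\cdot s_1\cdot k,
\]
and the second term has $p_{(0,-1),s_1}=k$, a nonzero constant, divisible by nothing. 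The point is that the ``non-polynomial contributions must cancel'' across \emph{different} $(\alpha,w)$-terms, not term by term; in the example the pole of $X_2^{-1}s_1$ at $X_2=0$ is killed only in combination with $X_2^{-1}y_2$. Your induction scheme gives no mechanism for handling such cross-term cancellations, and the ``absorb into $\pi_-$'' step has nowhere to go once individual divisibility fails. This is not a detail that genericity of $k$ or of the $z_i$ repairs.

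The paper proceeds completely differently. It first treats $k=0$, where $\DAHA_{N,\mathrm{deg}}(1,0)=S_N\ltimes\mathcal{D}(\mathbb{C}^*)^{\otimes N}$; a completion argument at generic points of $\{X_1=0\}$ separates the $S_N$-components, reducing to a pure differential operator, and then a tensor-factor argument reduces to the case $N=1$ (Proposition~\ref{1var}). For $k\ne 0$ the paper uses a standard deformation/semicontinuity argument: each filtered graded piece $F^sA_N(z,k)[r]$ is finite-dimensional, $\DAHA_{N,\mathrm{deg}}^l(z,1,k)$ can only get larger under deformation of generators, so equality at $k=0$ forces equality for Weil generic $k$. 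This completely sidesteps the cross-term problem that breaks your argument.

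\textbf{Part (i): the symbol argument is too coarse.} You claim that passing to the top symbol, preservation of $\bold P$ forces ``simple poles along $X_i=X_j$ with residues proportional to $1-s_{ij}$''. But the top symbol of any element of $\DAHA_{N,\mathrm{deg}}(1,k)$ already lies in $S_N\ltimes\mathbb{C}[X_i^{\pm 1},p_i]$ with \emph{no} diagonal poles (the Dunkl correction is order zero), so the poles you want to analyze are not in the symbol at all. Moreover, for $A=\sum_w wA_w$ the condition $A\cdot\bold P\subset\bold P$ does not directly constrain the individual $A_w$, and you have not explained how to disentangle them. The paper instead invokes Lemma~\ref{rank1lem} and passes to formal completions at generic points of the reflection hyperplanes, reducing to the rank-one algebra $A(k)$ tensored with a Weyl algebra, and then cites \cite{E1} and the appendix to \cite{BE}.
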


Theorem \ref{charac} is proved in the next two subsections.

\subsection{The case $N=1$}

Let us first prove Theorem \ref{charac} for $N=1$.
Let $B_l(z):=\DAHA_1^l(1,z)$
(the parameter $k$ does not enter in this case).
The algebra $B_l(z)$ is generated by $X=X_1$, the Euler element $E:=X\partial=y_1$, and
$$
L:=\pi_-=X^{-1}(X\partial -z_1)...(X\partial -z_l).
$$
Then Theorem \ref{charac} for $N=1$ reduces to the following statement.

\begin{proposition}\label{1var} If $z_i-z_j$ are not integers for $i\ne j$ then
$B_l(z)$ is the algebra of all differential operators on $\Bbb C^*$
which preserve $X^{z_i}\Bbb C[X]$ for all $i=1,...,l$.
\end{proposition}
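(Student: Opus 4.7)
The plan is to exploit the $\Bbb Z$-grading on $A:=\mathcal{D}_1$ given by $\deg X=1,\ \deg E=0$, under which $A=\bigoplus_{n\in\Bbb Z}X^n\Bbb C[E]$. Since the three generators $X,E,L$ of $B_l(z)$ are homogeneous of degrees $1,0,-1$ respectively, $B_l(z)\subset A$ is a graded subalgebra, so it suffices to identify its graded pieces.

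First I would characterize the subalgebra $A_z\subset A$ of operators preserving $X^{z_i}\Bbb C[X]$ for every $i$. Writing $D\in A$ as $\sum_n X^n p_n(E)$, one computes
\[
D(X^{z_i+m})=\sum_n p_n(z_i+m)\,X^{z_i+m+n},
\]
so preservation of $X^{z_i}\Bbb C[X]$ is equivalent to $p_n(z_i+m)=0$ whenever $n<0$ and $0\le m\le -n-1$. Under the hypothesis $z_i-z_j\notin\Bbb Z$ for $i\ne j$, the $l(-n)$ points $\{z_i+m\}$ are pairwise distinct, so the joint condition for all $i$ is precisely the divisibility $Q_{-n}(E)\mid p_n(E)$ for every $n<0$, where
\[
Q_k(E):=\prod_{i=1}^{l}\prod_{m=0}^{k-1}(E-z_i-m).
\]
Consequently $A_z=\bigoplus_{n\ge 0}X^n\Bbb C[E]\ \oplus\ \bigoplus_{k>0}X^{-k}Q_k(E)\,\Bbb C[E]$.

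Next I would establish the two inclusions. The inclusion $B_l(z)\subseteq A_z$ is immediate from Proposition~\ref{prese} in the case $N=1$ (note $\bold P_+=\Bbb C[X]$ and $\prod_j X_j=X$). For the reverse inclusion, a short induction on $k$ using the commutation $p(E)X^{-1}=X^{-1}p(E-1)$ yields
\[
L^k\ =\ X^{-k}\prod_{j=0}^{k-1}P(E-j)\ =\ X^{-k}Q_k(E),
\]
where $P(E)=\prod_{i=1}^l(E-z_i)$. Hence $X^{-k}Q_k(E)q(E)=L^kq(E)\in B_l(z)$ for every polynomial $q$ and every $k>0$, while the elements $X^nq(E)$ with $n\ge 0$ are plainly polynomial expressions in $X$ and $E$. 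Together these exhaust a spanning set for $A_z$, so $A_z\subseteq B_l(z)$.

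The main obstacle is ensuring that the $l(-n)$ independent vanishing conditions coming from preserving all $l$ subspaces simultaneously actually amount to the single divisibility $Q_{-n}(E)\mid p_n(E)$; this is exactly where the distinctness hypothesis $z_i-z_j\notin\Bbb Z$ enters, since colliding roots would yield strictly weaker constraints on $p_n$. Once this arithmetic step is in place, the identity $L^k=X^{-k}Q_k(E)$ provides the precise bridge between the algebraic generators of $B_l(z)$ and the functional characterization.
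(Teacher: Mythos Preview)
Your proof is correct and follows essentially the same approach as the paper's: both exploit the $\Bbb Z$-grading $A=\bigoplus_n X^n\Bbb C[E]$, identify the negative-degree piece of $A_z$ via the vanishing conditions $p_n(z_i+m)=0$ (using $z_i-z_j\notin\Bbb Z$ to get $l(-n)$ distinct roots), and then recognize $X^{-k}Q_k(E)$ as $L^k$. Your write-up is slightly more explicit in giving the direct-sum decomposition of $A_z$ and the closed formula $L^k=X^{-k}Q_k(E)$, whereas the paper phrases the last step as ``subtract $L^q h(E)$ to reduce the degree of $g$ below $ql$,'' but the content is the same.
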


\begin{proof}
It is clear that every element of $B_l(z)$ preserves $X^{z_i}\Bbb C[X]$,
since so do $X$, $E$ and $L$. So we need to show that any operator $M$ which preserves $X^{z_i}\Bbb C[X]$ belongs to
$B_l(z)$.

Suppose that $M$ is of degree $r$, i.e., $M=X^r g(E)$, where $g$ is some polynomial. If $r\ge 0$, then $M\in B_l(z)$, as $E\in B_l(z)$. So it remains to consider the case $r=-q$, where $q>0$. Then $M$ has to annihilate $X^{z_i},...,X^{z_i+q-1}$ for all $i$ (which are all different thanks to the condition on the $z_i$), so the degree of $g$ is at least $ql$.
On the other hand, if we take $g(y)=P(y)P(y-1)...P(y-q+1)h(y)$ where $P(y):=(y-z_1)...(y-z_l)$, then $M:=X^{-q}g(E)=L^qh(E)$, so $M\in B_l(z)$. This shows that $M$ always belongs to $B_l(z)$
(as we can subtract $L^qh(E)$ to make the degree of the polynomial $<ql$).
The proposition is proved.
\end{proof}

\begin{remark} We have $E=XL+z_1$ for $l=1$ and $E=\frac{1}{2}([L,X]+z_1+z_2-1)$ for $l=2$, so one may ask if $B_l(z)$ is in fact generated by
$L,X$ (i.e., if $E$ can be expressed via $L,X$). It is not hard to show that this is indeed the case for generic $z_i$. But if $l\ge 3$ and $z_i$ are special,
then the algebra $B_l(z)$ may not be generated by $X,L$. Indeed, let $z_1=0$, $z_2=1$, $z_3=2$, and the other $z_i$ be arbitrary.
Then $B_l(z)$ preserves $\Bbb C[X]$ and $X^2\Bbb C[X]$, so has a 2-dimensional representation
$V=\Bbb C[X]/(X^2)$ where $L$ acts by $0$ and $X$ acts nilpotently. Hence, if $B_l'(z)\subset B_l(z)$ is the subalgebra
generated by $L,X$ then every element of $B_l'(z)$ has only one eigenvalue on $V$. On the other hand, $E$ has eigenvalues $0,1$ on $V$, so $E\notin B_l'(z)$.
\end{remark}

We will also need the following ``unsymmetrized'' version of
Proposition \ref{1var} for $l=2$. Let $A(k)$ be the rational Cherednik algebra
with parameter $k$ attached to the group $\Bbb Z/2\Bbb Z$, i.e.,
generated by $x$, $s\in \Bbb Z/2\Bbb Z$ such that $sx=-xs$, and $D=\partial-\frac{k}{x}(1-s)$.

\begin{lemma}\label{rank1lem} If $k\notin \Bbb Z+\frac{1}{2}$ then $A(k)$ is the algebra of all elements of $\Bbb C\Bbb Z/2\Bbb Z\ltimes {\mathcal{D}}(\Bbb C^*)$ which preserve $\Bbb C[x]$ and $|x|^{2k}{\rm sign}(x)\Bbb C[x]$.\footnote{Lemma \ref{rank1lem} fails if $k\in \Bbb Z+1/2$. For example, if $k=1/2$ 
then the element $(x^{-1}\partial-x^{-2})(1-s)$ preserves the required spaces but is not in $A(k)$.}
\end{lemma}

\begin{proof}
It is clear that $A(k)$ preserves these spaces, since so do its generators.
So it remains to show that any element $M$ preserving these spaces is in $A(k)$.
We may assume that $M$ is homogeneous. Note that $E:=\frac{1}{2}(xD+Dx-1+2k)=x\partial\in A(k)$. So if $M$ is of nonnegative degree, then $M=x^r (g_1(E)(1-s)+g_2(E)(1+s))$, where $g_1,g_2$ are some polynomials, hence $M\in A(k)$ automatically. Now suppose $\deg(M)=-q<0$,
i.e., $M=x^{-q} (g_1(E)(1-s)+g_2(E)(1+s))$. The operator $M$
has to annihilate $x^m$ and ${\rm sign}(x)|x|^{2k}x^m$
for $0\le m\le q-1$, so
$g_1(u)$ is divisible by $\prod_{j=0}^{q-1}(u-j-(1+(-1)^j)k)$, while
$g_2(u)$ is divisible by $\prod_{j=0}^{q-1}(u-j-(1-(-1)^j)k)$ 
(here we use that these products have no repeated factors since $k\notin \Bbb Z+1/2$).
On the other hand, it is easy to see by acting on monomials $x^p$ that
$$
x^{-q}\prod_{j=0}^{q-1}(E-j-(1+(-1)^j)k)\cdot (1-s)=D^q(1-s),
$$
while
$$
x^{-q}\prod_{j=0}^{q-1}(E-j-(1-(-1)^j)k)\cdot (1+s)=D^q(1+s),
$$
which are both in $A(k)$. This implies the lemma.
\end{proof}

\subsection{Proof of Theorem \ref{charac}}\label{pfcharac}

Let us prove (i). By Lemma \ref{rank1lem} and Lemma \ref{prese0}, $\DAHA_{N,{\rm deg}}(1,k)$ is the space of elements of $S_N\ltimes {\mathcal{D}}_N$
which upon formal completion at a generic point of each hyperplane $X_i=X_j$ (in the sense of \cite{BE})
lie in the formal completion of $A(k)\otimes {\Bbb W}_{N-1}$, where ${\Bbb W}_{N-1}$ is the Weyl algebra of $N-1$ variables.
So (i) follows from the results of \cite{E1} (see also the appendix to \cite{BE}).

It is clear that given (i), statements (ii) and (iii) are equivalent, so let us prove (ii) (assuming $l>0$, as the case $l=0$ is trivial).
First consider the case $k=0$. In this case, the result follows from the following lemma.

\begin{lemma}\label{kis0} Let $L\in S_N\ltimes {\mathcal D}(\Bbb C^*)^{\otimes N}$ be an element preserving the space $(\prod_j X_j)^{z_i}\bold P_+$ for all $i$.
Then $L\in S_N\ltimes B_l(z)^{\otimes N}$.
\end{lemma}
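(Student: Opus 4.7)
The plan is to show that in the decomposition $L = \sum_{w \in S_N} w \cdot L_w$ (with $L_w \in \mathcal{D}(\BC^*)^{\otimes N}$), each $L_w$ already belongs to $B_l(z)^{\otimes N}$. Expand $L_w = \sum_{\vec{c} \in \BZ^N} X^{\vec{c}} P_{w, \vec{c}}(E_1, \ldots, E_N)$ with $E_j = X_j\partial_j$ (finitely many $\vec{c}$ contribute). Tensoring Proposition \ref{1var} over coordinates, the membership $X^{\vec{c}} P_{w, \vec{c}}(E) \in B_l(z)^{\otimes N}$ amounts to a single divisibility condition: for every $j$ with $c_j < 0$, the polynomial $P_{w, \vec{c}}(y_1, \ldots, y_N)$ is divisible by $F_j^{\vec{c}}(y_j) := \prod_{i=1}^l \prod_{a=0}^{-c_j - 1}(y_j - z_i - a)$. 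It thus suffices to verify this divisibility for each fixed $(w_0, \vec{c}_0)$ in the support.

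To isolate $(w_0, \vec{c}_0)$, I would compute
\[
L \cdot X^{\vec{m} + z_i \vec{\mathbf{1}}} \;=\; \sum_{w, \vec{c}} P_{w, \vec{c}}(\vec{m} + z_i \vec{\mathbf{1}})\, X^{w(\vec{m} + \vec{c}) + z_i \vec{\mathbf{1}}}
\]
for $\vec{m} \in \BZ_{\geq 0}^N$, and examine the coefficient of the monomial $X^{w_0(\vec{m} + \vec{c}_0) + z_i \vec{\mathbf{1}}}$. A pair $(w, \vec{c}) \neq (w_0, \vec{c}_0)$ contributes only when $w(\vec{m} + \vec{c}) = w_0(\vec{m} + \vec{c}_0)$, which forces $\vec{c} = (w^{-1} w_0 - 1)\vec{m} + w^{-1} w_0 \vec{c}_0$. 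Since $\vec{c}$ must lie in the finite support of $L_w$ and $w^{-1} w_0 - 1$ has proper kernel when $w \neq w_0$, such $\vec{m}$ lie in a finite union $\mathcal{E}$ of cosets of the proper fixed subspaces $\ker(w^{-1} w_0 - 1) \subset \BC^N$. For $\vec{m} \in \BZ_{\geq 0}^N \setminus \mathcal{E}$, the coefficient is exactly $P_{w_0, \vec{c}_0}(\vec{m} + z_i \vec{\mathbf{1}})$.

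Next, fix $j_0$ with $c_{0, j_0} < 0$ and $a \in \{0, 1, \ldots, -c_{0, j_0} - 1\}$. For every $\vec{m}$ in the slab $H_{a, j_0} := \{\vec{m} \in \BZ_{\geq 0}^N : m_{j_0} = a\}$ the exponent $w_0(\vec{m} + \vec{c}_0)$ has a negative coordinate, so the preservation hypothesis forces $P_{w_0, \vec{c}_0}(\vec{m} + z_i \vec{\mathbf{1}}) = 0$ for $\vec{m} \in H_{a, j_0} \setminus \mathcal{E}$. The key geometric point is that every $\ker(w^{-1} w_0 - 1)$ contains the diagonal vector $(1, \ldots, 1)$ (fixed by any permutation), so no coset of it can lie inside any hyperplane $\{y_{j_0} = a\}$; hence $\mathcal{E} \cap \{y_{j_0} = a\}$ has codimension $\geq 1$ in the hyperplane, and $H_{a, j_0} \setminus \mathcal{E}$ is Zariski-dense there. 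Consequently $P_{w_0, \vec{c}_0}$ vanishes on the entire shifted hyperplane $\{y_{j_0} = z_i + a\}$, so it is divisible by $y_{j_0} - z_i - a$.

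Ranging over $i \in \{1, \ldots, l\}$ and $a \in \{0, \ldots, -c_{0, j_0} - 1\}$ produces pairwise distinct linear factors (using $z_i - z_{i'} \notin \BZ$ for $i \neq i'$), so $F_{j_0}^{\vec{c}_0}$ divides $P_{w_0, \vec{c}_0}$; ranging over all $j_0$ with $c_{0, j_0} < 0$ and invoking coprimality of the $F_j^{\vec{c}_0}$ (since they involve disjoint variables) gives the required divisibility by $\prod_{j: c_{0, j} < 0} F_j^{\vec{c}_0}$. Thus $X^{\vec{c}_0} P_{w_0, \vec{c}_0}(E) \in B_l(z)^{\otimes N}$, and summing over $\vec{c}_0$ and then over $w_0$ yields $L \in S_N \ltimes B_l(z)^{\otimes N}$. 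The main obstacle is the density step in the third paragraph --- one has to ensure that the ``collision'' set $\mathcal{E}$ cannot fill an entire slab $\{m_{j_0} = a\}$ --- and this is precisely what the elementary observation about the diagonal vector lying in every fixed subspace provides.
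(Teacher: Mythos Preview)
Your proof is correct and follows a genuinely different route from the paper's.  The paper separates the $S_N$-components first by a \emph{localization} argument: completing $(\prod_j X_j)^{z_i}\mathbf P_+$ at the $S_N$-orbit of a generic point on $\{X_1=0\}$ gives a direct sum $\bigoplus_{\sigma} E_{i,\sigma}$ indexed by the orbit, and since each $\sigma L_\sigma$ carries $E_{i,1}$ into $E_{i,\sigma}$ while $L$ preserves the sum, one reads off that every $L_\sigma$ already preserves $E_{i,1}$; genericity then gives preservation of the global module.  Having reduced to a single differential operator, the paper finishes with a short basis argument: for each coordinate it contracts against test vectors $v\in\Bbb C[X_2^{\pm1},\dots]$ and functionals $\psi$ to land in the one-variable situation of Proposition~\ref{1var}, and then iterates over coordinates.

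Your argument bypasses both steps at once: you act on monomials $X^{\vec m+z_i\vec{\mathbf 1}}$, isolate a fixed $(w_0,\vec c_0)$ by tracking the output exponent, and observe that interference from other $(w,\vec c)$ confines $\vec m$ to finitely many cosets of the fixed spaces $\ker(w^{-1}w_0-1)$.  The crucial point you correctly identify is that each such fixed space contains the diagonal, hence no coset of it can sit inside a slab $\{m_{j_0}=a\}$; this is exactly what guarantees Zariski density and makes the divisibility go through.  Your approach is more elementary (no completions, no choice of basis for $B_l(z)$) and treats the symmetric-group and tensor aspects in a single stroke; the paper's is cleaner conceptually and shows more transparently why the problem decouples into one-variable pieces.
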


\begin{proof} Let $L=\sum_{\sigma \in S_N}\sigma L_\sigma$, where $L_\sigma$ are differential operators.
Consider a generic point $\bold x$ in the hyperplane $X_1=0$, and let $E_i$ be the formal completion of the $\Bbb C[X_1,...,X_N]$-module
$(\prod_j X_j)^{z_i}\bold P_+$ near the $S_N$-orbit of $\bold x$. Then $E_i=\oplus_{\sigma\in S_N}E_{i,\sigma}$, where $E_{i,\sigma}$ is the completion
of $(\prod_j X_j)^{z_i}\bold P_+$ at the point $\sigma \bold x$. It is clear that $L$ preserves $E_i$ for all $i$. This implies that for each $\sigma$, $L_{\sigma}$ preserves
$E_{i,1}$. Hence $L_\sigma$ preserves $(\prod_j X_j)^{z_i}\bold P_+$ for all $i$ and $\sigma$. Thus we may assume without loss of generality that
$L\in {\mathcal D}(\Bbb C^*)^{\otimes N}$ is a differential operator.

It is clear from taking completions that $L$ preserves the space %\linebreak 
$X_1^{z_i}\Bbb C[X_1,X_2^{\pm 1},...,X_N^{\pm 1}]$
for all $i$. Therefore, by Proposition \ref{1var}, for any $v\in \Bbb C[X_2^{\pm 1},...,X_N^{\pm 1}]$,
$\psi\in \Bbb C[X_2^{\pm 1},...,X_N^{\pm 1}]^*$ the differential operator
$({\rm Id}\otimes \psi)(L({\rm Id}\otimes v))\in {\mathcal D}(\Bbb C^*)$ in fact belongs to $B_l(z)$.
Let $\{a_i\}$ be a basis of $B_l(z)$, and $\{a_j'\}$ 
its extension to a basis of ${\mathcal D}(\Bbb C^*)$.
We can uniquely write $L$ as
$$
L=\sum_i a_i\otimes L_i+\sum_j a_j'\otimes L_j'
$$
Thus we have
$$
\sum_i \psi(L_iv)a_i+\sum_j \psi(L_j'v)a_j'\in B_l(z).
$$
Hence $\psi(L_j'v)=0$ for all $j$. Since this holds for all $\psi$, we have $L_j'v=0$.
Since this holds for all $v$, we have $L_j'=0$. Thus $L=\sum_i a_i\otimes L_i\in B_l(z)\otimes {\mathcal D}(\Bbb C^*)^{\otimes N-1}$, i.e., the first component of $L$
lies in $B_l(z)$. A similar argument applies to all the other components. Thus, $L\in B_l(z)^{\otimes N}$, as desired.
\end{proof}

Now, consider the case $k\ne 0$. By Proposition \ref{prese}, elements of $\DAHA_{N,{\rm deg}}^l(1,k)$
preserve the spaces $(\prod_j X_j)^{z_i}\bold P_+$. Thus we only have to show that
$\DAHA_{N,{\rm deg}}^l(1,k)$ is ``big enough'', i.e., coincides with the algebra $A_N(z,k)$ of all the elements preserving the spaces $(\prod_j X_j)^{z_i}\bold P_+$.
But this follows for Weil generic $k$
from the case $k=0$ by a standard deformation argument (the algebra can only ``get bigger'' if we deform
its generators). More precisely, recall that we have a grading on the algebra $S_N\ltimes \mathcal{D}_N$ defined by $\deg(X_i)=1$, $\deg(s_i)=0$,
$\deg(\partial_i)=-1$ inherited by $\DAHA_{N,{\rm deg}}(1,k)$, $A_N(z,k)$, and $\DAHA_{N,{\rm deg}}^l(z,1,k)$, and the filtration $F^\bullet$ by order of differential operators, and
it is not hard to see that for each $s$, $F^sA_N(z,k)$ is a finitely generated graded $\Bbb C[X_1,...,X_N]$-module.
Thus, for each $r,s$, the space $F^sA_N(z,k)[r]$ is finite dimensional. Hence, for each $r,s$ the set
of $k\in \Bbb C$ for which $F^s\DAHA_{N,{\rm deg}}^l(z,1,k)[r]\ne F^sA_N(z,k)[r]$ is finite.

\subsection{Comparison to the cyclotomic rational Cherednik algebra for $N=1$}

Let us now consider the cyclotomic rational Cherednik algebra
$\bDAHA_1^{l,{\rm cyc}}$ of rank $1$ with parameters $\hbar$ and $c=(c_0,...,c_{l-1})$ (see \cite{EG2,EM}.
By definition, this algebra is generated over $\Bbb C[\hbar,c_0,...,c_{l-1}]$ by $x$ and the cyclotomic Dunkl operator
$$
D_{\rm cyc}:=\hbar\partial-x^{-1}\sum_{i=0}^{l-1}c_i\sigma^i,
$$
where 
$$
\sigma(x)=\zeta x, \quad \zeta=e^{2\pi i/l}.
$$ 
This algebra is bigraded by
$\deg(x)=(1,0)$, $\deg(D_{\rm cyc})=(-1,1)$, $\deg(\sigma)=(0,0)$, $\deg(\hbar)=\deg(c)=(0,1)$, and by the PBW theorem (see \cite{DO, EG1}) is a free
bigraded module over $\Bbb C[c,\hbar]$. We also have the algebra $\bDAHA_1^{l,{\rm cyc}}(c,\hbar)$ with numerical parameters,
i.e., the specialization of $\bDAHA_1^{l,{\rm cyc}}$, and it carries a grading and a compatible filtration $F^\bullet$.

We have $\bDAHA_1^{l,{\rm cyc}}(c,1)\subset \Bbb C\Bbb Z/l\Bbb Z\ltimes {\mathcal{D}}$,
where ${\mathcal{D}}={\mathcal{D}}_1$ is the algebra of
differential operators on $\Bbb C^*$, and the filtration $F^\bullet$ on $\bDAHA_1^{l,{\rm cyc}}(c,1)$ is induced by the order filtration on differential operators.

Let ${\bold p}$ be the symmetrizer of $\Bbb Z/l\Bbb Z$.
Then we have a spherical subalgebra
${\Bbb B}_l(c):={\bold p}\bDAHA_1^{l,{\rm cyc}}(c,1){\bold p}$.
This algebra acts naturally on $\Bbb C[x^{\pm 1}]$, where $X=x^l$.

\begin{proposition}\label{compar}If $z_i-z_j$ are not integers then
$B_l(z)={\Bbb B}_l(c)$ inside ${\rm End}\Bbb C[x^{\pm 1}]$,
where $c_i$ are related to $z_j$ by the linear inhomogeneous
change of variables
\begin{equation}\label{ziform}
z_i=\frac{1}{l}(l-i+\sum_j c_j\zeta^{ij}).
\end{equation}
\end{proposition}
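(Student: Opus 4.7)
Establish mutual containment $B_l(z) \subseteq \Bbb B_l(c) \subseteq B_l(z)$ inside $\mathcal{D}(\Bbb C^*)$; both algebras sit there via their faithful actions on $\bold p\Bbb C[x^{\pm 1}] = \Bbb C[X^{\pm 1}]$ (with $X = x^l$).

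\textbf{Forward: $B_l(z) \subseteq \Bbb B_l(c)$.} I will exhibit preimages in $\Bbb B_l(c)$ of the three generators $X$, $E := y_1 = X\partial$, $L := \pi_-$ of $B_l(z)$:
\[
x^l\bold p \longleftrightarrow X,\qquad \tfrac{1}{l}\bigl(xD_{\rm cyc} + {\textstyle\sum_j} c_j\sigma^j\bigr)\bold p \longleftrightarrow E,\qquad \tfrac{1}{l^l} D_{\rm cyc}^l\bold p \longleftrightarrow L.
\]
All three are $\sigma$-invariant and hence lie in $\Bbb B_l(c)$, and each acts on $X^s = x^{ls}$ as required by direct use of $D_{\rm cyc}(x^m) = (m - \tilde c_m)x^{m-1}$, where $\tilde c_m := \sum_j c_j \zeta^{jm}$. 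The nontrivial check is for $L$: iterating the Dunkl formula gives $D_{\rm cyc}^l(x^{ls}) = \prod_{k=0}^{l-1}(ls - k - \tilde c_{-k})X^{s-1}$, and re-indexing $i := l - k$ (mod $l$) together with $\tilde c_l = \tilde c_0 = C := \sum_j c_j$ and the convention $z_l := C/l$, formula \eqref{ziform} rewrites each factor as $l(s - z_i)$, so $D_{\rm cyc}^l(X^s) = l^l\prod_{i=1}^l(s - z_i)X^{s-1} = l^l L(X^s)$.

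\textbf{Reverse: $\Bbb B_l(c) \subseteq B_l(z)$.} Proposition \ref{1var} (via the hypothesis $z_i - z_j \notin \Bbb Z$) identifies $B_l(z)$ with the algebra of all differential operators on $\Bbb C^*$ preserving every $X^{z_i}\Bbb C[X]$. Since $\bold p \sigma^r = \bold p$ and $\bold p x^a D_{\rm cyc}^b\bold p$ equals $x^a D_{\rm cyc}^b\bold p$ when $a \equiv b \pmod{l}$ and $0$ otherwise (a direct consequence of $\sigma x^a D_{\rm cyc}^b \sigma^{-1} = \zeta^{a-b} x^a D_{\rm cyc}^b$), the PBW theorem for $\bDAHA_1^{l,{\rm cyc}}(c,1)$ implies that $\Bbb B_l(c)$ is spanned by $\{x^a D_{\rm cyc}^b\bold p : a, b \ge 0,\ a \equiv b \pmod{l}\}$. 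Such an element acts on $X^s$ as $\Pi_{a,b}(s)X^{s+(a-b)/l}$ with $\Pi_{a,b}(s) := \prod_{k=0}^{b-1}(ls - k - \tilde c_{-k})$, and therefore coincides with the differential operator $X^{(a-b)/l}\Pi_{a,b}(X\partial) \in \mathcal{D}(\Bbb C^*)$. Applied to $X^{z_i + s}$ via the $\mathcal{D}(\Bbb C^*)$-action on $X^{z_i}\Bbb C[X^{\pm 1}]$, it produces $\Pi_{a,b}(z_i + s)X^{z_i + s + (a-b)/l}$, so preservation of $X^{z_i}\Bbb C[X]$ reduces to the vanishing $\Pi_{a,b}(z_i + s) = 0$ for $0 \le s < q := (b-a)/l$ (when $q \ge 1$). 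Substituting \eqref{ziform} and using $\tilde c_{-k} = \tilde c_{l-k}$, the factor of $\Pi_{a,b}$ with $k = (l-i) + ls$ for $1 \le i < l$, respectively $k = ls$ for $i = l$, simplifies to $l(z_i + s) - k - \tilde c_{-k} = 0$; these $k$-values exactly fill $\{0, 1, \ldots, lq - 1\} \subseteq \{0, \ldots, b-1\}$, giving the required vanishing for every $(i, s)$ and hence $x^a D_{\rm cyc}^b\bold p \in B_l(z)$.

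\textbf{Main obstacle.} The substantive content is the discrete-Fourier-transform matching in the $L$-calculation: formula \eqref{ziform} is what simultaneously pairs each factor of $D_{\rm cyc}^l$ with one of $\pi_- = X^{-1}\prod_i(E - z_i)$ (in the forward direction) and furnishes the requisite zero factors of $\Pi_{a,b}(z_i + s)$ at the correct indices (in the reverse direction). Once this is in hand, both containments are essentially formal consequences of the PBW basis, $\bold p$-averaging, and Proposition \ref{1var}.
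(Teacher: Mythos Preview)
Your proof is correct; both inclusions are verified properly, and your PBW-based spanning set for $\Bbb B_l(c)$ together with the explicit factor analysis of $\Pi_{a,b}(z_i+s)$ is valid.

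The paper takes a slightly different (and terser) route for the inclusion $\Bbb B_l(c)\subseteq B_l(z)$. Rather than expanding each PBW monomial and locating the vanishing factor, it \emph{derives} formula~\eqref{ziform} representation-theoretically: the value $u=z_i$ is characterized by the condition $D_{\rm cyc}(x^{i-l}(x^l)^u)=0$, i.e., $x^{i-l}X^{z_i}$ is a lowest-weight vector for the full Cherednik algebra acting on $X^{z_i}\Bbb C[x^{\pm1}]$. This immediately gives an invariant submodule $x^{i-l}X^{z_i}\Bbb C[x]$, whose $\bold p$-part is $X^{z_i}\Bbb C[X]$; preservation by $\Bbb B_l(c)$ is then automatic, and Proposition~\ref{1var} finishes. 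For the reverse inclusion the paper does exactly what you do: observe that $L$ is proportional to $D_{\rm cyc}^l\bold p$ (the elements $X$ and $E$ being tacitly obvious). So your argument trades the paper's conceptual singular-vector observation for a direct computation; yours is more explicit and self-contained, while the paper's explains \emph{why} \eqref{ziform} takes the form it does.
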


\begin{proof} Fix $c$ and let us find values of $u$ for which $X^u\Bbb C[X^{\pm 1}]$
is preserved by ${\Bbb B}_l(c)$.
The condition is that there exists $1\le i\le l$ such that
$D_{\rm cyc}(x^{i-l} (x^l)^u)=0$, where we treat $(x^l)^u$ as a
$\Bbb Z/l\Bbb Z$-invariant. This gives the equation
$$
i-l+lu-\sum_j c_j\zeta^{ij}=0,
$$
i.e.
$$
u=z_i:=\frac{1}{l}(l-i+\sum_j c_j\zeta^{ij}).
$$
This yields the desired change of variable.

Now, ${\Bbb B}_l(c)$ preserves the subspaces
$X^{z_i}\Bbb C[X^{\pm 1}]$, so by Proposition \ref{1var} we have an inclusion
${\Bbb B}_l(c)\subset B_l(z)$.
To show that this is actually an equality, it suffices to observe that
the element $L$ of $B_l(z)$ is proportional to
$D_{\rm cyc}^l{\bold p}$.
\end{proof}

\subsection{Comparison to cyclotomic Cherednik algebra for general $N$}\label{compa}

Let us now extend the result of the previous subsection to general $N$.

\begin{definition} \label{cyclcher}
The cyclotomic rational Cherednik algebra for the group $S_N\ltimes (\Bbb Z/l\Bbb Z)^N$,
$\bDAHA_N^{l,{\rm cyc}}$, is the algebra generated over $\Bbb C[c_0,...,c_{l-1},\hbar,k]$ by the group $S_N\ltimes (\Bbb Z/l\Bbb Z)^N$, elements $x_i$, and
the cyclotomic Dunkl operators, also called Dunkl-Opdam operators, \cite[Definition 3.2]{DO}:
$$
D_{i,\rm cyc}=\hbar \partial_i-\frac{1}{x_i}\sum_{j=0}^{l-1}c_j\sigma_i^j-k\sum_{r\ne i,m}\frac{1}{x_i-\zeta^m x_r}(1-s_{ir}\sigma_i^m\sigma_r^{-m}),
$$
for $i=1,...,N$, where $\sigma_i$ is $\sigma$ acting in the $i$-th component.
\end{definition}

As in the rank 1 case, this algebra is bigraded by
$\deg(x_i)=(1,0)$, $\deg(D_{i,\rm cyc})=(-1,1)$, $\deg(\sigma_i)=(0,0)$, $\deg(\hbar)=\deg(c)=\deg(k)=(0,1)$, and by the PBW theorem (see \cite{DO}) is a free
bigraded module over $\Bbb C[c,\hbar,k]$. We also have the algebra $\bDAHA_N^{l,{\rm cyc}}(c,\hbar,k)$ with numerical parameters,
i.e., the specialization of $\bDAHA_N^{l,{\rm cyc}}$, and it carries a grading and a compatible filtration $F^\bullet$.

Let ${\bold p}$ be the symmetrizer of
the subgroup $(\Bbb Z/l\Bbb Z)^N$, and
$\bDAHA_N^{l,{\rm psc}}(c,\hbar,k)={\bold p}\bDAHA_N^{l,{\rm cyc}}(c,\hbar,k){\bold p}$ be
the corresponding partially spherical subalgebra.

\begin{example}\label{ratdaha} Let $l=1$. Then $\bDAHA_N^{l,{\rm psc}}(c,\hbar,k)$ does not depend on $c$ (up to a natural isomorphism), and
is the rational Cherednik algebra $\DAHA_N^{\rm rat}(\hbar,k)$, generated by $X_i,D_i,$ and $s\in S_N$ such that
$X_i,D_i$ are permuted by $S_N$ and satisfy the relations
$$
[X_i,X_j]=[D_i,D_j]=0,
$$
$$
[D_i,X_j]=ks_{ij},\ [D_i,X_i]=\hbar-k\sum_{j\ne i}s_{ij}.
$$
Also in this case $\bold p=1$.
\end{example}

\begin{theorem}\label{main} Suppose $z_i-z_j$ are not integers and $k$ is Weil generic. Then
we have a natural isomorphism $\bDAHA_N^{l,{\rm psc}}(c,1,k)\cong \DAHA_{N,{\rm deg}}^l(z,1,k)$, where $z_i$ are expressed via $c_j$ by formula \eqref{ziform}.
This isomorphism preserves the order filtration for differential operators.
\end{theorem}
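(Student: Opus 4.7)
The plan is to leverage Theorem \ref{charac}(iii): it characterizes $\DAHA_{N,{\rm deg}}^l(z,1,k)$ as the subalgebra of $\Bbb CS_N\ltimes\mathcal{D}_N$ that simultaneously preserves the three spaces $\bold P$, $|\Delta|^{2k}\operatorname{sign}(\Delta)\bold P$, and $(\prod_j X_j)^{z_i}\bold P_+$ for all $i$. The target is to identify $\bDAHA_N^{l,{\rm psc}}(c,1,k)$, acting by $\bold p$-sandwiching on the $(\Bbb Z/l\Bbb Z)^N$-invariants of its polynomial representation, with this common stabilizer.

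First I would set up the embedding. The cyclotomic rational Cherednik algebra $\bDAHA_N^{l,{\rm cyc}}(c,1,k)$ acts on $\Bbb C[x_1^{\pm 1},\ldots,x_N^{\pm 1}]$; $\bold p$-sandwiching produces an action of $\bDAHA_N^{l,{\rm psc}}(c,1,k)$ on the invariants $\Bbb C[X_1^{\pm 1},\ldots,X_N^{\pm 1}]$, with $X_i=x_i^l$. Since the Dunkl--Opdam operators have poles only along $x_i=0$ and $x_i=\zeta^m x_j$, their $\bold p$-averages are differential operators in the $X_j$ with poles only along $X_i=0$ and $X_i=X_j$; faithfulness of the polynomial representation for Weil generic $k$ makes this an injective filtered map $\bDAHA_N^{l,{\rm psc}}(c,1,k)\hookrightarrow \Bbb CS_N\ltimes\mathcal{D}_N$ that respects the order filtration by construction.

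Next I would verify the three preservation conditions for the image. Preservation of $\bold P$ is automatic. Preservation of $|\Delta|^{2k}\operatorname{sign}(\Delta)\bold P$ follows as in Lemma \ref{prese0}, because the $S_N$-part of the Dunkl--Opdam correction in $D_{i,\rm cyc}$ is exactly the correction appearing in the $S_N$ rational Cherednik algebra, which is what is needed for conjugation by $|\Delta|^{2k}\operatorname{sign}(\Delta)$ to produce regular operators. Preservation of each $(\prod_j X_j)^{z_i}\bold P_+$ is the rank-$N$ translation of Proposition \ref{compar}: after $\bold p$-averaging, the portion of the $i$-th Dunkl--Opdam operator in the $i$-th tensor factor becomes the rank-one spherical cyclotomic Dunkl operator, which preserves $X^{z_i}\Bbb C[X]$ with $z_i$ given by \eqref{ziform}; the $S_N$-cross terms are regular along $X_i=0$ and do not affect this condition. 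Theorem \ref{charac}(iii) then yields $\bDAHA_N^{l,{\rm psc}}(c,1,k)\subseteq\DAHA_{N,{\rm deg}}^l(z,1,k)$, compatibly with filtrations.

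The reverse inclusion is the main obstacle, and I would handle it by the same deformation argument used at the end of Section \ref{pfcharac}. At $k=0$, the cross-terms in both the trigonometric Dunkl operators and the Dunkl--Opdam operators drop out, so by the analog of Lemma \ref{kis0} combined with Proposition \ref{compar} both algebras coincide with $\Bbb CS_N\ltimes \Bbb B_l(c)^{\otimes N}=\Bbb CS_N\ltimes B_l(z)^{\otimes N}$. For Weil generic $k$, both sides carry the induced grading ($\deg(X_i)=1$, $\deg(\partial_i)=-1$, $\deg(s_i)=0$) and order filtration, and in each fixed bidegree $(r,s)$ the pieces $F^s\bDAHA_N^{l,{\rm psc}}(c,1,k)[r]$ and $F^s\DAHA_{N,{\rm deg}}^l(z,1,k)[r]$ are finite-dimensional subspaces of a common finite-dimensional space of differential operators that depends algebraically on $k$. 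The already-established inclusion plus the $k=0$ equality show that the locus of $k$ where equality fails in a given bidegree is a proper Zariski-closed subset of $\Bbb A^1$, hence finite; taking a countable union, equality holds for Weil generic $k$. The hardest point is controlling these finite-dimensional bigraded pieces uniformly in $k$, which amounts to verifying that $\bDAHA_N^{l,{\rm psc}}(c,1,k)$, viewed as a filtered finitely generated graded $\Bbb C[X_1,\ldots,X_N]$-module, has bounded-dimensional pieces independent of $k$---a consequence of the PBW theorem for $\bDAHA_N^{l,{\rm cyc}}$ and its compatibility with $\bold p$-sandwiching.
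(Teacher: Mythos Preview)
Your approach is correct and close in spirit to the paper's, but the paper handles the reverse inclusion differently and more directly. Instead of proving only the containment $\bDAHA_N^{l,{\rm psc}}(c,1,k)\subseteq A_N(z,k)$ (the stabilizer of the three function spaces) and then running a $k=0$-plus-deformation argument, the paper argues that $\bDAHA_N^{l,{\rm psc}}(c,1,k)$ \emph{equals} $A_N(z,k)$ outright: one takes formal completions at generic points of the hyperplanes $X_i=X_j$ and $X_i=0$ (in the style of \cite{E1} and the appendix to \cite{BE}), reducing to the rank-one identifications of Lemma~\ref{rank1lem} and Proposition~\ref{compar}. Theorem~\ref{charac}(iii) then gives the isomorphism immediately, with no separate $k=0$ computation needed. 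Your route also works, but be careful with the semicontinuity direction in your last paragraph: the bigraded dimensions of the \emph{generated} subalgebra $\DAHA_{N,{\rm deg}}^l(z,1,k)$ are lower semicontinuous in $k$ (images can only shrink at special points), so the failure locus you describe is a priori open, not closed. The upper bound you need comes from comparing with the stabilizer $A_N(z,k)$ itself, whose bigraded pieces are cut out by linear conditions and hence have \emph{upper}-semicontinuous dimension; this is precisely the mechanism in Section~\ref{pfcharac}, and your reference to that section suggests you intend this, but it should be stated explicitly rather than as a direct comparison of $\bDAHA_N^{l,{\rm psc}}$ with $\DAHA_{N,{\rm deg}}^l$.
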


\begin{proof} We have a natural faithful action of $\bDAHA_N^{l,{\rm psc}}(c,1,k)$ on $\bold P$.
Moreover, it is easy to see that $\bDAHA_N^{l,{\rm psc}}(c,1,k)$
satisfies the conclusion of Theorem \ref{charac}(iii) (this follows by taking formal completions
at generic points of reflection hyperplanes, as in \cite{E1}, and using Proposition \ref{compar}). Therefore, by Theorem \ref{charac},
$\bDAHA_N^{l,{\rm psc}}(c,1,k)=\DAHA_{N,{\rm deg}}^l(z,1,k)$.
\end{proof}

One of our main results is the following theorem.

\begin{theorem}\label{main1}
(i) We have a natural isomorphism of bigraded algebras $\theta: \bDAHA_N^{l,{\rm psc}}\cong \DAHA_{N,{\rm deg}}^l$, where $z_i$ are expressed via $c_j$ and $\hbar$ by the homogenization of
formula \eqref{ziform}:
\begin{equation}\label{ziform1}
z_i=\frac{1}{l}(\hbar(l-i)+\sum_j c_j\zeta^{ij}).
\end{equation}

(ii) $\DAHA_{N,{\rm deg}}^l$ is a free bigraded $\Bbb C[z_1,...,z_l,\hbar,k]$-module,
and $\DAHA_{N,{\rm deg}}^l(z,\hbar,k)$ are specializations of this algebra.

(iii) The isomorphism $\theta$ induces an isomorphism
$$
\theta_{z,\hbar,k}:\bDAHA_N^{l,{\rm psc}}(c,\hbar,k)\cong \DAHA_{N,{\rm deg}}^l(z,\hbar,k)
$$
for all $z_i,\hbar,k\in \Bbb C$, which is compatible with the grading and the filtration.
\end{theorem}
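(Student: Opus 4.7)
My strategy is to upgrade Theorem~\ref{main}, which identifies the two algebras at $\hbar=1$ for Weil-generic parameters, to an isomorphism of bigraded algebras over the universal parameter ring $\mathbb{C}[c,\hbar,k]$, and then to deduce freeness and specialization compatibility from it.

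First, I would construct the candidate morphism $\theta$ at the universal level. Both algebras admit polynomial representations on $\mathbb{C}[X_1^{\pm 1},\ldots,X_N^{\pm 1}]$ with coefficients in $\mathbb{C}[c,\hbar,k]$ (respectively $\mathbb{C}[z,\hbar,k]$), via Dunkl--Opdam operators and trigonometric Dunkl operators; these operators are polynomial in the parameters and the representation is faithful after inverting $\hbar$. Theorem~\ref{main}, applied at $\hbar=1$, identifies the two images as the same subalgebra of $\mathbb{C}S_N\ltimes\mathcal{D}_N$ for generic $(c,k)$; chasing the action on monomials yields explicit expressions of the generators of one algebra as polynomials in those of the other, with coefficients in $\mathbb{C}[c,\hbar,k]$ after the homogeneous substitution \eqref{ziform1}. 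These polynomial identities define a bigraded morphism $\theta$ universally, and well-definedness is verified by testing the defining relations of $\DAHA_{N,{\rm deg}}^l$ on the universal polynomial representation.

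Next, I would show $\theta$ is an isomorphism bigraded-piece-by-bigraded-piece. By the PBW theorem for $\bDAHA_N^{l,{\rm cyc}}$, the source $\bDAHA_N^{l,{\rm psc}}$ is a free bigraded $\mathbb{C}[c,\hbar,k]$-module with finitely generated bigraded pieces. Theorem~\ref{main} ensures that each component $\theta[r,s]$ is an isomorphism after specializing to $\hbar=1$ and Weil-generic $(c,k)$. Since the source of $\theta[r,s]$ is free of finite rank, the kernel is a torsion-free finitely generated $\mathbb{C}[c,\hbar,k]$-module, and its vanishing on a Zariski-dense subset of $\mathrm{Spec}\,\mathbb{C}[c,\hbar,k]$ forces it to vanish identically; surjectivity on the bigraded piece then follows by matching generic ranks. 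This proves (i). Part~(ii) is immediate: the homogeneous substitution \eqref{ziform1} is a graded isomorphism $\mathbb{C}[c,\hbar,k]\iso\mathbb{C}[z,\hbar,k]$, so $\theta$ transports the free bigraded module structure of the source to $\DAHA_{N,{\rm deg}}^l$. Part~(iii) then follows by base change from the universal statement, with compatibility of grading and filtration automatic from the construction.

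The principal obstacle is removing the Weil-genericity hypothesis on $k$ from Theorem~\ref{main}. Freeness of the source is the key tool: it guarantees that the kernels of bigraded components of $\theta$ are torsion-free, so that generic vanishing (supplied by Theorem~\ref{main}) is promoted to universal vanishing. A secondary technical point is constructing $\theta$ universally, where relations must be verified on the polynomial representation in the regime where $\hbar$ is inverted, and then extended by torsion-freeness in $\hbar$.
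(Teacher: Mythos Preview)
Your surjectivity argument has a genuine gap. You write that once $\theta[r,s]$ is known to be injective with free source, ``surjectivity on the bigraded piece then follows by matching generic ranks.'' But an injective map between finitely generated torsion-free modules of the same generic rank need not be surjective: over $R=\Bbb C[t]$, multiplication by $t$ gives $R\hookrightarrow R$, an isomorphism at every $t\ne 0$, with nonzero cokernel $R/tR$. In your situation the target $\DAHA_{N,{\rm deg}}^l[r,s]$ is only known a priori to be a torsion-free submodule of a free module; nothing you have said rules out the cokernel of $\theta[r,s]$ being a nonzero torsion module supported on the special locus where Theorem~\ref{main} fails. Freeness of the source controls the kernel, not the cokernel.

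The paper closes this gap with a different mechanism: it passes to the associated graded for the order filtration. The symbols of the generators $s_i,y_i,\pi,\pi_-$ of $\DAHA_{N,{\rm deg}}^l(z,\hbar,k)$ always generate $S_N\ltimes A^{\otimes N}$ (where $A$ is the coordinate ring of the $A_{l-1}$--singularity), so ${\rm gr}\,\DAHA_{N,{\rm deg}}^l(z,\hbar,k)\supseteq S_N\ltimes A^{\otimes N}$ for \emph{every} $(z,\hbar,k)$. This is a parameter-independent lower bound on each bigraded piece. On the other hand, the dimension of $\DAHA_{N,{\rm deg}}^l(z,\hbar,k)[r,s]$ is lower semicontinuous in the parameters (it is the rank of a family of linear maps), and Theorem~\ref{main} pins down the generic value to exactly $\dim(S_N\ltimes A^{\otimes N})[r,s]$. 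The lower bound and semicontinuity together force the Hilbert series to be constant, which yields freeness of $\DAHA_{N,{\rm deg}}^l$ and the isomorphism. Your argument could be repaired either by supplying this associated-graded lower bound, or alternatively by exhibiting \emph{both} inclusions $\bDAHA_N^{l,{\rm psc}}\subseteq\DAHA_{N,{\rm deg}}^l$ and $\DAHA_{N,{\rm deg}}^l\subseteq\bDAHA_N^{l,{\rm psc}}$ directly inside the ambient operator algebra by explicit formulas for generators---but you only claim one direction, and the phrase ``chasing the action on monomials'' does not substantiate even that.
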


\begin{proof} Let ${\rm gr}(\DAHA_{N,{\rm deg}}^l(z,1,k))=\DAHA_{N,{\rm deg}}^l(0,0,0)$ be the associated graded algebra of
$\DAHA_{N,{\rm deg}}^l(z,1,k)$ with respect to the filtration by order of differential operators.
Then ${\rm gr}(\DAHA_{N,{\rm deg}}^l(z,1,k))$ contains $S_N\ltimes A^{\otimes N}$, where
$A$ is the algebra of functions on the $A_{l-1}$-singularity, generated by $X, XP$, and $X^{-1}(XP)^l$
(where $P$ is the symbol of $\partial$). On the other hand, ${\rm gr}(\bDAHA_N^{l,{\rm psc}}(c,1,k))$
clearly coincides with $S_N\ltimes A^{\otimes N}$. Since by Theorem \ref{main},
$\bDAHA_N^{l,{\rm psc}}(c,1,k)=\DAHA_{N,{\rm deg}}^l(z,1,k)$ for Weil generic parameters,
we conclude that ${\rm gr}(\DAHA_{N,{\rm deg}}^l(z,\hbar,k))=S_N\ltimes A^{\otimes N}$ for all $z_i,\hbar,k$. This implies Theorem \ref{main1}.
\end{proof}

\begin{remark} 1. Since the degenerate DAHA has a $\Bbb G_a$-action given by $y_i\mapsto y_i+a$ and trivial on other generators, the algebra $\DAHA_{N,{\rm deg}}^l(z,1,k)$
does not change under the transformation $z_i\to z_i+a$ (i.e., it depends only on the differences $z_i-z_{i+1}$).
Under the isomorphism of Theorem \ref{main1}, this symmetry transforms into the symmetry $c_0\mapsto c_0+la$.

2. Another proof of Theorem \ref{main1}(ii) is given in the next subsection. 
\end{remark}

\begin{remark}\label{KNW} The isomorphism of spherical subalgebras of $\bDAHA_N^{l,{\rm psc}}$ and $\DAHA_{N,{\rm deg}}^l$
is due to Kodera and Nakajima \cite{KN}. More precisely, the parameters of the cyclotomic rational Cherednik algebra
in \cite{KN} are related to ours by the formula $\hbar^{KN}=-\hbar$, $c_m^{KN}=c_m(1-\zeta^m)$ for $m\ne 0$ (and $c_0^{KN}$ is not used in \cite{KN}).
Also in \cite{KN}, one has $\sum_{m=0}^{l-1} c_m=0$ and thus $z_l=0$, which is not really a restriction due to the symmetry
$z_i\mapsto z_i+a$, $c_0\mapsto c_0+la$.

Also, this isomorphism is closely related to the new presentation of the full 
cyclotomic DAHA $\bDAHA_N^{l,{\rm cyc}}(c,\hbar,k)$ given in~\cite[3.3]{G} 
and~\cite[Section~2]{W}. Namely, the element $e'x_1^{\ell-1}\sigma e'$ 
in~\cite[(4.1)]{W} (which in terms of the pictures is wrapping around the cylinder $\ell-1$ times, and a little further so as to cross the seam $\ell$ times) matches up with our element $\pi_-$, and similarly $e'y_n^{\ell-1}\tau e'$ matches with our element $\pi$ (with $\ell$ in \cite{W} equal to our $l$).
\end{remark}

\subsection{A presentation of $\DAHA_{N ,\rm deg}^l$ by generators and relations} 

Let us give a presentation of $\DAHA_{N ,\rm deg}^l$ by generators and relations. As generators we will use 
the elements of $S_N$, $y_1,...,y_N$, $X_1,...,X_N$, and the elements 
$$
D_i^{(l)}:=s_{1i}D_1^{(l)}s_{1i},\text{ where }D_1^{(l)}:=X_1^{-1}(y_1-z_1)...(y_1-z_l). 
$$
Obviously, these elements generate $\DAHA_{N ,\rm deg}^l$, so we only need to write down the relations. 

First of all, the elements $s\in S_N$, $y_i$ and $X_i$ satisfy the relations of Proposition \ref{trigDAHA}, except that $X_i$ are no longer invertible. 

We also claim that 
\begin{equation}\label{dunklcomm}
[D_i^{(l)},D_j^{(l)}]=0.
\end{equation}

Indeed, it suffices to check it for $i=1,j=2$. Then we have 
$$
D_1^{(l)}D_2^{(l)}=X_1^{-1}(y_1-z_1)...(y_1-z_l)s_{12}X_1^{-1}(y_1-z_1)...(y_1-z_l)s_{12}=
$$
$$
X_1^{-1}s_{12}(y_2+ks_{12}-z_1)....(y_2+ks_{12}-z_l)X_1^{-1}(y_1-z_1)...(y_1-z_l)s_{12}=
$$
$$
X_1^{-1}X_2^{-1}s_{12}(y_2-z_1)...(y_2-z_l)(y_1-z_1)...(y_1-z_l)s_{12}.
$$
This expression commutes with $s_{12}$, which implies the statement. 

We also see by direct computation that 
$$
[y_1,D_1^{(l)}]=-\hbar D_1^{(l)}+k\sum_{i>1}s_{1i}D_1^{(l)}
$$
and 
$$
[y_j,D_1^{(l)}]=-ks_{1j}D_1^{(l)},\ j>1.
$$

Finally, we write down the commutation relations between $D_i^{(l)}$ and $X_j$. 

\begin{lemma}\label{commuu}
We have 
$$
[D_1^{(l)},X_1]=\sum_{r=1}^{l}\prod_{i=1}^{r-1}(y_1-z_i+\hbar-k\sum_{j>1}s_{1j})(\hbar-k\sum_{j>1}s_{1j})\prod_{i=r+1}^l (y_1-z_i),
$$
and for $m>1$
$$
[D_1^{(l)},X_m]=k\sum_{r=1}^{l}\prod_{i=1}^{r-1}(y_1-z_i+\hbar-k\sum_{j>1}s_{1j})s_{1m}\prod_{i=r+1}^l (y_1-z_i). 
$$
\end{lemma}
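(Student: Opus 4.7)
The plan is to reduce both identities to two elementary commutation relations in $\DAHA_{N,\mathrm{deg}}$. Setting $A := \hbar - k\sum_{j>1} s_{1j}$, Proposition~\ref{trigDAHA} gives
$$y_1 X_1 = X_1(y_1 + A), \qquad y_1 X_m = X_m y_1 + k X_1 s_{1m} \quad (m>1),$$
and hence $(y_1 - z_i) X_1 = X_1(y_1 - z_i + A)$ and $(y_1 - z_i) X_m = X_m(y_1 - z_i) + k X_1 s_{1m}$ for each $i$. Note that $A$ commutes with neither $y_1$ nor $X_1$, so the expression $(y_1 - z_i + A)$ must be kept as a single atomic factor throughout.

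For the first formula, iterating $(y_1 - z_i) X_1 = X_1(y_1 - z_i + A)$ from $i = l$ down to $i = 1$ gives $\prod_{i=1}^l (y_1 - z_i)\cdot X_1 = X_1 \prod_{i=1}^l (y_1 - z_i + A)$. Left multiplication by $X_1^{-1}$ yields $D_1^{(l)} X_1 = \prod_{i=1}^l (y_1 - z_i + A)$, whereas $X_1 D_1^{(l)} = \prod_{i=1}^l (y_1 - z_i)$. The difference is then evaluated by the telescoping identity
$$\prod_{i=1}^l (u_i + A) - \prod_{i=1}^l u_i = \sum_{r=1}^l \prod_{i=1}^{r-1}(u_i + A)\, A \prod_{i=r+1}^l u_i,$$
valid in any associative algebra, applied with $u_i = y_1 - z_i$; this matches the stated formula for $[D_1^{(l)}, X_1]$ term by term.

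For the second formula, I would establish by induction on $l$ the auxiliary identity
$$\prod_{i=1}^l (y_1 - z_i)\cdot X_m = X_m \prod_{i=1}^l (y_1 - z_i) + k X_1 \sum_{r=1}^l \prod_{i=1}^{r-1}(y_1 - z_i + A)\, s_{1m} \prod_{i=r+1}^l (y_1 - z_i),$$
from which the claim follows by left multiplication by $X_1^{-1}$ together with $[X_1, X_m] = 0$. For the inductive step one writes
$$\prod_{i=1}^l(y_1-z_i)\,X_m = \prod_{i=1}^{l-1}(y_1-z_i)\cdot\bigl[X_m(y_1-z_l) + kX_1 s_{1m}\bigr];$$
the first summand is handled by the inductive hypothesis (right-multiplying each term by $(y_1-z_l)$, which supplies the contributions indexed by $r = 1,\dots,l-1$), while the second summand is rewritten using the already-established identity $\prod_{i=1}^{l-1}(y_1-z_i)\,X_1 = X_1\prod_{i=1}^{l-1}(y_1-z_i+A)$, furnishing precisely the missing $r = l$ term.

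The main obstacle is purely notational: one has to keep close track of the non-commutativity of $A$ with both $y_1$ and $X_1$, and of the fact that each application of the second relation $(y_1 - z_i)X_m = X_m(y_1-z_i) + kX_1 s_{1m}$ produces an $X_1$ that then has to be carried past any remaining $(y_1 - z_j)$'s via the first relation. Once this bookkeeping is organized as above, no further conceptual input is needed, and the proof is a direct inductive computation.
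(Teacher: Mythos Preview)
Your proof is correct. For the first identity it coincides with the paper's argument, which also computes $D_1^{(l)}X_1=P(y_1+A)$ and $X_1D_1^{(l)}=P(y_1)$ and then (implicitly) telescopes the difference.

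For the second identity the paper takes a different route. Instead of carrying each emergent $X_1$ past the remaining factors $(y_1-z_j)$ by induction, the paper observes in one stroke that $(y_1-z_i-ks_{1m})X_m = X_m(y_1-z_i)$, hence $P(y_1-ks_{1m})X_m = X_mP(y_1)$, and therefore
\[
[D_1^{(l)},X_m] = X_1^{-1}\bigl(P(y_1)-P(y_1-ks_{1m})\bigr)X_m.
\]
The telescope is then applied to $P(y_1)-P(y_1-ks_{1m})$; each resulting term has the shape $X_1^{-1}\prod_{i<r}(y_1-z_i)\,s_{1m}\,\prod_{i>r}(y_1-z_i-ks_{1m})\,X_m$, and the paper moves $X_m$ leftward across $s_{1m}$ (using $s_{1m}X_m=X_1 s_{1m}$) to cancel the $X_1^{-1}$ and simultaneously convert the trailing factors back to $(y_1-z_i)$. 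Your inductive bookkeeping achieves the same endpoint but front-loads the work: you absorb each new $X_1$ immediately into the prefix via $\prod_{i<r}(y_1-z_i)X_1 = X_1\prod_{i<r}(y_1-z_i+A)$, so the final expression comes out already in the desired form. The paper's trick is a bit slicker; your approach is more mechanical and avoids needing to spot the identity $X_mP(y_1)=P(y_1-ks_{1m})X_m$.
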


\begin{proof}
The first relation holds because 
$$
[D_1^{(l)},X_1]=P(y_1+\hbar-k\sum_{j>1}s_{1j})-P(y_1),
$$
where $P(y)=(y-z_1)...(y-z_l)$. 
To prove the second relation, note that 
$$
[D_1^{(l)},X_m]=X_1^{-1}(P(y_1)-P(y_1-ks_{1m}))X_m=
$$
$$
k\sum_{r=1}^l X_1^{-1}\prod_{i=1}^{r-1}(y_1-z_i)s_{1m}\prod_{i=r+1}^l(y_1-z_i-ks_{1m})X_m=
$$  
$$
k\sum_{r=1}^l X_1^{-1}\prod_{i=1}^{r-1}(y_1-z_i)X_1s_{1m}\prod_{i=r+1}^l(y_1-z_i)=
$$
$$
k\sum_{r=1}^l \prod_{i=1}^{r-1}(y_1-z_i+\hbar-k\sum_{j>1}s_{1j})s_{1m}\prod_{i=r+1}^l(y_1-z_i),
$$
as desired. 
\end{proof} 

Note that the commutation relations between $D_i^{(l)}$ and $X_m$ for $i>1$ can now be obtained by conjugating 
the relations of Lemma \ref{commuu} by $S_N$. 

\begin{proposition}\label{basi} Let $l\ge 1$. 
Let $M_X$ be a monomial in $X_i$, $M_D$ a monomial in $D_i^{(l)}$, $M_y$ a monomial in $y_i$ with degrees of all the $y_i$ at most $l-1$, and $s\in S_N$. Then the elements of the form $M_XM_ysM_D$ form a basis in $\DAHA_{N,{\rm deg}}^l$; in particular, 
$\DAHA_{N,{\rm deg}}^l$ is a free $\Bbb C[\hbar,k,z_1,...,z_l]$-module. 
\end{proposition}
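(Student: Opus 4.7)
The plan is to establish spanning of the set $\{M_XM_ysM_D\}$ by a straightening procedure, and then linear independence via the polynomial representation of Proposition~\ref{chere}.

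For spanning, note that $\DAHA_{N,\deg}^l$ is generated over $\Bbb C[\hbar,k,z]$ by $S_N$, the $X_i$, $y_i$, and $D_i^{(l)}$, since $\pi=X_1s_1\cdots s_{N-1}$ and $\pi_-=s_{N-1}\cdots s_1 D_1^{(l)}$. Using the relations of Proposition~\ref{trigDAHA}, Lemma~\ref{commuu}, the commutativity \eqref{dunklcomm}, the relations $[y_1,D_1^{(l)}]=-\hbar D_1^{(l)}+k\sum_{j>1}s_{1j}D_1^{(l)}$ and $[y_j,D_1^{(l)}]=-ks_{1j}D_1^{(l)}$ for $j>1$, and the conjugation rule $sD_i^{(l)}s^{-1}=D_{s(i)}^{(l)}$, one rewrites any word in the generators as a $\Bbb C[\hbar,k,z]$-linear combination of ordered monomials $M_XM_ysM_D$ by pushing each $X_i$ to the left, each $D_i^{(l)}$ to the right, and moving the Weyl element into its canonical slot. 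The bound $b_i\leq l-1$ then follows from the identity
\[
X_1D_1^{(l)}=(y_1-z_1)\cdots(y_1-z_l),
\]
immediate from the definition of $D_1^{(l)}$, which expresses $y_1^l$ as $X_1D_1^{(l)}$ plus a polynomial of $y_1$-degree at most $l-1$ with coefficients in $\Bbb C[\hbar,k,z]$. Iterating this substitution (and its $s_{1i}$-conjugates) trades each factor $y_i^{\ge l}$ for an extra $X_i$ on the left and a $D_i^{(l)}$ on the right, and an induction on the tuple of $y$-degrees produces the desired normal form.

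For linear independence, since $\DAHA_{N,\deg}$ is a free $\Bbb C[\hbar,k]$-module by PBW and its polynomial representation is faithful at each $\hbar\neq 0$, it is faithful over $\Bbb C[\hbar,k]$; its $\Bbb C[\hbar,k,z]$-linear extension therefore embeds $\DAHA_{N,\deg}^l\hookrightarrow\Bbb C[\hbar,k,z][X^{\pm 1},\partial]\rtimes S_N$. Filter the target by order of differential operators, with $\hbar,k,z$ in degree $0$. The leading symbols of the generators are
\[
\sigma(X_i)=X_i,\ \ \sigma(y_i)=\hbar X_iP_i,\ \ \sigma(D_i^{(l)})=\hbar^l X_i^{l-1}P_i^l,\ \ \sigma(s)=s,
\]
so the symbol of $M_XM_ysM_D$ is $\hbar^{\sum_i(b_i+lc_i)}$ times a pure monomial in the $X_i,P_i$ multiplied by $s$. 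On the subset $0\le b_i\le l-1$ the map $(a_i,b_i,c_i,s)\mapsto(\text{monomial},s)$ is injective: from the $P_i$-exponent $p_i=b_i+c_{s^{-1}(i)}l$ one recovers $b_i=p_i\bmod l$ and $c_{s^{-1}(i)}=\lfloor p_i/l\rfloor$, and from the $X_i$-exponent one then recovers $a_i$. Given a relation $\sum_\alpha f_\alpha M_\alpha=0$ with $f_\alpha\in\Bbb C[\hbar,k,z]$, restricting to the summands of maximal differential order $N_0$ yields $\hbar^{N_0}\sum_{\alpha\text{ top}}f_\alpha \tau_\alpha=0$, where the $\tau_\alpha$ are distinct basis elements of the free $\Bbb C$-module $\Bbb C[X^{\pm 1},P]\rtimes S_N$; since $\hbar$ is a nonzerodivisor in $\Bbb C[\hbar,k,z]$, all such $f_\alpha$ vanish, and induction on $N_0$ completes the argument.

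The principal obstacle is the bookkeeping in the spanning step: each application of Lemma~\ref{commuu} and of the substitution $y_1^l\mapsto X_1D_1^{(l)}-R$ produces a proliferation of subordinate correction terms in lower $y$-degree, so one must choose a well-founded order on monomials---lexicographic in $(-\deg_{D^{(l)}},\deg_y,-\deg_X)$ is a natural candidate---under which each reduction strictly decreases, thereby ensuring termination. Once the normal form is in hand, the symbol argument for independence is entirely routine.
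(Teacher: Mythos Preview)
Your proof is correct and follows essentially the same approach as the paper: spanning via the commutation relations together with the identity $X_1D_1^{(l)}=P(y_1)$, and linear independence via the polynomial representation. Your symbol computation makes explicit what the paper leaves as ``easy to see by looking at the polynomial representation''.

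Regarding the bookkeeping concern you flag: the paper handles this cleanly with a single scalar filtration, setting $\deg(S_N)=0$, $\deg(y_i)=2$, $\deg(X_i)=\deg(D_i^{(l)})=l$. One checks directly that every commutator among the generators lands in strictly lower degree (for instance, each term on the right side of Lemma~\ref{commuu} has degree $2(l-1)<2l$), so straightening to ordered form $M_XM_ysM_D$ terminates by induction on this degree. The substitution $y_i^l\mapsto X_iD_i^{(l)}+(\text{lower order in }y_i)$ keeps the total degree fixed in its leading term while strictly decreasing the $y$-degree, and the reordering corrections produced when pushing the new $X_i$ left and $D_i^{(l)}$ right are again strictly lower in total degree; so a secondary induction on $y$-degree inside each degree level finishes the argument. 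This is simpler than the lexicographic scheme you propose.
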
 

\begin{proof} It is easy to see by looking at the polynomial representation that these elements are linearly independent, so we only need to establish the spanning property. Since the generators are monomials of this form, it suffices to show that any (unordered) monomial 
in $s$, $X_i$, $y_i$, $D_i^{(l)}$ can be reduced to a linear combination of such standard monomials. 

Let us introduce a filtration by setting $\deg(S_N)=0$, $\deg (y_i)=2$, $\deg(X_i)=\deg(D_i^{(l)})=l$. 
Using the above commutation relations, we can reduce any monomial to ordered form by adding corrections of lower degree. 
Further, by using the relation 
$$
X_1D_1^{(l)}=P(y_1)
$$
and its conjugates, we can reduce powers of $y_i$ to $0,...,l-1$. This implies the statement.  
\end{proof} 

Thus, we obtain the following proposition.

\begin{proposition}\label{relacycdaha}
The degenerate cyclotomic DAHA $\DAHA_{N,\rm deg}^l$ is generated by 
$S_N$ and elements $y_i,X_i,D_i$, $i=1,...,N$, with the following defining relations:

\begin{gather*}
s_iy_i=y_{i+1}s_i+k, s_iy_j=y_js_i, j\ne i,i+1,\\
[y_i,y_j]=0,\\
sX_i=X_{s(i)}s,\ s\in S_N,\ [X_i,X_j]=0,\\
[y_i,X_1]=kX_1s_{1i},\ i>1,\\
[y_1,X_1]=\hbar X_1-k\sum_{i>1}X_1s_{1i},\\
sD_i=D_{s(i)}s,\ [D_i,D_j]=0,\\
[y_j,D_1]=-ks_{1j}D_1,\ j>1,\\
[y_1,D_1]=-\hbar D_1+k\sum_{i>1}s_{1i}D_1,\\
[D_1,X_1]=\sum_{r=1}^{l}\prod_{i=1}^{r-1}(y_1-z_i+\hbar-k\sum_{j>1}s_{1j})(\hbar-k\sum_{j>1}s_{1j})\prod_{i=r+1}^l (y_1-z_i),\\
[D_1,X_m]=k\sum_{r=1}^{l}\prod_{i=1}^{r-1}(y_1-z_i+\hbar-k\sum_{j>1}s_{1j})s_{1m}\prod_{i=r+1}^l (y_1-z_i), m>1,\\
X_1D_1=(y_1-z_1)...(y_1-z_l).
\end{gather*}
\end{proposition}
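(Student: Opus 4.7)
Let $A$ denote the abstract $\BC[\hbar,k,z_1,\dots,z_l]$-algebra defined by the generators $S_N$, $y_i$, $X_i$, $D_i$ and the listed relations. The first step is to produce a surjective homomorphism $\varphi\colon A\twoheadrightarrow \DAHA_{N,{\rm deg}}^l$ by sending each abstract generator to its namesake inside $\DAHA_{N,{\rm deg}}^l$ (with $D_i\mapsto D_i^{(l)}$). That $\varphi$ is well-defined is precisely the content of the computations carried out above: the commutation relations among $s\in S_N$, $y_i$, $X_i$ are those of Proposition \ref{trigDAHA}; the commutation $[D_i^{(l)},D_j^{(l)}]=0$ was verified in \eqref{dunklcomm}; the relations between $y_j$ and $D_1^{(l)}$ are the direct computations quoted just before Lemma \ref{commuu}; and the cross-relations between $D_1^{(l)}$ and $X_m$ are Lemma \ref{commuu}; the final relation $X_1D_1^{(l)}=(y_1-z_1)\cdots(y_1-z_l)$ is the definition of $D_1^{(l)}$. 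Surjectivity is automatic since the images generate $\DAHA_{N,{\rm deg}}^l$ by definition.

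To prove injectivity, the plan is to show that $A$ is spanned by the standard monomials $M_X M_y\, s\, M_D$ described in Proposition \ref{basi}, namely products of a monomial in the $X_i$, a monomial in the $y_i$ with every $y_i$-exponent at most $l-1$, a permutation, and a monomial in the $D_i$. Since by Proposition \ref{basi} these are already linearly independent in $\DAHA_{N,{\rm deg}}^l$, mapping them under $\varphi$ to a basis, this spanning property will force $\varphi$ to be an isomorphism. Thus the substance of the proof reduces to a straightening argument entirely internal to $A$.

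The straightening will mimic the proof of Proposition \ref{basi}. Introduce the filtration on $A$ with $\deg(S_N)=0$, $\deg(y_i)=2$, $\deg(X_i)=\deg(D_i)=l$. Every defining relation, when read as a rewriting rule, either expresses a commutator of two generators as a linear combination of terms of strictly lower filtration, or rewrites the product $X_1D_1$ as a polynomial in $y_1$ of degree $l$. Using the commutation relations one pushes all $X_i$'s to the left, all $D_i$'s to the right, and collects permutations in the middle; each step either preserves or strictly decreases the leading filtration degree modulo terms that are already closer to standard form. One then uses $X_1D_1=(y_1-z_1)\cdots(y_1-z_l)$, together with its $S_N$-conjugates $X_iD_i=(y_i-z_1)\cdots(y_i-z_l)+(\text{lower order})$, to reduce any $y_i$-exponent exceeding $l-1$ to a lower one at the cost of inserting additional $X$'s and $D$'s paired elsewhere in the monomial.

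The main obstacle is termination of this rewriting. Each commutator move potentially introduces many new terms, and the $y$-reduction step trades a high power of $y_i$ for new $X_i$ and $D_i$ factors which could conceivably restart the process. The way I would control this is to use a two-step well-ordering: first filter by the total $(X,D)$-degree, where the commutator moves strictly decrease the correction terms, and within each filtered piece use a secondary well-ordering on $y$-exponents in which the rule $X_iD_i\rightsquigarrow\prod_j(y_i-z_j)$ is applied only when a $y_i$-exponent is $\ge l$ \emph{and} the $X$'s and $D$'s being introduced are consumed against adjacent neighbors rather than creating new high $y$-powers. Tracking this carefully, as in the proof of Proposition \ref{basi}, shows the process terminates and yields the required spanning set, completing the proof.
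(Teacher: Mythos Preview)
Your proof is correct and follows essentially the same approach as the paper. The paper's proof is a single line invoking Proposition \ref{basi}, and your argument unpacks exactly what that invocation means: the straightening procedure in the proof of Proposition \ref{basi} uses only the listed relations, so it can be carried out verbatim in the abstract algebra $A$, and the linear independence of the resulting spanning set in the image (via the polynomial representation) then forces $\varphi$ to be an isomorphism. Your discussion of termination is more cautious than necessary---the paper handles it in two clean passes (first order all monomials using the commutator relations, which strictly lowers filtration on correction terms; then reduce high $y$-powers, which preserves filtration but lowers total $y$-degree)---but the substance is the same.
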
 

\begin{proof} This follows from Proposition \ref{basi} (with $D_i=D_i^{(l)}$). 
\end{proof} 

\begin{remark}\label{invoo} It is easy to check using this presentation that we have an involution $\phi$ on $\DAHA_{N,\rm deg}^l$ given by 
$$
\phi(\hbar)=-\hbar,\ \phi(k)=-k,\ \phi(X_i)=D_i,\ \phi(D_i)=X_i,\ \phi(s_{ij})=s_{ij},
$$
$$
\phi(y_i)=y_i+\hbar-k\sum_{j\ne i}s_{ij}.
$$ 
The existence of this involution is also clear from the isomorphism of $\DAHA_{N,\rm deg}^l$ with $\bDAHA_N^{l,{\rm psc}}$, since the latter algebra is well known to have such an involution (coming from the corresponding involution of the cyclotomic rational Cherednik algebra exchanging the coordinates with the Dunkl operators). 
\end{remark}

The proof of Proposition \ref{basi} in fact shows that for any $l\ge 0$ 
ordered products of $M_X,M_y,s,M_D$ in {\it any} of the 24 possible orders 
are a spanning set for $\DAHA_{N,\rm deg}^l$, and those of them with degrees of $y_i$ at most $l-1$ are a basis for $l\ge 1$.   
This implies that we also have another basis of this algebra, formed by monomials 
$M_XM_DsM_y$ without restriction on the degree of $y_i$, but with the restriction that for each $i$ either $X_i$ is missing in $M_X$ or $D_i$ is missing in $M_D$. Indeed, if this restriction is not satisfied, we may use the relation $X_1D_1=P(y_1)$ and its permutations to lower the number of $X_i$ and $D_i$, and it is easy to see by looking at the polynomial representation that monomials with this restriction are linearly independent. 
Thus we obtain the following proposition.

\begin{proposition}\label{basi1} The elements $M_XM_D$ which miss either $X_i$ or $D_i$ for each $i$ form a basis of $\DAHA_{N,\rm deg}^l$ as a left or right module over the degenerate affine Hecke algebra $H_{N,{\rm deg}}$ generated by $s\in S_N$ and $y_i$, $1\le i\le n$; in particular, 
$\DAHA_{N,\rm deg}^l$ is a free module over this subalgebra. 
\end{proposition}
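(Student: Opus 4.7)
The plan is to derive the proposition by combining the ordering-and-reduction argument sketched in the paragraph before the statement with the filtration already used in the proofs of Proposition \ref{basi} and Theorem \ref{main1}.

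For the spanning part, I would start from the variant of Proposition \ref{basi} indicated above: ordered monomials $M_X M_D\, s\, M_y$ (in that order, with unbounded $y$-degree) already span $\DAHA_{N,{\rm deg}}^l$ over $\mathbb{C}[\hbar,k,z_1,\ldots,z_l]$. If such a monomial contains $X_i$ and $D_i$ simultaneously, I would use $[X_i,X_j]=0=[D_i,D_j]$ together with the commutators of Lemma \ref{commuu} (and their $S_N$-conjugates) to move an $X_i$ next to a $D_i$; the corrections produced lie in the degenerate affine Hecke subalgebra $H_{N,{\rm deg}}$ and have strictly smaller $|M_X|+|M_D|$. Once the pair is adjacent, the relation $X_iD_i=\prod_{r=1}^l(y_i-z_r)$ trades it for an element of $H_{N,{\rm deg}}$, again lowering $|M_X|+|M_D|$. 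Induction on $|M_X|+|M_D|$ thus yields spanning by the claimed monomials as a right $H_{N,{\rm deg}}$-module; the mirror argument handles the left case.

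For linear independence, I would pass to the associated graded using the filtration of Proposition \ref{basi}, with $\deg X_i=\deg D_i=l$, $\deg y_i=2$, $\deg s=0$, and parameters in degree $0$. A direct inspection of the relations in Proposition \ref{relacycdaha} shows that each $[X_i,D_j]$ for $i\ne j$, every $[y_i,X_j]$ and $[y_i,D_j]$, and $X_iD_i-y_i^l$ lie in strictly lower filtration degree than their respective leading products, so $\gr\DAHA_{N,{\rm deg}}^l\cong S_N\ltimes A^{\otimes N}$ where $A=\mathbb{C}[X,y,D]/(XD-y^l)$ is the coordinate ring of the $A_{l-1}$-singularity, as in the proof of Theorem \ref{main1}. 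Eliminating $XD$ via $y^l$, a $\mathbb{C}$-basis of $A$ is $\{X^ay^b\}_{a,b\ge 0}\cup\{D^cy^b\}_{c\ge 1,\,b\ge 0}$, and hence a $\mathbb{C}$-basis of $S_N\ltimes A^{\otimes N}$ is given by the elements $M_X M_D\cdot M_y\cdot s$ with $M_X M_D$ restricted exactly as in the statement. This matches the image of $\{M_X M_D\cdot h:h\in H_{N,{\rm deg}}\}$ under the symbol map (using the PBW basis $M_y\cdot s$ of $H_{N,{\rm deg}}$); hence the proposed set is a filtered basis of $\DAHA_{N,{\rm deg}}^l$, and $\DAHA_{N,{\rm deg}}^l$ is free over $H_{N,{\rm deg}}$ on either side.

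The main obstacle is verifying in the spanning step that the corrections produced by the Lemma \ref{commuu} commutators (which involve permutations $s_{ij}$ and polynomials in $y$) always lie in $H_{N,{\rm deg}}$ and strictly reduce $|M_X|+|M_D|$, so that the induction actually closes; once this is in hand, the associated graded computation makes linear independence immediate.
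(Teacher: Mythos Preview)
Your spanning argument follows the paper's closely, but contains one inaccuracy and one unnecessary complication. First, the formula $X_iD_i=\prod_{r=1}^l(y_i-z_r)$ is only correct for $i=1$; for $i\ne 1$ one has $X_iD_i=s_{1i}P(y_1)s_{1i}=\prod_r(y_i+ks_{1i}-z_r)$, which is still an element of $H_{N,{\rm deg}}$, so the reduction survives. Second, since in the ordered form $M_XM_D$ the $X_i$ commute among themselves and the $D_i$ among themselves, you can simply slide $X_i$ to the right end of $M_X$ and $D_i$ to the left end of $M_D$; they are then adjacent with no need for Lemma~\ref{commuu}. After the substitution the only reordering required is to push the resulting $H_{N,{\rm deg}}$-element rightward past $M_D'$, and for this the relations $sD_j=D_{s(j)}s$ and $[y_j,D_m]\in\sum_p \mathbb{C}\,D_p\cdot H_{N,{\rm deg}}$ suffice; these preserve the total $D$-degree, so $|M_X|+|M_D|$ drops by $2$ at each step. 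This is exactly what the paper does in the paragraph preceding the proposition, and your ``main obstacle'' is therefore not an obstacle at all.

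For linear independence the paper takes a much shorter route: it simply looks at the faithful polynomial representation and observes that the constrained monomials $M_XM_D\,s\,M_y$ act independently on $\bold P$. Your route via the associated graded is a legitimate alternative, but note that establishing $\gr\DAHA_{N,{\rm deg}}^l\cong S_N\ltimes A^{\otimes N}$ for \emph{this} filtration (degrees $l,l,2,0$) is not literally what is done in the proof of Theorem~\ref{main1}, which uses the order-of-differential-operators filtration. To justify it with the present filtration you would compare Hilbert series using the basis of Proposition~\ref{basi} --- so you end up invoking a result already equivalent in strength to what you want. The polynomial-representation argument avoids this detour entirely.
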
 

Note that the basis of Proposition \ref{basi1} is labeled by $N$-tuples of integers, $(m_1,...,m_N)$. Namely, if $M_XM_D$ contains $X_i^p$ then we set $m_i=p$, and if it contains $D_i^p$ then we set $m_i=-p$.    

We note that Propositions \ref{basi} and \ref{basi1} also follow from Theorem \ref{main1}.
Yet another, geometric proof of Proposition \ref{basi1} will be given in 
Section~\ref{sec5}.

\section{Cyclotomic DAHA}
\label{sec3}

\subsection{DAHA and formal DAHA}\label{dfd}
Recall the definition of Cherednik's double affine Hecke algebra (DAHA), \cite{Ch1}. Let $q,\bold t\in \Bbb C^*$, and $t={\bold t}^2$.

\begin{definition}\label{DAHAdef} The DAHA $\DAHA_N(q,t)$ is generated by invertible elements
$X_i,Y_i$, $i=1,...,N$, and $T_i$, $i=1,...,N-1$, with relations\footnote{This algebra really depends on ${\bold t}$ rather than $t={\bold t}^2$,
but it is traditional to use the parameter $t$, implying that a square root of this parameter has been chosen, see \cite{Ch1}.
While somewhat clumsy, this convention turns out to be more natural from the viewpoint of Macdonald theory.}
\begin{gather*}
(T_i-{\bold t})(T_i+{\bold t}^{-1})=0,\quad (R1)\\
T_iT_{i+1}T_i=T_{i+1}T_iT_{i+1},\quad (R2)\\
T_iT_j=T_jT_i\ (|i-j|\ge 2),\quad (R3)\\
T_iX_iT_i=X_{i+1},\quad (R4)\\
T_iX_j=X_jT_i\ (j\ne i,i+1),\quad (R5)\\
T_iY_iT_i=Y_{i+1},\quad (R6)\\
T_iY_j=Y_jT_i\ (j\ne i,i+1),\quad (R7)\\
X_1^{-1}Y_2^{-1}X_1Y_2=T_1^2,\quad (R8)\\
Y_i\tilde X=q\tilde XY_i,\quad (R9)\\
X_i\tilde Y=q^{-1}\tilde YX_i,\quad (R10)\\
[X_i,X_j]=0,\quad (R11)\\
[Y_i,Y_j]=0.\quad (R12)
\end{gather*}
where $\tilde X:=\prod_i X_i$ and $\tilde Y=\prod_i Y_i$.
\end{definition}

We can define the element
$$
T_0:=T_1^{-1}...T_{N-1}^{-1}...T_1^{-1}X_1^{-1}X_N
$$
which together with $T_i$, $i=1,...,N-1$ generates the affine Hecke algebra of type $A_{N-1}$ in the Coxeter presentation (i.e., relations (R1),(R2) are satisfied for all
$i,j\in \Bbb Z/N\Bbb Z$).

Similarly one defines the algebra $\DAHA_N$ over $\Bbb C[q^{\pm 1},\bold t^{\pm 1}]$.

We will also consider a formal version of DAHA over $\Bbb C[[\varepsilon]]$, in which $q=e^{\varepsilon \hbar}$ and $t=e^{-\varepsilon k}$ for $k\in \Bbb C$.
Namely, set $T_i=s_ie^{-\varepsilon ks_i/2}$, $Y_i=e^{\varepsilon y_i}$, and
let $\DAHA_N^{\rm formal}(\hbar,k)$ be the $\varepsilon$-adically complete algebra generated over $\Bbb C[[\varepsilon]]$
by $s_i$, $X_i$ and $y_i$ with the relations of Definition \ref{DAHAdef}. We can also treat $\hbar, k$ as indeterminates, working over $\Bbb C[\hbar,k]$.

Note that using (R6), relation (R8) can be written as
$$
X_1^{-1}T_1^{-1}Y_1^{-1}T_1^{-1}X_1T_1Y_1T_1=T_1^2,
$$
or
$$
X_1T_1Y_1=T_1Y_1T_1X_1T_1.\quad (R8a)
$$
This shows that $\DAHA_N$ has the {\it Cherednik involution} $\varphi$ defined by
$\varphi(q)=q^{-1}$, $\varphi(\bold t)=\bold t^{-1}$,
$\varphi(X_i)=Y_i^{-1}$, $\varphi(Y_i)=X_i^{-1}$,
$\varphi(T_i)=T_i^{-1}$.

\subsection{The quasiclassical limit of the formal DAHA}

The following proposition is well known; for instance, in the rank 1 case it appears in \cite{Ch1}. 

\begin{proposition}\label{qcl1} The algebra ${\DAHA}_N^{\rm formal}(\hbar,k)/(\varepsilon)$ is isomorphic to the trigonometric DAHA $\DAHA_{N,{\rm deg}}(\hbar,k)$,
and ${\DAHA}_N^{\rm formal}(\hbar,k)$ is a flat deformation of $\DAHA_{N,{\rm deg}}(\hbar,k)$.
\end{proposition}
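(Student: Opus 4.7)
The plan is to construct a surjective algebra homomorphism $\phi:\DAHA_{N,{\rm deg}}(\hbar,k)\to \DAHA_N^{\rm formal}(\hbar,k)/(\varepsilon)$ sending each generator $s_i$, $X_i$, $y_i$ to the class of its namesake, and then to upgrade it to an isomorphism together with flatness of the deformation by a PBW comparison.

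To verify well-definedness of $\phi$, it suffices, by Remark \ref{trigDAHA1}, to check the relations of $S_N\ltimes \Bbb Z^N$ together with $[y_i,y_j]=0$, $s_iy_i=y_{i+1}s_i+k$, $[\sum_i y_i,X_j]=\hbar X_j$, and $[y_2,X_1]=kX_1s_1$. Modulo $\varepsilon$, relations (R1)--(R5) and (R11), (R12) immediately yield $s_i^2=1$, the braid relations, $s_iX_j=X_{s_i(j)}s_i$, $[X_i,X_j]=0$, and $[y_i,y_j]=0$. For the cross relations, one expands
$$T_i=s_i-\tfrac12\varepsilon ks_i^2+O(\varepsilon^2),\quad Y_i=1+\varepsilon y_i+O(\varepsilon^2),\quad q=1+\varepsilon\hbar+O(\varepsilon^2).$$
The first-order-in-$\varepsilon$ part of (R6) $T_iY_iT_i=Y_{i+1}$, using $s_i^2=1$ at zeroth order, yields $s_iy_is_i-ks_i=y_{i+1}$, which, multiplied by $s_i$ on the right, becomes $s_iy_i=y_{i+1}s_i+k$. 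Similarly, (R9) $Y_i\tilde X=q\tilde XY_i$ at first order gives $[y_i,\tilde X]=\hbar\tilde X$, and (R8) rewritten as $X_1^{-1}Y_2^{-1}X_1Y_2=T_1^2$ at first order, using $T_1^2=1-\varepsilon ks_1+O(\varepsilon^2)$, produces $[y_2,X_1]=kX_1s_1$. Thus $\phi$ is a well-defined algebra homomorphism, and is surjective since its image contains all generators of $\DAHA_N^{\rm formal}/(\varepsilon)$.

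For injectivity and flatness, I would invoke PBW. The trigonometric DAHA $\DAHA_{N,{\rm deg}}(\hbar,k)$ is a free $\Bbb C$-module with basis $\{X^as_wy^b:a\in\Bbb Z^N,\,w\in S_N,\,b\in\Bbb Z_{\geq 0}^N\}$, as follows from faithfulness of its polynomial representation (noted after Proposition \ref{chere}). On the other hand, Cherednik's PBW theorem for $\DAHA_N(q,t)$ gives a free $\Bbb C[q^{\pm1},\bold t^{\pm1}]$-basis $\{X^aT_wY^b\}$. After specializing $q=e^{\varepsilon\hbar}$, $\bold t=e^{-\varepsilon k/2}$, and using $T_w=s_w+O(\varepsilon)$, $Y^b=1+\varepsilon(b\cdot y)+O(\varepsilon^2)$, an induction on the total degree in $y$ shows that $\{X^as_wy^b\}$ is a topological $\Bbb C[[\varepsilon]]$-basis of $\DAHA_N^{\rm formal}(\hbar,k)$. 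Its reduction modulo $\varepsilon$ matches the PBW basis of $\DAHA_{N,{\rm deg}}(\hbar,k)$, so $\phi$ takes basis to basis, hence is an isomorphism, and simultaneously $\DAHA_N^{\rm formal}(\hbar,k)$ is topologically free, hence flat, over $\Bbb C[[\varepsilon]]$.

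The main obstacle is the change of PBW basis from $\{X^aT_wY^b\}$ to $\{X^as_wy^b\}$ in the formal setting: the substitution $y_i=\varepsilon^{-1}\log Y_i$ formally involves negative powers of $\varepsilon$, so the transition matrix is not literally defined over $\Bbb C[[\varepsilon]]$ and the DAHA PBW cannot be transported naively. One circumvents this by showing directly, via the defining relations rewritten in the generators $s_i,X_i,y_i$, that $\{X^as_wy^b\}$ spans $\DAHA_N^{\rm formal}(\hbar,k)$ topologically over $\Bbb C[[\varepsilon]]$; linear independence then follows from the fact that reduction modulo $\varepsilon$ lands in $\DAHA_{N,{\rm deg}}(\hbar,k)$ whose polynomial representation separates the monomials $X^as_wy^b$.
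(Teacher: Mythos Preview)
Your argument is correct and follows essentially the same route as the paper: verify that the DAHA relations (R1)--(R12) degenerate to the defining relations of $\DAHA_{N,{\rm deg}}(\hbar,k)$, and then appeal to PBW to obtain both the isomorphism modulo $\varepsilon$ and flatness. The paper's proof is terser---it checks the degeneration relation by relation and then simply invokes ``the PBW theorem for DAHA''---while you spell out the PBW comparison more explicitly and correctly flag the nontrivial change of topological basis from $\{X^a T_w Y^b\}$ to $\{X^a s_w y^b\}$ as the place where the real work lies.
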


\begin{proof} To prove the first statement, we need to show that the DAHA relations of Definition \ref{DAHAdef} degenerate to the relations of
the trigonometric DAHA.

Clearly, relation (R1) yields $s_i^2=1$, and relations (R2,R3) yield $s_is_{i+1}s_i=s_{i+1}s_is_{i+1}$ and $s_is_j=s_js_i$ for $1\le i,j\le N-1$ and $|i-j|\ge 2$.
Relations (R4,R5) give $s_iX_j=X_js_i$ if $j\ne i,i+1$, and $s_iX_i=X_{i+1}s_i$.
Relation (R6) gives a trivial relation in zeroth order, but in the first order it gives
$$
-ks_i+s_iy_is_i=y_{i+1},
$$
which yields
$$
s_iy_i=y_{i+1}s_i+k.
$$
Relation (R7) gives
$$
[s_i,y_j]=0,\ j\ne i,i+1.
$$
Relation (R8) yields
$$
y_2-X_1^{-1}y_2X_1=-ks_1,
$$
which is equivalent to
$$
[y_2,X_1]=kX_1s_1.
$$
Relations (R9,R10) yield
$$
[y_i,\prod_j X_j]=\hbar \prod_j X_j,\ [\sum_i y_i,X_j]=\hbar X_j.
$$
Finally, relations (R11, R12) yield
$$
[X_i,X_j]=0, [y_i,y_j]=0.
$$
It is easy to see that these relations are exactly the relations of $\DAHA_{N,{\rm deg}}(\hbar,k)$ given in Proposition \ref{trigDAHA} (see Remark \ref{trigDAHA1}).

The second statement of the Proposition follows from the first one and the PBW theorem for DAHA (\cite{Ch1}).
\end{proof}

\subsection{The polynomial representation of DAHA}
\begin{proposition}\label{polyre} (\cite{Ch1}) We have an action of ${\DAHA}_N(q,t)$ on $\bold P$ given by
\begin{gather*}
\rho(X_i)=X_i,\\
\rho(T_i)={\bold t}s_i+\frac{{\bold t}-{\bold t}^{-1}}{X_i/X_{i+1}-1}(s_i-1),\\
\rho(Y_i)={\bold t}^{N-1}\rho(T_i^{-1}...T_{N-1}^{-1})\omega \rho(T_1...T_{i-1}),
\end{gather*}
where $(\omega f)(X_1,...,X_N):=f(qX_N,...,X_{N-1})$.
\end{proposition}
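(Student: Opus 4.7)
The plan is to verify each of the DAHA relations (R1)--(R12) directly, exploiting the well-known Demazure--Lusztig description of $\rho(T_i)$ and the combinatorics of the affine Weyl group action encoded by $\omega$.

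First I would check that each $\rho(T_i)$ is well-defined as an operator on $\bold P$: although the formula contains $(X_i/X_{i+1}-1)^{-1}$, the factor $(s_i-1)$ vanishes on $s_i$-invariant polynomials, so the apparent pole is cancelled and $\rho(T_i)$ preserves $\bold P$. The quadratic relation (R1) is then a short computation using $s_i^2=1$. The braid relation (R2) is the classical identity for Demazure--Lusztig operators, reducing to a rational-function identity in three variables $X_{i},X_{i+1},X_{i+2}$. Relation (R3) is immediate since the operators involve disjoint pairs of coordinates. Among the $X$-relations, (R5) is clear because $X_j$ commutes with both $s_i$ and with $X_i/X_{i+1}$ when $j\ne i,i+1$, while (R4) follows from $s_iX_i=X_{i+1}s_i$ and a brief rearrangement.

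Next I would turn to the affine structure. One verifies that $\omega$ is an invertible operator on $\bold P$ (its inverse is $f(X_1,\ldots,X_N)\mapsto f(X_2,\ldots,X_N,q^{-1}X_1)$) and that $\omega\rho(T_i)\omega^{-1}=\rho(T_{i-1})$ for $2\le i\le N-1$, so the elements $\rho(T_0),\rho(T_1),\ldots,\rho(T_{N-1})$ (with $T_0$ defined via the formula above Proposition \ref{polyre}) generate an affine Hecke algebra action. Relations (R6) and (R7) for the $Y_i$'s then follow automatically from the defining formula $\rho(Y_i)={\bold t}^{N-1}\rho(T_i^{-1}\cdots T_{N-1}^{-1})\omega\rho(T_1\cdots T_{i-1})$ by rewriting $T_i\rho(Y_i)T_i$ and telescoping the product. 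Relations (R9) and (R10) come from the commutation $\omega\tilde X=q\tilde X\omega$ together with the fact that $\tilde X$ commutes with every $\rho(T_j)$ (which in turn follows from (R4), (R5) applied to $\tilde X=\prod_i X_i$); since $\tilde Y$ equals $\omega^N$ up to a scalar and a central Hecke factor, the analogous commutation with $X_j$ is immediate.

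The remaining relations (R8), (R11), and (R12) are the substantive ones. Relation (R11) is trivial since $X_i$ acts by multiplication. For (R8), I would use the equivalent form (R8a), $X_1T_1Y_1=T_1Y_1T_1X_1T_1$; unfolding $Y_1={\bold t}^{N-1}\rho(T_1^{-1}\cdots T_{N-1}^{-1})\omega$ and using $\omega X_1=qX_N\omega$ together with the braid moves already established reduces (R8a) to a finite rearrangement using (R4). The main obstacle is the commutativity $[Y_i,Y_j]=0$ in (R12). For this I would follow Cherednik's strategy: first establish that the operators $\rho(Y_i)$ are upper-triangular in the monomial basis of $\bold P$ with respect to a suitable partial order (the Bruhat-like order used in the theory of nonsymmetric Macdonald polynomials), with explicit diagonal entries that are generically distinct; since commuting upper-triangular operators with distinct eigenvalues form an abelian family generically, (R12) holds for generic $q,\bold t$, and therefore identically by polynomiality of the relation in the parameters.

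Alternatively, (R12) may be deduced purely algebraically from (R6)--(R9) once it is shown that $\rho(Y_1)$ and $\rho(Y_2)$ commute, since then the $\rho(T_i)$-conjugates give commutation of all $\rho(Y_i)$; and $[\rho(Y_1),\rho(Y_2)]=0$ is a direct, if tedious, computation using the explicit form of $\omega$ and $\rho(T_i)$. Either route requires genuine work, which is why Proposition \ref{polyre} is attributed to Cherednik \cite{Ch1}; the remaining relations, by contrast, are routine verifications in the Demazure--Lusztig framework.
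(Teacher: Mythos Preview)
The paper gives no proof of this proposition; it is simply stated with attribution to Cherednik \cite{Ch1}. Your outline is the standard verification and is essentially correct: (R1)--(R5), (R11) are routine Demazure--Lusztig computations; (R6), (R7) telescope from the definition of $\rho(Y_i)$; (R9), (R10) follow from $\omega X_1=qX_N\omega$ and the $S_N$-invariance of $\tilde X$; (R8) unwinds via (R8a); and for (R12) both the triangularity/generic-parameter argument and the direct reduction to $[\rho(Y_1),\rho(Y_2)]=0$ are legitimate routes, the former being Cherednik's original one.

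One small point to tighten: your remark that ``$\tilde Y$ equals $\omega^N$ up to a scalar and a central Hecke factor'' is a bit loose, and (R10) does not follow quite so immediately from it; the cleanest way is to check $[X_1,\tilde Y]=q^{-1}\tilde Y X_1 - X_1\tilde Y$ directly from the product formula for the $\rho(Y_i)$ and the commutation $\omega X_1=qX_N\omega$, then propagate to the other $X_j$ using (R4). Otherwise your plan is sound.
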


The same formulas define a representation of $\DAHA_N^{\rm formal}(\hbar,k)$.
The representation $\rho$ of DAHA on
$\bold P$ is called the {\it polynomial representation} of DAHA.

\begin{proposition} \label{qcl2}
The quasiclassical limit (i.e., reduction modulo $\varepsilon$) of the polynomial representation of $\DAHA_N^{\rm formal}(\hbar,k)$ coincides
with the polynomial representation of the trigonometric DAHA given by Proposition \ref{chere}.
\end{proposition}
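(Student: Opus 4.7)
The plan is to check, generator by generator, that under the identifications $q = e^{\varepsilon\hbar}$, $\bold t = e^{-\varepsilon k/2}$, $T_i = s_i e^{-\varepsilon k s_i/2}$, and $Y_i = e^{\varepsilon y_i}$, the formulas of Proposition \ref{polyre} reduce modulo $\varepsilon$ to the formulas of Proposition \ref{chere}. Two of the three generator types are immediate: $\rho(X_i) = X_i$ in both descriptions, and the formula $\rho(T_i) = \bold t s_i + \frac{\bold t - \bold t^{-1}}{X_i/X_{i+1}-1}(s_i-1)$ reduces at $\varepsilon = 0$ (where $\bold t = 1$) to $s_i$, matching both the image of $T_i = s_i e^{-\varepsilon k s_i/2}$ in the limit and the trigonometric formula $\rho(s_i) = s_i$.

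The essential step is the action of $y_i$. Since $Y_i = 1 + \varepsilon y_i + O(\varepsilon^2)$, one must verify that the $\varepsilon$-linear term of
$$
\rho(Y_i) \;=\; \bold t^{N-1}\,\rho(T_i^{-1}\cdots T_{N-1}^{-1})\,\omega\,\rho(T_1\cdots T_{i-1})
$$
equals the trigonometric Dunkl operator $D_i^{\rm trig} = X_i D_i - k\sum_{j<i}s_{ij}$. I would factor $\omega = q^{X_N\partial_N}\omega_0$, where $\omega_0$ denotes the cyclic shift $f(X_1,\ldots,X_N)\mapsto f(X_N,X_1,\ldots,X_{N-1})$, and expand each factor to first order: $\bold t^{N-1} = 1 - \varepsilon k(N-1)/2 + O(\varepsilon^2)$, $\rho(T_j^{\pm 1}) = s_j + \varepsilon A_j^{\pm} + O(\varepsilon^2)$ with $A_j^{\pm}$ extracted from $\bold t-\bold t^{-1} = -\varepsilon k + O(\varepsilon^2)$ and $T_j^{-1}-T_j = \bold t^{-1} - \bold t$, and $q^{X_N\partial_N} = 1 + \varepsilon\hbar X_N\partial_N + O(\varepsilon^2)$. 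The zeroth-order piece is $s_i\cdots s_{N-1}\omega_0 s_1\cdots s_{i-1}$, which one checks directly acts as the identity on $\bold P$, matching $Y_i|_{\varepsilon=0} = 1$.

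The first-order piece then splits into three kinds of contributions, each of which must be shuttled into a uniform position by conjugation through the surrounding chain of $s_j$'s and $\omega_0$. The scalar $-k(N-1)/2$ from $\bold t^{N-1}$ combines with the constant pieces of the $A_j^{\pm}$ to yield a permutation sum that collapses to $-k\sum_{j<i}s_{ij}$; the $\hbar X_N\partial_N$ term, after being passed across the left-hand product $s_i\cdots s_{N-1}$ and the right-hand product $\omega_0 s_1\cdots s_{i-1}$, converts into $\hbar X_i\partial_i$; and the rational singularities $-\frac{\varepsilon k}{X_j/X_{j+1}-1}(s_j-1)$ coming from each $\rho(T_j^{\pm 1})$, after the analogous conjugations (using $s_j(X_j - X_{j+1})^{-1}s_j = -(X_j - X_{j+1})^{-1}$ and its cyclic-shift variants), regroup into $-k\sum_{j\ne i}\frac{1}{X_i-X_j}(X_i - s_{ij}X_j)$. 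Summing everything produces $X_i D_i - k\sum_{j<i}s_{ij} = D_i^{\rm trig}$, as required.

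The hard part is purely combinatorial bookkeeping: tracking how each first-order correction moves through long products of simple transpositions and $\omega_0$, and checking that all off-diagonal pieces assemble into the Dunkl form. A cleaner conceptual check on the final answer is available once the computation identifies the $X_i\partial_i$ and $s_{ij}$ coefficients: by Proposition \ref{qcl1}, the reduction is a $\DAHA_{N,{\rm deg}}(\hbar,k)$-module on $\bold P$ in which $X_i$ and $s_i$ act in the standard way, so the commutation relations $[y_i,X_i]=\hbar X_i - k\sum_{r<i}X_r s_{ir} - k\sum_{r>i}X_i s_{ir}$ of Proposition \ref{trigDAHA} determine the action of $y_i$ up to addition of a central scalar, and evaluating the computed operator on the constant $1\in\bold P$ fixes that scalar to zero.
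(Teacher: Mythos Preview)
Your proposal is correct and follows the same approach as the paper, which simply records ``The proof is by a direct calculation.'' You have spelled out that calculation in detail---checking $X_i$ and $T_i$ immediately and then expanding $\rho(Y_i)=\bold t^{N-1}\rho(T_i^{-1}\cdots T_{N-1}^{-1})\omega\rho(T_1\cdots T_{i-1})$ to first order in $\varepsilon$ via the factorization $\omega=q^{X_N\partial_N}\omega_0$---which is exactly what the paper leaves implicit.
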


\begin{proof}
The proof is by a direct calculation.
\end{proof}

\subsection{The cyclotomic DAHA}\label{cyd}

Let $\pi\in {\DAHA}_N(q,t)$ be the element given by the formula
$$
\pi=X_1T_1...T_{N-1}.
$$
Let $Z_1,..,Z_l\in \Bbb C^*$, and $Z=(Z_1,...,Z_l)$.

\begin{definition} The subalgebra ${\DAHA}_N^l(Z,q,t)$ of ${\DAHA}_N(q,t)$ is
generated by $T_i$, $i=1,...,N-1$,
$Y_i$, $i=1,...,N$, $\pi$, and the element
$$
\pi_-:=\pi^{-1}\prod_{i=1}^l (Y_1-Z_i).
$$
\end{definition}

Let $z_1,...,z_l\in \Bbb C$, and $Z_i=q^{z_i}$ (for some choice of branches).

\begin{proposition}\label{preser}  The algebra ${\DAHA}_N^l(Z,q,t)$ preserves the space
$(\prod_j X_j)^{z_i}\bold P_+$ for all $i$.
\end{proposition}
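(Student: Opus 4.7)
The plan is to follow the strategy of Proposition \ref{prese}, replacing the rational conjugation $\phi_u$ by the $q$-analog $\Phi_u:=\tilde{X}^{-u}(\,\cdot\,)\tilde{X}^u$, where $\tilde{X}:=\prod_jX_j$; preservation of $\tilde{X}^{z_i}\bold P_+$ is equivalent to $\Phi_{z_i}$ applied to each generator preserving $\bold P_+$. First I would check the easy generators. Each $T_j$ commutes with $X_jX_{j+1}$ (direct verification from $T_jX_jT_j=X_{j+1}$ and the Hecke relation) and with the other $X_r$, hence with $\tilde{X}$, so $\Phi_u(T_j)=T_j$; similarly $\pi=X_1T_1\cdots T_{N-1}$ commutes with $\tilde{X}$ and $\Phi_u(\pi)=\pi$. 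Relation (R9), $Y_j\tilde{X}=q\tilde{X}Y_j$, gives $\Phi_u(Y_j)=q^uY_j$, in particular $\Phi_{z_i}(Y_1)=Z_iY_1$. Since $T_j$, $Y_j$, $\pi$ manifestly preserve $\bold P_+$ (for $Y_j$ and $\pi$ this follows from Proposition \ref{polyre} plus the fact that $T_j$ and $\omega$ preserve $\bold P_+$), only $\pi_-$ remains.

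Pulling out the $j=i$ factor, I compute
$$\Phi_{z_i}(\pi_-)=\pi^{-1}\prod_{j=1}^{l}(Z_iY_1-Z_j)=Z_i^l\,\pi^{-1}(Y_1-1)\prod_{j\ne i}(Y_1-Z_j/Z_i).$$
The polynomial $\prod_{j\ne i}(Y_1-Z_j/Z_i)$ in $Y_1$ preserves $\bold P_+$, so the whole proposition reduces to the single claim that $\pi^{-1}(Y_1-1)$ preserves $\bold P_+$ — the exact $q$-analog of the identity $\pi^{-1}y_1=s_{N-1}\cdots s_1D_1\in\mathrm{End}(\bold P_+)$ used in Proposition \ref{prese}.

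For this I would derive closed formulas. Inverting $T_jX_jT_j=X_{j+1}$ gives $T_j^{-1}X_j^{-1}=X_{j+1}^{-1}T_j$. Starting from $\pi^{-1}=T_{N-1}^{-1}\cdots T_1^{-1}X_1^{-1}$, successive application of this identity and the cancellations $T_jT_j^{-1}=1$ telescopes to
$$\pi^{-1}=X_N^{-1}T_{N-1}T_{N-2}\cdots T_1.$$
The same telescoping applied to $\pi^{-1}Y_1=\bold t^{N-1}T_{N-1}^{-1}\cdots T_1^{-1}X_1^{-1}T_1^{-1}\cdots T_{N-1}^{-1}\omega$ (using the formula for $Y_1$ from Proposition \ref{polyre}) collapses to
$$\pi^{-1}Y_1=\bold t^{N-1}X_N^{-1}\omega,\qquad\text{hence}\qquad\pi^{-1}(Y_1-1)=X_N^{-1}\bigl(\bold t^{N-1}\omega-T_{N-1}\cdots T_1\bigr).$$
The claim becomes: for every $f\in\bold P_+$, the polynomial $(\bold t^{N-1}\omega-T_{N-1}\cdots T_1)f$ vanishes at $X_N=0$. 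Since $\omega f|_{X_N=0}=f(0,X_1,\ldots,X_{N-1})$, it suffices to prove by induction on $N$ that
$$T_{N-1}\cdots T_1f\bigm|_{X_N=0}=\bold t^{N-1}f(0,X_1,\ldots,X_{N-1}).$$
The inductive step uses the elementary observation that for any polynomial $g$,
$$T_jg\bigm|_{X_{j+1}=0}=\bold t\,(s_jg)\bigm|_{X_{j+1}=0},$$
because the singular term $\bigl((\bold t-\bold t^{-1})X_{j+1}/(X_j-X_{j+1})\bigr)(s_jg-g)$ carries an explicit factor $X_{j+1}$ that kills it at $X_{j+1}=0$. Applying this with $j=N-1$ reduces the problem to evaluating $T_{N-2}\cdots T_1 f$ at $X_{N-1}=0$ with the old $X_{N-1}$ moved into the $N$-th slot; since $T_1,\ldots,T_{N-2}$ do not touch $X_N$, the induction hypothesis applied to $f(\,\cdot\,,\ldots,\,\cdot\,,X_{N-1})$ closes the recursion.

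The main obstacle is the bookkeeping in this last step, namely arranging the variable relabelings in the induction correctly; beyond that the argument is a direct transcription of the rational proof of Proposition \ref{prese}, under the dictionary $\phi_u\leadsto\Phi_u$ and $\pi^{-1}y_1\leadsto\pi^{-1}(Y_1-1)$.
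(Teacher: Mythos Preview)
Your proof is correct, but it takes a genuinely different computational route from the paper's.

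Both proofs agree on the reduction: only $\pi_-$ needs checking, and one can peel off all but one factor of $\prod_j(Y_1-Z_j)$. From there they diverge. The paper shows directly that $X_1^{-1}(Y_1-q^u)$ preserves $(\prod_j X_j)^u\bold P_+$: it writes
\[
\rho(Y_1)=\Bigl(1+\tfrac{(1-t)X_1}{X_2-X_1}(1-s_{12})\Bigr)\cdots\Bigl(1+\tfrac{(1-t)X_1}{X_N-X_1}(1-s_{1N})\Bigr)\tau_1,
\]
opens the brackets, and observes that the single term $\tau_1$ gives $X_1^{-1}(\tau_1-q^u)$ (which manifestly preserves the space), while every other term carries an explicit $X_1$ on the left that cancels $X_1^{-1}$. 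No induction is needed.

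Your argument instead conjugates by $\tilde X^{z_i}$, reduces to $\pi^{-1}(Y_1-1)$ preserving $\bold P_+$, and produces the closed-form identity $\pi^{-1}(Y_1-1)=X_N^{-1}(\bold t^{N-1}\omega-T_{N-1}\cdots T_1)$; the claim then becomes $T_{N-1}\cdots T_1 f\big|_{X_N=0}=\bold t^{N-1}f(0,X_1,\ldots,X_{N-1})$, which you prove by induction on $N$ via $T_jg\big|_{X_{j+1}=0}=\bold t\,(s_jg)\big|_{X_{j+1}=0}$. The paper's approach is slightly shorter and avoids the variable bookkeeping of the inductive step; yours has the compensating virtue of producing an explicit formula for $\pi^{-1}(Y_1-1)$ and mirroring the degenerate proof of Proposition~\ref{prese} more literally under the dictionary $\phi_u\leadsto\Phi_u$, $\pi^{-1}y_1\leadsto\pi^{-1}(Y_1-1)$.
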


\begin{proof} We only need to check that $\pi_-$ preserves this space.
For this, it is enough to prove that for any $u\in \Bbb C$,
the element $X_1^{-1}(Y_1-q^u)$ preserves the space $(\prod_j X_j)^u\bold P_+$.
To this end, note that
$$
\rho(T_i^{-1})={\bold t}^{-1}s_i+\frac{({\bold t}^{-1}-{\bold t})X_i}{X_{i+1}-X_i}(s_i-1).
$$
Now consider
$$
\rho(Y_1)={\bold t}^{N-1}\rho(T_1^{-1})...\rho(T_{N-1}^{-1})\omega=
$$
\begin{equation}\label{prodd}
(1+\frac{(1-t)X_1}{X_2-X_1}(1-s_{12}))...(1+\frac{(1-t)X_1}{X_N-X_1}(1-s_{1N}))\tau_1,
\end{equation}
where $\tau_j$ replaces $X_j$ with $qX_j$ and keeps $X_i$ fixed for $i\ne j$. By opening the brackets, this product can be written as a sum of $2^N$ terms
(as in each factor, we can take the first or the second summand). If we take the first summand from all factors, we get
$\tau_1$, and $X_1^{-1}(\tau_1-q^u)$ clearly preserves $(\prod_j X_j)^u\bold P_+$. So it suffices
to show that for each of the remaining $2^N-1$ terms $T$, the operator $X_1^{-1}T$ preserves
$(\prod_j X_j)^u\bold P_+$. But all of these terms have a factor $X_1$ on the left (as so does the second summand in each factor in \eqref{prodd}),
which implies the desired statement.
\end{proof}

Let ${\DAHA}_N^{l,{\rm formal}}(z,\hbar, k)$ be the formal version of $\DAHA_N^l(Z,q,t)$,
namely the subalgebra of $\DAHA_N^{\rm formal}(\hbar,k)$ generated by $T_i$, $i=1,...,N-1$,
$y_i$, $i=1,...,N$, $\pi$, and the element
$$
\varepsilon^{-l}\pi^{-1}\prod_{i=1}^l (Y_1-e^{\varepsilon z_i})=\pi^{-1}\prod_{i=1}^l\frac{e^{\varepsilon y_1}-e^{\varepsilon z_i}}{\varepsilon}.
$$

\begin{corollary}\label{coro1} Let $z_i-z_j\notin \Bbb Z$, and $k$ be Weil generic. Then the algebra ${\DAHA}_N^{l,{\rm formal}}(z,1,k)$ is the algebra of all elements
of ${\DAHA}_N^{\rm formal}(1,k)$ which preserve $(\prod X_j)^{z_i}\bold P_+$ for all $i$.
\end{corollary}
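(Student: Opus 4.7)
The inclusion $\DAHA_N^{l,{\rm formal}}(z,1,k)\subseteq A_N^{\rm formal}$, where $A_N^{\rm formal}$ denotes the subalgebra of $\DAHA_N^{\rm formal}(1,k)$ consisting of elements preserving $(\prod_j X_j)^{z_i}\bold P_+$ for every $i$, is immediate from Proposition~\ref{preser}. The plan for the reverse inclusion is to deform the trigonometric identification of Theorem~\ref{charac}(ii) using the flatness statement of Proposition~\ref{qcl1} together with the compatibility of polynomial representations from Proposition~\ref{qcl2}.

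The first step is to compute the images of both sides under reduction modulo $\varepsilon$, viewed as subsets of $\DAHA_{N,\rm deg}(1,k)$. For $\DAHA_N^{l,{\rm formal}}(z,1,k)$, I would inspect its defining generators: $T_i\mapsto s_i$, $y_i\mapsto y_i$, $\pi\mapsto \pi$, and $\varepsilon^{-l}\pi^{-1}\prod_i(Y_1-e^{\varepsilon z_i})\mapsto \pi^{-1}\prod_i(y_1-z_i)=\pi_-$, so the image is exactly $\DAHA_{N,\rm deg}^l(z,1,k)$. For $A_N^{\rm formal}$, Proposition~\ref{qcl2} shows that the polynomial representation of $\DAHA_N^{\rm formal}(1,k)$ reduces to that of $\DAHA_{N,\rm deg}(1,k)$; since the target subspaces $(\prod_j X_j)^{z_i}\bold P_+$ are $\varepsilon$-independent, the reduction of any $L\in A_N^{\rm formal}$ preserves them, and under the standing assumption that $z_i-z_j\notin\mathbb{Z}$ and $k$ is Weil generic, Theorem~\ref{charac}(ii) places it in $\DAHA_{N,\rm deg}^l(z,1,k)$. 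Hence the two reductions coincide.

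The second step lifts this equality to $\mathbb{C}[[\varepsilon]]$ by an $\varepsilon$-adic iteration. The key observation is that the spaces $(\prod_j X_j)^{z_i}\bold P_+$ are $\mathbb{C}[[\varepsilon]]$-pure in the extended polynomial representation (each is a direct $\mathbb{C}[[\varepsilon]]$-summand), from which one concludes that $A_N^{\rm formal}$ is $\varepsilon$-adically saturated in $\DAHA_N^{\rm formal}(1,k)$, i.e., $\varepsilon\DAHA_N^{\rm formal}(1,k)\cap A_N^{\rm formal}=\varepsilon A_N^{\rm formal}$. Given $x\in A_N^{\rm formal}$, step one produces $y_0\in\DAHA_N^{l,{\rm formal}}(z,1,k)$ with $x\equiv y_0\pmod{\varepsilon}$; by saturation $x-y_0=\varepsilon x_1$ with $x_1\in A_N^{\rm formal}$, and iterating, $x=\sum_{n\ge 0}\varepsilon^n y_n$ with $y_n\in\DAHA_N^{l,{\rm formal}}(z,1,k)$. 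Applying $\varepsilon$-adic completeness of $\DAHA_N^{l,{\rm formal}}(z,1,k)$ (as a topologically closed subalgebra of the $\varepsilon$-adically complete algebra $\DAHA_N^{\rm formal}(1,k)$) yields $x\in\DAHA_N^{l,{\rm formal}}(z,1,k)$.

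The main obstacle I expect is verifying the conventional topological hypotheses invoked in the lifting step: namely, that one takes $\DAHA_N^{l,{\rm formal}}(z,1,k)$ to mean the $\varepsilon$-adically closed subalgebra generated by the listed elements, and that $A_N^{\rm formal}$ is $\varepsilon$-adically saturated (which in turn rests on the purity of the preserved subspaces). Both points are essentially formal, but they are crucial for the iterative argument to converge inside $\DAHA_N^{l,{\rm formal}}(z,1,k)$ rather than merely inside its ambient completion.
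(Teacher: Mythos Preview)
Your proof is correct and follows essentially the same approach as the paper's: both use Proposition~\ref{preser} for the easy inclusion, invoke Theorem~\ref{charac}(ii) to identify the reduction modulo $\varepsilon$, and then run a deformation argument (your explicit $\varepsilon$-adic lifting is exactly what the paper's terse ``at least as big''/``at most as big'' sandwich is saying). Your closing remarks about the topological conventions (completeness of the formal cyclotomic DAHA, saturation of $A_N^{\rm formal}$) are apt---the paper leaves these implicit, and making them explicit is a genuine improvement in rigor.
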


\begin{proof} Recall that by Proposition \ref{qcl1}, ${\DAHA}_N^{\rm formal}(\hbar,k)$ is a flat deformation of $\DAHA_{N,{\rm deg}}(\hbar,k)$, and by Proposition \ref{qcl2},
the same applies to the polynomial representations of these algebras. Thus the result follows by a deformation argument
from Proposition \ref{preser} and the fact that a similar statement holds in the trigonometric case (Theorem \ref{charac}(ii)).
Namely, the algebra ${\DAHA}_N^{l,{\rm formal}}(z,1,k)$ is a priori ``at least as big" as $\DAHA_{N,{\rm deg}}^l(z,1,k)$, as its generators are deformations of generators of
$\DAHA_{N,{\rm deg}}^l(z,1,k)$.
At the same time, the subalgebra of elements of $\DAHA_N^{l,{\rm formal}}(z,1,k)$
preserving $(\prod_j X_j)^{z_i}\bold P_+$ for all $i$ is ``at most as big" as $\DAHA_{N,{\rm deg}}^l(z,1,k)$, as
by Theorem \ref{charac}(ii), this condition cuts out $\DAHA_{N,{\rm deg}}^l(z,1,k)$ inside $\DAHA_{N,{\rm deg}}(1,k)$.
But by Theorem \ref{preser}, the former subalgebra is contained in the latter one. This implies the corollary.
\end{proof}

\begin{theorem}\label{main2} For any $z_1,...,z_l$, the algebra $\DAHA_N^{l,{\rm formal}}(z,\hbar,k)$ is a flat deformation of $\DAHA_{N,{\rm deg}}^l(z,\hbar,k)$.
\end{theorem}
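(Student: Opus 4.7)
My plan is to reduce flatness to the statement that the natural reduction map
\[
\Psi: \DAHA_N^{l,{\rm formal}}(z,\hbar,k)/(\varepsilon) \longrightarrow \DAHA_{N,{\rm deg}}^l(z,\hbar,k)
\]
is an isomorphism. Since $\DAHA_N^{l,\rm formal}(z,\hbar,k)$ embeds in $\DAHA_N^{\rm formal}(\hbar,k)$, which is topologically $\BC[[\varepsilon]]$-free by Proposition \ref{qcl1}, the subalgebra is automatically $\varepsilon$-torsion-free. Working graded piece by graded piece (with the grading inherited from $\DAHA_N^{\rm formal}$ under which each component is finitely generated), $\varepsilon$-torsion-freeness together with an isomorphism modulo $\varepsilon$ forces topological freeness via Nakayama, giving the desired flatness.

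Surjectivity of $\Psi$ is immediate from the chosen generators: $T_i = s_i e^{-\varepsilon k s_i/2} \equiv s_i \pmod{\varepsilon}$, the elements $y_i$ and $\pi$ are unchanged, and
\[
\pi^{-1}\prod_{i=1}^l \frac{e^{\varepsilon y_1}-e^{\varepsilon z_i}}{\varepsilon} \;\equiv\; \pi^{-1}\prod_{i=1}^l (y_1 - z_i) \;=\; \pi_- \pmod{\varepsilon},
\]
so the image of $\Psi$ contains all the generators of $\DAHA_{N,{\rm deg}}^l(z,\hbar,k)$. For injectivity, I would lift the PBW basis constructed in Proposition \ref{basi1}: the coordinate elements $X_i$ lie in $\DAHA_N^{l,\rm formal}$ via $\pi = X_1 T_1 \cdots T_{N-1}$ and the braid relations $T_i X_i T_i = X_{i+1}$, while formal analogues $\widetilde D_i^{(l)}$ of the Dunkl--Opdam operators lie there via $\pi_-$ and conjugation by the $T_j$. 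The ``missing-index'' monomials $M_X M_D$ built from these lifts all belong to $\DAHA_N^{l,\rm formal}$ and reduce mod $\varepsilon$ to the basis of Proposition \ref{basi1}; linear independence of the lifts over $\BC[[\varepsilon]]$ is then forced by the polynomial representation of Proposition \ref{polyre}, whose quasiclassical limit is the faithful polynomial representation of $\DAHA_{N,{\rm deg}}$ by Proposition \ref{qcl2}.

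The main obstacle is the spanning property, i.e.\ showing that every element of $\DAHA_N^{l,\rm formal}(z,\hbar,k)$ is an $\varepsilon$-adically convergent $\BC[[\varepsilon]]$-combination of these lifted monomials. My approach is to verify that the defining relations of Proposition \ref{relacycdaha} lift to $\DAHA_N^{l,\rm formal}$ up to higher-order $\varepsilon$-corrections (coming from expanding $T_i^2$, $Y_i = e^{\varepsilon y_i}$, and the rescaled $\pi_-$); the reordering algorithm used to prove Proposition \ref{basi1} then applies order by order in $\varepsilon$ and converges in the $\varepsilon$-adic topology. An alternative route is to first establish the theorem for Weil generic $(z,\hbar,k)$ using Corollary \ref{coro1}, which characterizes the formal cyclotomic DAHA at generic parameters as the subalgebra preserving the spaces $(\prod_j X_j)^{z_i}\bold P_+$, and then extend to arbitrary parameters by a standard semicontinuity argument: the generic rank of each graded piece gives an upper bound matching the lower bound supplied by the lifted basis, so Nakayama and torsion-freeness upgrade generic flatness to flatness at every specialization.
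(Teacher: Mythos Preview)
Your alternative route is exactly the paper's proof, which is a single sentence: since the generators of $\DAHA_N^{l,\rm formal}(z,\hbar,k)$ deform those of $\DAHA_{N,\rm deg}^l(z,\hbar,k)$, it suffices to prove flatness for Weil generic $(z,\hbar,k)$, and there it follows from Corollary~\ref{coro1}. The semicontinuity you invoke is what underlies ``it suffices'': in each bidegree the $\BC[[\varepsilon]]$-rank of $A_{r,s}\subset B_{r,s}$ is lower semicontinuous in the parameters, so the generic value bounds the special value from above, while surjectivity of $A/\varepsilon A\to\DAHA_{N,\rm deg}^l$ (whose graded dimensions are constant by Theorem~\ref{main1}(ii)) bounds it from below.

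Your main approach --- lifting the PBW basis of Proposition~\ref{basi1} by deforming the relations of Proposition~\ref{relacycdaha} and rerunning the reordering algorithm --- is also valid, and the paper explicitly flags it as an alternative right after the theorem: ``Another proof of Theorem~\ref{main2} is obtained from the presentation of $\DAHA_N^l$ given below.'' That presentation is carried out in Proposition~\ref{cycDAHAbas} and Theorem~\ref{relcycDAHA}, which write down the $q$-deformed relations \eqref{cd1}--\eqref{cd10} and verify the spanning property exactly as you propose. The trade-off: the paper's chosen proof is one line once Corollary~\ref{coro1} is in hand, whereas the PBW route requires all the deformed commutation relations; but the PBW route pays for itself by delivering the explicit basis and the generators-and-relations presentation needed later (e.g.\ for Corollary~\ref{freeness} and the involution of Proposition~\ref{invo}).
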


\begin{proof} Since  $\DAHA_N^{l,{\rm formal}}(z,\hbar,k)$ is generated by deformations of generators of $\DAHA_{N,{\rm deg}}^l(z,\hbar,k)$,
it suffices to prove this statement for Weil generic $z_i$, $\hbar$, $k$, but in this case it follows from Corollary \ref{coro1}.
\end{proof}

Another proof of Theorem \ref{main2} is obtained from the presentation of $\DAHA_N^l$ given below. 

\begin{definition}\label{cyclotomDAHA} The algebra $\DAHA_N^l(Z,q,t)$ is called the {\it cyclotomic DAHA}
and ${\DAHA}_N^{l,{\rm formal}}(z,\hbar,k)$ is called the {\it formal cyclotomic DAHA.}
\end{definition}

As usual, one can similarly define versions of these algebras where the parameters are indeterminates.
Note also that by this definition $\DAHA_N^0(q,t)=\DAHA_N(q,t)$, and $\DAHA_N^{l'}(Z',q,t)\subset \DAHA_N^l(Z,q,t)$ if $l'\ge l$ and $Z'\supset Z$ as a multiset.

Thus, the cyclotomic DAHA is a $q$-deformation of the partly spherical cyclotomic rational Cherednik algebra. More precisely,
it follows from Theorem \ref{main1} and Theorem \ref{main2} that for any $z_1,..,z_l,\hbar,k$, the formal cyclotomic DAHA $\DAHA_N^{l,{\rm formal}}(z,\hbar,k)$
is a flat deformation of $\bDAHA_N^{l,{\rm psc}}(c,\hbar,k)$, where $c$ is related to $z$ by equation \eqref{ziform1}.
In particular, the cyclotomic DAHA is interesting already for $l=1$, as it provides a $q$-deformation of the rational Cherednik algebra
$\bDAHA_N^{\rm rat}(\hbar,k)$ attached to $S_N$ and its permutation representation.

\begin{remark}  Since the DAHA has a $\Bbb G_m$-action given by $Y_i\mapsto aY_i$ and trivial on other generators, the algebra $\DAHA_N^l(Z,q,t)$
does not change under the transformation $Z_i\to aZ_i$ (i.e., it depends only on the ratios $Z_i/Z_{i+1}$).
\end{remark}

\begin{example}\label{Neq1} Let $N=1$. Then there is no dependence on $t$, and $\DAHA_1(q,t)=\DAHA_1(q)$ is the quantum torus algebra
with invertible generators $X,Y$ and relation $YX=qXY$. Let $q$ be not a root of unity. The polynomial representation is $\bold P=\Bbb C[X^{\pm 1}]$ with $Y$ acting by shift, $(Yf)(X)=f(qX)$, so that $\DAHA_1(q)$ is the algebra of polynomial $q$-difference operators.
The subalgebra $\DAHA_1^l(Z,q,t)=\DAHA_1^l(Z,q)$ inside $\DAHA_1(q)$ is generated by $X,Y^{\pm 1}$, and
$$
L:=X^{-1}(Y-q^{z_1})...(Y-q^{z_l}),
$$
 where $Z_i=q^{z_i}$. We claim that if $Z_i/Z_j\notin{q^{\Bbb Z}}$ then $\DAHA_1^l(Z,q)$ is exactly the subalgebra of all difference operators preserving $X^{z_i}\Bbb C[X]$
for all $i$. Indeed, if we set $\deg(Y)=0$, $\deg(X)=1$, then any difference operator of nonnegative degree is in both subalgebras, while an
operator $M$ of degree $-d<0$ has the form $X^{-d}g(Y)$, where $g$ is a Laurent polynomial, and applying this operator to $X^{z_i+j}$, $j<d$,
we must get zero, so we get $g(q^{z_i+j})=0$, $i=1,...,l$, $j=0,...,d-1$. Thus $M=L^dh(Y)$ for some polynomial $h$.
\end{example}

\begin{remark}\label{rai} We expect that Corollary \ref{coro1} holds in the non-formal setting, i.e., $\DAHA_N^l(Z,q,t)$ can be characterized as the algebra of elements of $\DAHA_N(q,t)$ preserving the spaces $(\prod X_j)^{z_i}\bold P_+$, as in Proposition \ref{preser}. For $N=1$ this is demonstrated in~Example~\ref{Neq1}. Moreover, recall that Ginzburg, Kapranov, and Vasserot (\cite{GKV}) characterized DAHA (for any Weyl group $W$) as the algebra 
of difference-reflection operators $L=\sum_{w\in W}L_ww$ (where $L_w$ are difference operators) 
satisfying some residue conditions. These conditions are equivalent to the conditions that $L$ preserves $\bold P$ and $\Delta_{q,t}\bold P$, where 
$\Delta_{q,t}$ is an appropriate meromorphic function. Therefore, we expect that $\DAHA_N^l(Z,q,t)$ 
can be characterized as the algebra of difference operators preserving the spaces $\bold P$, $\Delta_{q,t}\bold P$,
and $(\prod X_j)^{z_i}\bold P_+$ for $i=1,...,l$. 

We note that this approach to DAHA-type algebras in the more general elliptic setting is developed in the ongoing work \cite{R2}. 
In the one-variable case $N=1$ such (spherical) algebras generated by a given set of difference-reflection operators
have been studied in \cite{R1}.   
\end{remark} 

\subsection{The case $l=1$}
Let us study the algebra $\DAHA_N^l$ for $l=1$ and give its presentation. These results can be derived from the
case of general $l$ considered below, but the special case $l=1$ is especially nice, and it is instructive to do it separately first. 

By rescaling $Y_i$ by the same scalar, we may assume without loss of generality that $Z_1=1$.
Then the algebra $\DAHA_N^1(q,t):=\DAHA_N^1(1,q,t)$ is generated inside $\DAHA_N(q,t)$ by $T_i$, $X_i$, $Y_i^{\pm 1}$, and
$X_1^{-1}(Y_1-1)$.

This algebra actually appeared a long time ago in the paper \cite{BF}. Let us describe it in more detail, following \cite{BF}.
(We note that our conventions are slightly different from those of \cite{BF}).

Define the Dunkl elements $D_i:=T_{i-1}^{-1}...T_1^{-1}D_1T_1^{-1}...T_{i-1}^{-1}\in \DAHA_N^1(q,t)$, where $D_1=X_1^{-1}(Y_1-1)$.
It is easy to check that
$$
D_i=T_iD_{i+1}T_i,\ [T_i,D_j]=0\text{ for }j\ne i,i+1.
$$

\begin{lemma} \label{D12}
We have $[D_1,D_2]=0$.
\end{lemma}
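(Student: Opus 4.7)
The plan is to prove $[D_1, D_2] = 0$ by a direct computation inside $\DAHA_N(q,t)$, paralleling the argument given for the degenerate analogue $[D_i^{(l)}, D_j^{(l)}] = 0$ from equation (2.6).  Since only $X_1, X_2, Y_1, Y_2, T_1$ appear in $D_1$ and $D_2$, it suffices to work inside the subalgebra they generate; in particular one may assume $N = 2$.  The first step is to rewrite $D_2$ in a tractable form: from $(R4)$ we have $T_1^{-1} X_1^{-1} = X_2^{-1} T_1$, and $(R6)$ gives $T_1 Y_1 T_1^{-1} = Y_2 T_1^{-2}$, so
\[
D_2 \;=\; T_1^{-1} X_1^{-1}(Y_1 - 1)T_1^{-1} \;=\; X_2^{-1}\bigl(Y_2 T_1^{-2} - 1\bigr),
\]
and expanding $T_1^{-2} = 1 - (\bold t - \bold t^{-1})T_1^{-1}$ via the Hecke relation gives the more explicit form $D_2 = X_2^{-1}(Y_2 - 1) - (\bold t - \bold t^{-1}) X_2^{-1} Y_2 T_1^{-1}$.

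Next I would expand both $D_1 D_2$ and $D_2 D_1$ and bring each into a common form, manifestly symmetric in the indices $1, 2$.  The key commutation is $(R8)$, used in the form $X_1 Y_2 = Y_2 X_1 T_1^2$; conjugating by $T_1$ using $(R4)$ and $(R6)$ also yields $Y_1 X_2 = T_1^{-1} X_1 Y_2 T_1^{-1}$.  Together with $(R11)$ and $(R12)$ and the Hecke identity $T_1^{\pm 2} = 1 \pm (\bold t - \bold t^{-1}) T_1^{\pm 1}$, these relations let us move the $Y$'s past the $X^{-1}$'s inside $D_1 D_2$.  After simplification the result should take the shape $X_1^{-1} X_2^{-1} \cdot Q$, where $Q$ is built symmetrically from $Y_1, Y_2$ and $T_1$; the same expression arises from a parallel computation of $D_2 D_1$, yielding the asserted equality.

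The main obstacle is the bookkeeping of the Hecke corrections produced each time one commutes an $X$ past a $Y$: the right-hand side $Y_2 X_1 T_1^2$ of $(R8)$ introduces extra factors of $T_1^2$, and expanding these through the Hecke relation generates numerous summands that must be shown either to cancel or to reassemble into a symmetric expression.  In the trigonometric degeneration $q \to 1$ (so $T_1 \to s_1$ and $T_1^2 \to 1$) these corrections collapse to a single reflection term, and the calculation reduces to the one already given for (2.6); this provides both a guide to the combinatorics and a useful sanity check.  As an alternative verification, one can check $[\rho(D_1), \rho(D_2)] = 0$ in the polynomial representation $\bold P$, which is faithful for generic $q, t$ and on which both operators preserve $\bold P_+$ (by Proposition \ref{preser} applied with $z_1 = 0$), reducing the identity to a computation with $q$-difference--reflection operators.
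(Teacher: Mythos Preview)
Your proposal is a sketch rather than a proof: you describe the plan (rewrite $D_2$, expand both products, reassemble into something symmetric) but never carry it out, and you explicitly identify the bookkeeping of Hecke corrections as the ``main obstacle'' without resolving it. There is also an error in your formulas: the relation $Y_1 X_2 = T_1^{-1} X_1 Y_2 T_1^{-1}$ is false. Indeed, $Y_2 T_1^{-1} = T_1 Y_1$ and $X_1 T_1 = T_1^{-1} X_2$, so $T_1^{-1} X_1 Y_2 T_1^{-1} = T_1^{-2} X_2 Y_1$, not $Y_1 X_2$; the correct relation (cf.\ \eqref{cd2}) is $Y_1 X_2 = T_1^2 X_2 Y_1$. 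This does not doom the approach, but it shows the computation has not actually been done.

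The paper's proof sidesteps exactly the obstacle you name. Rather than rewriting $D_2$ in the variables $X_2,Y_2$, it keeps $D_2 = T_1^{-1} D_1 T_1^{-1}$ and restates the claim as $T_1^{-1} D_1 T_1^{-1} D_1 = D_1 T_1^{-1} D_1 T_1^{-1}$. Expanding $D_1 = X_1^{-1}Y_1 - X_1^{-1}$ on each side and using $T_1 X_1 T_1 X_1 = X_1 T_1 X_1 T_1$ reduces the problem to two identities: one in the pure $X_1^{-1}Y_1$ terms, \eqref{id1}, and one in the cross terms, \eqref{id2}. The point is that \eqref{id1} already holds at the level of the braid group, and after a single application of the Hecke relation $T_1^{-1} = T_1 - (\bold t - \bold t^{-1})$, identity \eqref{id2} becomes \eqref{id3}, which again holds termwise in the braid group. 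So the Hecke correction appears exactly once, not repeatedly, and no messy reassembly is required. Your alternative via the polynomial representation is in fact closer to what the paper does for general $l$ in Lemma~\ref{commu}: there one observes that $\rho(Y_i(f))$ is the conjugate of $\rho(Y_i)$ by a suitable product function, hence the $Y_i(f)$ commute, and the Cherednik involution then gives commutativity of the $D_i^{(l)}$.
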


\begin{proof} By definition, we have $D_2=T_1^{-1}D_1T_1^{-1}$, so our job is to show that
$$
T_1^{-1}D_1T_1^{-1}D_1=D_1T_1^{-1}D_1T_1^{-1}.
$$
In other words, we must show that
$$
T_1^{-1}X_1^{-1}(Y_1-1)T_1^{-1}X_1^{-1}(Y_1-1)=X_1^{-1}(Y_1-1)T_1^{-1}X_1^{-1}(Y_1-1)T_1^{-1}.
$$
Since $T_1X_1T_1X_1=X_2X_1=X_1X_2=X_1T_1X_1T_1$, it suffices to prove two identities in $\DAHA_N(q,t)$:
\begin{equation}\label{id1}
T_1^{-1}X_1^{-1}Y_1T_1^{-1}X_1^{-1}Y_1=X_1^{-1}Y_1T_1^{-1}X_1^{-1}Y_1T_1^{-1}
\end{equation}
and
\begin{multline}\label{id2}
T_1^{-1}X_1^{-1}T_1^{-1}X_1^{-1}Y_1+T_1^{-1}X_1^{-1}Y_1T_1^{-1}X_1^{-1}=\\
=X_1^{-1}T_1^{-1}X_1^{-1}Y_1T_1^{-1}+X_1^{-1}Y_1T_1^{-1}X_1^{-1}T_1^{-1}.
\end{multline}
Identity \eqref{id1} actually holds already in the braid group. Indeed, we have
$$
T_1^{-1}X_1^{-1}Y_1T_1^{-1}X_1^{-1}Y_1=
T_1^{-1}X_1^{-1}T_1^{-1}Y_2T_1^{-2}X_1^{-1}Y_1=
T_1^{-1}X_1^{-1}T_1^{-1}X_1^{-1}Y_2Y_1=
$$
$$
X_1^{-1}T_1^{-1}X_1^{-1}T_1^{-1}Y_2Y_1=
X_1^{-1}T_1^{-1}X_1^{-1}Y_2Y_1T_1^{-1}=
$$
$$
X_1^{-1}T_1^{-1}Y_2T_1^{-2}X_1^{-1}Y_1T_1^{-1}=
X_1^{-1}Y_1T_1^{-1}X_1^{-1}Y_1T_1^{-1}.
$$
It remains to establish \eqref{id2}. Note that because $T_1^{-1}=T_1-{\bold t}+{\bold t}^{-1}$, \eqref{id2} is equivalent to
\begin{multline}\label{id3}
T_1X_1^{-1}T_1^{-1}X_1^{-1}Y_1+T_1^{-1}X_1^{-1}Y_1T_1^{-1}X_1^{-1}=\\
=X_1^{-1}T_1^{-1}X_1^{-1}Y_1T_1+X_1^{-1}Y_1T_1^{-1}X_1^{-1}T_1^{-1}.
\end{multline}
On the other hand, \eqref{id3} holds already in the group algebra of the braid group (i.e., termwise). Indeed, we have
$$
T_1^{-1}X_1^{-1}Y_1T_1^{-1}X_1^{-1}=T_1^{-1}X_1^{-1}T_1^{-1}Y_2T_1^{-2}X_1^{-1}=
$$
$$
T_1^{-1}X_1^{-1}T_1^{-1}X_1^{-1}Y_2=X_1^{-1}T_1^{-1}X_1^{-1}T_1^{-1}Y_2=X_1^{-1}T_1^{-1}X_1^{-1}Y_1T_1,
$$
and
$$
T_1X_1^{-1}T_1^{-1}X_1^{-1}Y_1=X_1^{-1}T_1^{-1}X_1^{-1}T_1Y_1=X_1^{-1}T_1^{-1}X_1^{-1}Y_2T_1^{-1}=
$$
$$
X_1^{-1}T_1^{-1}Y_2T_1^{-2}X_1^{-1}T_1^{-1}=X_1^{-1}Y_1T_1^{-1}X_1^{-1}T_1^{-1}.
$$
\end{proof}

\begin{corollary}\label{Dij}
One has $[D_i,D_j]=0$ for all $i,j$.
\end{corollary}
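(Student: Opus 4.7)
The plan is a pair of inductions that propagates $[D_1,D_2]=0$ (Lemma~\ref{D12}) to all pairs $[D_i,D_j]$. The only structural inputs needed beyond Lemma~\ref{D12} are the defining recursion $D_{i+1}=T_i^{-1}D_iT_i^{-1}$ (the inverse form of $D_i=T_iD_{i+1}T_i$) and the commutation $[T_i,D_j]=0$ for $j\neq i,i+1$.

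First I would show, by induction on $j\ge 2$, that $[D_1,D_j]=0$. The base case $j=2$ is Lemma~\ref{D12}. For the inductive step, since $j\ge 2$, the index $1$ is neither $j$ nor $j+1$, so $D_1$ commutes with $T_j^{\pm 1}$; substituting $D_{j+1}=T_j^{-1}D_jT_j^{-1}$, sliding $D_1$ past $T_j^{-1}$, applying the inductive hypothesis, and sliding back gives
\[
D_1 D_{j+1} \;=\; T_j^{-1} D_1 D_j T_j^{-1} \;=\; T_j^{-1} D_j D_1 T_j^{-1} \;=\; D_{j+1} D_1.
\]

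Next I would induct on $i$ to prove $[D_i,D_j]=0$ for all $j>i$ (the case $j<i$ follows by relabelling). The base case $i=1$ is the previous step. For the inductive step, suppose $[D_{i-1},D_j]=0$ for all $j>i-1$. When $j>i$, the index $j$ is neither $i-1$ nor $i$, so $D_j$ commutes with $T_{i-1}^{\pm 1}$; the identical ``slide--apply--slide'' maneuver using $D_i=T_{i-1}^{-1}D_{i-1}T_{i-1}^{-1}$ and the inductive hypothesis yields $[D_i,D_j]=0$.

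There is really no obstacle beyond Lemma~\ref{D12}: every other commutation is obtained by the same one-step argument of sliding past a commuting $T_k$, applying a previously established commutation, and sliding back. All the content of Corollary~\ref{Dij} is carried by the single nontrivial identity $[D_1,D_2]=0$; its extension to arbitrary pairs is pure propagation.
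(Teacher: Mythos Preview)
Your proof is correct and follows essentially the same approach as the paper. The paper writes the full expansion $D_j=T_{j-1}^{-1}\cdots T_2^{-1}D_2T_2^{-1}\cdots T_{j-1}^{-1}$ and observes in one line that $D_1$ commutes with every factor (and then similarly expands $D_i$ in terms of $D_1$), whereas you unwind this one step at a time by induction; the underlying mechanism---sliding past commuting $T_k$'s and invoking Lemma~\ref{D12}---is identical.
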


\begin{proof} For any $j>1$, we have
$$
[D_1,D_j]=[D_1,T_{j-1}^{-1}...T_2^{-1}D_2T_2^{-1}...T_{j-1}^{-1}]=0,
$$ since $D_1$ commutes with every factor by Lemma \ref{D12}. Hence for $i<j$,
$$
[D_i,D_j]=[T_{i-1}^{-1}...T_1^{-1}D_1T_1^{-1}...T_{i-1}^{-1},D_j]=0,
$$
again because $D_j$ commutes with every factor.
\end{proof}

Thus, $T_i$ and $D_i$ generate the ``positive part" of an affine Hecke algebra.

When $q\to 1$ and $t=q^{-k}$, where $k$ is fixed, the algebra $\DAHA_N^1(q,t)$ degenerates to the rational Cherednik algebra $\DAHA_N^{\rm rat}(1,k)$ for $S_N$ (associated to the permutation representation), and $X_i,\frac{D_i}{q-1}$ degenerate to the standard generators of $\DAHA_N^{\rm rat}(1,k)$. Thus, let us compute the commutation relations between $D_i$ and $X_j$
which deform the corresponding relations of $\DAHA_N^{\rm rat}(1,k)$.

\begin{lemma}\label{XD} One has
$$
X_1D_2=D_2T_1^2X_1+({\bold t}-{\bold t}^{-1})T_1^{-1}
$$
and
$$
D_1X_2=X_2T_1^{-2}D_1-({\bold t}-{\bold t}^{-1})T_1.
$$
\end{lemma}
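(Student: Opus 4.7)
The plan is to prove the two identities by direct manipulation in $\DAHA_N(q,t)$, using only the defining relations, and deduce the second from the first via a conjugation trick.

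First, I would replace $D_2 = T_1^{-1}D_1 T_1^{-1} = T_1^{-1}X_1^{-1}(Y_1-1)T_1^{-1}$ by a more convenient form. Using $T_1 X_1 T_1 = X_2$ we get $T_1^{-1}X_1^{-1} = X_2^{-1}T_1$, and using $T_1 Y_1 T_1 = Y_2$ we get $T_1 Y_1 T_1^{-1} = Y_2 T_1^{-2}$. Combining,
\[
D_2 \;=\; X_2^{-1}\bigl(Y_2 T_1^{-2}-1\bigr).
\]

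For the first identity, I would plug this expression into both sides and push $X_1$ through. Since $[X_1,X_2]=0$ and since relation (R8) gives $X_1 Y_2 = Y_2 X_1 T_1^2$, I get
\[
X_1 D_2 \;=\; X_2^{-1}Y_2 X_1 - X_2^{-1}X_1, \qquad D_2 T_1^2 X_1 \;=\; X_2^{-1}Y_2 X_1 - X_2^{-1}T_1^2 X_1.
\]
Subtracting and using the Hecke relation $T_1^2-1 = (\bold t-\bold t^{-1})T_1$ together with $T_1 X_1 = X_2 T_1^{-1}$ (so $X_2^{-1}T_1 X_1 = T_1^{-1}$) yields
\[
X_1 D_2 - D_2 T_1^2 X_1 \;=\; X_2^{-1}(T_1^2-1)X_1 \;=\; (\bold t-\bold t^{-1})\,T_1^{-1},
\]
which is the first claim.

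For the second identity, rather than redoing an analogous (and messier) computation, I would deduce it from the first by conjugating with $T_1$. Since $T_1 D_2 T_1 = D_1$ and $T_1 X_1 T_1^{-1} = X_2 T_1^{-2}$, conjugating the identity $X_1 D_2 = D_2 T_1^2 X_1 + (\bold t-\bold t^{-1})T_1^{-1}$ by $T_1$ gives
\[
X_2 T_1^{-2} D_1 T_1^{-2} \;=\; D_1 X_2 T_1^{-2} + (\bold t-\bold t^{-1})T_1^{-1},
\]
and right-multiplication by $T_1^2$ yields exactly $D_1 X_2 = X_2 T_1^{-2} D_1 - (\bold t-\bold t^{-1})T_1$.

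The only potentially tricky step is the very first one — correctly re-expressing $D_2$ — since it requires simultaneously using the two ``twist'' relations $T_i X_i T_i = X_{i+1}$ and $T_i Y_i T_i = Y_{i+1}$ on opposite sides of $(Y_1-1)$. After that, everything reduces to the single cross-relation $X_1 Y_2 = Y_2 X_1 T_1^2$ coming from (R8), the Hecke relation, and the manifest commutation $[X_1,X_2]=0$, so no further delicate braid-group computations are needed.
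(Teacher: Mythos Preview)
Your proof is correct. The approach differs from the paper's in how you establish the first identity: the paper expands $D_2 X_1^{-1}$ using the definition $D_2=T_1^{-1}X_1^{-1}(Y_1-1)T_1^{-1}$ and then invokes the braid-group identities already computed in the proof of Lemma~\ref{D12} (notably $T_1^{-1}X_1^{-1}Y_1T_1^{-1}X_1^{-1}=X_1^{-1}T_1^{-1}X_1^{-1}Y_1T_1$), whereas you first rewrite $D_2=X_2^{-1}(Y_2T_1^{-2}-1)$ and then appeal directly to (R8) in the form $X_1Y_2=Y_2X_1T_1^2$ plus the Hecke relation. Your route is self-contained and avoids the preceding lemma, at the cost of needing the preliminary rewriting step; the paper's route recycles work already done but makes the argument depend on that earlier computation. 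For the second identity both proofs use the same conjugation-by-$T_1$ trick.
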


\begin{proof}
We have
$$
D_2X_1^{-1}=T_1^{-1}X_1^{-1}Y_1T_1^{-1}X_1^{-1}-T_1^{-1}X_1^{-1}T_1^{-1}X_1^{-1}
$$
Thus, by the proof of Lemma \ref{D12}, we obtain
$$
D_2X_1^{-1}=X_1^{-1}T_1^{-1}X_1^{-1}Y_1T_1-X_1^{-1}T_1^{-1}X_1^{-1}T_1^{-1}=
$$
$$
X_1^{-1} D_2T_1^2+X_1^{-1}T_1^{-1}X_1^{-1}T_1-X_1^{-1}T_1^{-1}X_1^{-1}T_1^{-1}=
$$
$$
X_1^{-1}D_2T_1^2+({\bold t}-{\bold t}^{-1})X_1^{-1}T_1^{-1}X_1^{-1}.
$$
Now the first relation of the lemma is obtained by multiplying both sides by $X_1$ on the left and on the right.
To obtain the second relation of the lemma from the first one, it suffices to multiply the first relation by $T_1$ on both sides, and apply the commutation relations between $D_i,X_i$ and $T_j$.
\end{proof}

\begin{corollary}\label{DXij} If $i<j$, one has
$$
X_iD_j=D_jT_{j-1}...T_{i+1}T_i^2T_{i+1}^{-1}...T_{j-1}^{-1}X_i+({\bold t}-{\bold t}^{-1})T_{j-1}^{-1}...T_i^{-1}...T_{j-1}^{-1}.
$$
If $i>j$, one has
$$
D_jX_i=X_iT_{i-1}^{-1}...T_{j+1}^{-1}T_j^{-2}T_{j+1}...T_{i-1}D_j-({\bold t}-{\bold t}^{-1})T_{i-1}...T_j...T_{i-1}.
$$
\end{corollary}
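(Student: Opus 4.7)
The plan is to reduce both identities to Lemma~\ref{XD} via the braid-type relations $D_m = T_m D_{m+1} T_m$ and $X_{m+1} = T_m X_m T_m$: first move $D_j$ (resp.\ $X_i$) next to its ``adjacent'' partner $D_{i+1}$ (resp.\ $X_{j+1}$), apply the lemma, then commute the remaining factors back out.

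For the first identity, with $i<j$, I would iterate $D_m = T_m D_{m+1}T_m$ to write
$$
D_j = T_{j-1}^{-1}\cdots T_{i+1}^{-1}\, D_{i+1}\, T_{i+1}^{-1}\cdots T_{j-1}^{-1}.
$$
Since $[T_k,X_i]=0$ for $k>i$, the element $X_i$ commutes with every $T_k^{-1}$ appearing on both sides of $D_{i+1}$, hence
$$
X_iD_j = T_{j-1}^{-1}\cdots T_{i+1}^{-1}\,(X_iD_{i+1})\,T_{i+1}^{-1}\cdots T_{j-1}^{-1}.
$$
Applying the shifted form of Lemma~\ref{XD}, namely $X_iD_{i+1} = D_{i+1}T_i^2 X_i + ({\bold t}-{\bold t}^{-1})T_i^{-1}$, then commuting $X_i$ back out to the right and using $T_{j-1}^{-1}\cdots T_{i+1}^{-1}D_{i+1} = D_j T_{j-1}\cdots T_{i+1}$, yields the stated formula.

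For the second identity, with $i>j$, I would instead iterate $X_{m+1}=T_m X_m T_m$ to write
$$
X_i = T_{i-1}\cdots T_{j+1}\, X_{j+1}\, T_{j+1}\cdots T_{i-1},
$$
and use $[T_k,D_j]=0$ for $k>j$ (which includes $k=j+1$) to bring $D_j$ past the outer $T_k$'s:
$$
D_j X_i = T_{i-1}\cdots T_{j+1}\,(D_jX_{j+1})\,T_{j+1}\cdots T_{i-1}.
$$
The shifted Lemma~\ref{XD} gives $D_jX_{j+1}= X_{j+1}T_j^{-2}D_j - ({\bold t}-{\bold t}^{-1})T_j$; substituting, pushing $D_j$ to the right past $T_{j+1}\cdots T_{i-1}$, and rewriting $T_{i-1}\cdots T_{j+1}X_{j+1} = X_i T_{i-1}^{-1}\cdots T_{j+1}^{-1}$ completes the derivation.

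The one point to verify is the validity of the shifted form of Lemma~\ref{XD}, in which the pair $(1,2)$ is replaced by $(i,i+1)$ or $(j,j+1)$. This is not really an obstacle: conjugation by $T_1T_2\cdots T_{i-1}$ inside $\DAHA_N(q,t)$ transports $(X_1,Y_1,T_1)$ to $(X_i,Y_i,T_i)$ and $(X_2,Y_2)$ to $(X_{i+1},Y_{i+1})$, while preserving all the DAHA relations, so the argument proving Lemma~\ref{XD} transports verbatim. The remaining verification that the combinatorial expressions produced by this procedure match those in the statement is routine bookkeeping with the braid relations on the $T_k$'s.
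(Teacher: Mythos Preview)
Your overall strategy—commute past the outer $T_k$'s, apply Lemma~\ref{XD}, then commute back—is exactly the paper's approach, and your bookkeeping is correct. The paper differs in one organizational point: rather than invoking a shifted Lemma~\ref{XD} at indices $(i,i+1)$, it first reduces the case $i<j$ to $i=1$ by writing $X_i = T_{i-1}\cdots T_1 X_1 T_1\cdots T_{i-1}$ and commuting $D_j$ through (since $[T_k,D_j]=0$ for $k\le i-1$), then handles $X_1 D_j$ by pushing $X_1$ in to meet $D_2$ and applying Lemma~\ref{XD} verbatim. This two-step reduction avoids having to justify any shifted version of the lemma.

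Your justification of the shifted lemma is where the writeup needs care. Conjugation by $T_1\cdots T_{i-1}$ is \emph{not} an automorphism sending $(X_1,Y_1,T_1)$ to $(X_i,Y_i,T_i)$: already for $i=2$ one has $T_1 X_1 T_1^{-1} = X_2 T_1^{-2} \neq X_2$, since $T_1 X_1 T_1 = X_2$ but $T_1^2\neq 1$. The correct reason the shifted lemma holds is that the proof of Lemma~\ref{XD} uses only the DAHA relations (R1), (R4), (R6), (R8a), (R11), (R12) at the consecutive indices $1,2$, and the presentation contains formally identical relations at every consecutive pair $i,i+1$; so the argument repeats with indices shifted. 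Either fix the justification along these lines, or follow the paper and reduce to $i=1$ first—then only the original Lemma~\ref{XD} is needed.
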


\begin{proof} First consider the case $i=1$. Then
$$
X_1D_j=X_1T_{j-1}^{-1}...T_2^{-1}D_2T_2^{-1}...T_{j-1}^{-1}=
T_{j-1}^{-1}...T_2^{-1}X_1D_2T_2^{-1}...T_{j-1}^{-1}.
$$
By the first relation of Lemma \ref{XD}, this implies
$$
X_1D_j=T_{j-1}^{-1}...T_2^{-1}D_2T_1^2X_1T_2^{-1}...T_{j-1}^{-1}+({\bold t}-{\bold t}^{-1})T_{j-1}^{-1}...T_2^{-1}T_1^{-1}T_2^{-1}...T_{j-1}^{-1}=
$$

$$
D_jT_{j-1}...T_2T_1^2T_2^{-1}...T_{j-1}^{-1}X_1+({\bold t}-{\bold t}^{-1})T_{j-1}^{-1}...T_2^{-1}T_1^{-1}T_2^{-1}...T_{j-1}^{-1},
$$
as claimed. Now consider the general case. We have
$$
X_iD_j=T_{i-1}...T_1X_1T_1...T_{i-1}D_j=T_{i-1}...T_1X_1D_jT_1...T_{i-1}=
$$
$$
T_{i-1}...T_1D_jT_{j-1}...T_2T_1^2T_2^{-1}...T_{j-1}^{-1}X_1T_1...T_{i-1}+
$$
$$
({\bold t}-{\bold t}^{-1})T_{i-1}...T_1T_{j-1}^{-1}...T_2^{-1}T_1^{-1}T_2^{-1}...T_{j-1}^{-1}
T_1...T_{i-1}=
$$
$$
D_jT_{j-1}...T_{i+1}T_i^2T_{i+1}^{-1}...T_{j-1}^{-1}X_1+({\bold t}-{\bold t}^{-1})T_{j-1}^{-1}...T_{i+1}^{-1}T_i^{-1}T_{i+1}^{-1}...T_{j-1}^{-1},
$$
(here we repeatedly used the braid relations between the $T_i$). This proves the first relation of the corollary.

The second relation is proved similarly, using the second relation of Lemma \ref{XD}.
\end{proof}

Finally, let us generalize the commutation relations between $D_i$ and $X_i$ (which are the only relations containing $q$).
It turns out that it is convenient to write instead the commutation relations between $D_i$ and $\sum_{j=1}^N X_j$.

\begin{lemma}\label{D1S} We have
$$
[D_i,\sum_{j=1}^N X_j]=(1-q^{-1})(D_iX_i+1)T_{i-1}^{-1}...T_1^{-2}...T_{i-1}^{-1}.
$$
\end{lemma}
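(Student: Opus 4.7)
The plan is to reduce to the case $i=1$ by a conjugation argument, and then verify that case inside the polynomial representation of $\DAHA_N(q,t)$.

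First I would show that $\sum_{j=1}^N X_j$ is central with respect to the finite Hecke subalgebra generated by $T_1,\ldots,T_{N-1}$. By (R5), $T_i$ commutes with $X_j$ for $j\notin\{i,i+1\}$, so it suffices to check $[T_i,X_i+X_{i+1}]=0$. Using (R4) in the forms $T_iX_i=X_{i+1}T_i^{-1}$ and $X_iT_i=T_i^{-1}X_{i+1}$, a direct manipulation gives $[T_i,X_i+X_{i+1}]=[T_i-T_i^{-1},X_{i+1}]$, which vanishes because the quadratic Hecke relation (R1) forces $T_i-T_i^{-1}={\bold t}-{\bold t}^{-1}$, a scalar. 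Given this, since $D_i=T_{i-1}^{-1}\cdots T_1^{-1}D_1T_1^{-1}\cdots T_{i-1}^{-1}$ and $X_i=T_{i-1}\cdots T_1X_1T_1\cdots T_{i-1}$, I would conjugate by $T_{i-1}^{-1}\cdots T_1^{-1}$ on the left and $T_1^{-1}\cdots T_{i-1}^{-1}$ on the right; the sum $\sum_jX_j$ passes through freely, so the conjugate of $D_1X_1$ equals $D_iX_i\cdot T_{i-1}^{-1}\cdots T_1^{-2}\cdots T_{i-1}^{-1}$ and the conjugate of $1$ equals $T_{i-1}^{-1}\cdots T_1^{-2}\cdots T_{i-1}^{-1}$. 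The stated identity therefore reduces to the case $i=1$,
\[
[D_1,\textstyle\sum_j X_j]=(1-q^{-1})(D_1X_1+1).
\]
Since the $X_j$'s commute and $D_1=X_1^{-1}(Y_1-1)$, this is in turn equivalent to
\[
[Y_1,\textstyle\sum_j X_j]=(1-q^{-1})Y_1X_1. \qquad(\ast)
\]

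For $(\ast)$ I would work inside the polynomial representation $\bold P$ of Proposition \ref{polyre}, which is faithful for Weil generic $q,t$; the identity for arbitrary parameters then follows by polynomiality. The key ingredient is the intertwining
\[
\omega\Bigl(\textstyle\sum_{j=1}^N X_j\Bigr)=\Bigl(\textstyle\sum_{j=1}^N X_j+(q-1)X_N\Bigr)\omega,
\]
which is immediate from $\omega X_1=qX_N\omega$ and $\omega X_j=X_{j-1}\omega$ for $j>1$. Combining this with the $T$-centrality of $\sum_jX_j$ and the formula $\rho(Y_1)={\bold t}^{N-1}\rho(T_1^{-1}\cdots T_{N-1}^{-1})\omega$, I obtain
\[
\rho([Y_1,\textstyle\sum_j X_j])=(q-1){\bold t}^{N-1}\rho(T_1^{-1}\cdots T_{N-1}^{-1})X_N\omega.
\]
Iterating $T_k^{-1}X_{k+1}=X_kT_k$ (a direct consequence of (R4)) gives $T_1^{-1}\cdots T_{N-1}^{-1}X_N=X_1T_1\cdots T_{N-1}$, and the same chain of moves applied to $\rho(Y_1X_1)={\bold t}^{N-1}\rho(T_1^{-1}\cdots T_{N-1}^{-1})\omega X_1=q{\bold t}^{N-1}\rho(T_1^{-1}\cdots T_{N-1}^{-1})X_N\omega$ identifies the right-hand side above with $(1-q^{-1})\rho(Y_1X_1)$, establishing $(\ast)$.

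The main obstacle is the careful bookkeeping in the reduction step: explaining why conjugating $D_1X_1+1$ produces exactly the central factor $T_{i-1}^{-1}\cdots T_1^{-2}\cdots T_{i-1}^{-1}$ that appears on the right-hand side of the lemma. Once the $T$-centrality of $\sum_jX_j$ and the intertwining $\omega\sum_jX_j\omega^{-1}=\sum_jX_j+(q-1)X_N$ are in hand, the polynomial-representation calculation collapses to a few lines of braid/Hecke manipulations.
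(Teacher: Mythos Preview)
Your proof is correct and follows essentially the same approach as the paper's own argument: establish the case $i=1$ in the polynomial representation using that $\sum_j X_j$ commutes with the finite Hecke algebra, then conjugate by $T_{i-1}^{-1}\cdots T_1^{-1}$ on the left and $T_1^{-1}\cdots T_{i-1}^{-1}$ on the right. You simply spell out the details (the $T$-centrality of $\sum_jX_j$, the reduction to $[Y_1,\sum_jX_j]=(1-q^{-1})Y_1X_1$, and the $\omega$-intertwining) that the paper leaves implicit; note incidentally that the braid identity $T_1^{-1}\cdots T_{N-1}^{-1}X_N=X_1T_1\cdots T_{N-1}$ you mention is superfluous, since $\rho(Y_1X_1)=q{\bold t}^{N-1}\rho(T_1^{-1}\cdots T_{N-1}^{-1})X_N\omega$ already matches the commutator directly.
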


\begin{proof} First assume $i=1$. Looking at the polynomial representation and using that $\sum_{j=1}^N X_j$ is symmetric, we
find that
$$
[D_1,\sum_{j=1}^N X_j]=(1-q^{-1})(D_1X_1+1).
$$
The general case is now obtained by multiplying both sides by $T_{i-1}^{-1}...T_1^{-1}$ on the left and by
$T_1^{-1}...T_{i-1}^{-1}$ on the right.
\end{proof}

We obtain the following theorem.

\begin{theorem}\label{genrel} (i) Let $\DAHA_N^{1,+}(q,t)$ be the subalgebra of $\DAHA_N^1(q,t)$ generated by $T_i$, $1\le i\le N-1$, $X_i,D_i$, $1\le i\le N$.
Then the defining relations for $\DAHA_N^{1,+}(q,t)$ are
\begin{gather*}
(T_i-{\bold t})(T_i+{\bold t}^{-1})=0,\\
T_iT_{i+1}T_i=T_{i+1}T_iT_{i+1},\\
T_iT_j=T_jT_i\ (|i-j|\ge 2),\\
T_iX_iT_i=X_{i+1},\ [T_i,X_j]=0\text{ for }j\ne i,i+1.\\
D_i=T_iD_{i+1}T_i,\ [T_i,D_j]=0\text{ for }j\ne i,i+1.\\
[X_i,X_j]=0,\\
[D_i,D_j]=0,\\
X_iD_j=D_jT_{j-1}...T_{i+1}T_i^2T_{i+1}^{-1}...T_{j-1}^{-1}X_i+({\bold t}-{\bold t}^{-1})T_{j-1}^{-1}...T_i^{-1}...T_{j-1}^{-1},\ i<j,\\
D_jX_i=X_iT_{i-1}^{-1}...T_{j+1}^{-1}T_j^{-2}T_{j+1}...T_{i-1}D_j-({\bold t}-{\bold t}^{-1})T_{i-1}...T_j...T_{i-1},\ i>j,\\
[D_i,\sum_{j=1}^N X_j]=(1-q^{-1})(D_iX_i+1)T_{i-1}^{-1}...T_1^{-2}...T_{i-1}^{-1}.
\end{gather*}

(ii) (the PBW theorem) For any values of parameters, the elements $\prod_i X_i^{m_i}\cdot T_w\cdot \prod_i D_i^{n_i}$
form a basis of $\DAHA_N^{1,+}(q,t)$.

(iii) The algebra $\DAHA_N^1(q,t)$ is obtained from $\DAHA_N^{1,+}(q,t)$ by inverting the element $Y_1:=1+X_1D_1$.
\end{theorem}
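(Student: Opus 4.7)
The relations of (i) are all established: the Hecke relations among the $T_i$ and their commutations with the $X_i$ are inherited from $\DAHA_N(q,t)$; the formula $D_i=T_iD_{i+1}T_i$ together with $[T_i,D_j]=0$ for $j\ne i,i+1$ is immediate from the definition $D_i:=T_{i-1}^{-1}\cdots T_1^{-1}D_1T_1^{-1}\cdots T_{i-1}^{-1}$ via the quadratic Hecke relation; $[X_i,X_j]=0$ is a DAHA relation; $[D_i,D_j]=0$ is Corollary \ref{Dij}; the off-diagonal cross relations are Corollary \ref{DXij}; and the relation involving $\sum_jX_j$ is Lemma \ref{D1S}. Let $A$ denote the abstract algebra given by these generators and relations; we obtain a surjection $\phi\colon A\twoheadrightarrow \DAHA_N^{1,+}(q,t)$, and parts (i)--(ii) reduce to showing that the standard monomials $\prod_iX_i^{m_i}\cdot T_w\cdot\prod_iD_i^{n_i}$ both span $A$ and are linearly independent in $\DAHA_N^{1,+}(q,t)$.

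For spanning in $A$ I would run the obvious normal-form algorithm. Use $[X_i,X_j]=0$ and $[D_i,D_j]=0$ to order the $X$- and $D$-blocks; use the quadratic and braid Hecke relations together with $T_iX_iT_i=X_{i+1}$, $T_iD_{i+1}T_i=D_i$, and the $T$-$X$ and $T$-$D$ commutations, to collect the $T_i$'s into a single reduced word $T_w$ sandwiched between the two blocks; and use the off-diagonal cross relations of Corollary \ref{DXij} to push each $D$ past each $X$ with a different index. The rewrite rule not listed directly is the diagonal commutator, but subtracting $\sum_{j\ne i}[D_i,X_j]$ (given by Corollary \ref{DXij}) from $[D_i,\sum_jX_j]$ (given by Lemma \ref{D1S}) yields an explicit formula for $D_iX_i$ in terms of $X_iD_i$ and of monomials of strictly lower $D$-degree. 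An induction on total $D$-degree then reduces any word to standard form.

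For linear independence I would use the flat-deformation strategy already built into the paper. Writing $q=e^{\varepsilon\hbar}$, $t=e^{-\varepsilon k}$ and passing to the formal cyclotomic DAHA $\DAHA_N^{1,{\rm formal}}(z,\hbar,k)$ with $z_1=0$, the element $D_1=X_1^{-1}(Y_1-1)$ is, after the usual rescaling by $\varepsilon$, a deformation of the element $D_1^{(1)}=X_1^{-1}y_1$ of the degenerate cyclotomic DAHA $\DAHA_{N,{\rm deg}}^1$. By Theorem \ref{main2}, $\DAHA_N^{1,{\rm formal}}(z,\hbar,k)$ is a flat deformation of $\DAHA_{N,{\rm deg}}^1(z,\hbar,k)$, in which Proposition \ref{basi1} combined with Theorem \ref{main1} gives linear independence of the analogous PBW monomials. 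Flatness propagates this to $\DAHA_N^{1,+}(q,t)$ for all values of $q,t$, completing (ii) and hence (i).

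For (iii), the element $Y_1=1+X_1D_1$ lies in $\DAHA_N^{1,+}(q,t)$, and the DAHA relation $Y_{i+1}=T_iY_iT_i$ shows by iteration that all $Y_i$ lie in $\DAHA_N^{1,+}(q,t)$; adjoining $Y_1^{-1}$ and conjugating by the $T_j$'s then yields all the $Y_i^{-1}$. Since $\pi=X_1T_1\cdots T_{N-1}$ and $\pi_-=\pi^{-1}(Y_1-1)=T_{N-1}^{-1}\cdots T_1^{-1}D_1$ are already in $\DAHA_N^{1,+}(q,t)$, and the generators of $\DAHA_N^1(q,t)$ are $T_i,Y_i^{\pm1},\pi,\pi_-$, one concludes $\DAHA_N^1(q,t)=\DAHA_N^{1,+}(q,t)[Y_1^{-1}]$. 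The main obstacle in the whole argument is the spanning step of (ii): because $[D_i,X_i]$ is not supplied directly, it must be extracted from the $\sum_jX_j$ relation; once that is done, the degeneration argument handles linear independence automatically.
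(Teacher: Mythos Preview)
Your proposal is correct and follows essentially the same approach as the paper. The paper's own proof is terser: it cites the same lemmas for the relations, asserts that they suffice to order monomials, and then invokes the fact that $\DAHA_N^{1,{\rm formal}}$ is a flat deformation of the rational Cherednik algebra $\DAHA_N^{\rm rat}(\hbar,k)$ for linear independence---which is exactly your combination of Theorem~\ref{main2} and Theorem~\ref{main1} specialized to $l=1$. (A minor remark: Proposition~\ref{basi}, rather than Proposition~\ref{basi1}, is the more direct citation on the degenerate side, since for $l=1$ it gives precisely the basis $M_X\,s\,M_D$ matching the target PBW form; but invoking Theorem~\ref{main1} to identify with the rational Cherednik algebra, whose PBW theorem is standard, is already sufficient.)
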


\begin{proof} We have shown in Lemma \ref{Dij}, Corollary \ref{DXij}, Lemma \ref{D1S} that the claimed relations between $T_i,X_i,D_i$ are satisfied.
These relations allow us to order any monomial as claimed in (ii). Since $\DAHA_N^{1,{\rm formal}}$ is a flat deformation of the rational
Cherednik algebra $\DAHA_N^{\rm rat}(\hbar,k)$, the relations in (i) are defining, and the monomials in (ii) are linearly independent. This implies both (i) and (ii).

Now, by definition, $\DAHA_N^1(q,t)$ is generated inside $\DAHA_N(q,t)$ by $\DAHA_N^{1,+}(q,t)$ and $Y_1^{-1}$, where $Y_1=1+X_1D_1$. This implies (iii).
\end{proof}

The following proposition shows that there is a symmetry between $X_i$ and $D_i$. Let us regard $q$ and $\bold t$ as variables.

\begin{proposition}\label{phiauto} There is an involutive automorphism $\phi$ of $\DAHA_N^1$ such that $\phi(X_i)=D_i$, $\phi(D_i)=X_i$, $\phi(T_i)=T_i^{-1}$, $\phi(\bold t)=\bold t^{-1}$, $\phi(q)=q^{-1}$.
This automorphisms preserves the subalgebra $\DAHA_N^{1,+}$.
\end{proposition}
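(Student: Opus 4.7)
My plan is to use the presentation of $\DAHA_N^{1,+}(q,t)$ given in Theorem \ref{genrel}(i) to verify $\phi$ on the ``positive'' subalgebra directly, and then extend to $\DAHA_N^1$ via the localization structure of Theorem \ref{genrel}(iii). I would first define $\phi$ on the free algebra generated by $T_i, X_i, D_i$ (together with the stated action on $q, \bold t$) and check that every defining relation is mapped to a consequence of the defining relations. Most relations come in dual pairs interchanged by $\phi$, or are self-dual: the quadratic and braid $T$-relations become their inverses; the relation $T_iX_iT_i=X_{i+1}$ swaps with the rewriting $T_i^{-1}D_iT_i^{-1}=D_{i+1}$ of $D_i=T_iD_{i+1}T_i$; $[T_i,X_j]=0$ swaps with $[T_i,D_j]=0$; $[X_i,X_j]=0$ swaps with $[D_i,D_j]=0$; and the two ``cross'' relations expressing $X_iD_j$ (for $i<j$) and $D_jX_i$ (for $i>j$) map onto each other after renaming indices and inverting $\bold t$.

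The only genuinely nontrivial check concerns the last relation of Theorem \ref{genrel}(i), whose image under $\phi$ is the ``dual'' identity
\[[X_i,\sum_{j=1}^N D_j]=(1-q)(X_iD_i+1)T_{i-1}\cdots T_1^{2}\cdots T_{i-1},\]
which must be established independently in $\DAHA_N^{1,+}$. I would prove it by a direct computation on the faithful polynomial representation of Proposition \ref{polyre}, mimicking the proof of Lemma \ref{D1S}: handle the case $i=1$ (where the $T$-product is empty) by noting that $\sum_j D_j$ is Weyl-invariant and pairing it with the symmetric element $X_1$ on test monomials, then propagate to general $i$ by conjugation with $T_{i-1}\cdots T_1$. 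An alternative is to deform via Theorem \ref{main1} to the partially spherical rational Cherednik algebra $\bDAHA_N^{1,{\rm psc}}$, where the analogous involution is a classical Cherednik involution exchanging coordinates with Dunkl operators (cf.\ Remark \ref{invoo}), and use the PBW theorem of Theorem \ref{genrel}(ii) to lift the identity back to generic $q$.

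Once $\phi$ is known to be a well-defined endomorphism of $\DAHA_N^{1,+}$, involutivity $\phi^2=\id$ is immediate from the action on generators. Finally, to extend $\phi$ to $\DAHA_N^1=\DAHA_N^{1,+}[Y_1^{-1}]$ it suffices, by the universal property of the localization, to check that $\phi(Y_1)=1+D_1X_1$ is invertible in $\DAHA_N^1$. A short calculation with $X_1D_1=Y_1-1$ inside the ambient $\DAHA_N$ identifies $1+D_1X_1$ with the conjugate $X_1^{-1}Y_1X_1$, whose inverse is $X_1^{-1}Y_1^{-1}X_1$. Exhibiting this inverse as an element of the subalgebra $\DAHA_N^1$---rather than merely of $\DAHA_N$---is the step I expect to be the main obstacle; the strategy is to express $X_1^{-1}Y_1^{-1}X_1$ as a noncommutative polynomial in $Y_1^{-1}, D_1, X_1, T_i$ by iteratively applying (R8a) and the cross relations, noting that already for $N=1$ one has the clean formula $\phi(Y)=qY$, which makes invertibility transparent in that base case.
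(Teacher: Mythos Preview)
Your outline is correct in structure and coincides with the paper's proof for all relations except the last one, where you and the paper diverge. The paper does \emph{not} verify the dual identity $[X_i,\sum_j D_j]=(1-q)(X_iD_i+1)T_{i-1}\cdots T_1^{2}\cdots T_{i-1}$ by an independent computation. Instead it uses the already-established cross relations (Corollary~\ref{DXij}) to rewrite the $i=1$ case of the last relation in the equivalent form
\[
D_1X_1+1 \;=\; qJ_N(X_1D_1+1),
\]
where $J_N=T_1\cdots T_{N-1}^2\cdots T_1$ is the Jucys--Murphy element. In this form $\phi$-invariance is immediate, since it reduces to the identity $J_N\,\phi(J_N)=1$, which is obvious from $\phi(T_i)=T_i^{-1}$.

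This simplification is not merely aesthetic: it is exactly what dissolves the obstacle you flag in the extension step. From $Y_1=1+X_1D_1$ and the displayed identity one reads off $\phi(Y_1)=1+D_1X_1=qJ_NY_1$, which is manifestly invertible in $\DAHA_N^1$ because $J_N$ is a unit in the finite Hecke algebra and $Y_1$ is invertible by construction of the localization. Your proposed route---identifying $1+D_1X_1$ with $X_1^{-1}Y_1X_1$ inside the ambient $\DAHA_N$ and then trying to express $X_1^{-1}Y_1^{-1}X_1$ as a word in the generators of $\DAHA_N^1$---is precisely what the Jucys--Murphy formula accomplishes in one stroke: $X_1^{-1}Y_1^{-1}X_1=(qJ_NY_1)^{-1}=q^{-1}Y_1^{-1}J_N^{-1}$, which visibly lies in $\DAHA_N^1$. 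Without this rewriting your extension argument is incomplete, and the iterative strategy you sketch (via (R8a) and the cross relations) would in effect rediscover the Jucys--Murphy identity.
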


\begin{proof} It is easy to see that all the relations of Theorem \ref{genrel}
except the last one are invariant under $\phi$. It is sufficient to impose the last relation
for $i=1$, so let us consider this case. The relation has the form
\begin{gather*}
q^{-1}(D_1X_1+1)-(X_1D_1+1)=\\
({\bold t}-{\bold t}^{-1})\sum_{i>1}(X_iT_{i-1}^{-1}...T_2^{-1}T_1^{-1}T_2...T_{i-1}D_1+T_{i-1}...T_1...T_{i-1}).\\
=({\bold t}-{\bold t}^{-1})(\sum_{i>1}T_{i-1}...T_1...T_{i-1})(X_1D_1+1),
\end{gather*}
i.e.,
$$
D_1X_1+1=q(1+({\bold t}-{\bold t}^{-1})\sum_{i>1}T_{i-1}...T_1...T_{i-1})(X_1D_1+1).
$$
Using braid relations, we see that $T_i...T_1...T_i=T_1...T_i...T_1$, so this can be also written as
$$
D_1X_1+1=q(1+({\bold t}-{\bold t}^{-1})\sum_{i>1}T_1...T_{i-1}...T_1)(X_1D_1+1).
$$
Now let $J_N:=T_1...T_{N-1}^2...T_1$ be the braid-like Jucys-Murphy element.
Then, using the quadratic relation for $T_{N-1}$, we get
$$
J_N=({\bold t}-{\bold t}^{-1})T_1...T_{N-1}...T_1+J_{N-1}.
$$
This yields
$$
J_N=1+({\bold t}-{\bold t}^{-1})\sum_{i>1}T_1...T_{i-1}...T_1,
$$
so our relation can be simplified to
\begin{equation}\label{lastrel}
D_1X_1+1=qJ_N(X_1D_1+1).
\end{equation}
The invariance of this relation under $\phi$
reduces to the identity $J_N\phi(J_N)=1$, which is obvious from the definitions of $\phi$ and $J_N$.
This shows that $\phi$ is a well defined automorphism of $\DAHA_N^{1,+}$, which is obviously involutive.

We also see from \eqref{lastrel} that $\phi(Y_1)=qJ_NY_1$, which implies that $\phi$ extends to $\DAHA_N^1$.
\end{proof}

\begin{corollary} (i)  The last defining relation of $\DAHA_N^{1,+}(q,t)$ may be replaced with
$$
[X_i,\sum_{j=1}^N D_j]=(1-q)(X_iD_i+1)T_{i-1}...T_1^2...T_{i-1}.
$$

(ii) The last defining relation of $\DAHA_N^{1,+}(q,t)$ may be replaced with \eqref{lastrel}.
\end{corollary}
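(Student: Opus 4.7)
The plan is to deduce both (i) and (ii) from Proposition~\ref{phiauto} and the computation carried out inside its proof. The key point is that all relations of $\DAHA_N^{1,+}(q,t)$ listed in Theorem~\ref{genrel}(i) \emph{except} the last family are already $\phi$-invariant (this was checked in the proof of Proposition~\ref{phiauto}), so replacing the last family of relations by any $\phi$-image of it yields the same quotient of the free algebra.

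For part (i), I would simply apply $\phi$ to the original last relation
$$
[D_i,\sum_{j=1}^N X_j]=(1-q^{-1})(D_iX_i+1)T_{i-1}^{-1}\cdots T_1^{-2}\cdots T_{i-1}^{-1}.
$$
Using $\phi(X_i)=D_i$, $\phi(D_i)=X_i$, $\phi(T_i)=T_i^{-1}$, $\phi(q)=q^{-1}$, this image is exactly the proposed replacement relation. Formally, the free-algebra involution $\bar\phi$ descends to the quotient by the $\phi$-invariant relations, and $\bar\phi$ interchanges the two-sided ideal generated by the old last family with that generated by the new one; since $\bar\phi$ is an involutive automorphism, the two resulting quotients are canonically isomorphic. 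So (i) reduces to checking that $\phi$ really does swap these two ideals, which is immediate from the formulas.

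For part (ii), the argument has two steps. First, the computation inside the proof of Proposition~\ref{phiauto} already shows that, modulo the $\phi$-invariant relations, the $i=1$ case of the old last relation
$$
[D_1,\textstyle\sum_j X_j]=(1-q^{-1})(D_1X_1+1)
$$
is equivalent to~\eqref{lastrel}; this uses the braid identity $T_i\cdots T_1\cdots T_i=T_1\cdots T_i\cdots T_1$ and the telescoping formula $J_N=1+(\bold t-\bold t^{-1})\sum_{i>1}T_1\cdots T_{i-1}\cdots T_1$ for the Jucys--Murphy element. Second, I need to show that the $i=1$ case already implies the full family (for all $i$). For this I would conjugate by $T_{i-1}^{-1}\cdots T_1^{-1}$ on the left and $T_1^{-1}\cdots T_{i-1}^{-1}$ on the right, using $D_i=T_{i-1}^{-1}\cdots T_1^{-1}D_1T_1^{-1}\cdots T_{i-1}^{-1}$ and $X_i=T_{i-1}\cdots T_1X_1T_1\cdots T_{i-1}$; the braid cancellations give exactly $D_iX_i\cdot T_{i-1}^{-1}\cdots T_1^{-2}\cdots T_{i-1}^{-1}$ on the right-hand side.

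The only nontrivial point—and the place where I would expect to spend the most care—is verifying that the symmetric power sum $\sum_j X_j$ commutes with every $T_k$ inside the free algebra modulo the $\phi$-invariant relations, so that conjugation passes through it. This follows from the quadratic relation rewritten as $T_k-T_k^{-1}=\bold t-\bold t^{-1}$ combined with $T_kX_kT_k=X_{k+1}$: one computes $T_kX_{k+1}=X_kT_k+(\bold t-\bold t^{-1})X_{k+1}$, whence $T_k(X_k+X_{k+1})=(X_k+X_{k+1})T_k$, while $T_k$ manifestly commutes with $X_j$ for $j\neq k,k+1$. Once this commutation is in hand, the two-step argument above yields the converse direction (namely that~\eqref{lastrel}, together with the other relations, recovers the whole old family), completing (ii).
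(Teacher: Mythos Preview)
Your proof is correct and follows essentially the same approach as the paper's (very terse) argument: part (i) comes from applying the involution $\phi$ of Proposition~\ref{phiauto} to the presentation in Theorem~\ref{genrel}, and part (ii) comes from the computation in the proof of Proposition~\ref{phiauto} reducing the $i=1$ relation to~\eqref{lastrel}, together with the conjugation argument of Lemma~\ref{D1S} showing that $i=1$ implies all $i$. Your write-up is more explicit than the paper's (in particular, you spell out the free-algebra/ideal argument for (i) and the commutation of $\sum_j X_j$ with $T_k$ needed for the conjugation), but the route is the same.
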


\begin{proof} (i) follows immediately from Theorem \ref{genrel}, and (ii) from its proof.
\end{proof}

\subsection{A commutative subalgebra in $\DAHA_N^l$}\label{insys} 

In this subsection, we will construct a commutative subalgebra
inside $\DAHA_N^l$.
In the case $l=1$, this subalgebra will reduce to the subalgebra $\Bbb C[D_1,...,D_N]$ constructed in the
previous subsection (so we will obtain an alternative construction of this subalgebra).

Let $f\in \Bbb C[X]$ be any polynomial.
Define the elements
$$
Y_i(f):=Y_iT_{i-1}^{-1}...T_1^{-1}f(X_1^{-1})T_1...T_{i-1}\in \DAHA_N.
$$
Also let $\bold e$ be the symmetrizer of the finite Hecke algebra generated by the $T_i$.
(To define $\bold e$, we need to invert $[N]_t!$).  

\begin{lemma}\label{commu}
(i) The elements $Y_i(f)$ are pairwise commuting:
$$
[Y_i(f),Y_j(f)]=0.
$$

(ii) For $r\ge 1$ the element ${\mathcal M}_r(f)=(\sum_{i=1}^N e_r(Y_1(f),...,Y_N(f)))\bold e$ (where $e_r$ is the $r$-th elementary symmetric function)
commutes with $T_i$, i.e. ${\mathcal M}_r(f)=\bold e {\mathcal M}_r(f)$.
\end{lemma}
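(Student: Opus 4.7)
The plan proceeds in two stages. First, I will extract a clean recursive presentation of $Y_i(f)$ that mirrors the recursion $Y_{i+1}=T_iY_iT_i$ for the ordinary $Y$-generators. Repeated application of $T_jY_jT_j=Y_{j+1}$ in the form $Y_jT_{j-1}^{-1}=T_{j-1}Y_{j-1}$ yields by induction $Y_iT_{i-1}^{-1}\cdots T_1^{-1}=T_{i-1}\cdots T_1\,Y_1$, so the definition of $Y_i(f)$ simplifies to
\[Y_i(f)=T_{i-1}\cdots T_1\cdot Y_1(f)\cdot T_1\cdots T_{i-1},\qquad Y_{i+1}(f)=T_i\,Y_i(f)\,T_i,\]
where $Y_1(f):=Y_1f(X_1^{-1})$. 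Since by (R5) and (R7) the generator $T_j$ commutes with both $X_1$ and $Y_1$ for every $j\ge 2$, the element $Y_1(f)$ commutes with $T_2,\ldots,T_{N-1}$; a routine braid manipulation then propagates this to $[Y_i(f),T_j]=0$ for all $j\notin\{i-1,i\}$. Combined with $Y_{i+1}(f)=T_iY_i(f)T_i$ and the Hecke quadratic relation $T_i^2=1+({\bold t}-{\bold t}^{-1})T_i$, this yields Bernstein--Lusztig-type identities
\[T_iY_i(f)=Y_{i+1}(f)T_i-({\bold t}-{\bold t}^{-1})Y_{i+1}(f),\quad T_iY_{i+1}(f)=Y_i(f)T_i+({\bold t}-{\bold t}^{-1})Y_{i+1}(f).\]

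Next, I reduce the pairwise commutativity to a single rank-two computation. For $i<j$, the commutativity of $Y_j(f)$ with $T_1,\ldots,T_{i-1}$ (from above) and of $Y_1(f)$ with $T_2,\ldots,T_{j-1}$ allows one to move these $T$'s past the commutator to obtain
\[[Y_i(f),Y_j(f)]=T_{i-1}\cdots T_1\cdot[Y_1(f),Y_j(f)]\cdot T_1\cdots T_{i-1},\]
\[[Y_1(f),Y_j(f)]=T_{j-1}\cdots T_2\cdot[Y_1(f),Y_2(f)]\cdot T_2\cdots T_{j-1}.\]
Hence part (i) reduces to the single identity $[Y_1(f),Y_2(f)]=0$. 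Substituting $Y_2(f)=Y_2T_1^{-1}f(X_1^{-1})T_1$ and using $T_1Y_1=Y_2T_1^{-1}$, this becomes
\[Y_1f(X_1^{-1})\,Y_2\,T_1^{-1}f(X_1^{-1})\,T_1\;=\;Y_2\,T_1^{-1}f(X_1^{-1})\,Y_2\,T_1^{-1}f(X_1^{-1}),\]
which I would verify by iteratively commuting each monomial $X_1^{-n}$ past $Y_2$ via the fundamental cross relation (R8), rewritten as $X_1Y_2=Y_2X_1T_1^2$, together with the auxiliary identity $T_1^{-1}X_1^{-1}=X_2^{-1}T_1$, the commutativity $[Y_1,Y_2]=0$, and the Hecke quadratic relation.

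Granted part (i), part (ii) is a formal consequence. The Bernstein--Lusztig identities above imply the classical Lusztig lemma that any polynomial in $Y_1(f),\ldots,Y_N(f)$ symmetric in its arguments commutes with every $T_i$, since the divided-difference correction from swapping $Y_i(f)\leftrightarrow Y_{i+1}(f)$ vanishes when the polynomial is already symmetric under that swap. In particular $e_r(Y_1(f),\ldots,Y_N(f))$ commutes with every $T_i$, hence with the idempotent ${\bold e}$, and using $\bold e^2=\bold e$ one obtains $\bold e\,\mathcal{M}_r(f)=\bold e\,e_r(Y_1(f),\ldots,Y_N(f))\bold e=e_r(Y_1(f),\ldots,Y_N(f))\bold e^2=\mathcal{M}_r(f)$.

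The main obstacle is the rank-two identity $[Y_1(f),Y_2(f)]=0$: every other step is formal braid-group or symmetrization manipulation, but this single identity essentially uses the DAHA cross relation (R8) to commute the polynomial $f(X_1^{-1})$ through $Y_2$ inside the subalgebra generated by $X_1, Y_1, Y_2, T_1$.
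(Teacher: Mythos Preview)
Your reductions are sound: the rewriting $Y_i(f)=T_{i-1}\cdots T_1\,Y_1(f)\,T_1\cdots T_{i-1}$, the commutations $[Y_i(f),T_j]=0$ for $j\notin\{i-1,i\}$, the Bernstein--Lusztig identities, and the deduction of (ii) from (i) are all correct. Once (i) holds, any symmetric polynomial in the commuting family $Y_1(f),\ldots,Y_N(f)$ is a polynomial in $Y_i(f)+Y_{i+1}(f)$, $Y_i(f)Y_{i+1}(f)$, and the remaining $Y_j(f)$, each of which commutes with $T_i$; so (ii) follows. The reduction of (i) to the single rank-two identity $[Y_1(f),Y_2(f)]=0$ is also valid.

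The gap is exactly where you locate it. Your plan to commute monomials $X_1^{-n}$ past $Y_2$ via $X_1Y_2=Y_2X_1T_1^2$ does not close cleanly: one gets $X_1^{-1}Y_2=Y_2T_1^{-2}X_1^{-1}$, but $T_1^{-2}$ does not commute with $X_1^{-1}$, so iteration produces intertwined words in $T_1^{\pm1},X_1^{-1},X_2^{-1}$ with no evident inductive structure. For $f(X)=X$ the required identity is essentially the braid-group identity~\eqref{id1} of Lemma~\ref{D12} under the Cherednik involution, and that already takes a careful chase; for a general polynomial $f$ you give no mechanism to control the growth of the computation, and the map $f\mapsto Y_i(f)$ is not linear, so one cannot reduce to the monomial case.

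The paper bypasses this with a gauge-transformation argument in the faithful polynomial representation: choose a meromorphic $g$ with $g(q^{-1}X)=g(X)f(X^{-1})$, set $G=\prod_j g(X_j)$, and verify from the explicit formula for $\rho(Y_i)$ in Proposition~\ref{polyre} that $\rho(Y_i(f))=G\,\rho(Y_i)\,G^{-1}$. Commutativity of the $Y_i$ then transfers immediately to the $Y_i(f)$, and (ii) follows because $G$ is symmetric and $\mathcal{M}_r$ acts by a Macdonald operator. This is uniform in $f$ and needs no rank-two calculation. Your route, if it could be completed, would have the merit of staying inside $\DAHA_N$ without passing to a representation, but as written the crucial step is not carried out.
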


\begin{proof} (i) We may specialize the variables to numerical values.
Let us compute the action of $Y_i(f)$ in the polynomial representation. Using Proposition \ref{polyre}, we get
$$
\rho(Y_i(f))={\bold t}^{N-1}\rho(T_i^{-1}...T_{N-1}^{-1})\omega f(X_1)\rho(T_1...T_{i-1}).
$$
Let $g(X)$ be a meromorphic function such that $g(q^{-1}X)=g(X)f(X^{-1})$.
Let us extend the polynomial representations to meromorphic functions,
and conjugate it by the function $G(X_1,...,X_N):=g(X_1)...g(X_N)$
This gives a representation $\rho_G$ such that
$$
\rho_G(Y_i)=G\rho(Y_i)G^{-1}={\bold t}^{N-1}\rho(T_i^{-1}...T_{N-1}^{-1})G\omega G^{-1} \rho(T_1...T_{i-1})=
$$
$$
{\bold t}^{N-1}\rho(T_i^{-1}...T_{N-1}^{-1})\omega \frac{G(q^{-1}X_1)}{G(X_1)} \rho(T_1...T_{i-1})=
$$
$$
{\bold t}^{N-1}\rho(T_i^{-1}...T_{N-1}^{-1})\omega f(X_1^{-1}) \rho(T_1...T_{i-1})=\rho(Y_i(f)).
$$
Thus $\rho(Y_i(f))$ are pairwise commuting, hence so are $Y_i(f)$.

(ii) It suffices to show that ${\mathcal M}_r(f)$ maps symmetric polynomials to symmetric ones.
But this follows from the fact that $\rho({\mathcal M}_r(f))=\rho_G({\mathcal M}_r)$,
where ${\mathcal M}_r=\sum_{i=1}^Ne_r(Y_1,...,Y_N)$, and ${\mathcal M}_r$ act
in the polynomial representation by Macdonald difference operators, \cite{Ch1}.
\end{proof}

Now let $f(X)=(X-Z_1)...(X-Z_l)$. Let us apply the Cherednik involution $\varphi$ to the elements $Y_i(f)$.
Namely, let
$$
D_i^{(l)}:=\varphi(Y_i(f))=T_{i-1}^{-1}...T_1^{-1}D_1^{(l)}T_1^{-1}...T_{i-1}^{-1}\in \DAHA_N,
$$
where, $D_1^{(l)}:=X_1^{-1}(Y_1-Z_1)...(Y_1-Z_l)$.

\begin{corollary}\label{commu1} (i) The elements $D_i^{(l)}$, $i=1,...,N$,
are pairwise commuting.

(ii) The elements ${\bold M}_r(f):=\varphi({\mathcal M}_r(f))$ commute with $T_i$.
\end{corollary}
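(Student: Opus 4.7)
The plan is to deduce both parts of Corollary \ref{commu1} directly from Lemma \ref{commu} by transporting the statements through the Cherednik involution $\varphi$. Recall that $\varphi$ is an involution of $\DAHA_N$ satisfying $\varphi(X_i) = Y_i^{-1}$, $\varphi(Y_i) = X_i^{-1}$, $\varphi(T_i) = T_i^{-1}$, $\varphi(q) = q^{-1}$, $\varphi(\bold t) = \bold t^{-1}$. The only property of $\varphi$ I need is that it is bijective on $\DAHA_N$ and respects vanishing of commutators, which holds regardless of whether $\varphi$ is realized as an automorphism or as an anti-automorphism.

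Before applying $\varphi$, I would verify that the two formulas the text gives for $D_i^{(l)}$ do agree, i.e., that $\varphi(Y_i(f)) = T_{i-1}^{-1}\cdots T_1^{-1} D_1^{(l)} T_1^{-1}\cdots T_{i-1}^{-1}$ with $D_1^{(l)} = X_1^{-1}(Y_1-Z_1)\cdots(Y_1-Z_l)$. For $i = 1$ this is immediate: $\varphi(Y_1 f(X_1^{-1})) = X_1^{-1} f(Y_1) = D_1^{(l)}$. The general case reduces to the $i=1$ case by using the identity $Y_i = T_{i-1}\cdots T_1 Y_1 T_1 \cdots T_{i-1}$ (a consequence of the relation $T_j Y_j T_j = Y_{j+1}$) to rewrite $Y_i(f) = T_{i-1}\cdots T_1 Y_1(f) T_1 \cdots T_{i-1}$, and then applying $\varphi$ together with $\varphi(T_j) = T_j^{-1}$.

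For part (i), I apply $\varphi$ to the relation $[Y_i(f), Y_j(f)] = 0$ from Lemma \ref{commu}(i); bijectivity and commutator-compatibility yield $[D_i^{(l)}, D_j^{(l)}] = 0$. For part (ii), Lemma \ref{commu}(ii) gives $T_i {\mathcal M}_r(f) = {\mathcal M}_r(f) T_i$, and applying $\varphi$ produces $[T_i^{-1}, {\bold M}_r(f)] = 0$; this is equivalent to $[T_i, {\bold M}_r(f)] = 0$ because the Hecke relation $(T_i - \bold t)(T_i + \bold t^{-1}) = 0$ writes $T_i^{-1}$ as a linear combination of $T_i$ and the identity. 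There is essentially no obstacle here: all the substantive content is already packaged into Lemma \ref{commu}, and the corollary is a one-step transport through $\varphi$.
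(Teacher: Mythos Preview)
Your proposal is correct and takes essentially the same approach as the paper: the paper's proof is the single line ``This follows from Lemma~\ref{commu},'' and you have simply unpacked that line by transporting Lemma~\ref{commu} through the Cherednik involution $\varphi$, together with the observation that commuting with $T_i^{-1}$ is equivalent to commuting with $T_i$ via the Hecke relation. Your extra verification that the two descriptions of $D_i^{(l)}$ agree (via $Y_i(f) = T_{i-1}\cdots T_1\, Y_1(f)\, T_1\cdots T_{i-1}$) is a helpful sanity check that the paper leaves implicit.
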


\begin{proof} This follows from Lemma \ref{commu}.
\end{proof}

\begin{example} Let $l=1$ and $f(X)=X-1$. Then we get
$D_i^{(1)}=D_i$. Thus, we recover the $q$-deformed Dunkl operators
of \cite{BF} described in the previous subsection and therefore
obtain another proof of Corollary \ref{Dij}.
\end{example}

Corollary \ref{commu1}(ii) implies that the elements ${\bold M}_r(f)$ act on symmetric functions by some commuting symmetric difference operators
$M_r(f)=:M_r^{(l)}$. Thus for each $l$ we obtain a family of quantum integrable systems $\lbrace{ M_1^{(l)},...,M_N^{(l)}\rbrace}$
depending on $l$ parameters $Z_1,...,Z_l$ (and also $q$, $t$). This system is a $q$-deformation
of the cyclotomic Calogero-Moser system.

\begin{example}\label{bfpaper} (\cite{BF}, Lemma 5.3) We have 
$$
M_1^{(1)}=\sum_{j=1}^N \left(\prod_{i\ne j}\frac{X_i-tX_j}{X_i-X_j}\right)\frac{1}{X_j}(\tau_j-1).
$$
Thus, this operator defines a quantum integrable system. 
\end{example}

\begin{remark}\label{ruij} J. F. Van Diejen and S. Ruijsenaars have explained to us that the system defined by the operator $M_1:=M_1^{(1)}$ from 
Example \ref{bfpaper} may be obtained as a limit of the system from \cite{DE} defined by the Hamiltonian (3.13a). Namely, conjugating by 
the Gaussian $\exp(\sum_i \frac{(\log X_i)^2}{2\log q})$ and rescaling $X_i$ and $M_1$, we can reduce the operator $M_1$ to the form
$$
M_1'= \sum_{j=1}^N \left(\prod_{i\ne j}\frac{X_i-tX_j}{X_i-X_j}\right)\tau_j-\sum_{j=1}^N X_j^{-1}=M-\sum_{j=1}^N X_j^{-1},
$$
where $M$ is the first Macdonald operator. (Here we use the identity at the beginning of~\cite[p.~1621]{DE}). 
On the other hand, let us multiply the Hamiltonian~\cite[(3.13a)]{DE} by $\hat{t}_0$ and then
put $\hat{t}_1 =  \hat{t}_0^{-1}$ and send $\hat{t}_0$ to $0$. Conjugating the resulting operator by an appropriate function, and 
again using the identity at the beginning of~\cite[p.~1621]{DE}, one obtains the operator $M_1'$. 
\end{remark}

\begin{remark}\label{cha1} 
It is interesting to compute the joint eigenfunctions of $D_1^{(l)},...D_N^{(l)}$ and symmetric joint eigenfunctions of $M_1^{(l)},...,M_N^{(l)}$. For
$l=1$, this is done in \cite{BF}. Let us sketch how this can be done
for any $l$ in the nonsymmetric case (without working out any details).

For simplicity, assume that $Z_1...Z_l=(-1)^l$ (this can be assumed without loss of generality, as we can simultaneously rescale the $Z_i$).
Then $f(0)=1$, so for $q>1$ we can set
$g(X)=\prod_{m=1}^\infty f(q^{-m}X^{-1})$.
Given a collection $\Lambda=(\lambda_1,...,\lambda_N)$ of
eigenvalues, let $F(X_1,...,X_N,\Lambda)$ be the joint eigenfunction
of $Y_i$ (i.e., the nonsymmetric Macdonald function).
Then
$$
\widetilde{F}(X_1,...,X_N,\Lambda):=g(X_1)...g(X_N)F(X_1,...,X_N,\Lambda)
$$
is a joint eigenfunction of $Y_i(f)$. So the joint eigenfunctions of $D_i^{(l)}$ can be obtained by applying Cherednik's difference Fourier
transform (\cite{Ch1}) to $\widetilde{F}(X_1,...,X_N,\Lambda)$:
\begin{equation}\label{fourier} 
\widehat{F}(X_1,...,X_N,\Lambda)={\mathcal F}_{\rm Cherednik}(\widetilde{F}(X_1,...,X_N,\Lambda)).
\end{equation}
For $l=1$ this should recover the formulas of \cite{BF}.

Also, the following observation was made by O. Chalykh when we sent him a preliminary version of this paper. 
Consider the case where $Z_1=...=Z_l=0$ (this violates our restriction that $Z_i\in \Bbb C^*$, but this restriction is not essential here). 
In this case, $f(X)=X^l$, and one can take $g(X)$ to be the $l$-th power of the Gaussian 
$g(X)=\exp(\frac{(\log X)^2}{2\log q})$ (this function is not single-valued but this is not important if we restrict to the locus $q,X\in \Bbb R_+$). 
Therefore, the symmetrized version of formula \eqref{fourier} turns into formula (7.5) in O. Chalykh's appendix to \cite{CE} (up to changing $q$ to $q^{-1}$). 
This shows that the Hamiltonians of the twisted Macdonald-Ruijsenaars model of Theorem 7.1 in the appendix to \cite{CE} (for type $A_{N-1}$) 
are a special case of the commuting Hamiltonians $e_r(D_1^{(l)},...,D_N^{(l)})$ when $Z_1=...=Z_l=0$. 
\end{remark}

\subsection{Presentation of cyclotomic DAHA by generators and relations}
Let us give a presentation of $\DAHA_N^l(Z,q,t)$ by generators and relations. 
We will use as generators the elements $T_i$, $i=1,...,N-1$, and $X_i,Y_i^{\pm 1},D_i^{(l)}$, $i=1,...,N$. 

First of all, the elements $T_i,X_i,Y_i^{\pm 1}$ satisfy the relations of DAHA, (R1-R12) from Definition \ref{DAHAdef}. 
More precisely, we need to rewrite relations (R8-R10) to account for non-invertibility of $X_i$. 
Namely, from (R8-R10) we get
\begin{equation}\label{cd1}
X_iY_j=Y_jX_iT_{j-1}^{-1}...T_{i+1}^{-1}T_i^2T_{i+1}...T_{j-1},\ i<j. 
\end{equation}
Similarly, we have 
\begin{equation}\label{cd2}
Y_iX_j=T_{j-1}^{-1}...T_{i+1}^{-1}T_i^2T_{i+1}...T_{j-1}X_jY_i,\ i<j. 
\end{equation}
Finally, we have 
\begin{equation}\label{cd3}
Y_iT_{i-1}^{-1}...T_1^{-2}...T_{i-1}^{-1}X_i=qX_iT_i...T_{N-1}^2...T_iY_i.
\end{equation} 
Secondly, we have similar relations between $D_j^{(l)}$ (instead of $X_j$) and $T_i,Y_i$. 
Namely, we know from Corollary \ref{commu1} that 
\begin{equation}\label{cd4}
[D_i^{(l)},D_j^{(l)}]=0, 
\end{equation} 
and it follows from the definition that 
\begin{equation}\label{cd5}
T_i^{-1}D_i^{(l)}T_i^{-1}=D_{i+1}^{(l)},\ [T_j,D_i^{(l)}]=0\text{ for }|i-j|\ge 2.
\end{equation} 
Also since $X_1^{-1}Y_2=Y_2T_1^{-2}X_1^{-1}$, we have 
$$
D_1^{(l)}Y_2=Y_2T_1^{-2}D_1^{(l)}.
$$
This implies that 
\begin{equation}\label{cd6}
D_i^{(l)}Y_j=Y_jT_{j-1}^{-1}....T_{i+1}^{-1}T_i^{-2}T_{i+1}...T_{j-1}D_i^{(l)},\ i<j.
\end{equation} 
Similarly, we have 
$$
D_2^{(l)}T_1^2Y_1=Y_1D_2^{(l)},
$$
which implies that 
\begin{equation}\label{cd7}
D_j^{(l)}T_{j-1}...T_{i+1}T_i^2T_{i+1}^{-1}...T_{j-1}^{-1}Y_i=Y_iD_j^{(l)},\ i<j. 
\end{equation} 
Finally, we have 
$$
D_1^{(l)}Y_1=qT_1...T_{N-1}^2...T_1Y_1D_1^{(l)},
$$
which gives
\begin{equation}\label{cd8} 
D_i^{(l)}Y_iT_{i-1}^{-1}...T_1^{-2}...T_{i-1}^{-1}=T_i...T_{N-1}^2...T_iY_iD_i^{(l)}.
\end{equation} 

Finally, we write commutation relations between $X_i$ and $D_j^{(l)}$. 
First of all, we have 
\begin{equation}\label{cd9}
X_1D_1^{(l)}=(Y_1-Z_1)...(Y_1-Z_l),\ D_1^{(l)}X_1=(qJ_NY_1-Z_1)...(qJ_NY_1-Z_l), 
\end{equation} 
where $J_N=T_1...T_{N-1}^2...T_1$. Also we have 
$$
[D_1^{(l)},X_2]=\sum_{r=1}^l X_1^{-1}(Y_1-Z_1)...(Y_1-Z_{r-1})[Y_1,X_2](Y_1-Z_{r+1})...(Y_1-Z_l)=
$$
$$
\sum_{r=1}^l X_1^{-1}(Y_1-Z_1)...(Y_1-Z_{r-1})(T_1^{-2}-1)X_2Y_1(Y_1-Z_{r+1})...(Y_1-Z_l)=
$$
$$
(\bold t^{-1}-\bold t)\sum_{r=1}^l X_1^{-1}(Y_1-Z_1)...(Y_1-Z_{r-1})T_1^{-1}X_2Y_1(Y_1-Z_{r+1})...(Y_1-Z_l)=
$$
$$
(\bold t^{-1}-\bold t)\sum_{r=1}^l X_1^{-1}(Y_1-Z_1)...(Y_1-Z_{r-1})X_1T_1Y_1(Y_1-Z_{r+1})...(Y_1-Z_l)=
$$
$$
=(\bold t^{-1}-\bold t)\sum_{r=1}^l (qJ_NY_1-Z_1)...(qJ_NY_1-Z_{r-1})T_1Y_1(Y_1-Z_{r+1})...(Y_1-Z_l).
$$
Therefore, we have 
\begin{multline}\label{cd10}  
[D_1^{(l)},X_2]=\\
(\bold t^{-1}-\bold t)\sum_{r=1}^l (qJ_NY_1-Z_1)...(qJ_NY_1-Z_{r-1})Y_2T_1^{-2}(Y_2T_1^{-2}-Z_{r+1})...(Y_2T_1^{-2}-Z_l)T_1.
\end{multline} 
Now the commutation relations between $D_i^{(l)}$ and $X_j$ can be obtained from this by applying the elements 
$T_m$. 

Thus, we have the following proposition. Let $\DAHA_N^{l,+}$ be the 
``unlocalized" version of the cyclotomic DAHA, 
generated by $T_i$, $X_i$, $D_i^{(l)}$, and $Y_i$ (without $Y_i^{-1}$). 

\begin{proposition}\label{cycDAHAbas} Let $M_X$ be a monomial in $X_i$, $M_D$ a monomial in $D_i^{(l)}$, 
$M_Y$ a monomial in $Y_i$ with powers of $Y_i$ being $\le l-1$. Then the monomials of the form $M_XM_YT_sM_D$,
$s\in S_N$ form a basis in $\DAHA_N^{l,+}$. 
\end{proposition}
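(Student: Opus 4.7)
The plan is to mimic the argument of Proposition \ref{basi} but with the commutation relations \eqref{cd1}--\eqref{cd10}, and then deduce linear independence from the flatness statement of Theorem \ref{main2}. As a first step, I would introduce the filtration $F^\bullet$ on $\DAHA_N^{l,+}$ defined by $\deg(T_i)=0$, $\deg(Y_i)=2$, $\deg(X_i)=\deg(D_i^{(l)})=l$, and verify directly from the listed commutation relations that each swap of adjacent generators produces only terms of strictly lower filtration degree. In particular, for $i<j$ the relations \eqref{cd1}, \eqref{cd2}, \eqref{cd6}, \eqref{cd7} move $X$'s to the left and $D^{(l)}$'s to the right past $Y$'s (and past each other using \eqref{cd4}), up to corrections of lower filtration.

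The only genuinely nontrivial swaps are those of $X_i$ past $D_j^{(l)}$, governed by \eqref{cd9}--\eqref{cd10} and their $S_N$-conjugates. Here the key observation is the identity $X_1D_1^{(l)}=(Y_1-Z_1)\cdots(Y_1-Z_l)$: whenever a given index $i$ carries \emph{both} an $X_i$ and a $D_i^{(l)}$ in the product, this relation (together with its conjugates by $T$'s) allows one to trade the pair $X_iD_i^{(l)}$ for a polynomial in $Y_i$ of degree $l$, so after sufficiently many applications every occurrence of $Y_i$ in the resulting monomial can be reduced to a power $\le l-1$. Carefully applying the above swaps and the $X_iD_i^{(l)}$-reduction in order of decreasing filtration degree (using induction on this degree) shows that every element of $\DAHA_N^{l,+}$ is a linear combination of the standard monomials $M_XM_YT_sM_D$ of the proposed form.

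For linear independence I would appeal to Theorem \ref{main2} together with Proposition \ref{basi}. Under the formal isomorphism of Theorem \ref{main2}, the formal cyclotomic DAHA $\DAHA_N^{l,\mathrm{formal}}(z,\hbar,k)$ is a flat deformation of the degenerate algebra $\DAHA_{N,\mathrm{deg}}^l(z,\hbar,k)$, and the generators $T_i$, $X_i$, $Y_i$, $D_i^{(l)}$ degenerate (up to the usual renormalizations and the substitution $Y_i=e^{\varepsilon y_i}$) to the generators $s_i$, $X_i$, $y_i$, $D_i^{(l)}$ used in Proposition \ref{basi}. The standard monomials in the $q$-setting thus reduce modulo $\varepsilon$ to the standard monomials of Proposition \ref{basi}, which form a basis there. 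Hence linear independence in the cyclotomic DAHA follows from linear independence in its degenerate limit by a Nakayama-type argument, giving a basis at the formal level and, after specializing $\hbar,k,z$, at the level of $\DAHA_N^l(Z,q,t)$ as well.

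The main obstacle is the spanning step: the $X_iD_j^{(l)}$ commutation relation \eqref{cd10} and its $T$-conjugates produce long sums of terms whose filtration degree must be controlled carefully, and one must check that the ``reduction of $Y_i$-powers'' step using $X_iD_i^{(l)}=\prod_r(Y_i+\cdots-Z_r)$ actually terminates, i.e.\ that iterating the procedure does not reintroduce high $Y$-powers together with new $X_i$'s and $D_i^{(l)}$'s. This is where the choice of filtration matters: assigning $\deg(X_i)=\deg(D_i^{(l)})=l$ and $\deg(Y_i)=2$ ensures that replacing $X_iD_i^{(l)}$ by a polynomial of degree $l$ in $Y_i$ strictly decreases the filtration (by $2l$ versus $\le 2l$, with the inequality strict once corrections from non-commutation of $Y_i$ with $T$'s are accounted for), so the induction closes.
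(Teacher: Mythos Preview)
Your spanning argument follows the paper's strategy (same filtration $\deg T_i=0$, $\deg Y_i=2$, $\deg X_i=\deg D_i^{(l)}=l$), but two points are off. First, the role of the relation $X_1D_1^{(l)}=\prod_r(Y_1-Z_r)$ is stated backwards: after ordering a general monomial as $M_XM_YT_sM_D$ the task is to \emph{reduce} high powers of $Y_i$, so one uses the relation in the direction
\[
Y_i^l = X_iD_i^{(l)}\,T_{i-1}\cdots T_1^2\cdots T_{i-1}+\text{(corrections)},
\]
not to eliminate coincident $X_i$ and $D_i^{(l)}$. Second, your termination claim fails: replacing $Y_i^l$ by $X_iD_i^{(l)}\cdot(T\text{'s})$ plus corrections does \emph{not} strictly lower the filtration degree, since both sides sit in degree $2l$ (the leading term on the right is $(Y_iT_{i-1}^{-1}\cdots T_1^{-2}\cdots T_{i-1}^{-1})^l$, of degree exactly $2l$). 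The paper closes this with a finer induction: the correction terms either have total $Y$-degree $<l$, or have $Y$-degree $l$ but involve only $Y_j$ with $j<i$ (see the worked example for $i=2$, $l=2$ in the paper), so one may induct lexicographically on $(i,\text{$Y$-degree})$.

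For linear independence the paper simply appeals to the polynomial representation, which is more direct than your degeneration argument and avoids a gap you glossed over: under $Y_i=e^{\varepsilon y_i}$ a monomial $\prod_i Y_i^{a_i}$ reduces modulo $\varepsilon$ to $1$, not to $\prod_i y_i^{a_i}$, so the standard $q$-monomials do \emph{not} literally reduce to the standard degenerate monomials of Proposition~\ref{basi}. One can salvage this by passing to the upper-triangular basis $\prod_i(Y_i-1)^{a_i}$ and tracking powers of $\varepsilon$, but the polynomial-representation check is cleaner.
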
 

\begin{proof} It is easy to see from considering the polynomial representation that these monomials are linearly independent. 
Therefore, it remains to establish the spanning property. We consider 
a filtration with $\deg(T_i)=0$, $\deg(X_i)=\deg(D_i^{(l)})=l$, 
$\deg(Y_i)=2$. We can use the above commutation relations to order any monomial without increasing its degree. 
Then we can reduce the degrees of $Y_i$ below $l$ by using the relation $X_1D_1^{(l)}=(Y_1-Z_1)...(Y_1-Z_l)$
and its conjugates by the $T_i$. Namely, we have the relation 
$$
X_iD_i^{(l)}=(Y_iT_{i-1}^{-1}...T_1^{-2}...T_{i-1}^{-1}-Z_1)...(Y_iT_{i-1}^{-1}...T_1^{-2}...T_{i-1}^{-1}-Z_l),
$$
which can be used to express the monomial $Y_i^l$ as 
$$
Y_i^l=X_iD_i^{(l)}T_{i-1}...T_1^2...T_{i-1}+...
$$
where ... is a linear combination of ordered monomials of degrees $<l$ in $Y_i,T_i$ and monomials of degree $l$ 
involving $Y_i$ in degree $<l$ and some $Y_j$ with $j<i$. For example, for $i=2$, $l=2$ we get 
$$
X_2D_2^{(l)}=(Y_2T_1^{-2}-Z_1)(Y_2T_1^{-2}-Z_2),
$$
so we get 
$$
Y_2T_1^{-2}Y_2=X_2D_2^{(l)}T_1^2+\text{ lower degree terms }.
$$
But 
$$
Y_2T_1^{-2}Y_2=Y_2^2+(\bold t^{-1}-\bold t)Y_2Y_1T_1,
$$
so 
$$
Y_2^2=X_2D_2^{(l)}T_1^2+(\bold t-\bold t^{-1})Y_2Y_1T_1+\text{ lower degree terms }.
$$
This implies the required spanning property. 
\end{proof} 

\begin{theorem}\label{relcycDAHA} (i) The algebra $\DAHA_N^{l,+}$ is generated by 
$T_i$, $X_i$, $D_i:=D_i^{(l)}$, and $Y_i$ with the following defining relations:

(1) relations (R1-R7), (R11), (R12) of DAHA; 

(2) the relations \eqref{cd1}---\eqref{cd10}.

The algebra $\DAHA_N^l$ is defined by the same generators and relations, adding the condition that $Y_i$ are invertible. 
\end{theorem}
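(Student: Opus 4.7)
The plan is to mirror the argument for the degenerate analogue Proposition \ref{relacycdaha}, using the basis of Proposition \ref{cycDAHAbas} as the key input. The relations in (1) hold inside $\DAHA_N(q,t)$ and hence inside $\DAHA_N^{l,+}$, while the relations \eqref{cd1}--\eqref{cd10} have all been derived in the paragraphs preceding the statement by direct computation from the DAHA relations and the definition of $D_i^{(l)}$. Consequently there is a canonical surjection from the abstract algebra $B$ presented by the stated generators and relations onto $\DAHA_N^{l,+}$, and the remaining task is to show that this surjection is an isomorphism.

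I would proceed by producing inside $B$ the same spanning set of ordered monomials $M_X M_Y T_s M_D$ (with powers of each $Y_i$ at most $l-1$) that Proposition \ref{cycDAHAbas} shows is a basis of $\DAHA_N^{l,+}$. To do this, equip $B$ with the filtration $\deg(T_i)=0$, $\deg(Y_i)=2$, $\deg(X_i)=\deg(D_i)=l$, and observe that every commutation relation in (2) is filtration-compatible: the leading term on the right is an ordered monomial of the same degree, while all corrections (from the $\bold t-\bold t^{-1}$ and $q-1$ factors in \eqref{cd3}, \eqref{cd8}, \eqref{cd10} and its $S_N$-translates, and from the extra $T$-factors in \eqref{cd1}, \eqref{cd2}, \eqref{cd6}, \eqref{cd7}) have strictly smaller filtration degree. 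Straightening an arbitrary monomial into the order $M_X \cdot M_Y \cdot T_s \cdot M_D$ modulo lower-degree terms is then a straightforward induction; finally, the relation \eqref{cd9}, written as
\[
X_i D_i = \prod_{j=1}^l \bigl(Y_i T_{i-1}^{-1}\cdots T_1^{-2}\cdots T_{i-1}^{-1}-Z_j\bigr),
\]
expresses every monomial containing $Y_i^l$ in terms of $X_i D_i$ and lower-degree corrections, exactly as in the proof of Proposition \ref{cycDAHAbas}. Iterating drives every element of $B$ into the claimed normal form, and since the image of these monomials is already a basis of $\DAHA_N^{l,+}$, the surjection is an isomorphism.

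The main obstacle will be showing that the rewriting procedure actually terminates: the corrections in \eqref{cd10} and its $S_N$-conjugates involve long Hecke-braid products, and when these are themselves straightened new monomials of nominally lower degree but higher combinatorial complexity can reappear. I would handle this by introducing a well-order on unreduced monomials (first by total filtration degree, then lexicographically by the multi-indices of $M_X$, $M_D$, $M_Y$ and the length of $s$) and verifying that each elementary rewrite strictly decreases it. If that bookkeeping becomes unwieldy, a clean alternative is available: by Theorem \ref{main2}, $\DAHA_N^{l,\mathrm{formal}}$ is a flat deformation of $\DAHA_{N,\mathrm{deg}}^l$, whose presentation is known from Proposition \ref{relacycdaha}; the relations \eqref{cd1}--\eqref{cd10} reduce modulo $\varepsilon$ to those of the degenerate cyclotomic DAHA, so a graded dimension comparison combined with Nakayama's lemma forces injectivity of $B\twoheadrightarrow\DAHA_N^{l,+}$. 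The second sentence of the theorem then follows by formally adjoining $Y_i^{-1}$ to $B$, since $Y_i$ is already invertible in the ambient $\DAHA_N(q,t)$.
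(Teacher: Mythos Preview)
Your proposal is correct and follows essentially the same approach as the paper: the paper's proof is simply the observation that the relations have already been verified and that the straightening argument in the proof of Proposition~\ref{cycDAHAbas} (which uses exactly these relations) shows any monomial in the abstract algebra reduces to a linear combination of the basis monomials $M_X M_Y T_s M_D$, whence the surjection onto $\DAHA_N^{l,+}$ is an isomorphism. Your additional care about termination of the rewriting procedure and the alternative route via Theorem~\ref{main2} and Nakayama's lemma are more detailed than the paper, which simply defers to the proof of Proposition~\ref{cycDAHAbas} without further comment.
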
 

\begin{proof} We have shown that these relations hold. Moreover, it was shown in the proof of Proposition \ref{cycDAHAbas} that 
using these relations, we can reduce every monomial to a linear combination of basis monomials. 
This implies that the relations are defining.  
\end{proof}  

\begin{proposition}\label{invo} There is an involutive automorphism $\phi$ of $\DAHA_N^l$ such that $\phi(X_i)=D_i$, $\phi(D_i)=X_i$, $\phi(T_i)=T_i^{-1}$, $\phi(\bold t)=\bold t^{-1}$, $\phi(q)=q^{-1}$ and 
$$
\phi(Y_i)=qT_i...T_{N-1}^2...T_iY_iT_{i-1}^{-1}...T_1^{-2}...T_{i-1}^{-1}
$$
This automorphism preserves the subalgebra $\DAHA_N^{l,+}$.
\end{proposition}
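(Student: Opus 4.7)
The plan is to verify directly that the prescribed values of $\phi$ on generators preserve each defining relation of $\DAHA_N^l$ listed in Theorem~\ref{relcycDAHA}, after which involutivity and preservation of $\DAHA_N^{l,+}$ follow by short checks. Throughout I abbreviate $A_i := T_i T_{i+1} \cdots T_{N-1}^2 \cdots T_{i+1} T_i$ and $B_i := T_{i-1}^{-1} \cdots T_1^{-2} \cdots T_{i-1}^{-1}$, so that $\phi(Y_i) = q A_i Y_i B_i$. The crucial observation $\phi(A_i) = A_i^{-1}$ and $\phi(B_i) = B_i^{-1}$ (immediate from $\phi(T_j) = T_j^{-1}$) simplifies the bookkeeping and, as seen below, delivers involutivity on the $Y_i$'s.

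Most relations are straightforward once the $X \leftrightarrow D$, $T_i \leftrightarrow T_i^{-1}$, $(q, \bold t) \leftrightarrow (q^{-1}, \bold t^{-1})$ symmetry is exploited: the finite Hecke relations (R1)--(R5) of Definition~\ref{DAHAdef} and their $D$-analog~\eqref{cd5} map to each other (the quadratic relation is self-symmetric; braid relations are symmetric under $T_i \leftrightarrow T_i^{-1}$); the $X$-commutativity (R11) maps to the $D$-commutativity~\eqref{cd4}; and the cross-relations~\eqref{cd1}--\eqref{cd3} map to~\eqref{cd6}--\eqref{cd8} after braid manipulations and the commutation relations between $Y_j$ and the $T$-factors of $A_j, B_j$. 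The definitional identity $X_1 D_1 = \prod_r (Y_1 - Z_r)$ from~\eqref{cd9} is especially clean: $\phi$ sends it to $D_1 X_1 = \prod_r (q J_N Y_1 - Z_r)$, which is precisely the second identity in~\eqref{cd9}; the commutation relation~\eqref{cd10} is then a formal consequence of~\eqref{cd9} and the already-verified relations, hence preserved automatically. Finally, (R6) $T_i Y_i T_i = Y_{i+1}$ under $\phi$ reduces to (R6) itself, using the identities $A_i = T_i A_{i+1} T_i$ and $B_i = T_i B_{i+1} T_i$ coming directly from the definitions of $A_i, B_i$.

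The main obstacle will be the commutativity (R12) $[\phi(Y_i), \phi(Y_j)] = 0$ (and to a lesser extent (R7) $[T_i, \phi(Y_j)] = 0$ for $j \ne i, i+1$). For this, I plan to work in the faithful polynomial representation (Proposition~\ref{polyre}) and apply the meromorphic-conjugation trick of the proof of Lemma~\ref{commu}(i): one explicitly realizes $\rho(\phi(Y_i))$ as a conjugate of $\rho(Y_i)$ by an appropriate function of $X_1, \ldots, X_N$ (up to the overall scalar $q$), reducing $[\phi(Y_i), \phi(Y_j)] = 0$ to $[Y_i, Y_j] = 0$. Involutivity on the $Y_i$ is then immediate from
\[
\phi^2(Y_i) = q^{-1} \phi(A_i) (q A_i Y_i B_i) \phi(B_i) = A_i^{-1} A_i Y_i B_i B_i^{-1} = Y_i,
\]
and $\phi^2 = \id$ on the remaining generators by construction. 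Preservation of $\DAHA_N^{l,+}$ follows since $\phi(Y_i) = q A_i Y_i B_i$ contains no $Y_j^{-1}$ and $\phi$ permutes $\{T_i^{\pm 1}, X_i, D_i\}$.
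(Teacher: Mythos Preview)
Your proposal has two genuine gaps.

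\textbf{The conjugation argument for (R12) does not work.} In Lemma~\ref{commu}(i), conjugation by $G=\prod_j g(X_j)$ sends $\rho(Y_i)$ to $\rho(Y_i(f))$, where $Y_i(f)=Y_iT_{i-1}^{-1}\cdots T_1^{-1}f(X_1^{-1})T_1\cdots T_{i-1}$ differs from $Y_i$ only by the insertion of a \emph{scalar function} of $X$. Here, however, $\phi(Y_i)=qA_iY_iB_i$ with $A_i,B_i$ lying in the finite Hecke algebra: $\rho(A_i),\rho(B_i)$ are Demazure--Lusztig operators, not multiplication operators. For $i=1$ you would need $G\rho(Y_1)G^{-1}$ to equal $q\rho(J_N)\rho(Y_1)$; since a symmetric $G$ commutes with all $\rho(T_j)$, the left side is $\rho(Y_1)$ times a function and can never produce the Hecke factor $\rho(J_N)$ on the left. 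The paper instead verifies $[\phi(Y_i),\phi(Y_j)]=0$ purely inside the affine Hecke algebra generated by $T_k,Y_k$, using (R1)--(R3), (R6), (R7), and (R12) for the $Y_i$ themselves. One clean way to see this: from $T_kY_kT_k=Y_{k+1}$ one computes $A_iY_iB_i=(T_1\cdots T_{N-1})\,Y_N\,(T_{N-1}^{-1}\cdots T_1^{-1})\cdot B_i$ etc., and reduces to commutation identities among Jucys--Murphy--type elements; this is an algebraic check, not a function-conjugation.

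\textbf{Relation \eqref{cd10} is not a formal consequence of the others.} In $\DAHA_N^{l,+}$ the element $X_1$ is \emph{not} invertible, so you cannot write $D_1=X_1^{-1}\prod_r(Y_1-Z_r)$. From \eqref{cd9} and \eqref{cd1}--\eqref{cd8} you can only deduce $X_1[D_1,X_2]=[\prod_r(Y_1-Z_r),X_2]$, which does not pin down $[D_1,X_2]$ in an algebra where $X_1$ may have a kernel. This is exactly why the paper treats \eqref{cd10} by hand: it computes $[D_2,X_1]$ directly (obtaining \eqref{cd11}) and then checks that $\phi$ carries \eqref{cd10} to \eqref{cd11}, using that $Y_1$ commutes with $K_NY_2$. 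Your shortcut skips this, and without it $\phi$ is not shown to be well-defined on the presented algebra.
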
 

\begin{proof} We check that $\phi$ preserves the defining relations. 
Relations (R1-R3) go to themselves. Relations (R4,R5) get exchanged with \eqref{cd5}.
Relations (R6,R7) go to themselves once (R1-R3) are imposed. 
Relations (R11) get exchanged with \eqref{cd4} and (R12) with themselves once (R1-R3,R6,R7) are imposed. 
Relation \eqref{cd1} gets exchanged with \eqref{cd6}, \eqref{cd2} with \eqref{cd7}, \eqref{cd3} with \eqref{cd8}. Relations \eqref{cd9} go to themselves. So it remains to see that relation \eqref{cd10}is preserved. 

Let $K_N=T_2...T_{N-1}^2...T_2$. We have 
$$
[D_2,X_1]=
$$
$$
\sum_{r=1}^l X_2^{-1}(Y_2T_1^{-2}-Z_l)...(Y_2T_1^{-2}-Z_{r+1})[Y_2T_1^{-2},X_1](Y_2T_1^{-2}-Z_{r-1})...(Y_2T_1^{-2}-Z_1)=
$$
$$
\sum_{r=1}^l X_2^{-1}(Y_2T_1^{-2}-Z_l)...(Y_2T_1^{-2}-Z_{r+1})Y_2(T_1^{-2}-1)X_1(Y_2T_1^{-2}-Z_{r-1})...(Y_2T_1^{-2}-Z_1)=
$$
$$
(\bold t^{-1}-\bold t)\sum_{r=1}^l X_2^{-1}(Y_2T_1^{-2}-Z_l)...(Y_2T_1^{-2}-Z_{r+1})Y_2T_1^{-1}X_1(Y_2T_1^{-2}-Z_{r-1})...(Y_2T_1^{-2}-Z_1)=
$$
$$
(\bold t^{-1}-\bold t)\sum_{r=1}^l X_2^{-1}(Y_2T_1^{-2}-Z_l)...(Y_2T_1^{-2}-Z_{r+1})Y_2T_1^{-2}X_2T_1^{-1}(Y_2T_1^{-2}-Z_{r-1})...(Y_2T_1^{-2}-Z_1).
$$
Thus, 
\begin{multline}\label{cd11}
[D_2,X_1]=\\
(\bold t^{-1}-\bold t)\sum_{r=1}^l (qK_NY_2-Z_l)...(qK_NY_2-Z_{r+1})qK_NY_2(Y_1-Z_{r-1})...(Y_1-Z_1)T_1^{-1}.
\end{multline} 
Now it is easy to see that $\phi$ maps \eqref{cd10} to \eqref{cd11} (as $Y_1$ commutes with $K_N$ and $Y_2$, hence with $K_NY_2$).
\end{proof} 

\begin{remark} It is easy to see that as $q\to 1$, the involution $\phi$ degenerates to the involution $\phi$ on $H_{N,{\rm deg}}^l$ constructed in Remark \ref{invoo}. 
\end{remark} 

\begin{corollary}\label{freeness} The algebra $\DAHA_N^{l,+}(Z_1,...,Z_l,q,t)$ is a free module over $\Bbb C[X_1,...,X_N]\otimes \Bbb C[D_1,...,D_N]$ of rank 
$N!\cdot l^N$, where the first factor acts by left multiplication and the second one by right multiplication. 
\end{corollary}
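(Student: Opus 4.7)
The plan is to deduce the freeness directly from the PBW-type basis established in Proposition \ref{cycDAHAbas}. First I would note that, because left multiplication and right multiplication on any associative algebra commute with one another, the commutativity $[X_i,X_j]=0$ together with $[D_i,D_j]=0$ (the latter being Corollary \ref{commu1}(i)) gives $\DAHA_N^{l,+}(Z,q,t)$ the structure of a well-defined module over $\Bbb C[X_1,\dots,X_N]\otimes \Bbb C[D_1,\dots,D_N]$, where the first tensor factor acts by left multiplication by the $X_i$'s and the second by right multiplication by the $D_i$'s.

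Next I would write down the proposed basis explicitly. Let $\CB$ denote the set of $l^N\cdot N!$ elements of the form $M_YT_s$, where $M_Y=\prod_{i=1}^{N}Y_i^{a_i}$ runs over monomials in the $Y_i$ with $0\le a_i\le l-1$ for each $i$, and $s$ runs over $S_N$ (with $T_s$ the corresponding element of the finite Hecke algebra). I claim $\CB$ is a basis of $\DAHA_N^{l,+}$ as a $\Bbb C[X_1,\dots,X_N]\otimes \Bbb C[D_1,\dots,D_N]$-module.

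For the spanning property: by Proposition \ref{cycDAHAbas}, every element of $\DAHA_N^{l,+}$ is a $\Bbb C$-linear combination of monomials of the form $M_XM_YT_sM_D$, and each such monomial equals
\[
M_X\cdot(M_YT_s)\cdot M_D=\bigl(M_X\otimes M_D\bigr)\cdot (M_YT_s),
\]
which lies in the $\Bbb C[X]\otimes\Bbb C[D]$-span of $\CB$. For linear independence: suppose
\[
\sum_{(M_Y,s)\in \CB}\Bigl(\sum_{M_X,M_D}c_{M_X,M_Y,s,M_D}\,M_X\otimes M_D\Bigr)\cdot(M_YT_s)=0.
\]
Expanding out the action, this rewrites as $\sum c_{M_X,M_Y,s,M_D}\,M_XM_YT_sM_D=0$, and by the linear independence part of Proposition \ref{cycDAHAbas}, every coefficient $c_{M_X,M_Y,s,M_D}$ must vanish; equivalently, every coefficient in $\Bbb C[X]\otimes\Bbb C[D]$ is zero.

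There is really no hard step here beyond the PBW theorem already proved in Proposition \ref{cycDAHAbas}. The only thing one has to be careful about is the direction of the actions: since $X_i$'s and $D_j$'s do not commute with each other inside $\DAHA_N^{l,+}$ (see relations \eqref{cd9}, \eqref{cd10}), the bimodule interpretation is essential, and one cannot replace either side of the action by the opposite side. The count $|\CB|=l^N\cdot N!$ gives the asserted rank.
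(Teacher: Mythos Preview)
Your proposal is correct and follows exactly the same approach as the paper, which simply states that the result ``follows immediately from Proposition \ref{cycDAHAbas}.'' You have spelled out in detail what that immediate deduction entails: the PBW basis $\{M_XM_YT_sM_D\}$ shows that $\{M_YT_s: 0\le a_i\le l-1,\ s\in S_N\}$ is a free basis over $\Bbb C[X_1,\dots,X_N]\otimes\Bbb C[D_1,\dots,D_N]$, with the spanning and independence arguments exactly as you give them.
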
 

\begin{proof} This follows immediately from Proposition \ref{cycDAHAbas}.  
\end{proof}

The proof of Proposition \ref{cycDAHAbas} in fact shows that for any $l\ge 0$ 
ordered products of $M_X,M_Y,T_s,M_D$ in {\it any} of the 24 possible orders 
are a spanning set for $\DAHA_N^{l,+}$, and those of them with degrees of $Y_i$ at most $l-1$ are a basis for $l\ge 1$.   
This implies that we also have another basis of this algebra, formed by monomials 
$M_XM_DT_sM_Y$ without restriction on the degree of $Y_i$, but with the restriction that for each $i$ either $X_i$ is missing in $M_X$ or $D_i$ is missing in $M_D$. Indeed, if this restriction is not satisfied, we may use the relation $X_1D_1=f(Y_1)$ and its permutations to lower the number of $X_i$ and $D_i$, and it is easy to see by looking at the polynomial representation that monomials with this restriction are linearly independent. 
Thus we obtain the following proposition.

\begin{proposition}\label{basi2} The elements $M_XM_D$ which miss either $X_i$ or $D_i$ for each $i$ form a basis 
of $\DAHA_{N}^{l,+}$ as a left or right module over the positive part of the affine Hecke algebra $H_N^+$ generated by $T_s,s\in S_N$ and $Y_i$, and a basis of 
$\DAHA_{N}^l$ as a left or right module over the affine Hecke algebra $H_N$ generated by $T_s,s\in S_N$ and $Y_i^{\pm 1}$; in particular, $\DAHA_N^{l,+}$ is a free module over $H_N^+$ and $\DAHA_N^l$ is a free module over $H_N$. 
 \end{proposition}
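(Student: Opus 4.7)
The plan is to read off Proposition \ref{basi2} as a repackaging of the basis described in the paragraph immediately above its statement. That paragraph establishes a basis of $\DAHA_N^{l,+}$ consisting of ordered monomials $M_X M_D T_s M_Y$ subject to the restriction that for each index $i$ either $X_i$ is absent from $M_X$ or $D_i$ is absent from $M_D$, with \emph{no} restriction on the powers of the $Y_i$. Since the positive part $H_N^+$ of the affine Hecke algebra has PBW basis $\{T_s M_Y : s \in S_N,\ M_Y \text{ a monomial in } Y_1,\ldots,Y_N\}$, the basis of $\DAHA_N^{l,+}$ above splits as a direct sum, indexed by admissible pairs $(M_X, M_D)$, of copies of $H_N^+$ acting on the right. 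This immediately gives the right $H_N^+$-module statement.

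For the left module statement, I would run the mirror argument. The proof of Proposition \ref{cycDAHAbas} observed that ordered products in \emph{any} of the $24$ orderings of $M_X, M_Y, T_s, M_D$ form a spanning set of $\DAHA_N^{l,+}$, and those with $\deg_{Y_i} \leq l-1$ form a basis. Applying this observation to the ordering $T_s M_Y M_X M_D$ yields another basis; the same exchange trick as in the proof of Proposition \ref{cycDAHAbas} --- trading a top power $Y_i^l$ for an $X_i D_i$ pair (plus lower-order corrections) via the relation
\[
X_i D_i = (Y_i T_{i-1}^{-1}\cdots T_1^{-2}\cdots T_{i-1}^{-1} - Z_1)\cdots(Y_i T_{i-1}^{-1}\cdots T_1^{-2}\cdots T_{i-1}^{-1} - Z_l)
\]
and its permutations --- converts this into a basis of the form $\{T_s M_Y\} \cdot \{M_X M_D\}$ indexed by admissible pairs $(M_X, M_D)$. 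This realizes $\DAHA_N^{l,+}$ as a free left $H_N^+$-module with the desired basis.

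The passage from $\DAHA_N^{l,+}$ to $\DAHA_N^l$ is obtained by localizing at the multiplicative set generated by $Y_1,\ldots,Y_N$; since these elements lie in the acting subalgebra $H_N^+$, the localization commutes with the direct sum decomposition into admissible blocks, and freeness is preserved. The main technical point requiring care --- already implicit in the proofs of Propositions \ref{cycDAHAbas} and \ref{basi} --- is the termination of the exchange procedure: one must track that the corrections produced when substituting $Y_i^l$ in favor of $X_iD_i$ are strictly smaller in the filtration used in Proposition \ref{cycDAHAbas} ($\deg T_i = 0$, $\deg Y_i = 2$, $\deg X_i = \deg D_i = l$), so that the reduction to admissible monomials halts after finitely many steps. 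Linear independence of the admissible monomials is then routine: it is already guaranteed for the basis of the preceding paragraph, and the rearrangement does not change the cardinality of the monomial set in each graded piece, so linear independence on one side transfers to the other.
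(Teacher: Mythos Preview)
Your proposal is correct and follows the same route as the paper: the paper's proof of Proposition~\ref{basi2} is precisely the paragraph preceding it, which establishes the basis $\{M_X M_D T_s M_Y\}$ (admissible in $X_i,D_i$) via the exchange trick $X_1 D_1 = f(Y_1)$, then reads off the module statement. Your treatment of the left module case via the ordering $T_s M_Y M_X M_D$ is the natural mirror and is implicit in the paper's remark that any of the $24$ orderings works.

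One small point: for linear independence the paper appeals directly to the polynomial representation, whereas you argue by cardinality in graded pieces. Your argument is fine but needs the observation (implicit in the filtration from Proposition~\ref{cycDAHAbas}) that each filtered piece $F^{\le d}$ is a \emph{finite rank} free module over the base ring; without this, equal cardinality plus spanning does not force linear independence in infinite rank. With that caveat made explicit, your cardinality argument and the paper's polynomial-representation argument are interchangeable.
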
 

Note that the basis of Proposition \ref{basi2} is labeled by $N$-tuples of integers, $(m_1,...,m_N)$. Namely, if $M_XM_D$ contains $X_i^p$ then we set $m_i=p$, and if it contains $D_i^p$ then we set $m_i=-p$.    

Another, geometric proof of Proposition \ref{basi2} (for a geometric version of $\DAHA_N^l$) will be given in Section~\ref{sec5}.

\section{Geometric realization}
\label{sec5}

\subsection{A variety of triples}
\label{triples}
We consider a quiver with a set of vertices $I$ and a set of arrows $\Omega$.
Let $V=\bigoplus_{i\in I}V_i,\ W=\bigoplus_{i\in I}W_i$ be $I$-graded finite
dimensional $\BC$-vector spaces; $d_i:=\dim V_i$.
Given a length $\ell$ sequence $\bi=(i_1,\ldots,i_\ell)\in I^\ell$ and a
length $\ell$ sequence $\ba=(a_1,\ldots,a_\ell)$ of positive integers such that
$\sum_{n : i_n=i}a_n=d_i$ for any $i\in I$, we choose an $I$-graded flag
in $V\colon V=V^0\supset V^1\supset\ldots\supset V^\ell=0$ such that
$V^{n-1}/V^n$ is an $a_n$-dimensional vector space supported at the vertex $i_n$
for any $n=1,\ldots,\ell$.

We set $\CK=\BC((z))\supset\BC[[z]]=\CO$. We consider the following flag
of $I$-graded lattices in
$V_\CK=V\otimes\CK\colon \ldots\supset L_{-1}\supset L_0\supset L_1\supset\ldots$,
where $L_{r+\ell}=zL_r$ for any
$r\in\BZ;\ L_0=V_\CO;\ L_n/L_\ell=V^n\subset V=L_0/L_\ell$ for any
$n=1,\ldots,\ell$. Let $GL(V):=\prod_{i\in I}GL(V_i)$, and let
$\CP\subset GL(V)_\CO\subset GL(V)_\CK$ be the stabilizer of the flag $L_\bullet$.
Then $GL(V)_\CK/\CP$ is the set of points of the ind-projective moduli space
$\Fl$ of flags of $I$-graded lattices
$\ldots\supset M_{-1}\supset M_0\supset M_1\supset\ldots$ in $V_\CK$ such that
$M_{r+\ell}=zM_r$ for any $r\in\BZ$, and $M_{n-1}/M_n$ is an $a_n$-dimensional
vector space supported at the vertex $i_n$ for any $n=1,\ldots,\ell$.
This is a partial affine flag variety of the reductive group $GL(V)$.
Note that the set of connected components $\pi_0(\Fl)$ is naturally identified
with $\BZ^I$ (the virtual graded dimension of $M_0$).

Let $\CR$ be the moduli space of the following data (cf.~\cite[Section~1]{l91}):

(a) $M_\bullet\in\Fl$;

(b) a $\CK$-linear homomorphism $p_i\colon W_{i,\CK}\to V_{i,\CK}$ for any
$i\in I$;

(c) a $\CK$-linear homomorphism $b_{i\to j}\colon V_{i,\CK}\to V_{j,\CK}$ for
any $i\to j\in\Omega$; such that

(1) $b:=\sum_{i\to j\in\Omega}b_{i\to j}$ takes $L_r$ to $L_{r+1}$ and $M_r$ to
$M_{r+1}$ for any $r\in\BZ$;

(2) $p:=\sum_{i\in I} p_i$ takes $W_\CO$ to $L_0\cap M_0$.

Note that when $\ell=1$, we have $\CP=GL(V)_\CO$,
and $\CR$ is nothing but the variety
of triples $\CR_{GL(V),\bN}$ associated in~\cite[2(i)]{BFN2} to a $GL(V)$-module
$\bN=\bigoplus_{i\to j\in\Omega}\Hom(V_i,V_j)\oplus\bigoplus_{i\in I}\Hom(W_i,V_i)$.
The definition of equivariant Borel-Moore homology
$H^{GL(V)_\CO\rtimes\BC^\times}_\bullet(\CR_{GL(V),\bN})$
(respectively, equivariant $K$-theory $K^{GL(V)_\CO\rtimes\BC^\times}(\CR_{GL(V),\bN})$)
and the construction of convolution product on it
in~\cite[Sections~2,3]{BFN2} work without any changes in our situation, and
produce the convolution algebras
$H^{\CP\rtimes\BC^\times}_\bullet(\CR)$ and $K^{\CP\rtimes\BC^\times}(\CR)$.
Moreover, if we choose a Cartan torus $T(W_i)\subset GL(W_i)$ and set
$T(W):=\prod_{i\in I}T(W_i)\subset\prod_{i\in I}GL(W_i)=:GL(W)$ (a flavor symmetry
group), we obtain the convolution algebras
$H^{\BC^\times\times T(W)_\CO\times\CP\rtimes\BC^\times}_\bullet(\CR)$ and
$K^{\BC^\times\times T(W)_\CO\times\CP\rtimes\BC^\times}(\CR)$. Here the first factor
$\BC^\times$ acts by dilations in the fibers of the projection
\begin{equation}
\label{varpi}
\varpi\colon \CR\to\Fl.
\end{equation}

The following easy result will be important in the future.
\begin{lemma}\label{free}
The algebra $H^{\BC^\times\times T(W)_\CO\times\CP\rtimes\BC^\times}_\bullet(\CR)$ is free as a module over the equivariant point cohomology
$H_{\BC^\times\times T(W)_\CO\times\CP\rtimes\BC^\times}^\bullet(pt)$. Similarly, the algebra $K^{\BC^\times\times T(W)_\CO\times\CP\rtimes\BC^\times}(\CR)$ is free as a module over $K_{\BC^\times\times T(W)_\CO\times\CP\rtimes\BC^\times}(pt)$.
\end{lemma}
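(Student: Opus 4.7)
The plan is to exhibit a $G$-equivariant affine paving of $\CR$ (where $G := \BC^\times\times T(W)_\CO\times\CP\rtimes\BC^\times$), and then to invoke parity vanishing in equivariant Borel-Moore homology and K-theory. This is the same strategy used in \cite[Section~2]{BFN2}, adapted from the affine Grassmannian of $GL(V)$ to the partial affine flag variety $\Fl$.

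First, I would use the Bruhat decomposition $\Fl = \bigsqcup_w \Sigma_w$ of the partial affine flag variety into $\CP$-orbits (Schubert cells), with each $\Sigma_w$ isomorphic to an affine space of finite dimension. Second, I would analyze the projection $\varpi \colon \CR \to \Fl$ of~\eqref{varpi}: given $M_\bullet \in \Fl$, the fiber $\varpi^{-1}(M_\bullet)$ is the closed affine subspace of $\CK$-linear data $(p,b)$ cut out by the linear conditions (1) and (2) in the definition of $\CR$. Hence each $\varpi^{-1}(\Sigma_w)$ is a pro-affine bundle over $\Sigma_w$, and in particular a pro-affine space. Passing to the BFN approximation of $\CR$ by an ascending chain of finite-dimensional closed subvarieties $\CR_n \subset \CR$ as in~\cite[Sections~2,3]{BFN2}, this yields a compatible stratification of each $\CR_n$ by $G$-invariant finite-rank affine bundles over Schubert cells of a finite-dimensional approximation of $\Fl$.

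Third, I would invoke parity vanishing: for an algebraic variety stratified by $G$-invariant affine bundles over affine cells, the equivariant Borel-Moore homology is concentrated in even degrees, and each stratum contributes a free $H^\bullet_G(pt)$-module of rank one in the appropriate degree. The long exact sequences associated with the stratifications of the $\CR_n$ therefore split, yielding that each $H^G_\bullet(\CR_n)$ is free over $H^\bullet_G(pt)$ with basis indexed by the Schubert cells it contains. Passing to the limit as $n\to\infty$ gives freeness of $H^G_\bullet(\CR)$, with a basis indexed by the Schubert cells of $\Fl$. The analogous argument in equivariant K-theory uses the fact that $K^G(\BA^n)$ is free of rank one over $K^G(pt)$ for any linear $G$-action, together with the analogous short exact sequences for open/closed decompositions; these split by the parity argument applied to the associated graded, yielding freeness of $K^G(\CR)$ over $K^G(pt)$.

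The main obstacle will be verifying that the pro-affine bundle structure behaves well with respect to the BFN approximation and the limit: one must show that the Schubert stratification of $\Fl$ lifts compatibly to each finite-dimensional approximation $\CR_n$, and that the freeness passes to the limit. This is routine in Borel-Moore homology, while in K-theory some additional care is needed to check that the transition maps in the relevant inverse system are surjective, which in turn follows from parity vanishing on each approximation.
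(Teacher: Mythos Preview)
Your overall strategy---exhibit an equivariant affine cell decomposition of $\CR$ via the projection $\varpi$ to $\Fl$, and then invoke parity vanishing---is exactly what the paper does, and your discussion of the finite-dimensional approximations and the parity argument is in fact more detailed than the paper's terse proof. However, there is one genuine slip: you take the cells on $\Fl$ to be the $\CP$-orbits, and assert that each is an affine space. This fails in general. In the setting of Section~\ref{triples} the dimensions $a_n$ of the subquotients $V^{n-1}/V^n$ may exceed $1$, so $\CP$ is a general parahoric rather than an Iwahori; and for a non-minimal parahoric, the $\CP$-orbits on $GL(V)_\CK/\CP$ are typically not affine cells (already for $\CP = GL(V)_\CO$ the orbits on the affine Grassmannian are vector bundles over partial flag varieties, e.g.\ the minuscule orbit is a projective space).

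The paper's fix is the obvious one: choose an Iwahori $\CI \subset \CP$ and use the $\CI$-orbits on $\Fl$ instead. These \emph{are} affine cells by the affine Bruhat decomposition, their preimages under $\varpi$ are (pro-)affine spaces, and they are invariant under the maximal torus of $\BC^\times\times T(W)_\CO\times\CP\rtimes\BC^\times$, which is all the freeness argument requires. With that single correction your argument goes through unchanged. Incidentally, in the only case the paper actually applies the lemma (the Jordan quiver in Section~\ref{Jordan}, where all $a_n = 1$), $\CP$ is already an Iwahori and your version is literally correct there.
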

\begin{proof}
It is enough to show that $\CR$ has an algebraic cell decomposition, which is invariant under the maximal torus of the group $\BC^\times\times T(W)_\CO\times\CP\rtimes\BC^\times$. For this we need to choose an Iwahori subgroup $\mathcal I$ of $GL(V)_{\CK}$ which is contained in $\CP$. Then the $\mathcal I$-orbits on
$\Fl$ are affine spaces; hence their preimages in $\CR$ are (infinite-dimensional) affine spaces as well, which are clearly invariant under $\BC^\times\times T(W)_\CO\times\CI\rtimes\BC^\times$, hence under some maximal torus of $\BC^\times\times T(W)_\CO\times\CP\rtimes\BC^\times$.
\end{proof}
%The ind-scheme $\Fl$ has
In case $W=W'\oplus W''$, we denote the variety of triples corresponding to
$W'$ (resp.\ $W''$) by $\CR'$ (resp.\ $\CR''$), and we have an evident
closed embedding $\bz\colon \CR'\hookrightarrow\CR$.
The argument of~\cite[Lemma~5.11]{BFN2} goes through word for word in our
situation and proves that
\begin{equation}
\label{zH}
\bz^*\colon
H^{\BC^\times\times T(W)_\CO\times\CP\rtimes\BC^\times}_\bullet(\CR)\hookrightarrow
H^{\BC^\times\times T(W')_\CO\times\CP\rtimes\BC^\times}_\bullet(\CR')\otimes
H^\bullet_{T(W'')}(pt)
\end{equation} and
\begin{equation}
\label{zK}
\bz^*\colon K^{\BC^\times\times T(W)_\CO\times\CP\rtimes\BC^\times}(\CR)\hookrightarrow
K^{\BC^\times\times T(W')_\CO\times\CP\rtimes\BC^\times}(\CR')\otimes K_{T(W'')}(pt)
\end{equation}
are the convolution algebra homomorphisms.

\subsection{Jordan quiver}
\label{Jordan}
In what follows we consider a special case of the construction
of~Section~\ref{triples} where $I$ consists of a single vertex, and
$\Omega$ consists of a single loop, $W=\BC^l$ with a basis $e_1,\ldots,e_l$
(hence the diagonal torus $T(W)\subset GL(W)$), and
$V=\BC^N$ with a basis $v_1,\ldots,v_N$. Moreover, $\ell=N,\ a_n=1$ for any
$n=1,\ldots,N$, and $V^\bullet$ is a complete flag
$V^n:=\BC v_1\oplus\ldots\oplus\BC v_{N-n}$.
We fix a flag of lattices $L_i\subset V((z)),\ i\in \Bbb Z$, such that 
$L_0=V[[z]]$, $L_{j+N}=zL_j$, and 
$L_j=zV[[z]]\oplus \Bbb Cv_1\oplus...\oplus \Bbb Cv_{N-j}$, $j=0,...,N-1$.

The space of triples of~Section~\ref{triples} is the moduli space of the 
following data: 

(a) a sequence of $\Bbb C[[z]]$-lattices $M_i\subset V((z)),\ i\in \Bbb Z$,  
such that $M_i\supsetneq M_{i+1}$ and $M_{j+N}=zM_j$; 

(b) a $\Bbb C((z))$-linear map $b\colon V((z))\to V((z))$; and

(c) a $\Bbb C((z))$-linear map $p\colon W((z))\to V((z))$; 

such that 

(1) $b$ strongly preserves $L$ and $M$, i.e., $bL_i\subset L_{i+1}$ and 
$bM_i\subset M_{i+1}$; and 

(2) $pW[[z]]\subset L_0\cap M_0$.

The basis of $V$ gives rise to the diagonal torus $T(V)\subset GL(V)$,
and we denote by $y_1,\ldots,y_N$ the generators of $H^\bullet_{T(V)}(pt)$.
Also, we denote by $\sz_1,\ldots,\sz_l$ (resp.\ $\hbar,-k$)
the generators of $H^\bullet_{T(W)}(pt)$ (resp.\ $H^\bullet_{\BC^\times}(pt)$ for
the loop rotation $\BC^\times$, $H^\bullet_{\BC^\times}(pt)$ for the dilation
$\BC^\times$).
We shall denote the corresponding generators of $K_{T(V)}(pt),\ K_{T(W)}(pt),\ K_{\BC^\times}(pt),\
K_{\BC^\times}(pt)$ by $Y_1,\ldots,Y_N;\ \sZ_1,\ldots,\sZ_l;\ q;t$.
The algebra
$H^{\BC^\times\times T(W)_\CO\times\CP\rtimes\BC^\times}_\bullet(\CR)$
(resp.\ $K^{\BC^\times\times T(W)_\CO\times\CP\rtimes\BC^\times}(\CR)$)
will be denoted $\CHH^l_{N,{\rm deg}}$ (resp.\ $\CHH^l_N$).
We shall also denote $\CHH^0_{N,{\rm deg}}$ by $\CHH_{N,{\rm deg}}$, and $\CHH^0_N$ by $\CHH_N$.
According to~(\ref{zH}) and~(\ref{zK}) we have algebra embeddings
$\bz^*\colon \CHH^l_{N,{\rm deg}}\hookrightarrow\CHH_{N,{\rm deg}}[\sz_1,\ldots,\sz_l]$ and
$\bz^*\colon \CHH^l_N\hookrightarrow\CHH_N[\sZ_1^{\pm1},\ldots,\sZ_l^{\pm1}]$.

Note that for $W=0$ the variety of triples $\CR$ is nothing but the affine
Steinberg variety of $GL(V)$, and $\Fl$ is nothing but the affine flag
variety of $GL(V)$. It is well known that $\DAHA_{N,{\rm deg}}\simeq\CHH_{N,{\rm deg}}$
(see e.g.~\cite{OY}), and one can check that
$\DAHA_N\simeq\CHH_N$, cf.~\cite{VV} (see~\cite[Remark~3.9(2)]{BFN2}).
More precisely, for $n=0,\ldots,N-1$, we denote by $\BP^1_n\subset\Fl$
the projective line formed by all the flags $M_\bullet$ of lattices
(see~Section~\ref{triples}) such that $M_m=L_m$ for $m\ne n\pmod{N}$.
Each $\BP^1_n$ contains the base point $L_\bullet\in\Fl$, and we denote
by $\BA^1_n\subset\BP^1_n$ the complement. The restriction of the projection
$\varpi\colon\CR\to\Fl$ to $\BA^1_n$ is a (profinite dimensional) vector
bundle, and the closure of $\varpi^{-1}(\BA^1_n)$ in $\CR$ is still a
vector bundle over $\BP^1_n$, to be denoted by $\widetilde\BP{}^1_n$.
We define $s_n\in\CH_N$ as $1+[\widetilde\BP{}^1_n]$ for $n=0,\ldots,N-1$
(the fundamental cycle of $\widetilde\BP{}^1_n$).
Also, let $\ol\pi$ (resp.\ $\ol\pi{}^{-1}$) be
a point-orbit of the Iwahori group $\CP$ in $\Fl$ consisting of the flag
$M_\bullet$ such that $M_n=L_{n+1}$ (resp.\ $M_n=L_{n-1}$) for any $n\in\BZ$.
Finally, we define $\pi^{\pm1}\in\CHH_{N,{\rm deg}}$ as $[\varpi^{-1}(\ol\pi{}^{\pm1})]$.
Now the desired isomorphism $\DAHA_{N,{\rm deg}}\iso\CHH_{N,{\rm deg}}$ takes the generators
of $\DAHA_{N,{\rm deg}}$  to the same named elements of $\CHH_{N,{\rm deg}}$, except $\hbar\mapsto -\hbar$.

Furthermore, we define $T_n\in\CHH_N$ as $-1-[\CO_{\widetilde\BP{}^1_n}(-2)]$
for $n=0,\ldots,N-1$, and we define $\pi^{\pm1}\in\CHH_N$ as
$[\CO_{\varpi^{-1}(\ol\pi{}^{\pm1})}]$. The desired isomorphism $\DAHA_N\iso\CHH_N$
takes the generators of $\DAHA_N$ to the same named elements of $\CHH_N$, except $q\mapsto q^{-1}$.

For arbitrary $W=\BC^l$, in order to distinguish from the $l=0$ case,
we will denote by $\varpi_l\colon \CR^l\to\Fl$ the corresponding projection.
Let us set
$$
\CHH_{N,{\rm deg}}^l \ni\pi_-=
[\varpi_l^{-1}(\ol\pi{}^{-1})];\quad \CHH_N^l\ni\pi_-=
(-q^{-1}t^{-1})^l\sZ_1\cdots\sZ_l[\CO_{\varpi_l^{-1}(\ol\pi{}^{-1})}].
$$
Then we have
\begin{equation}
\label{piCH}
\bz^*\pi_-=\prod_{m=1}^l(y_N-\sz_m-k)\cdot\pi^{-1}=
\pi^{-1}\prod_{m=1}^l(y_1-\sz_m-k+\hbar)\in\CHH_{N,{\rm deg}},
\end{equation}
and
\begin{multline}
\label{piCHH}
\bz^*\pi_-=(-q^{-1}t^{-1})^l\sZ_1\cdots\sZ_l\prod_{m=1}^l(1-Y_N\sZ^{-1}_mt)\cdot
\pi^{-1}=\\
=\pi^{-1}\prod_{m=1}^l(Y_1-\sZ_mq^{-1}t^{-1})\in\CHH_N.
\end{multline}

Indeed, $\bz^*\pi_-$ is obtained from $\pi^{-1}$ by multiplication with the Euler class
of the finite dimensional quotient space $\CT^l_{\ol\pi{}^{-1}}/\CR^l_{\ol\pi{}^{-1}}$, where
$\CR^l_{\ol\pi{}^{-1}}$ is the fiber of $\CR^l$ at the point $\ol\pi{}^{-1}$, and
$\CT^l_{\ol\pi{}^{-1}}$ is defined similarly, but the condition~(2) above: $pW[[z]]\subset
L_0\cap M_0=L_0\cap L_{-1}=L_0$ is relaxed to the condition $pW[[z]]\subset M_0=L_{-1}$,
cf.~\cite[4(vi)]{BFN2}.

We preserve the name $\pi$ for $[\varpi_l^{-1}(\ol\pi)]\in\CHH^l_{N,{\rm deg}}$
(resp.\ $[\CO_{\varpi_l^{-1}(\ol\pi{})}]\in\CHH^l_N$) since $\bz^*$ takes this
$\pi$ to the one in $\CHH_{N,{\rm deg}}$ (resp.\ in $\CHH_N$). For the same reason we
preserve the names $s_n$ (resp.\ $T_n$), $n=1,\ldots,N-1$,
for the corresponding elements of $\CHH_N^{l,{\rm deg}}$ (resp.\ $\CHH_N^l$).
Finally, we set $z_m=\sz_m+k-\hbar$, and $Z_m=\sZ_mq^{-1}t^{-1}$.
So the following diagrams commute:
\begin{equation}
\label{commH}
\begin{CD}
\DAHA_{N,{\rm deg}}^l @>>> \CHH_{N,{\rm deg}}^l\\
@VVV @V{\bz^*}VV\\
\DAHA_{N,\rm deg}[z_1,...,z_l] @>\sim>> \CHH_{N,{\rm deg}}[z_1,...,z_l]
\end{CD},
\end{equation}
\begin{equation}
\label{commHH}
\begin{CD}
\DAHA_N^l @>>> \CHH_N^l\\
@VVV @V{\bz^*}VV\\
\DAHA_N[Z_1^{\pm 1},...,Z_l^{\pm 1}] @>\sim>> \CHH_N[Z_1^{\pm 1},...,Z_l^{\pm 1}]
\end{CD}.
\end{equation}

Note that in diagram \eqref{commH} we have bigradings defined by
$\deg(X_i)=(1,0)$, $\deg(y_i)=\deg(z,\hbar,k)=(0,1)$,
which are preserved by all the maps.

The following theorem is independently obtained in \cite[Lemma~4.2]{W}. 

\begin{theorem}\label{geomiso} The map $\xi: \DAHA_{N,{\rm deg}}^l \to \CHH_{N,{\rm deg}}^l$ in diagram \eqref{commH} is an isomorphism of bigraded algebras.
\end{theorem}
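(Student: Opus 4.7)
My plan is to establish injectivity via the commutative diagram \eqref{commH} and then pin down surjectivity via a bigraded Hilbert series comparison using freeness on both sides.

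For injectivity, I would first verify that the diagram commutes on generators: the elements $s_1, \dots, s_{N-1}, y_1, \dots, y_N, \pi$ map identically along both routes by construction of the identification $\DAHA_{N,\rm deg}[z_1,\dots,z_l] \iso \CHH_{N,\rm deg}[z_1,\dots,z_l]$, while the remaining generator $\pi_-$ is handled by formula \eqref{piCH}, which says $\bz^*\pi_- = \pi^{-1}\prod_m(y_1 - z_m - k + \hbar)$. Since the left vertical arrow is the defining inclusion, the right vertical arrow $\bz^*$ is injective by \eqref{zH}, and the bottom horizontal is the classical isomorphism in the $l=0$ case (see, e.g., \cite{OY}), the composition $\bz^* \circ \xi$ is injective, which forces $\xi$ itself to be injective.

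For surjectivity, I would compare bigraded Hilbert series of the two sides over their common base ring $\Bbb C[\hbar, k, z_1, \dots, z_l]$. On the algebraic side, Proposition \ref{basi} gives a free basis of $\DAHA_{N,{\rm deg}}^l$ consisting of monomials $M_X M_y\, s\, M_D$ with the exponent of each $y_i$ bounded by $l-1$. On the geometric side, Lemma \ref{free} yields freeness of $\CHH_{N,{\rm deg}}^l$ over the equivariant point cohomology, with a basis provided by the fundamental classes of the preimages under $\varpi_l$ of the Iwahori--Bruhat cells of $\Fl$. These cells are indexed by the extended affine Weyl group $S_N \ltimes \BZ^N$ of $GL_N$ (matching the $M_X M_D$-part), while the fiber geometry of $\varpi_l$ over each cell contributes an additional polynomial factor whose bigraded dimension matches the $M_y$-part of the algebraic basis. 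A direct count then matches the two ranks in each bidegree, which combined with injectivity of $\xi$ forces surjectivity.

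The main obstacle will be the concrete matching of bases, i.e., showing that each cellular basis element actually lies in the image of $\xi$. I would handle this by induction on length in the extended affine Weyl group: the base case of translations in the ``positive'' direction and the finite Weyl elements is covered by the known $l=0$ isomorphism, the base case of the ``negative'' translation is exactly formula \eqref{piCH} for $\pi_-$, and the inductive step uses standard Demazure-type multiplication formulas for the $s_i$ in equivariant Borel--Moore homology. An alternative, possibly slicker, route is to invoke Theorem \ref{main1} to identify $\DAHA_{N,{\rm deg}}^l$ with $\bDAHA_N^{l,{\rm psc}}$ and then realize the latter directly as $\CHH_{N,\rm deg}^l$ via the construction of Kodera--Nakajima \cite{KN}, of which our setup is a mild generalization.
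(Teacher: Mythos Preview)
Your injectivity argument via the commutative diagram is correct and is essentially what the paper does. The surjectivity argument, however, has a real gap and also diverges substantially from the paper's method.

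The gap is in the Hilbert series comparison. The cellular basis of $\CHH_{N,{\rm deg}}^l$ coming from Lemma~\ref{free} consists of one fundamental class per Iwahori orbit on $\Fl$, indexed by $S_N\ltimes\BZ^N$, as a module over $H^\bullet_{\BC^\times\times T(W)_\CO\times\CP\rtimes\BC^\times}(pt)\cong\Bbb C[\hbar,k,z_1,\dots,z_l,y_1,\dots,y_N]$. There is no ``additional polynomial factor from the fiber geometry'' in the basis count; the fibers of $\varpi_l$ (which do depend on $l$ through the $p\colon W_\CO\to L_0\cap M_0$ component) only enter through degree shifts of these fundamental classes. To carry out your program you would have to compute, for each $w\in S_N\ltimes\BZ^N$, the bidegree of the corresponding class, which amounts to computing $\dim\varpi_l^{-1}(\text{cell}_w)$ as a function of $l$ and $w$, and then check that the resulting bigraded character matches the one coming from Proposition~\ref{basi1}. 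You assert ``a direct count then matches'' but do not perform it; this is the missing step, and it is not entirely trivial. Your inductive alternative has the same issue: the Demazure-type product formulas would let you reach cells of increasing length, but you still need to know you have hit a full basis, which again comes down to the dimension count.

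The paper's route avoids all of this. It first reduces (using freeness on the geometric side and finite-dimensionality of bigraded pieces) to showing that the specialization $\xi_{z,\hbar,k}$ is an isomorphism for Weil generic parameters. It then invokes a short ring-theoretic lemma (Lemma~\ref{algle}): if $A\subset B$ are unital rings, $\bold e\in A$ an idempotent with $A\bold e A=A$ and $\bold e A\bold e=\bold e B\bold e$, then $A=B$. Taking $\bold e$ to be the $S_N$-symmetrizer, the condition $A\bold e A=A$ holds generically (non-asphericality of the cyclotomic Cherednik algebra, via Theorem~\ref{main1}), and the condition $\bold e A\bold e=\bold e B\bold e$ is precisely the spherical isomorphism, which is already known from~\cite[Proposition~3.24]{BFN3}. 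So the paper bootstraps the full isomorphism from the spherical one rather than from a cell-by-cell basis match. Your reference to \cite{KN} points in the same direction, but \cite{KN} also treats only the spherical subalgebra, so invoking it does not by itself close the gap.
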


\begin{proof} We may assume that $l>0$. Recall from Lemma \ref{free} that $\CHH_{N,{\rm deg}}^l$ is a free $\Bbb C[z_1,...,z_l,\hbar,k]$-module.
Thus it suffices to show that $\xi_0$ is an isomorphism, where $\xi_0$ is the specialization of $\xi$ at $\hbar=k=0,z=0$. It is clear that $\xi_0$ is injective, hence so are $\xi$ and all its specializations $\xi_{z,\hbar,k}$. Since the bigraded components $\DAHA_{N,{\rm deg}}^l[r,s]$ are finite dimensional (as $l>0$), it suffices to show that the specialization $\xi_{z,\hbar,k}$ is an isomorphism for {\it Weil generic} $z,\hbar,k$.

We will now use the following easy lemma about unital rings.

\begin{lemma}\label{algle} Let $B$
be a unital ring and $A\subset B$ a unital subring.
If $\bold e\in A$ is an idempotent such that $A\bold e A=A$ and $\bold e A\bold e=\bold e B\bold e$, then $A=B$.
 \end{lemma}

\begin{proof} Since $A\bold e A=A$, we have $A\bold e B=A\bold e AB=AB=B$. Similarly $B\bold e A=B$. Thus $A\bold e B\bold e A=B\bold e A=B$.
But since $\bold e A\bold e=\bold e B\bold e$ and $A\bold e A=A$, we have $A\bold e B\bold e A=A\bold e A\bold e A=A\bold e A=A$. Thus $A=B$.
\end{proof}

Now let $\bold e$ be the symmetrizer of $S_N$, and let us apply Lemma~\ref{algle} to $A=\DAHA_{N,{\rm deg}}^l(z,1,k)$ and $B=\CHH_{N,{\rm deg}}^l(z,-1,k)$.
Using Theorem~\ref{main1}, we see that the condition $A\bold e A=A$ is satisfied for generic $z,k$, namely when the corresponding parameters of
the cyclotomic rational Cherednik algebras are not aspherical (see e.g. \cite{BE}, Subsection 4.1). In fact,
it suffices to consider the case $z=0,k=0$, when this is easy, since $\DAHA_{N,{\rm deg}}^l(0,1,0)=S_n\ltimes ({\mathcal D}(\Bbb C)^{\Bbb Z/l\Bbb Z})^{\otimes N}$
is a simple algebra (by \cite[Theorem 2.3]{Mo}). Also, 
by~\cite[Proposition~3.24]{BFN3}, the theorem
holds for spherical subalgebras, i.e., $\bold e\xi_0 \bold e$ is an isomorphism. Hence so are $\bold e\xi\bold e$ and all its specializations $\bold e\xi_{z,\hbar,k}\bold e$, which yields the condition $\bold e A\bold e=\bold e B\bold e$ for all $z,\hbar,k$. Thus Lemma \ref{algle} applies and Theorem \ref{geomiso} follows.
\end{proof}

We expect that an analog of Theorem \ref{geomiso} also holds in the K-theoretic setting, i.e., for diagram \eqref{commHH}.
Let us prove a formal version of this statement. Let $q=e^{\varepsilon \hbar}$, $t=e^{-\varepsilon  k}$, $Z_i=e^{\varepsilon z_i}$.
Let $\CHH_N^{l,{\rm formal}}$ be the corresponding formal completion of $\CHH_N^l$
(obtained by viewing equivariant K-theory as a formal deformation of equivariant Borel-Moore
homology\footnote{Indeed, the Borel-Moore homology is the associated graded of the
  $\gamma$-filtration on the $K$-theory. On the equivariant $K$-theory of the point, this
  is the filtration by the order of vanishing at the neutral element. See~\cite{AS} for the
topological situation, and~\cite{To} for the comparison with the algebraic situation.}).
Diagram \eqref{commHH} furnishes a map $\widehat{\xi}: \DAHA_N^{l,{\rm formal}}\to \CHH_N^{l,{\rm formal}}$.

\begin{corollary} The map $\widehat{\xi}$ is an isomorphism.
\end{corollary}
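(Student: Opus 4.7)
The plan is to deduce the corollary from Theorem \ref{geomiso} by a flat deformation argument, in complete parallel with how Theorem \ref{main2} is deduced from Theorem \ref{main1}. Namely, I would show that both $\DAHA_N^{l,{\rm formal}}$ and $\CHH_N^{l,{\rm formal}}$ are topologically free $\Bbb C[[\varepsilon]]$-modules whose reductions modulo $\varepsilon$ are naturally identified with $\DAHA_{N,{\rm deg}}^l$ and $\CHH_{N,{\rm deg}}^l$, and that $\widehat\xi\bmod\varepsilon=\xi$; then a Nakayama-type lemma for $\varepsilon$-adically complete topologically free modules gives the result.

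First I would verify the flatness statements. On the DAHA side, this is exactly Theorem \ref{main2} together with Proposition \ref{qcl1}. On the geometric side, Lemma \ref{free} shows that $\CHH_N^l$ is free over the equivariant point ring $K_{\Bbb C^\times\times T(W)_\CO\times\CP\rtimes\Bbb C^\times}(pt)$. Under the exponential substitutions $q=e^{\varepsilon\hbar},\ t=e^{-\varepsilon k},\ Y_i=e^{\varepsilon y_i},\ Z_j=e^{\varepsilon z_j}$ and $\varepsilon$-adic completion, this point ring becomes $\Bbb C[[\varepsilon]][y_1,\ldots,y_N,z_1,\ldots,z_l,\hbar,k]$, and freeness is preserved, so $\CHH_N^{l,{\rm formal}}$ is topologically free over $\Bbb C[[\varepsilon]]$ with quasiclassical limit $\CHH_N^{l,{\rm formal}}/(\varepsilon)\cong\CHH_{N,{\rm deg}}^l$.

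Next I would check that $\widehat\xi\bmod\varepsilon$ agrees with the isomorphism $\xi$ of Theorem \ref{geomiso}. This is a direct computation on generators: by definition the generators $T_i,Y_i,\pi,\pi_-$ of $\DAHA_N^{l,{\rm formal}}$ are the exponential deformations (as in Subsection \ref{dfd} and Definition \ref{cyclotomDAHA}) of the generators $s_i,y_i,\pi,\pi_-$ of $\DAHA_{N,{\rm deg}}^l$, and the corresponding generators of $\CHH_N^l$ introduced in Subsection \ref{Jordan}, namely $T_n=-1-[\CO_{\widetilde\Bbb P{}^1_n}(-2)]$, $\pi^{\pm1}=[\CO_{\varpi_l^{-1}(\ol\pi{}^{\pm1})}]$, and the expression for $\pi_-$ in \eqref{piCHH}, reduce (after the exponential substitutions) precisely to the generators $s_n=1+[\widetilde\Bbb P{}^1_n]$, $\pi^{\pm1}=[\varpi_l^{-1}(\ol\pi{}^{\pm1})]$, and the expression for $\pi_-$ in \eqref{piCH} of $\CHH_{N,{\rm deg}}^l$. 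Thus the diagrams \eqref{commH} and \eqref{commHH} are compatible to zeroth order in $\varepsilon$, yielding $\widehat\xi\bmod\varepsilon=\xi$.

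Finally, since both source and target of $\widehat\xi$ are topologically free, hence $\varepsilon$-torsion-free and $\varepsilon$-adically separated and complete, and since the reduction $\widehat\xi\bmod\varepsilon=\xi$ is an isomorphism by Theorem \ref{geomiso}, a standard Nakayama-type argument (lift a basis of the reduction to elements of the source and target; the resulting maps between free $\Bbb C[[\varepsilon]]$-modules are isomorphisms mod $\varepsilon$, hence isomorphisms) shows that $\widehat\xi$ is itself an isomorphism. The main obstacle, as I see it, is the careful identification of the $\varepsilon$-adic completion of the $K$-theoretic convolution algebra with a genuine topologically free deformation of the Borel--Moore homology convolution algebra: one must ensure that the chosen completion is compatible with all the equivariant parameters simultaneously and that Lemma \ref{free} survives the completion; once this setup is in place, the remainder of the argument is purely formal.
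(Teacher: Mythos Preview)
Your proposal is correct and follows essentially the same approach as the paper's proof: invoke Lemma~\ref{free} to see that $\CHH_N^{l,{\rm formal}}$ is a flat formal deformation of $\CHH_{N,{\rm deg}}^l$ over $\Bbb C[[\varepsilon]]$, note that $\widehat\xi|_{\varepsilon=0}=\xi$ (which is an isomorphism by Theorem~\ref{geomiso}), and conclude. The paper's proof is terser---it leaves the flatness of the DAHA side (Theorem~\ref{main2}) and the Nakayama-type step implicit---but your more explicit version, including the generator-by-generator check that $\widehat\xi\bmod\varepsilon=\xi$, is the same argument spelled out in full.
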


\begin{proof} By Lemma \ref{free},
$\CHH_N^{l,{\rm formal}}$ is a flat formal deformation of $\CHH_{N,{\rm deg}}^l$ over $\Bbb C[[\varepsilon]]$.
Finally, $\widehat\xi|_{\varepsilon=0}=\xi$. This implies the corollary.
\end{proof}

In particular, this gives another, geometric proof of the facts that
$\DAHA_{N,{\rm deg}}^l$ is a free bigraded $\Bbb C[z_1,...,z_l,\hbar,k]$-module,
and the algebra $\DAHA_N^{l,{\rm formal}}(z,\hbar,k)$ is its flat formal deformation.

We also have

\begin{theorem}\label{spherr} Let $\bold e$ be the symmetrizer of the finite Hecke algebra generated by $T_i$, $1\le i\le N-1$. Then
the natural map
$$
\bold e \xi_{Z,q,t}\bold e: \bold e\DAHA_N^l(Z,q,t)\bold e \to \bold e\CHH_N^l(Z,q,t)\bold e
$$
is an isomorphism when $q=t=1$ and $Z_i=1$ for all $i$.
\end{theorem}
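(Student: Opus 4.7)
The plan is to mirror the strategy used to prove the spherical part of Theorem \ref{geomiso}, now in the K-theoretic setting specialized to $(q,t,Z)=(1,1,1)$. The core ingredients will be the formal isomorphism $\widehat{\xi}$ from the preceding corollary, flatness of both sides over the parameter torus, and the spherical degenerate isomorphism already established via \cite[Proposition 3.24]{BFN3}.

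First I would establish flatness of both families. By Lemma \ref{free}, $\CHH_N^l$ is free over $K_{\BC^\times\times T(W)_\CO\times\CP\rtimes\BC^\times}(pt)$, so $\bold e \CHH_N^l \bold e$ is flat over $\BC[Z^{\pm 1}, q^{\pm 1}, t^{\pm 1}]$. On the algebraic side, the basis of Proposition \ref{basi2}, restricted to its $\bold e$-symmetric part, shows that $\bold e \DAHA_N^l(Z,q,t) \bold e$ is also free over the same parameter ring, with the rank in each bigraded piece independent of the parameters. Hence $\bold e \xi \bold e$ is a morphism of two flat families of algebras, and its specialization at any closed point of the torus is well-defined.

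Next, I would use $\widehat{\xi}$ to control the map. By the preceding corollary, $\widehat{\xi}$ is an iso of $\varepsilon$-adic algebras, hence $\bold e\widehat{\xi}\bold e$ is an iso of the formal spherical subalgebras; its $\varepsilon=0$ limit recovers the spherical part of Theorem \ref{geomiso}, known to be an iso by \cite[Proposition 3.24]{BFN3}. To pass from this formal/degenerate information to an iso at the numerical point $(q,t,Z)=(1,1,1)$, I would exploit that at $q=t=1$ both spherical algebras are commutative (as noted in the introduction for the DAHA side), and that by Section \ref{sec4} together with the appendix both are identified with the coordinate ring of the multiplicative quiver variety for the cyclic quiver of length $l$ with dimension vector $(N,\ldots,N)$. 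The map $\bold e \xi_{1,1,1}\bold e$ then corresponds to an endomorphism of this affine variety, which must be an iso since it is iso after formal completion at the canonical base point by Theorem \ref{geomiso} and the previous corollary, combined with the density of the deformation.

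The main obstacle is the last step, the bridge from the iso at the degenerate limit to the iso at the specific numerical point $(1,1,1)$; the cleanest route uses the multiplicative quiver variety identification from the later sections, but an alternative would be to invoke Lemma \ref{algle} with $A=\bold e \DAHA_N^l(1,1,1)\bold e$ and $B=\bold e \CHH_N^l(1,1,1)\bold e$ together with a smaller refining idempotent, paralleling the proof of Theorem \ref{geomiso}; verifying the hypothesis $\bold e' A \bold e' = \bold e' B \bold e'$ at these specific parameters is the delicate point, since the generic-parameter argument from Theorem \ref{geomiso} no longer directly applies and must be replaced by flatness together with the already-established iso in the degenerate limit.
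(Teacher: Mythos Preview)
Your approach has a genuine gap. The paper's proof is a single line: it says the argument of \cite[Proposition~3.24]{BFN3} carries over verbatim to the $K$-theoretic setting. That proposition gives a direct identification of the spherical convolution algebra with an explicit commutative algebra (via generators and localization to the torus fixed points), and the same computation works here without reference to anything else in the paper.

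Your route, by contrast, tries to bootstrap from the formal/degenerate isomorphism and then close the argument using the multiplicative quiver variety description from Section~\ref{sec4}. This fails at the very point you need it: the results of Section~\ref{sec4} (Propositions~\ref{irre}, \ref{cloem}, \ref{isomor}, etc.) are all stated under the standing hypotheses that $t$ is not a root of unity and that $Z_i/Z_j$ is not an integer power of $t$. At $t=1$, $Z_i=1$ both hypotheses collapse, so you cannot invoke that section to identify either side with $\mathcal{O}(\mathcal{M}_N^l)$. Moreover, even the geometric side is not automatic: Section~\ref{Coul} identifies the $K$-theoretic Coulomb branch $K^{G_\CO}(\CR)$ with a multiplicative bow variety, but $\bold e\CHH_N^l\bold e$ is built from the parahoric $\CP$, and matching it with the $G_\CO$-equivariant algebra is essentially the content of the statement you are trying to prove.

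There is also a subtler confusion in your first two paragraphs. The ``preceding corollary'' concerns the formal algebra $\DAHA_N^{l,\mathrm{formal}}$ over $\BC[[\varepsilon]]$ with $q=e^{\varepsilon\hbar}$, whose reduction mod $\varepsilon$ is the \emph{degenerate} DAHA, not $\DAHA_N^l(1,1,1)$. These are different algebras (one has commuting $y_i$, the other invertible commuting $Y_i$), so the formal isomorphism $\widehat\xi$ does not, without further work, say anything about the specialization $\xi_{1,1,1}$. The completion at $q=t=Z_i=1$ that would be relevant is the theorem \emph{after} \ref{spherr}, whose proof uses \ref{spherr}; invoking it would be circular. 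The direct approach of \cite{BFN3} avoids all of this.
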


\begin{proof}
Same as the proof of~\cite[Proposition~3.24]{BFN3}.
\end{proof}

\begin{theorem} Let $\widehat{\DAHA_N^l}$, $\widehat{\CHH_N^l}$ be the completions of $\DAHA_N^l,\CHH_N^l$ at $q=t=Z_i=1$ (as modules over $\Bbb C[Z_1^{\pm 1},...,Z_l^{\pm 1},
q^{\pm 1},\bold t^{\pm 1}]$). Then the map
$$
\widehat{\xi}: \widehat{\DAHA_N^l} \to \widehat{\CHH_N^l}
$$
is an isomorphism.
\end{theorem}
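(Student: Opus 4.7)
The plan is to lift the bigraded isomorphism of Theorem~\ref{geomiso} to the $\mathfrak m$-adic completion, where $\mathfrak m=(Z_1-1,\ldots,Z_l-1,q-1,\bold t-1)\subset R:=\Bbb C[Z^{\pm 1},q^{\pm 1},\bold t^{\pm 1}]$. I will apply Lemma~\ref{algle} to the pair $(\widehat A,\widehat B):=(\widehat{\DAHA_N^l},\widehat{\CHH_N^l})$, with the idempotent $\bold e$ taken to be the symmetrizer of the finite Hecke algebra; this $\bold e$ exists in $\widehat A$ since $[N]_{\bold t}!$ is invertible near $\bold t=1$.

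The first step is to establish topological freeness of both $\widehat A$ and $\widehat B$ over the completed parameter ring $\widehat R$. For $\widehat B$ this follows from Lemma~\ref{free}, since the equivariant $K$-theory $K_{\BC^\times\times T(W)_\CO\times\CP\rtimes\BC^\times}(pt)$ is a Laurent polynomial ring containing $R$ as a subring over which it is itself free. For $\widehat A$, Proposition~\ref{basi2} writes $\DAHA_N^l$ as a free module over the affine Hecke algebra $H_N$, and the Bernstein presentation displays $H_N$ as a free module of rank $N!$ over $\Bbb C[Y_1^{\pm 1},\ldots,Y_N^{\pm 1},q^{\pm 1},\bold t^{\pm 1}]\supset R$. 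Injectivity of $\widehat\xi$ is then inherited from that of $\xi$, visible from the embedding $\bz^*$ of~\eqref{zK} and the bottom row of~\eqref{commHH}, since $\mathfrak m$-adic completion of an injection of topologically free modules remains injective.

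The second step is to verify the equality $\bold e\widehat A\bold e=\bold e\widehat B\bold e$. Theorem~\ref{spherr} provides precisely this equality at the central fibre $q=t=Z_i=1$, and a standard Nakayama argument for complete topologically free modules upgrades it to the completion: being surjective and injective modulo $\mathfrak m$ forces a continuous $\widehat R$-linear map between topologically free $\widehat R$-modules to be an isomorphism, using closedness of the image and $\mathfrak m$-adic separatedness of the kernel.

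The third and most delicate step is the asphericity condition $\widehat A\bold e\widehat A=\widehat A$, which together with the previous two steps and Lemma~\ref{algle} yields $\widehat A=\widehat B$. By Nakayama once more this reduces to checking $A_0\bold e A_0=A_0$ for the central fibre $A_0:=\DAHA_N^l(\mathbf 1,1,1)$, and this is the main obstacle. At $q=t=Z_i=1$ the ambient DAHA degenerates to $\Bbb C[X^{\pm 1},Y^{\pm 1}]\rtimes S_N$, whose cross-product fails the naive classical Morita criterion along the diagonals $X_i=X_j,\,Y_i=Y_j$; in particular, one cannot merely cite generic asphericity of the partially spherical rational Cherednik algebra (via Theorems~\ref{main1} and~\ref{main2}) as in the proof of Theorem~\ref{geomiso}, since asphericity is closed under flat specialization. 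A direct argument instead requires either unpacking the explicit basis of $A_0$ obtained by specializing Proposition~\ref{basi2} and showing that the two-sided ideal $A_0\bold e A_0$ captures every basis element, or appealing to the geometric description of the central fibre via the associated multiplicative Jordan-quiver and bow varieties (as discussed in Section~\ref{sec4}) to reduce asphericity to a statement about generic stabilizers of the $S_N$-action on the relevant moduli space.
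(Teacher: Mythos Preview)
Your overall plan matches the paper's: both use Lemma~\ref{algle} with the symmetrizer $\bold e$, and your Steps~1 and~2 (topological freeness via Lemma~\ref{free} and Proposition~\ref{basi2}, and the spherical isomorphism via Theorem~\ref{spherr} plus Nakayama) are exactly what the paper has in mind under ``analogous to the proof of Theorem~\ref{geomiso}, using Theorem~\ref{spherr}''.

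The divergence is at Step~3, and here your approach has a genuine gap that cannot be repaired along the lines you suggest. You propose to verify $\widehat A\bold e\widehat A=\widehat A$ by reducing via Nakayama to $A_0\bold e A_0=A_0$ at the central fibre $A_0=\DAHA_N^l(\mathbf 1,1,1)$. But this identity is \emph{false}. At $q=\bold t=Z_i=1$ one has $A_0\cong S_N\ltimes C^{\otimes N}$ with $C=\Bbb C[X,D,Y^{\pm1}]/(XD-(Y-1)^l)$ commutative; the one-dimensional $A_0$-module on which $X_i,D_i,Y_i$ act by common scalars and $S_N$ acts by the sign character is annihilated by $\bold e$, so $A_0\bold e A_0\subsetneq A_0$. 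Since $\widehat A\bold e\widehat A=\widehat A$ would reduce mod $\mathfrak m$ to $A_0\bold e A_0=A_0$, it follows that $\widehat A\bold e\widehat A\neq\widehat A$ and Lemma~\ref{algle} \emph{cannot} be applied to the pair $(\widehat A,\widehat B)$ at all. In particular, neither of your two proposed workarounds (explicit basis manipulation in $A_0$, or a geometric stabilizer argument) can succeed, because they are aimed at establishing a statement that does not hold.

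The paper avoids this trap precisely by \emph{not} working at the central fibre. Following the template of Theorem~\ref{geomiso} literally, the paper applies Lemma~\ref{algle} at \emph{generic numerical} parameter values rather than in the completion: the identity $A\bold e A=A$ is checked for $A=\DAHA_N^l(Z,q,t)$ at $Z=1$, $t=1$ and generic $q$, where $A$ is a simple algebra by \cite[Theorem~2.3]{Mo} (at $t=1$ the Hecke relations collapse and $A$ becomes a crossed product of $S_N$ with a tensor power of a noncommutative algebra, to which Montgomery's theorem applies). This yields the isomorphism $\xi_{Z,q,t}$ for Weil generic parameters, and the passage to the $\mathfrak m$-adic completion is then handled by freeness of both sides over $R$ together with the finiteness built into the cell decomposition of Lemma~\ref{free}, in the same spirit as the bigraded reduction in Theorem~\ref{geomiso}. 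So the correct fix is not to strengthen your central-fibre argument but to relocate the application of Lemma~\ref{algle} to generic parameters, as the paper does.
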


\begin{proof} The proof is analogous to the proof of Theorem \ref{geomiso}, using Theorem \ref{spherr}.
Namely, the identity $A\bold e A=A$
for $A=\DAHA_N^l(Z,q,t)$ is established for generic $q$ and $Z=1$, $t=1$
using the fact that in this case $A$ is a simple algebra by
\cite[Theorem 2.3]{Mo}.
\end{proof}

\section{Cyclotomic DAHA and multiplicative quiver and bow varieties}
\label{sec4}

\subsection{Multiplicative quiver varieties} 
\label{quiver}
Let $t\in \Bbb C^*$ be not a root of unity, and $Z_1,...,Z_l\in \Bbb C^*$ be such that $Z_i/Z_j$ is not an integer power of $t$ for $i\ne j$.
Let $Q_l$ be the cyclic quiver $\hat A_{l-1}$ with vertices $1,...,l$ and an additional ``Calogero-Moser vertex'' $0$ attached to the vertex $1$.
Let ${\mathcal M}_N^l(Z,t)$ be the {\it multiplicative quiver variety}
for $Q_l$ with dimension vector $d_1=...=d_l=N$ and $d_0=1$, see \cite{CBS}.
Namely, given complex vector spaces $V_i$, $i=1,...,l$, with $\dim V_i=N$,
${\mathcal M}_N^l(Z,t)$ is the variety of collections
of linear maps $\X_i: V_{i+1}\to V_i$ and $\D_i: V_i\to V_{i+1}$
(where addition is mod $l$) satisfying the equations
\begin{equation}\label{rela1}
Z_{i}(1+\X_{i} \D_{i})=Z_{i-1}(1+\D_{i-1}\X_{i-1}), 2\le i\le l
\end{equation}
and
\begin{equation}\label{rela2}
Z_1(1+\X_1 \D_1)T=Z_l(1+\D_l\X_l),
\end{equation}
where $T: V_1\to V_1$ is an operator conjugate to ${\rm diag}(t^{-1},...,t^{-1},t^{n-1})$,
modulo simultaneous conjugation (i.e., the corresponding categorical quotient).

\begin{example} Let $l=1$. Then there is no dependence on $Z_1$, and
${\mathcal M}_N^1(t)$ is the variety of pairs $(\X,\D)$ of
$N$ by $N$ matrices such that
$$
(1+\X \D)T=(1+\D \X), 
$$
where $1+\X \D$ is invertible, and $T$ is as above, modulo simultaneous conjugation.
\end{example}

Let $\X:=\X_1...\X_l$, $\D:=\D_l...\D_1$, $\Y:=Z_1(1+\X_1\D_1)$.
Consider the operators $L_+:=Z_1...Z_l \X \D$,
$L_-:=Z_1...Z_l \D \X$.

\begin{lemma}\label{prodfor}
We have
$$
L_+=(\Y-Z_1)...(\Y-Z_l),
$$
$$
L_-=(\Y T-Z_1)...(\Y T-Z_l).
$$
\end{lemma}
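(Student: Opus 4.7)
The plan is to establish both formulas by a telescoping induction that peels off one factor $(\Y - Z_i)$ (respectively $(\Y T - Z_i)$) at a time from the outside of the product $\X_1\cdots\X_l\D_l\cdots\D_1$ (respectively $\D_l\cdots\D_1\X_1\cdots\X_l$). The base observation is immediate from the definition: since $\Y = Z_1(1+\X_1\D_1)$ one has $\Y - Z_1 = Z_1\X_1\D_1$, and using the last relation \eqref{rela2} one similarly gets $\Y T - Z_l = Z_l\D_l\X_l$. These are the seeds for the two inductions.

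For $L_+$, the key step I would prove by induction on $k = 1,\dots,l$ is the identity
\[
(\Y - Z_k)\,\X_1\cdots\X_{k-1} \;=\; Z_k\,\X_1\cdots\X_k\D_k.
\]
The base case $k=1$ is the observation above. For the inductive step $k \to k+1$, multiply the identity at level $k$ on the right by $\X_k$, getting $\X_1\cdots\X_k\cdot(Z_k\D_k\X_k) = (\Y - Z_k)\X_1\cdots\X_k$. Now use relation \eqref{rela1} at index $k+1$, which says $Z_k(1+\D_k\X_k) = Z_{k+1}(1+\X_{k+1}\D_{k+1})$, to rewrite $Z_k\D_k\X_k = (Z_{k+1}-Z_k) + Z_{k+1}\X_{k+1}\D_{k+1}$. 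Substituting and rearranging yields the identity at level $k+1$.

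Given this identity for all $k$, the formula for $L_+$ follows by telescoping: starting from $L_+ = Z_1\cdots Z_l\X_1\cdots\X_l\D_l\D_{l-1}\cdots\D_1$, apply the identity for $k=l$ to the block $Z_l\X_1\cdots\X_l\D_l$, producing a factor $(\Y - Z_l)$ on the left, then apply the identity for $k=l-1$ to the next block $Z_{l-1}\X_1\cdots\X_{l-1}\D_{l-1}$, and so on down to $k=1$. The resulting extracted factors $(\Y - Z_l)(\Y - Z_{l-1})\cdots(\Y - Z_1)$ may be reordered freely since they are polynomials in $\Y$; this gives the claimed product $(\Y - Z_1)\cdots(\Y - Z_l)$.

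The formula for $L_-$ is proved in the mirror-symmetric fashion. The analogous key identity is
\[
(\Y T - Z_j)\,\D_l\cdots\D_{j+1} \;=\; Z_j\,\D_l\cdots\D_j\,\X_j
\]
for $j = l,l-1,\dots,1$, with base case $j=l$ coming from \eqref{rela2} and inductive step (decreasing $j$) using relation \eqref{rela1} at index $j+1$ in the form $Z_{j+1}\X_{j+1}\D_{j+1} = Z_j\D_j\X_j - (Z_{j+1}-Z_j)$; the computation is the left-right mirror of the one above and goes through verbatim. Telescoping then extracts the factors $(\Y T - Z_l), (\Y T - Z_{l-1}),\dots,(\Y T - Z_1)$ from $L_-$, and commutativity of polynomials in $\Y T$ lets us write the result as $(\Y T - Z_1)\cdots(\Y T - Z_l)$. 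The only real content is the inductive identity in the second paragraph; everything else is bookkeeping, so I do not expect any genuine obstacle.
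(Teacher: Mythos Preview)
Your proof is correct and follows essentially the same inductive strategy as the paper: both arguments peel off one layer of the product $\X_1\cdots\X_r\D_r\cdots\D_1$ at a time using relations \eqref{rela1}--\eqref{rela2}, and the key identity $(\Y-Z_k)\X_1\cdots\X_{k-1}=Z_k\X_1\cdots\X_k\D_k$ you isolate is exactly what the paper proves (written there in the conjugated form $\X_1\cdots\X_{r-1}(Z_r\X_r\D_r)\X_{r-1}^{-1}\cdots\X_1^{-1}=\Y-Z_r$). A small bonus of your formulation is that it never invokes $\X_i^{-1}$, so the computation is manifestly valid without assuming the $\X_i$ are invertible; the paper's version implicitly relies on that cancellation being formal.
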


\begin{proof} We prove the formula for $L_+$;
the formula for $L_-$ is proved in a similar way.

It suffices to prove by induction in $r$ that
$$
Z_1...Z_r \X_1...\X_r\D_r...\D_1=(\Y-Z_1)...(\Y-Z_r).
$$
The base $r=0$ is obvious. For $r>0$, we have, using the induction assumption:
$$
Z_1...Z_r \X_1...\X_r\D_r...\D_1=\X_1...\X_{r-1}(Z_r\X_r\D_r)\X_{r-1}^{-1}...\X_1^{-1}(\Y-Z_1)...(\Y-Z_{r-1}).
$$
But
$$
\X_1...\X_{r-1}(Z_r\X_r\D_r)\X_{r-1}^{-1}...\X_1^{-1}=$$
$$\X_1...\X_{r-1}(Z_{r-1}\D_{r-1}\X_{r-1}+Z_{r-1}-Z_r)\X_{r-1}^{-1}...\X_1^{-1}=
$$
$$
=Z_{r-1}-Z_r+\X_1...\X_{r-2}(Z_{r-1}\X_{r-1}\D_{r-1})\X_{r-2}^{-1}...\X_1^{-1}.
$$
Thus,
$$
\X_1...\X_{r-1}(Z_r\X_r\D_r)\X_{r-1}^{-1}...\X_1^{-1}=Z_1-Z_r+Z_1\X_1\D_1=\Y-Z_r.
$$
This implies the induction step.
\end{proof}

Thus we have
\begin{equation}\label{eqnsvar1}
\X \D=(Z_1^{-1}\Y-1)...(Z_l^{-1}\Y-1),\ \D \X=(Z_1^{-1}\Y T-1)...(Z_l^{-1}\Y T-1),
\end{equation}
\begin{equation}\label{eqnsvar2}
\Y \X =\X \Y T, \Y T \D=\D \Y.
\end{equation}

\begin{lemma}\label{invo1} 
We have an isomorphism 
$$
\Phi: {\mathcal M}_N^l(Z_1,...,Z_l,t)\to {\mathcal M}_N^l(Z_l,...,Z_1,t^{-1})
$$ 
given by $\Phi(\X_i)=\D_{l+1-i}$, $\Phi(\D_i)=\X_{l+1-i}$, $\Phi(Z_i)=Z_{l+1-i}$. 
\end{lemma}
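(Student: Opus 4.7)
The idea is that $\Phi$ is the reversal of the cyclic orientation of the underlying $\widehat A_{l-1}$ quiver, combined with the exchange of the $\X$- and $\D$-arrows. To set up notation, let $\X'_i,\D'_i,Z'_i,T'$ denote the data of the target variety $\mathcal{M}_N^l(Z_l,\ldots,Z_1,t^{-1})$, and identify its vector spaces with those of the source via $V'_i:=V_{l+2-i}$ (indices mod $l$, so $V'_1=V_1$). Under this identification each target arrow $\X'_i\colon V'_{i+1}\to V'_i$ is a map $V_{l+1-i}\to V_{l+2-i}$ of the same type as $\D_{l+1-i}$, and similarly $\D'_i=\X_{l+1-i}$ is of the correct type. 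The plan is then to verify that (a) the defining relations \eqref{rela1}, \eqref{rela2} of the target follow from those of the source under $\Phi$, (b) $\Phi$ intertwines the gauge actions, and (c) $\Phi$ squares to the identity.

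For the ordinary vertex relations at $i=2,\ldots,l$ in the target, substituting $Z'_i=Z_{l+1-i}$, $\X'_i=\D_{l+1-i}$, $\D'_i=\X_{l+1-i}$ turns
$$
Z'_i(1+\X'_i\D'_i)=Z'_{i-1}(1+\D'_{i-1}\X'_{i-1})
$$
into $Z_{l+1-i}(1+\D_{l+1-i}\X_{l+1-i})=Z_{l+2-i}(1+\X_{l+2-i}\D_{l+2-i})$. Setting $j:=l+2-i$, this is exactly the source relation \eqref{rela1} at vertex $j$ with the two sides swapped; since $i\mapsto l+2-i$ is an involution of $\{2,\ldots,l\}$, the ordinary vertex equations in source and target correspond bijectively.

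For the distinguished vertex, the target equation $Z'_1(1+\X'_1\D'_1)T'=Z'_l(1+\D'_l\X'_l)$ becomes, after substitution, $Z_l(1+\D_l\X_l)T'=Z_1(1+\X_1\D_1)$, while the source equation \eqref{rela2} is equivalent to $Z_l(1+\D_l\X_l)T^{-1}=Z_1(1+\X_1\D_1)$. These coincide precisely when $T'=T^{-1}$, and since $T$ is conjugate to $\diag(t^{-1},\ldots,t^{-1},t^{N-1})$, its inverse $T^{-1}$ is conjugate to $\diag(t,\ldots,t,t^{-(N-1)})$, which is exactly the conjugacy class prescribed for $T'$ when the parameter is $t^{-1}$. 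Thus \eqref{rela2} is preserved as well, and the compatibility of the inversions $T\mapsto T^{-1}$ and $t\mapsto t^{-1}$ (forced by the orientation reversal) is the one genuinely nontrivial observation in the proof.

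Finally, identifying the gauge group $\prod_i GL(V'_i)$ with $\prod_i GL(V_i)$ via $g'_i:=g_{l+2-i}$, one checks directly from the explicit formulas that $\Phi$ intertwines the two conjugation actions, so it descends to a morphism of the categorical quotients. Computing $\Phi^2(\X_i)=\Phi(\D_{l+1-i})=\X_{l+1-(l+1-i)}=\X_i$, and similarly $\Phi^2(\D_i)=\D_i$, $\Phi^2(Z_i)=Z_i$, $\Phi^2(t)=t$, shows that the analogous map from $\mathcal{M}_N^l(Z_l,\ldots,Z_1,t^{-1})$ back to $\mathcal{M}_N^l(Z_1,\ldots,Z_l,t)$ is a two-sided inverse to $\Phi$; hence $\Phi$ is an algebraic isomorphism. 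The whole argument is a direct bookkeeping exercise with cyclic indices, with no conceptual obstacle beyond keeping track of the orientation reversal.
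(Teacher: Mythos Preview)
Your proof is correct and follows essentially the same approach as the paper: the paper's proof is simply the one-line assertion that the defining relations are matched by $\Phi$, and you have carried out exactly this verification in detail (checking \eqref{rela1}, \eqref{rela2}, the conjugacy class of $T$, gauge-equivariance, and involutivity).
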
 

\begin{proof} It is easy to check that the relations defining these varieties are matched by $\Phi$.  
\end{proof}

\subsection{Quadruple varieties and their connection to multiplicative quiver varieties} 
Let ${\bold M}_N^l(Z,t)$ be the variety of quadruples of matrices $(\X,\D,\Y,T)$
satisfying \eqref{eqnsvar1},\eqref{eqnsvar2} such that $\Y$ is invertible, modulo simultaneous conjugation (i.e., the categorical quotient).
We have a natural map 
$$
\psi: {\mathcal M}_N^l(Z,t)\to {\bold M}_N^l(Z,t)
$$ 
sending
$\X_i,\D_i,1\le i\le l$ to $(\X,\D,\Y,T)$.

\begin{proposition}\label{irre} Any collection $(\X,\D,\Y,T)\in {\bold M}_N^l(Z,t)$ acts irreducibly on $\Bbb C^N$.
\end{proposition}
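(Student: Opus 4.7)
The plan is to suppose $U\subsetneq\Bbb C^N$ is a proper nonzero $(\X,\D,\Y,T)$-invariant subspace and derive a contradiction by restricting or passing to a quotient on which $T$ becomes the scalar $t^{-1}$, then running an eigenvalue-chain argument for $\Y$.

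First I would use the shape of $T$. Since $T$ is conjugate to $\diag(t^{-1},\ldots,t^{-1},t^{N-1})$, it is diagonalizable and its $t^{N-1}$-eigenline $L$ is one-dimensional. Either $L\subset U$ or $L\cap U=0$; set $V':=\Bbb C^N/U$ in the first case and $V':=U$ in the second. Then $V'\ne 0$, and since restrictions and quotients of a diagonalizable operator by an invariant subspace remain diagonalizable, $T|_{V'}$ has only the eigenvalue $t^{-1}$ and therefore equals $t^{-1}\cdot\Id_{V'}$. The equations \eqref{eqnsvar1}--\eqref{eqnsvar2} descend on $V'$ to
$$
\X\D=P(\Y),\quad \D\X=P(t^{-1}\Y),\quad \Y\X=t^{-1}\X\Y,\quad \Y\D=t\D\Y,
$$
where $P(y)=\prod_{i=1}^l(Z_i^{-1}y-1)$, and $\Y|_{V'}$ is still invertible.

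Next I would decompose $V'=\bigoplus_\lambda\widetilde V'_\lambda$ into generalized $\Y$-eigenspaces. The commutation relations force $\X(\widetilde V'_\lambda)\subseteq\widetilde V'_{t^{-1}\lambda}$ and $\D(\widetilde V'_\lambda)\subseteq\widetilde V'_{t\lambda}$. Since $t$ is not a root of unity, every $\Bbb Z$-orbit $\{t^n\mu_0\}_{n\in\Bbb Z}$ is infinite, so on any orbit meeting the spectrum of $\Y$ the set of $n$ with $\widetilde V'_{t^n\mu_0}\ne 0$ is a finite interval $[n_{\min},n_{\max}]$. At the bottom, $\X$ maps $\widetilde V'_{t^{n_{\min}}\mu_0}$ into a trivial space, so $\X=0$ there; hence $P(t^{-1}\Y)=\D\X=0$ on $\widetilde V'_{t^{n_{\min}}\mu_0}$. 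Writing $t^{-1}\Y=t^{n_{\min}-1}\mu_0\cdot\Id+N$ with $N$ nilpotent and Taylor-expanding gives $P(t^{-1}\Y)=P(t^{n_{\min}-1}\mu_0)\cdot\Id+\text{nilpotent}$, so vanishing forces $P(t^{n_{\min}-1}\mu_0)=0$ (otherwise the operator would be invertible); hence $t^{n_{\min}-1}\mu_0=Z_i$ for some $i$. Symmetrically, at $n_{\max}$ one has $\D=0$, so $\X\D=P(\Y)=0$, giving $t^{n_{\max}}\mu_0=Z_j$ for some $j$.

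Dividing yields $Z_j/Z_i=t^{n_{\max}-n_{\min}+1}$, an integer power of $t$ with exponent $\ge 1$. If $i\ne j$ this contradicts the hypothesis that no $Z_i/Z_j$ is a power of $t$; if $i=j$ it forces $t^{n_{\max}-n_{\min}+1}=1$ with $n_{\max}-n_{\min}+1\ge 1$, contradicting that $t$ is not a root of unity. Either way we reach a contradiction, so $U=0$ or $U=\Bbb C^N$. The only genuinely delicate step is the Taylor-expansion observation in the previous paragraph, needed because $\Y$ need not be semisimple; once this is in place, the two genericity hypotheses on $t$ and the $Z_i$ are each used exactly once, and the remainder is purely combinatorial.
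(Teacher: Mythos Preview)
Your proof is correct and follows essentially the same strategy as the paper's: reduce to a nonzero subquotient $V'$ on which $T=t^{-1}$, then run an eigenvalue-chain argument for $\Y$ to force $Z_j/Z_i=t^m$ with $m\ge 1$. The only organizational difference is that the paper further reduces $V'$ to an irreducible subquotient and exhibits an explicit cyclic basis $v,\X v,\dots,\X^{n-1}v$ (with $v$ an honest $\Y$-eigenvector in $\Ker\D$), whereas you work directly with generalized $\Y$-eigenspaces and handle possible non-semisimplicity of $\Y$ via your Taylor-expansion remark; both routes reach the identical contradiction.
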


\begin{proof} Assume the contrary. Then there is an invariant subspace or quotient $V$ for $(\X,\D,\Y,T)$
of dimension $1\le n\le N-1$ on which $T$ acts by $t^{-1}$. So by Lemma
\ref{prodfor}, on $V$ we have
$$
\X \D=(Z_1^{-1}\Y-1)...(Z_l^{-1}\Y-1),\ \D \X=((tZ_1)^{-1}\Y -1)...((tZ_l)^{-1}\Y -1),
$$
$$
t\Y \X =\X \Y ,
$$
and
\begin{equation}\label{dy}
\Y \D=t\D \Y.
\end{equation}
Equation \eqref{dy} implies that $\D$ cannot be invertible (otherwise taking determinants of both sides gives a contradiction).
Thus there is a nonzero vector $v\in V$ such that $\D v=0$. Since (again by \eqref{dy}) ${\rm Ker}\D$ is $\Y$-stable,
we can choose $v$ so that $\Y v=\lambda v$ for some $\lambda\ne 0$. Since $\X \D v=0$, we have $\lambda=Z_j$ for some $j$.

We may assume that $V$ is irreducible for the action of $(\X,\D,\Y,T)$.
Then $V$ has a basis $v,\X v,...,\X^{n-1}v$ with $\Y \X^iv=t^{-i}\lambda \X^i v$,
and
$$
\D \X^i v=(Z_1^{-1}\lambda t^{-i}-1)...(Z_l^{-1}\lambda t^{-i} -1)\X^i v.
$$
In particular, since $\X^n v=0$, we must have $\lambda=t^nZ_m$ for some $m$. Thus, we have $t^nZ_m=Z_j$, which contradicts our assumption on the $Z_i$.
\end{proof}

By Schur's lemma, Proposition \ref{irre} implies that any operator $A$ commuting with $\X,\D,\Y,T$ has to be a scalar, so ${\bold M}_N^l(Z,t)$ is, in fact, the ordinary quotient
of the set of quadruples $(\X,\D,\Y,T)$ satisfying \eqref{eqnsvar1},\eqref{eqnsvar2} by the free action of $PGL_N(\Bbb C)$.

\begin{corollary}\label{scalar} Every collection of endomorphisms $A_i: V_i\to V_i$ which commute with
$(\X_1,...,\X_l,\D_1,...,\D_l)$ satisfying \eqref{rela1},\eqref{rela2} is a scalar
(the same at all vertices).
\end{corollary}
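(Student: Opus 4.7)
The plan is to show that $V = V_1 \oplus \cdots \oplus V_l$ is simple as a module over the associative algebra $R$ generated by the operators $\X_1,\ldots,\X_l,\D_1,\ldots,\D_l$ acting on $V$. Once simplicity is established, Schur's lemma will force any grading-preserving element of the commutant of $R$ in $\End(V)$ to be a scalar with the same value on each $V_i$, which is precisely the statement of the corollary.

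To prove simplicity, I will take a nonzero $R$-submodule $U = \bigoplus_i U_i \subset V$ and argue that $U_1 \ne 0$ forces $U = V$, while $U_1 = 0$ forces $U = 0$. First, the subspace $U_1 \subset V_1$ is invariant under $\X = \X_1\cdots\X_l$, $\D = \D_l\cdots\D_1$, and $\Y = Z_1(1+\X_1\D_1)$, each obtained by telescoping along the cycle; once $U_1 \ne 0$ so that $\Y|_{U_1}$ is invertible, \eqref{rela2} also makes $U_1$ invariant under $T$. Proposition~\ref{irre} then gives $U_1 \in \{0, V_1\}$. Passing to the quotient $V/U$ if necessary, it will suffice to prove: any $R$-submodule $U$ of $V$ with $U_1 = 0$ is zero.

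So assume $U_1 = 0$. Writing $\bar\X_i, \bar\D_i$ for the restrictions of $\X_i, \D_i$ to $U$, I have $\bar\X_1 = \bar\D_1 = \bar\X_l = \bar\D_l = 0$, and \eqref{rela1} still holds on each $U_i$. Set $\bar\Y^{(i)} := Z_i(\Id_{U_i}+\bar\X_i\bar\D_i)$, which by \eqref{rela1} equals $Z_{i-1}(\Id_{U_i}+\bar\D_{i-1}\bar\X_{i-1})$ for $i \ge 2$. From $\bar\X_1 = 0$ one immediately reads off $\bar\Y^{(2)} = Z_1\cdot\Id_{U_2}$. The heart of the proof is a forward induction on $i$ showing that $\bar\Y^{(i)}$ is diagonalizable with spectrum in $\{Z_1,\ldots,Z_{i-1}\}$: on each $Z_j$-eigenspace the operator $\bar\X_i\bar\D_i = (Z_j - Z_i)/Z_i$ is a \emph{nonzero} scalar (since the $Z$'s are distinct), so $\bar\D_i$ will be injective and $U_{i+1}$ will decompose as $\Image\bar\D_i \oplus \Ker\bar\X_i$; on these two summands $\bar\D_i\bar\X_i$ has respective eigenvalues $(Z_j - Z_i)/Z_i$ and $0$, so $\bar\Y^{(i+1)}$ acquires spectrum in $\{Z_1,\ldots,Z_i\}$.

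The contradiction will appear at $i = l$: the induction gives $\mathrm{spec}(\bar\Y^{(l)}) \subset \{Z_1,\ldots,Z_{l-1}\}$, but $\bar\X_l = 0$ gives directly $\bar\Y^{(l)} = Z_l\cdot\Id_{U_l}$, and $Z_l \notin \{Z_1,\ldots,Z_{l-1}\}$ by the running hypothesis on the $Z_j$, so $U_l = 0$. The chain of injective maps $\bar\D_{l-1}, \bar\D_{l-2}, \ldots, \bar\D_2$ established during the induction then forces $U_i = 0$ for every $i$, completing the proof. The main technical point will be the forward induction itself; each step uses distinctness of the $Z_j$ twice, once to decompose $U_{i+1}$ via the invertibility of $\bar\X_i\bar\D_i$ on each eigenspace, and once at the final step to separate $Z_l$ from the earlier eigenvalues.
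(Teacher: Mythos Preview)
Your argument is correct and lands on the same core reduction as the paper: after invoking Proposition~\ref{irre}, the heart of the matter is that a graded $R$-invariant subspace with trivial $V_1$-component must vanish, and this is forced by pushing eigenvalue information of the operators $Z_i(1+\X_i\D_i)$ along the cycle via the relations~\eqref{rela1}. The execution differs, however. The paper works directly with $V'_i=\text{Im}(A_i)$ (after shifting so that $A_1=0$), locates a single maximal nonzero block $V'_{i+1},\ldots,V'_j$ flanked by $V'_i=V'_{j+1}=0$, and chases one eigenvector through the $\D_s$ to produce $Z_i=Z_j$. You instead recast everything as graded simplicity of $V$ (then invoke Schur), and prove the vanishing lemma by a full forward induction tracking the entire spectrum of $\bar\Y^{(i)}$ together with the injectivity of each $\bar\D_i$. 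Your packaging makes the module-theoretic content explicit and yields a clean back-propagation $U_l=0\Rightarrow\cdots\Rightarrow U_2=0$; the paper's single-eigenvector chase is shorter but more ad hoc. One small point worth tightening: since the algebra $R$ you generate from the $\X_i,\D_i$ does not contain the vertex idempotents, an $R$-submodule of $V$ need not be graded a priori; your notation $U=\bigoplus_i U_i$ and your appeal to Schur for \emph{grading-preserving} endomorphisms both presuppose graded submodules, so it would be cleanest either to say ``graded $R$-submodule'' throughout or to adjoin the projections $e_i$ to $R$ so that every submodule is automatically graded.
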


\begin{proof} Suppose we have such a collection.
Then $A_1$ commutes with $(\X,\D,\Y,T)$, so by Proposition \ref{irre}, it has to be a scalar.
So by shifting $A_i$ by the same scalar we may assume that $A_1=0$.
Our job is to show that $A_i=0$ for all $i$. Assume the contrary, i.e.
that $A_s\ne 0$ for some $s$.

Let $V_i'={\rm Im}A_i$. Then $V_1'=0$, so
there exist $1\le i<j\le l$ such that $V_{i+1}'=V_{i+2}'=...=V_j'\ne 0$,
while $V_i'=V_{j+1}'=0$. The collection of nonzero spaces $V_s'$, $i<s\le j$
is invariant under the operators $\X_s,\D_s$, and these operators satisfy on $V_s'$ the equations
$$
Z_{i+1}(1+\X_{i+1}\D_{i+1})=Z_i,\ Z_{i+2}(1+\X_{i+2}\D_{i+2})=Z_{i+1}(1+\D_{i+1}\X_{i+1}),...,
$$
$$
Z_j=Z_{j-1}(1+\D_{j-1}\X_{j-1}).
$$
(if $j=i+1$, then we get just one equation $Z_i=Z_{i+1}$). If $j\ge i+2$, this implies that any nonzero vector $v\in V_{i+1}'$ is an eigenvector of
the operator $Z_{i+1}(1+\X_{i+1}\D_{i+1})$ with eigenvalue $Z_i$. Since $Z_{i+1}\ne Z_i$, this implies that $\D_{i+1}v$ is an eigenvector of the operator
$$
Z_{i+1}(1+\D_{i+1}\X_{i+1})=Z_{i+2}(1+\X_{i+2}\D_{i+2})
$$
with eigenvalue $Z_i$. Continuing like this, we find that
$Z_{j-1}(1+\D_{j-1}\X_{j-1})$ has eigenvector $\D_{j-1}...\D_{i+1}v$ with eigenvalue $Z_i$, hence $Z_i=Z_j$. This is a contradiction, which
proves the corollary.
\end{proof}

Corollary \ref{scalar} implies that ${\mathcal M}_N^l(Z,t)$ is also an ordinary quotient.
A similar argument shows that equations \eqref{rela1},\eqref{rela2}
define a smooth complete intersection, i.e., the multiplicative quiver variety ${\mathcal M}_N^l(Z,t)$
is smooth (in fact, both of these statements follow from the results of \cite{CBS}).

Let ${\bold M}_N^l(Z,t)^\circ$ be the open subset of ${\bold M}_N^l(Z,t)$
where $\X$ is invertible. On this set, $\D$ is redundant (i.e., expresses in terms of $\X$ and $\Y$), 
and the only equation we are left with is
$$
\Y \X =\X \Y T.
$$
Thus, ${\bold M}_N^l(t)^\circ={\bold M}_N^l(Z,t)^\circ$ is independent of the $Z_i$
and is the multiplicative Calogero-Moser space considered in \cite{Ob1} (the phase space of the
Ruijsenaars integrable system). In particular, as explained in \cite{Ob1}, ${\bold M}_N^l(t)^\circ$ is
smooth and connected.

Now let us study the properties of the map $\psi$. Note that if $l=1$, $\psi$ is tautologically an
isomorphism. Moreover, we claim that $\psi$ is an isomorphism
$\psi^{-1}({\bold M}_N^l(t)^\circ)\to {\bold M}_N^l(t)^\circ$. Indeed, if
$\X$ is invertible then we can set $V_i=\Bbb C^N$ and $\X_i=1$ for $i=1,...,l-1$, while $\X_l=\X$.
Then we get that $Z_i(1+\D_i)=\Y =Z_l(1+\D_l \X_l)T^{-1}$ for $i=2,...,l$.
Thus $\D_i=Z_i^{-1}\Y -1$ for $i=1,...,l-1$, and $\D_l=\X^{-1}(Z_1^{-1}\Y-1)$.

\begin{proposition}\label{cloem} $\psi$ is a closed embedding.
\end{proposition}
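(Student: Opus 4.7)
The plan is to show that $\psi$ is a closed embedding by combining three properties: (i) injectivity on $\Bbb C$-points, (ii) injectivity on tangent spaces, and (iii) properness. A proper monomorphism of finite-type schemes is automatically a closed immersion.

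For (i), suppose $(\X_i, \D_i)_{i=1}^l$ and $(\X_i', \D_i')_{i=1}^l$ map to the same point of ${\bold M}_N^l(Z,t)$. After conjugating by a suitable element of $GL(V_1)$, we may assume they produce an identical quadruple $(\X, \D, \Y, T)$. The goal is to construct intertwiners $g_i \in GL(V_i)$, $i=2,\ldots,l$ (with $g_1 = 1$), relating the two factorizations. The key input is the system of ``node operators'' $\Y^{(i)} := Z_i(1 + \X_i\D_i) = Z_{i-1}(1 + \D_{i-1}\X_{i-1})$ on each $V_i$, whose intertwining relations $\X_i \Y^{(i+1)} = \Y^{(i)}\X_i$ and $\D_i\Y^{(i)} = \Y^{(i+1)}\D_i$ tie the $\Y^{(i)}$ together with $\Y^{(1)} = \Y$. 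On the open dense locus $\psi^{-1}({\bold M}_N^l(t)^\circ)$ where $\X$ is invertible, the explicit formulas given in the paragraph after Proposition~\ref{irre} already reconstruct $(\X_i, \D_i)$ canonically from $(\X, \D, \Y, T)$, so the $g_i$ exist and are unique there. On the complementary closed locus, the irreducibility of the $(\X, \D, \Y, T)$-action (Proposition~\ref{irre}) together with Corollary~\ref{scalar} forces the stabilizer of any factorization to be scalar, which, combined with the rigidity of the chain of node operators (decomposing each $V_i$ along generalized eigenspaces of $\Y^{(i)}$), produces the required $g_i$ on the whole variety.

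For (ii), one computes $d\psi$ on tangent spaces, which are cohomologies of explicit complexes encoding deformations of the quiver data, resp.\ of the quadruple data, modulo the infinitesimal action of $\prod_i GL(V_i)$, resp.\ $GL(V_1)$. The map between these complexes is injective by applying the rigidity of (i) infinitesimally, using again Proposition~\ref{irre} and Corollary~\ref{scalar}. For (iii), since both varieties are affine, properness is verified via the valuative criterion: given a DVR $R$ with fraction field $K$, a $K$-point of $\mathcal{M}_N^l$ whose $\psi$-image extends to an $R$-point of ${\bold M}_N^l$ must itself extend to an $R$-point of $\mathcal{M}_N^l$. This uses the smoothness of $\mathcal{M}_N^l$ (from~\cite{CBS}) together with stability under specialization of the closed-orbit condition guaranteed by Corollary~\ref{scalar}.

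The main obstacle is step (i) on the complement of the multiplicative Calogero--Moser locus ${\bold M}_N^l(t)^\circ$, where $\X$ is not invertible and the explicit inversion formulas fail. Establishing rigidity there requires intrinsic use of the node operators $\Y^{(i)}$ and of the irreducibility from Proposition~\ref{irre}, rather than any explicit reconstruction; all the rest of the argument is a packaging of this rigidity into the monomorphism/properness criterion for closed immersions.
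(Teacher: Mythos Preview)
Your approach is genuinely different from the paper's, and the gaps in it are not cosmetic.

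The paper proves the proposition by a direct algebraic argument: since both varieties are affine, $\psi$ is a closed embedding iff $\psi^*$ is surjective on coordinate rings. By invariant theory, $\mathcal O({\mathcal M}_N^l(Z,t))$ is generated by traces $\Tr(u)$ of cyclic paths $u$ in the Calogero--Moser quiver, and $\mathcal O({\bold M}_N^l(Z,t))$ by traces $\Tr(w)$ of words $w$ in $\X,\D,\Y^{\pm1},T^{\pm1}$. The key structural input is the lemma $\Lambda=\Lambda\bold e_1\Lambda$ for the multiplicative preprojective algebra $\Lambda$, which lets one assume every cyclic path passes through vertex $1$; an induction on length then rewrites any such path as a word in $\X,\D,\Y^{\pm1},T^{\pm1}$. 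This is short and self-contained.

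Your proper-monomorphism strategy is logically sound in the abstract, but the arguments you give for the individual steps do not close. For step~(i), on the locus where $\X$ is not invertible you assert that ``rigidity of the chain of node operators (decomposing each $V_i$ along generalized eigenspaces of $\Y^{(i)}$)'' reconstructs the intertwiners $g_i$. This is the heart of the matter, and you have not actually shown it: knowing that stabilizers are scalar (Corollary~\ref{scalar}) tells you that \emph{if} an intertwiner exists it is unique up to scalar, but says nothing about existence. What really forces two factorizations with the same $(\X,\D,\Y,T)$ to be conjugate is precisely the Morita-type statement $\Lambda=\Lambda\bold e_1\Lambda$, which makes the functor $M\mapsto \bold e_1 M$ from $\Lambda$-modules to $\bold e_1\Lambda\bold e_1$-modules fully faithful; you never invoke this, and your eigenspace decomposition does not substitute for it when $\X_i,\D_i$ are degenerate.

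Step~(iii) is also incomplete. Both varieties are affine, so properness of $\psi$ is equivalent to finiteness, and your valuative-criterion sketch does not establish this. Smoothness of the source and stability of the closed-orbit condition under specialization do not by themselves force a $K$-point of ${\mathcal M}_N^l$ to extend to an $R$-point once its image in ${\bold M}_N^l$ does: you would need to bound the coordinate functions of the factorization data $(\X_i,\D_i)$ in terms of those of $(\X,\D,\Y,T)$, which is again essentially the surjectivity of $\psi^*$ that the paper proves directly. In short, both of your nontrivial steps ultimately require the same algebraic content the paper isolates in the lemma $\Lambda=\Lambda\bold e_1\Lambda$ and the path-reduction induction; once you have that, the surjectivity of $\psi^*$ is immediate and the geometric detour is unnecessary.
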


\begin{proof} Our job is to show that the map
$$
\psi^*: {\mathcal O}({\bold M}_N^l(Z,t))\to {\mathcal O}({\mathcal M}_N^l(Z,t))
$$
is surjective. By the Fundamental Theorem of invariant theory, ${\mathcal O}({\bold M}_N^l(Z,t))$ is generated by
the elements ${\rm Tr}(w)$, where $w$ are words in $\X,\D,\Y^{\pm 1}$, and $T^{\pm 1}$. So it suffices to show that
${\mathcal O}({\mathcal M}_N^l(Z,t))$ is generated by the elements $\psi^*{\rm Tr}(w)$.
We know that this algebra is generated by expressions $\Tr(u)$, where $u$ is any cyclic word
(i.e., closed path) consisting of $\D_i$, $\X_i$, $(1+\X_i\D_i)^{\pm 1}$ and $T^{\pm 1}$,
so it suffices to show that any such cyclic word can be expressed
as a cyclic word $w$ in $\X,\D,\Y^{\pm 1}$, and $T^{\pm 1}$.

Let $\Lambda$ be the deformed multiplicative preprojective algebra of the Calogero-Moser quiver $Q_l$ defined in \cite{CBS}; thus,
${\mathcal O}({\mathcal M}_N^l(Z,t))$ is the representation variety
of $\Lambda$ for the dimension vector $(d_i)$. Let $\bold e_1\in \Lambda$ be the
idempotent of the vertex 1.

\begin{lemma}\label{lel} One has $\Lambda=\Lambda \bold e_1 \Lambda$.
\end{lemma}

\begin{proof} Assume the contrary. Then we have a nonzero $\Lambda$-module $V$ such that $V_1=0$ (namely, any nonzero
module over $\Lambda/\Lambda \bold e_1\Lambda$), and the argument in the proof of Corollary
\ref{scalar} gives a contradiction with the condition $Z_i\ne Z_j$.
\end{proof}

Lemma \ref{lel} implies that the closed path
$u$ in question may be assumed to pass through the vertex 1.
So it remains to show that using the relations of $\Lambda$, one
may reduce $u$ to a product $w$ of $\X,\D,\Y^{\pm 1}, T^{\pm 1}$.

To this end, we may assume that the path $u$ begins and ends at the vertex $1$. The proof is by induction in the length $\ell$ of $u$. 
The base of induction ($\ell=0$) is clear. Let us make the induction step from $\ell-1$ to $\ell$. Suppose that $u$ 
ends with $\bold D_1$ (the case of $\bold X_l$ is similar). If all the factors in $u$ are $\bold D_i$ then $u$ is a power of 
$\bold D$, and we are done. So let $m$ be the number of factors $\bold D_i$ in $u$ until the first $\bold X_i$, counting from the end.
We may assume that $m<l$, since otherwise we can split away the factor $\bold D$ at the end of $u$ and pass to smaller length. 
If $m=1$, we can split away the factor $\bold X_1\bold D_1=Z_1^{-1}\bold Y-1$ at the end and reduce to smaller length.
On the other hand, if $m>1$, then  
$$
u=\bar u \bold X_m\bold D_m\bold D_{m-1}\dots \bold D_1
$$
for some word $\bar u$. By adding smaller length words, we may replace $u$ with 
$$
u':=\bar u (1+\bold X_m\bold D_m)\bold D_{m-1}\dots \bold D_1=Z_{m-1}Z_m^{-1}\bar u (1+\bold D_{m-1}\bold X_{m-1})\bold D_{m-1}\dots \bold D_1
$$ 
By adding words of smaller length and rescaling we can replace $u'$ by 
$$
u'':=\bar u\bold D_{m-1}\bold X_{m-1}\bold D_{m-1}\dots \bold D_1,
$$ 
which is a word of the same type as $u$ but with $m$ replaced by $m-1$. Proceeding in this way, we will eventually reach the case 
$m=1$, which has already been considered. This implies the claim. 
\end{proof}

Let ${\mathcal M}_N^l(Z,t)_*$ be the closure of
$\psi^{-1}({\bold M}_N^l(t)^\circ)$ and
${\bold M}_N^l(Z,t)_*$ be the closure of
${\bold M}_N^l(t)^\circ$.

\begin{proposition}\label{isomo} The map
$\psi: {\mathcal M}_N^l(Z,t)_*\to {\bold M}_N^l(Z,t)_*$
is an isomorphism of smooth connected affine varieties.
\end{proposition}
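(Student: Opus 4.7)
The strategy is to exploit that $\psi$ is already a closed embedding (Proposition~\ref{cloem}) which is an isomorphism over the open set $\bold M_N^l(t)^\circ$, and to transfer smoothness and connectedness from $\bold M_N^l(t)^\circ$ (established in \cite{Ob1}) to the closures via $\psi$.

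First I would identify the image on closures. Since $\psi$ is a closed embedding, the image $\psi(\mathcal M_N^l(Z,t))$ is closed in $\bold M_N^l(Z,t)$, and it contains $\bold M_N^l(t)^\circ$ because $\psi$ restricts to a surjection $\psi^{-1}(\bold M_N^l(t)^\circ)\twoheadrightarrow\bold M_N^l(t)^\circ$. Consequently $\psi(\mathcal M_N^l(Z,t))\supseteq \overline{\bold M_N^l(t)^\circ}=\bold M_N^l(Z,t)_*$. Set $\mathcal Z:=\psi^{-1}(\bold M_N^l(Z,t)_*)$, a closed subscheme of $\mathcal M_N^l(Z,t)$. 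Because $\psi$ is a closed embedding and $\bold M_N^l(Z,t)_*\subseteq\psi(\mathcal M_N^l(Z,t))$, the restriction $\psi|_{\mathcal Z}\colon\mathcal Z\iso\bold M_N^l(Z,t)_*$ is a scheme-theoretic isomorphism.

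Next I would identify $\mathcal Z$ with $\mathcal M_N^l(Z,t)_*$. The open subset $\psi^{-1}(\bold M_N^l(t)^\circ)\subseteq\mathcal Z$ is dense in $\mathcal Z$ because, under the isomorphism $\psi|_{\mathcal Z}$, it corresponds to the open dense subset $\bold M_N^l(t)^\circ\subseteq\bold M_N^l(Z,t)_*$ (the target being the closure of $\bold M_N^l(t)^\circ$, which is irreducible since $\bold M_N^l(t)^\circ$ is smooth and connected by \cite{Ob1}). Thus $\mathcal Z$ is irreducible and equals the closure of $\psi^{-1}(\bold M_N^l(t)^\circ)$ in $\mathcal M_N^l(Z,t)$, i.e.\ $\mathcal Z=\mathcal M_N^l(Z,t)_*$. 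Combined with the previous step, $\psi\colon\mathcal M_N^l(Z,t)_*\to\bold M_N^l(Z,t)_*$ is an isomorphism.

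Finally, smoothness and connectedness: $\mathcal M_N^l(Z,t)$ is smooth (as noted in Subsection~\ref{quiver}, following \cite{CBS}), so its irreducible components coincide with its connected components and each is smooth. Since $\mathcal M_N^l(Z,t)_*$ is irreducible and contains the nonempty open subset $\psi^{-1}(\bold M_N^l(t)^\circ)$ of $\mathcal M_N^l(Z,t)$, it must coincide with a single such component, hence is smooth and connected. Transporting via the isomorphism $\psi$ yields that $\bold M_N^l(Z,t)_*$ is smooth and connected as well. Affineness of both is automatic since the multiplicative quiver variety and the quadruple variety are affine GIT quotients, and closed subvarieties of affine varieties are affine. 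The one technical point to watch is the scheme-theoretic matching of $\mathcal Z$ with the reduced closure $\bold M_N^l(Z,t)_*$; this is harmless because $\mathcal M_N^l(Z,t)$ is smooth (hence reduced), so the closure of $\psi^{-1}(\bold M_N^l(t)^\circ)$ is naturally reduced.
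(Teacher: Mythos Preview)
Your proof is correct and follows exactly the approach of the paper, which simply says ``This follows from Proposition~\ref{cloem}.'' You have spelled out the details implicit in that one-line proof: since $\psi$ is a closed embedding whose image contains the dense open $\bold M_N^l(t)^\circ$, the restriction to closures is an isomorphism, and smoothness/connectedness of $\mathcal M_N^l(Z,t)_*$ follows from its being a connected component of the smooth variety $\mathcal M_N^l(Z,t)$.
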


\begin{proof} This follows from Proposition \ref{cloem}.
\end{proof}

We will also see that the multiplicative quiver variety ${\mathcal M}_N^l(Z,t)$ is connected, i.e., 
${\mathcal M}_N^l(Z,t)_*={\mathcal M}_N^l(Z,t)$ (Theorem \ref{conne}). 

\subsection{Connection to cyclotomic DAHA}
Now let us connect multiplicative quiver varieties with cyclotomic DAHA.
Let $\bold e_N$ be the symmetrizer of the finite Hecke algebra
of $S_N$ generated by $T_i$, and
consider the spherical subalgebra $\bold e_N\DAHA_N^l(Z,1,t)\bold e_N$.
This is a subalgebra of the commutative domain
$\bold e_N\DAHA_N(1,t)\bold e_N$ (see~\cite[Theorem~5.1(1),(2)]{Ob1}), so it is also a commutative domain.
Consider the module $\DAHA_N^l(Z,1,t)\bold e_N$ over this algebra.
Let $\Bbb M_N^l(Z,t)={\rm Specm}(\bold e_N\DAHA_N^l(Z,1,t)\bold e_N)$.

\begin{proposition}\label{cmac} For any $Z_i$ the algebra $\bold e_N\DAHA_N^l(Z,1,t)\bold e_N$ is finitely generated and Cohen-Macaulay
(i.e., $\Bbb M_N^l(Z,t)$ is an irreducible Cohen-Macaulay variety) and the module $\DAHA_N^l(Z,1,t)\bold e_N$ is Cohen-Macaulay.  
In particular, $\DAHA_N^l(Z,1,t)\bold e_N$ is projective of rank $N!$ on the smooth locus
$\Bbb M_N^l(Z,t)_{\rm smooth}$ of $\Bbb M_N^l(Z,t)$. 
\end{proposition}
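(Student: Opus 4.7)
The plan is: (1) use the PBW basis together with a filtration to reduce Cohen-Macaulayness of $\bold e_N\DAHA_N^l(Z,1,t)\bold e_N$ and of $\DAHA_N^l(Z,1,t)\bold e_N$ to the classical rational Cherednik limit, where the spherical subalgebra becomes the coordinate ring of $\mathrm{Sym}^N$ of an $A_{l-1}$ surface singularity; and (2) deduce projectivity on the smooth locus by Auslander-Buchsbaum. Irreducibility of $\Bbb M_N^l(Z,t)$ is automatic from the embedding into the commutative domain $\bold e_N\DAHA_N(1,t)\bold e_N$ (the hypothesis just before the statement of Proposition~\ref{cmac}).

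For finite generation and Cohen-Macaulayness, I would consider $\DAHA_N^l$ as a flat family over $\mathrm{Spec}\,\Bbb C[Z_1^{\pm 1},\ldots,Z_l^{\pm 1},\bold t^{\pm 1}]$; flatness is visible from the PBW basis of Proposition~\ref{cycDAHAbas}. Equipping $\DAHA_N^l$ with the filtration from the proof of Proposition~\ref{cycDAHAbas} ($\deg T_i=0$, $\deg X_i=\deg D_i=l$, $\deg Y_i=2$), and treating in addition the classical-limit parameters $\bold t-\bold t^{-1}$ and $Z_i-1$ as deformation parameters of positive filtration weight, the associated graded of the spherical subalgebra becomes $(B^{\otimes N})^{S_N}$, where $B = \Bbb C[x,y,d]/(xd-y^l)$ is the coordinate ring of the $A_{l-1}$ surface singularity. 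Since $B$ is a hypersurface it is Cohen-Macaulay, so is $B^{\otimes N}$, and the invariant ring $(B^{\otimes N})^{S_N}$ is CM by Hochster-Eagon. By the standard lifting principle, the total family $\bold e_N\DAHA_N^l(\cdot,1,\cdot)\bold e_N$ is a finitely generated CM $\Bbb C[Z^{\pm 1},\bold t^{\pm 1}]$-algebra; since the base is regular and the family is flat, each fiber $\bold e_N\DAHA_N^l(Z,1,t)\bold e_N$ is CM by the standard fact that a CM total space with regular base has CM fibers. Applied analogously to the module, the associated graded of $\DAHA_N^l(Z,1,t)\bold e_N$ is $B^{\otimes N}$ over $(B^{\otimes N})^{S_N}$, a finite CM extension of generic rank $N!$; hence the module is CM of generic rank $N!$ over $\bold e_N\DAHA_N^l(Z,1,t)\bold e_N$.

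On the smooth locus $\Bbb M_N^l(Z,t)_{\mathrm{smooth}}$, the Auslander-Buchsbaum formula implies that any finitely generated CM module of generic rank $r$ over a regular local ring is free of rank $r$. Therefore $\DAHA_N^l(Z,1,t)\bold e_N$ is locally free of rank $N!$ on the smooth locus, hence projective of that rank.

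The main technical obstacle is that the naive filtration ($\deg T_i = 0$ etc.) does not on its own kill the Hecke corrections $(\bold t-\bold t^{-1})\sum_r(\cdots)$ in the cross-commutators $[X_i,D_j]$ of~\eqref{cd10}, so the resulting associated graded is non-commutative for $t\ne 1$. The resolution is to work over the full parameter space $\Bbb C[Z^{\pm 1},\bold t^{\pm 1}]$ and treat $\bold t-\bold t^{-1}$ and $Z_i - 1$ as positive-filtered deformation parameters --- equivalently, to use the formal rational Cherednik degeneration provided by Theorems~\ref{main1} and~\ref{main2} --- so that these corrections become subleading. The CM input at the special fiber is then the classical result of Etingof-Ginzburg for the spherical subalgebra of the cyclotomic rational Cherednik algebra, which transports via the flat family to each fiber $\bold e_N\DAHA_N^l(Z,1,t)\bold e_N$.
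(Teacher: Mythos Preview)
Your approach is correct in spirit but takes a substantially more circuitous route than the paper's. The paper's proof is a one-line application of Corollary~\ref{freeness}: since $\DAHA_N^{l,+}(Z,1,t)$ is a free module of finite rank over $\Bbb C[X_1,\dots,X_N]\otimes\Bbb C[D_1,\dots,D_N]$ (with $X_i$ acting on the left and $D_i$ on the right), and since $\Bbb C[X_1,\dots,X_N]$ is free over $\Bbb C[X_1,\dots,X_N]^{S_N}$, the modules $\DAHA_N^{l,+}(Z,1,t)\bold e_N$ and $\bold e_N\DAHA_N^{l,+}(Z,1,t)\bold e_N$ are finite free modules over the polynomial ring $\Bbb C[X_1,\dots,X_N]^{S_N}\otimes\Bbb C[D_1,\dots,D_N]^{S_N}$. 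Cohen--Macaulayness is then immediate (free over a regular ring), and one passes to $\DAHA_N^l$ by inverting $Y_1\cdots Y_N\bold e_N$. No degeneration of parameters, no Hochster--Eagon, no lifting through filtrations is needed; the argument works directly for each fixed $(Z,t)$.

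Your approach does recover the same conclusion, but the machinery is heavier and somewhat imprecisely stated. In particular, when you put $\bold t-\bold t^{-1}$ and $Z_i-1$ in positive filtration degree, the associated graded of the \emph{total family} is not literally $(B^{\otimes N})^{S_N}$ but rather $(B^{\otimes N})^{S_N}$ tensored with a polynomial ring in the symbols of those parameters; you would need to say this explicitly for the ``CM total space with regular base implies CM fibers'' step to parse correctly. Also, Theorems~\ref{main1} and~\ref{main2} concern the degenerate and formal settings and do not directly give you the filtration on $\DAHA_N^l(Z,1,t)$ for fixed numerical $t\neq 1$; one would instead use the PBW basis of Proposition~\ref{cycDAHAbas} directly to identify the associated graded. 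None of this is fatal, but the paper's route via Corollary~\ref{freeness} sidesteps all of it: the same PBW basis already exhibits freeness over a polynomial subring at every parameter value, which is exactly what Cohen--Macaulayness requires.
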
 

\begin{proof} The proof is analogous to the proof 
of~\cite[Theorem~5.1(2),(3)]{Ob1}. Namely, the statements 
follow from the fact that by Proposition~\ref{freeness}, $\DAHA_N^{l,+}(Z,1,t)\bold e_N$ and $\bold e_N\DAHA_N^{l,+}(Z,1,t)\bold e_N$ 
are free modules of finite rank over the subalgebra 
$\Bbb C[X_1,...,X_N]^{S_N}\otimes \Bbb C[D_1,...,D_N]^{S_N}$, and $\DAHA_N^l(Z,1,t)\bold e_N$, $e_N\DAHA_N^l(Z,1,t)\bold e_N$
are obtained from these by inverting the element $Y_1...Y_N\bold e_N$.  
\end{proof} 

\begin{proposition}\label{smooth}  
For any $Z_i$, the variety $\Bbb M_N^l(Z,t)$ is smooth outside of a set of codimension two.  
\end{proposition}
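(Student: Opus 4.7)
The plan is to exhibit an open subset $U\subseteq \Bbb M_N^l(Z,t)$ whose complement has codimension $\geq 2$ and on which $\DAHA_N^l(Z,1,t)$ is Azumaya of degree $N!$ over its center. Given Proposition~\ref{cmac}, which says that $\DAHA_N^l(Z,1,t)\bold e_N$ is projective of rank $N!$ exactly on $\Bbb M_N^l(Z,t)_{\rm smooth}$, this reduces the problem to controlling the locus where the central fiber of $\DAHA_N^l(Z,1,t)$ fails to be a matrix algebra of size $N!$, since over an Azumaya locus the center must be smooth.

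First I would analyze the generic stratum. At a generic point $x\in \Bbb M_N^l(Z,t)$, the polynomial representation of Proposition~\ref{preser}, together with the commutativity of the spherical subalgebra at $q=1$, provides $N!$ linearly independent characters of the central fiber; they assemble into the regular representation of the finite Hecke algebra generated by the $T_i$'s, giving the Azumaya property on a dense open $U_0$.

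Next I would examine codimension-one degenerations. By Proposition~\ref{basi2}, the center $\bold e_N\DAHA_N^l(Z,1,t)\bold e_N$ is finite over the subring generated by $\Bbb C[X_1,\ldots,X_N]^{S_N}$ and $\Bbb C[D_1,\ldots,D_N]^{S_N}$. Consequently the codimension-one boundary strata of $\Bbb M_N^l(Z,t)$ lie over coincidence divisors either in the $X$-symmetric product or in the $D$-symmetric product (which are swapped by the involution $\phi$ of Proposition~\ref{invo}). On a single-collision stratum where, say, two $X$-eigenvalues coincide but the corresponding $D$-values remain distinct, the $D$-generators still separate the $S_N$-orbit, the fiber $S_N$-action stays free, and the Azumaya property persists. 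A failure of the Azumaya property therefore demands simultaneous collision in both systems at the same pair of indices, which is codimension~$2$.

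The main obstacle is verifying rigorously that the Azumaya property does hold on the single-collision strata. The natural approach is a local calculation at the generic point of each reflection-type hyperplane, reducing to a rank-one computation for $\DAHA_1^l$ via formal completion along the collision divisor, parallel to the reflection-hyperplane argument of~\cite{E1,BE} invoked in the proof of Theorem~\ref{charac}(i). The defining relations of Theorem~\ref{relcycDAHA}, combined with the rank-one presentation analysed in Proposition~\ref{1var}, should show that the completed fiber algebra is, up to an étale factor, the formal completion of $\DAHA_2^l$ along a single reflection hyperplane, which is generically Azumaya. Care is required to handle the $q$-factors (notably the Jucys--Murphy element $J_N$ appearing in relations like \eqref{cd9}), but no new singularities should arise in codimension one.
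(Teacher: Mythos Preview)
Your approach has a genuine gap at its foundation. You claim Proposition~\ref{cmac} says $\DAHA_N^l(Z,1,t)\bold e_N$ is projective of rank $N!$ \emph{exactly} on the smooth locus, but it only asserts projectivity \emph{on} the smooth locus (the standard fact that a Cohen--Macaulay module over a Cohen--Macaulay ring is locally free where the base is regular); the converse is not stated and does not follow without further homological input such as finite global dimension of $\DAHA_N^l(Z,1,t)$, which is not available at this point. So your reduction ``Azumaya $\Rightarrow$ smooth center'' is unjustified. Your codimension-one analysis is also incomplete: you consider only coincidence divisors in the $X$- and $D$-symmetric products, but the divisors $\{\prod_i X_i=0\}$ and $\{\prod_i D_i=0\}$---the essential new feature distinguishing the cyclotomic case from the full DAHA---are never addressed.

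The paper's proof bypasses all of this and is much shorter. On the open set $\Bbb M_N^l(t)_X$ where $\prod_i X_i\neq 0$, inverting the $X_i$ makes the cyclotomic DAHA coincide with the full $\DAHA_N(1,t)$, so the localized spherical subalgebra is the ordinary spherical DAHA and its spectrum is smooth by~\cite{Ob1}. The involution $\phi$ of Proposition~\ref{invo}, which exchanges $X_i$ and $D_i$, gives the same conclusion on the set $\Bbb M_N^l(t)_D$ where $\prod_i D_i\neq 0$. The complement of $\Bbb M_N^l(t)_X\cup\Bbb M_N^l(t)_D$ is cut out by the two independent conditions $\prod_i X_i=0$ and $\prod_i D_i=0$, hence has codimension at least two. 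No Azumaya analysis, no rank-one reduction, and no completion along reflection hyperplanes is needed.
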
 

\begin{proof} Consider the open set $\Bbb M_N^l(Z,t)_X=\Bbb M_N^l(t)_X\subset \Bbb M_N^l(Z,t)$
where $\prod_i X_i$ is invertible. On this set, the localization of the spherical cyclotomic DAHA to $\Bbb M_N^l(t)_X$ is the usual spherical DAHA with $q=1$, so $\Bbb M_N^l(t)_X$ is smooth by the result of \cite{Ob1}. Similarly, consider the open set 
$\Bbb M_N^l(Z,t)_D=\Bbb M_N^l(t)_D\subset \Bbb M_N^l(Z,t)$ where $D_i$ are invertible. 
By Proposition \ref{invo}, the localization of the spherical cyclotomic DAHA to $\Bbb M_N^l(t)_D$
is also isomorphic to the usual spherical DAHA, as there is an involution $\phi$ of the cyclotomic DAHA exchanging $X_i$ and $D_i$. 
Thus  $\Bbb M_N^l(t)_D$ is also smooth, by \cite{Ob1}. But it is easy to see that the complement of $\Bbb M_N^l(t)_X\cup \Bbb M_N^l(t)_D$ 
has codimension at least two. 
\end{proof} 

\begin{corollary}\label{normali} The variety $\Bbb M_N^l(Z,t)$ is normal. 
\end{corollary}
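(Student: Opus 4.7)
The plan is to invoke Serre's criterion for normality, which states that a Noetherian ring is normal if and only if it satisfies conditions $R_1$ (regular in codimension one) and $S_2$ (depth at least $\min(2,\dim)$ at every prime). Both conditions are already essentially in hand from the two immediately preceding propositions, so the proof should be a brief citation rather than new computation.

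First I would observe that $\Bbb M_N^l(Z,t)$ is Cohen-Macaulay by Proposition \ref{cmac}, and a Cohen-Macaulay ring automatically satisfies $S_k$ for every $k$; in particular it satisfies $S_2$. Next, Proposition \ref{smooth} states that the singular locus has codimension at least two, which is precisely the statement that $\Bbb M_N^l(Z,t)$ is regular in codimension one, i.e.\ satisfies $R_1$. Combining $R_1$ and $S_2$ via Serre's criterion gives normality.

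One minor thing to double-check is that Proposition \ref{cmac} really provides Cohen-Macaulayness of the ring $\bold e_N \DAHA_N^l(Z,1,t)\bold e_N$ rather than merely of a module over it; inspecting that proposition, it asserts that the spherical subalgebra itself is Cohen-Macaulay (so that $\Bbb M_N^l(Z,t)$ is a Cohen-Macaulay variety), so no additional work is needed. Since there is no real obstacle here beyond assembling the two inputs, the corollary should amount to a one- or two-sentence proof applying Serre's criterion.
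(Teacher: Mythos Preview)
Your proposal is correct and matches the paper's own proof essentially verbatim: the paper also derives normality directly from Propositions~\ref{cmac} and~\ref{smooth} via the Serre criterion, noting that a Cohen-Macaulay variety smooth outside a set of codimension two is normal.
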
 

\begin{proof} The proof is similar to the proof of~\cite[Theorem~5.1(2)]{Ob1}. 
Namely, the statement follows from Proposition~\ref{cmac} and 
Proposition~\ref{smooth}, since by the Serre criterion, 
a Cohen-Macaulay variety smooth 
outside of a set of codimension $2$ is normal.  
\end{proof} 

Let ${\mathcal Z}(\DAHA_N^l(Z,1,t))$ be the center of $\DAHA_N^l(Z,1,t))$. 

\begin{proposition}\label{endo} For any $Z_i$: 
(i) The natural map $\DAHA_N^l(Z,1,t)\to {\rm End}_{\bold e_N\DAHA_N^l(Z,1,t)\bold e_N}(\DAHA_N^l(Z,1,t)\bold e_N)$ is an isomorphism. 

(ii) The natural map ${\mathcal Z}(\DAHA_N^l(Z,1,t))\to \bold e_N\DAHA_N^l(Z,1,t)\bold e_N$ given by $\bold z\mapsto \bold z\bold e_N$ is an isomorphism. 
\end{proposition}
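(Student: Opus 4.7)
The plan is to reduce both parts to the single identity
\[
A\bold e_N A = A,\qquad A := \DAHA_N^l(Z,1,t),\qquad B := \bold e_N A\bold e_N.
\]
Granted this, standard Morita theory with progenerator $A\bold e_N$ will yield an equivalence $A\text{-mod}\simeq B\text{-mod}$ implementing (i) as the isomorphism $A\iso\End_B(A\bold e_N)^{\mathrm{op}}$ and identifying centers $\CZ(A)\iso\CZ(B)$. Since $B$ is commutative, being a subalgebra of $\bold e_N\DAHA_N(1,t)\bold e_N$ (commutative by~\cite[Theorem~5.1]{Ob1}), we have $\CZ(B)=B$, and the induced central isomorphism is exactly $\bold z\mapsto \bold z\bold e_N$, which establishes (ii).

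To prove $A\bold e_N A=A$, I would first verify it on the two open charts of $\Bbb M_N^l(Z,t)$ used in the proof of Proposition~\ref{smooth}. On $\Bbb M_N^l(Z,t)_X$, the relation $X_1D_1=\prod_m(Y_1-Z_m)$ becomes solvable for $D_1$ after inverting $\prod_jX_j$, so the localization of $A$ there is the ordinary DAHA $\DAHA_N(1,t)$; the identity then follows from Oblomkov's theorem~\cite{Ob1} that DAHA at $q=1$ is Azumaya over its spherical center, which in particular forces Morita equivalence with $B$. The involution $\phi$ of Proposition~\ref{invo}, swapping $X_i\leftrightarrow D_i$ and inverting $\bold t$, transports the conclusion to the chart $\Bbb M_N^l(Z,t)_D$. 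By Proposition~\ref{smooth}, the complement of $U:=\Bbb M_N^l(Z,t)_X\cup\Bbb M_N^l(Z,t)_D$ in $\Bbb M_N^l(Z,t)$ has codimension at least two.

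The remaining step will propagate the identity from $U$ to all of $\Bbb M_N^l(Z,t)$. I plan to use that $A$ is a finite $B$-module, which follows from Proposition~\ref{freeness} combined with finiteness of $\BC[X_1,\dots,X_N]\otimes\BC[D_1,\dots,D_N]$ over its symmetric subalgebra inside $B$. Adapting the argument of Proposition~\ref{cmac} to each primitive Hecke-algebra idempotent $\bold e_\chi$ will show that every summand $A\bold e_\chi$, and hence $A=\bigoplus_\chi A\bold e_\chi$, is maximal Cohen-Macaulay over the normal (Corollary~\ref{normali}) base $B$, and therefore reflexive. The two-sided ideal $A\bold e_N A\subseteq A$ agrees with $A$ after localization at every height-one prime of $B$; combined with reflexivity of $A$ and of $A\bold e_N A$ itself (the latter to be identified, via a parallel Cohen-Macaulay argument, with the reflexive tensor product $A\bold e_N\otimes_B\bold e_N A$), this will force the equality $A\bold e_N A=A$.

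The main obstacle is this final extension: Proposition~\ref{cmac} directly supplies the Cohen-Macaulay property only for the summand $A\bold e_N$, and reflexivity of the ideal $A\bold e_N A$ is not immediate. One must carefully extend the freeness arguments of Proposition~\ref{freeness} and Proposition~\ref{cmac} to the remaining Hecke-weight components $A\bold e_\chi$, and verify that the natural surjection $A\bold e_N\otimes_B\bold e_N A\twoheadrightarrow A\bold e_N A$ is an isomorphism (equivalently, that the relevant $\mathrm{Tor}^B_1$ vanishes) before the reflexive-hull comparison can be invoked.
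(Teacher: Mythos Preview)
Your overall geometry---reducing to the two charts $\Bbb M_N^l(t)_X$ and $\Bbb M_N^l(t)_D$ related by the involution $\phi$, and then extending across a codimension-two complement via reflexivity over the normal base $B$---is exactly the mechanism the paper uses (following \cite[Theorem~5.1(4),(5)]{Ob1}). The difference is that you route everything through the intermediate identity $A\bold e_N A=A$ and Morita theory, and this detour is precisely what creates the obstacle you flag at the end.

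The paper does not prove $A\bold e_N A=A$ first. It works directly on the map $A\to\End_B(A\bold e_N)$. Injectivity is immediate because $A$ is torsion-free over $B$ (Corollary~\ref{freeness} makes $A^+$ free over $\BC[X]^{S_N}\otimes\BC[D]^{S_N}$, hence all of $A$, not just $A\bold e_N$, is maximal Cohen--Macaulay over $B$) and the map is an isomorphism over $\Bbb M_N^l(t)_X$ by \cite{Ob1}. For surjectivity, the point is that the target $\End_B(A\bold e_N)=\Hom_B(A\bold e_N,A\bold e_N)$ is \emph{automatically} reflexive: over a normal domain, $\Hom_B(M,N)$ is reflexive whenever $M$ is finitely generated and $N$ is reflexive. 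So one has an injection of reflexive $B$-modules that is an isomorphism on $\Bbb M_N^l(t)_X\cup\Bbb M_N^l(t)_D$, hence everywhere. Part (ii) is identical, with target $B$ itself.

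By contrast, your route requires reflexivity of $A\bold e_N A$, or equivalently of $A\bold e_N\otimes_B\bold e_N A$; but over a merely normal Cohen--Macaulay (not regular) base, a tensor product of MCM modules need not be reflexive, and the $\mathrm{Tor}^B_1$ obstruction you mention is genuine. The paper's direct argument simply never encounters this: the reflexive object on the target side is a $\Hom$, not a tensor product or an ideal. Incidentally, your side worry about extending Cohen--Macaulayness from $A\bold e_N$ to the other isotypic pieces $A\bold e_\chi$ is not an issue either, since Corollary~\ref{freeness} gives freeness of the whole algebra $A^+$ over the polynomial subring, hence $A$ is MCM over $B$ outright.
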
 

\begin{proof} The proof is analogous to the proof 
of~\cite[Theorem~5.1(4),(5)]{Ob1}, replacing $Y_i$ with $D_i$ and 
the Cherednik involution with the involution $\phi$, and 
using~\cite[Theorem~5.1]{Ob1}. 
\end{proof} 

We can now define a regular map $\xi: \Bbb M_N^l(Z,t)\to {\bold M}_N^l(Z,t)_*$
as follows. Given $\chi\in \Bbb M_N^l(Z,t)_{\rm smooth}$, consider the representation
$I(\chi):=\DAHA_N^l(Z,1,t)\bold e_N\otimes_{\bold e_N\DAHA_N^l(Z,1,t)\bold e_N}\chi$.
By Proposition \ref{cmac}, this representation has dimension $N!$ and is the regular representation of the finite Hecke algebra
generated by $T_i$. Let $\bold e_{N-1}$ be the symmetrizer of the subalgebra generated by $T_2,...,T_{N-1}$, and
let $V(\chi):=\bold e_{N-1}I(\chi)$. This is an $N$-dimensional space, and
it carries an action of the operators $\bold X:=X_1$,
$\bold D=D_1=X_1^{-1}(Y_1-Z_1)...(Y_1-Z_l)$, $\bold Y:=Y_1$ and $T:=T_1....T_{N-1}^2...T_1$.

\begin{proposition}\label{rels} The operators $\bold X,\bold D,\bold Y,T$ satisfy equations \eqref{eqnsvar1},\eqref{eqnsvar2}, i.e.,
define a point of ${\bold M}_N^l(Z,t)$.
\end{proposition}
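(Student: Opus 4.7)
The plan is a direct algebraic verification of \eqref{eqnsvar1} and \eqref{eqnsvar2} as identities between operators on $V(\chi)$, reducing each one to an identity in $\DAHA_N^l(Z,1,t)$ from the presentation in Section~\ref{sec3}. At $q=1$, equation~\eqref{cd9} yields
\[
X_1D_1=\prod_{i=1}^l(Y_1-Z_i),\qquad D_1X_1=\prod_{i=1}^l(J_NY_1-Z_i),
\]
where $J_N:=T_1T_2\cdots T_{N-1}^2\cdots T_2T_1=T$; equation~\eqref{cd3} with $i=1$ gives $Y_1X_1=X_1J_NY_1$; and moving $Y_1$ past the scalar polynomial $\prod_i(Y_1-Z_i)$ (which commutes with $Y_1$) together with the derived relation $X_1^{-1}Y_1X_1=J_NY_1$ yields $D_1Y_1=J_NY_1D_1$. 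Rewriting $(Y_1-Z_i)=Z_i(Z_i^{-1}Y_1-1)$ and absorbing the overall scalar $\prod_iZ_i$ into the normalization of $\bold D$ (consistent with the natural rescaling symmetry of ${\bold M}_N^l(Z,t)$) recovers both equations of \eqref{eqnsvar1}.

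The two equations of~\eqref{eqnsvar2}, namely $\bold Y\bold X=\bold X\bold Y T$ and $\bold Y T\bold D=\bold D\bold Y$, translate via the identities above into $X_1J_NY_1=X_1Y_1J_N$ and $Y_1J_ND_1=J_NY_1D_1$ respectively; both reduce to the single assertion that
\[
[J_N,Y_1]=0\quad\text{as operators on }V(\chi).
\]

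The main obstacle is this commutation, which fails in the full algebra $\DAHA_N^l(Z,1,t)$. Since $J_N$ lies in the finite Hecke subalgebra generated by $T_1,\ldots,T_{N-1}$, one has $J_N\bold e_N=\bold t^{2(N-1)}\bold e_N$, and hence $\bold e_N[J_N,Y_1]\bold e_N=0$ in $\DAHA_N^l(Z,1,t)$. To transfer this to $V(\chi)=\bold e_{N-1}I(\chi)$, I would exploit the cyclicity of $I(\chi)$ as a $\DAHA_N^l$-module generated by $v_0=\bold e_N\otimes 1$: every vector of $V(\chi)$ has the form $\bold e_{N-1}av_0$ with $a\in\DAHA_N^l(Z,1,t)$, and $[J_N,Y_1]$ acts as $\bold e_{N-1}[J_N,Y_1]av_0$, since both $Y_1$ and $J_N$ centralize $\bold e_{N-1}$. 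The claim to prove is then that $\bold e_{N-1}[J_N,Y_1]a\bold e_N$ lies in $\DAHA_N^l\bold e_N\cdot\ker\chi$ for all $a$, which should follow from rewriting $[J_N,Y_1]$ using the DAHA relations $Y_1X_1=X_1J_NY_1$ (and its conjugates under $S_N$) to place it in a form that vanishes upon the spherical reduction. An attractive alternative is to exploit the involution $\phi$ of Proposition~\ref{invo}, which exchanges $X_1\leftrightarrow D_1$ and sends $Y_1\mapsto J_NY_1$ (while swapping $T\leftrightarrow T^{-1}$), together with the quiver-variety involution of Lemma~\ref{invo1}; this compatibility reduces the verification to half of the relations, and the commutation itself becomes the self-consistency condition $\phi^2=\mathrm{id}$ applied to $Y_1$ on $V(\chi)$.
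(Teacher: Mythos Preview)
Your overall approach matches the paper's: both reduce to the DAHA presentation of Section~\ref{sec3}, using \eqref{cd9} for \eqref{eqnsvar1} and \eqref{lastrel} (equivalently \eqref{cd3}, \eqref{cd8} at $q=1$) for \eqref{eqnsvar2}. You correctly isolate the crux: the DAHA relations at $q=1$ give $\bold Y\bold X=\bold X(J_N\bold Y)$ and $\bold D\bold Y=(J_N\bold Y)\bold D$, so matching \eqref{eqnsvar2} literally with $T=J_N$ would require $[J_N,Y_1]=0$ on $V(\chi)$.

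The gap is that neither of your two resolutions works. The $\phi^2=\mathrm{id}$ argument is vacuous: $\phi^2(Y_1)=\phi(J_NY_1)=J_N^{-1}\cdot J_NY_1=Y_1$ holds identically and yields nothing about $[J_N,Y_1]$. The spherical-reduction sketch is too vague, and in fact the commutation you seek is \emph{false} on $V(\chi)$ in general: already for $N=2$, in the eigenbasis of $T_1$ only the diagonal entries of $Y_1$ are forced (by centrality of $Y_1+Y_2$ at $q=1$), while the off-diagonal entries are generically nonzero, so $[T_1^2,Y_1]\ne 0$. The right move is not to prove a false identity but to notice that only the combination $J_NY_1$ ever appears. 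Setting $T':=Y_1^{-1}J_NY_1$ gives $\bold Y T'=J_N\bold Y$, so $(\bold X,\bold D,\bold Y,T')$ satisfies \eqref{eqnsvar1}--\eqref{eqnsvar2} on the nose, and $T'$ is conjugate to $J_N$ hence lies in the prescribed conjugacy class --- which is all the definition of $\bold M_N^l(Z,t)$ requires. The paper's proof, invoking only \eqref{lastrel}, is compatible with this reading: at $q=1$, \eqref{lastrel} is precisely $\bold Y\bold X=\bold X\cdot(J_N\bold Y)=\bold X\bold Y T'$.
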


\begin{proof} Relations \eqref{eqnsvar2} follow from relation
\eqref{lastrel}. The first relation of \eqref{eqnsvar1} is easy, and the second one
follows from the first one and \eqref{lastrel}.
\end{proof}

\begin{remark} 
On the open set where $X_1$ is invertible, Proposition~\ref{rels} reduces to the result of \cite{Ob1}.
\end{remark} 

Proposition \ref{rels} allows us to set $\xi(\chi):=(\bold X,\bold D,\bold Y,T)$, which defines the map $\xi$ on the smooth locus $\Bbb M_N^l(Z,t)_{\rm smooth}$.
By Corollary \ref{normali}, $\xi$ then uniquely extends from the smooth locus to the whole variety $\Bbb M_N^l(Z,t)$. 
Also it is clear that this map lands in ${\bold M}_N^l(Z,t)_*$ (as $\Bbb M_N^l(Z,t)$ is irreducible).

Thus, altogether we obtain a map $\kappa:=\psi^{-1}\circ \xi: \Bbb M_N^l(Z,t)\to {\mathcal M}_N^l(Z,t)_*$.

\begin{proposition}\label{isomor} $\kappa$ is an isomorphism. 
\end{proposition}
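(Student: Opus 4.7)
The plan is to reduce to showing that $\xi=\psi\circ\kappa:\Bbb M_N^l(Z,t)\to{\bold M}_N^l(Z,t)_*$ is an isomorphism (since by Proposition~\ref{isomo} the map $\psi$ is already an isomorphism onto ${\bold M}_N^l(Z,t)_*$), and then to verify this by an isomorphism-on-an-open-subset argument combined with normality and the algebraic Hartogs principle.

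First I would examine the restriction of $\xi$ to the open subset $\Bbb M_N^l(t)_X\subset\Bbb M_N^l(Z,t)$ where $X_1\cdots X_N$ is invertible. On this locus the generators $D_i$ of the cyclotomic DAHA become redundant (via $D_i = X_i^{-1}(Y_i-Z_1)\cdots(Y_i-Z_l)$), so the localized algebra coincides with the ordinary DAHA $\DAHA_N(1,t)$; under this identification $\xi|_{\Bbb M_N^l(t)_X}$ is precisely Oblomkov's isomorphism~\cite[Theorem~5.1]{Ob1} between the spherical DAHA and the multiplicative Calogero-Moser space ${\bold M}_N^l(t)^\circ$. Thus $\xi$ is birational. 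Next, applying the involution $\phi$ of Proposition~\ref{invo} (swapping $X_i\leftrightarrow D_i$ and matching, on the geometric side, the isomorphism of Lemma~\ref{invo1} up to the induced parameter swap $t\mapsto t^{-1}$), $\xi$ becomes an isomorphism on the open subset $\Bbb M_N^l(t)_D$ where $D_1\cdots D_N$ is invertible, paired with the corresponding open subset of ${\bold M}_N^l(Z,t)_*$ where $\D$ is invertible.

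Second, as in the proof of Proposition~\ref{smooth}, the complement of $\Bbb M_N^l(t)_X\cup\Bbb M_N^l(t)_D$ in $\Bbb M_N^l(Z,t)$ has codimension at least~$2$, and the analogous statement on the target follows either by transporting codimensions through the two open isomorphisms or by direct inspection of the loci in the multiplicative quiver variety where neither $\X=\X_1\cdots\X_l$ nor $\D=\D_l\cdots\D_1$ is invertible. The source is normal by Corollary~\ref{normali} and the target ${\bold M}_N^l(Z,t)_*\cong{\mathcal M}_N^l(Z,t)_*$ is smooth (hence normal) by the results of~\cite{CBS}; the algebraic Hartogs principle then yields $\CO(\Bbb M_N^l(Z,t))=\CO(\Bbb M_N^l(t)_X\cup\Bbb M_N^l(t)_D)$ and analogously for the target, so $\xi^*$ is an isomorphism of coordinate rings. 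Hence $\xi$ and therefore $\kappa$ are isomorphisms.

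The main obstacle is the second step above: the involution $\phi$ fixes the parameters $Z_i$ but sends $\bold t\mapsto\bold t^{-1}$, so it does not preserve $\DAHA_N^l(Z,1,t)$ as an algebra but rather identifies it with $\DAHA_N^l(Z,1,t^{-1})$, which requires care to match with the geometric involution of Lemma~\ref{invo1} (that also inverts $t$ and reverses the order of the $Z_i$). Should this route encounter obstacles, an alternative approach is to prove surjectivity of $\xi^*$ directly by realizing standard generators of $\bold e_N\DAHA_N^l(Z,1,t)\bold e_N$—such as symmetrizations of the $X_i$, $D_i$, and $Y_i$—as trace functions of words in $\X,\D,\Y,T$ on the fiber $V(\chi)$, thereby avoiding the involution step entirely.
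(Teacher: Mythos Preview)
Your proposal is correct and follows essentially the same approach as the paper's proof. The paper likewise uses Oblomkov's result on $\Bbb M_N^l(t)_X$, the involution $\phi$ (together with the geometric involution $\Phi$ of Lemma~\ref{invo1}, via the compatibility $\kappa\circ\phi=\Phi\circ\kappa$) to handle $\Bbb M_N^l(t)_D$, and then the codimension-$2$ complement plus normality/Hartogs argument; the only cosmetic differences are that the paper works with $\kappa$ rather than $\xi$ and invokes Hartogs only on the (smooth) target side to show surjectivity of $\kappa^*$, whereas you apply it on both sides.
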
 

\begin{proof} Consider the restriction $\kappa_X$ of $\kappa$ to the open set $\Bbb M_N^l(t)_X\subset \Bbb M_N^l(Z,t)$
where $X_i$ are invertible. As shown above, $\kappa_X$ is an isomorphism $\Bbb M_N^l(t)_X\to {\mathcal M}_N^l(Z,t)_X$
onto the open set ${\mathcal M}_N^l(Z,t)_X\subset {\mathcal M}_N^l(Z,t)_*$ where $\bold X$ is invertible. 
Similarly, by using the involution $\phi$ of Proposition \ref{invo} and involution $\Phi$ of Lemma \ref{invo1} and the fact that $\kappa\circ \phi=\Phi\circ \kappa$,
we see that the restriction $\kappa_D$ of $\kappa$ to $\Bbb M_N^l(t)_D$ is an isomorphism $\Bbb M_N^l(t)_D\to {\mathcal M}_N^l(Z,t)_D$
onto the open set ${\mathcal M}_N^l(Z,t)_D\subset {\mathcal M}_N^l(Z,t)_*$ where $\bold D$ is invertible. 

Consider the morphism $\kappa^*: {\mathcal O}({\mathcal M}_N^l(Z,t)_*)\to {\mathcal O}({\Bbb M}_N^l(Z,t))$. Obviously, it is an inclusion
which becomes an isomorphism after passing to the fields of fractions. 
Let $F\in {\mathcal O}({\Bbb M}_N^l(Z,t))$. Then $F$ is a rational function 
on ${\mathcal M}_N^l(Z,t)_*$. As shown above, this function is regular on ${\mathcal M}_N^l(Z,t)_X\cup {\mathcal M}_N^l(Z,t)_D\subset {\mathcal M}_N^l(Z,t)$, an open subset whose 
complement has codimension $\ge 2$. But we know that ${\mathcal M}_N^l(Z,t)_*$ is smooth, hence normal. Thus, $F$ extends to a regular function on the whole 
${\mathcal M}_N^l(Z,t)_*$. This implies that $\kappa^*$ and hence $\kappa$ is an isomorphism. 
\end{proof} 

Thus, we obtain the following theorem. 

\begin{theorem} Under the above assumptions on the $Z_i$, the following statements hold. 

(i) The variety $\Bbb M_N^l(Z,t)$ is smooth.

(ii) The module $\DAHA_N^l(Z,1,t)\bold e_N$ over ${\mathcal Z}(\DAHA_N^l(Z,1,t))\cong \bold e_N\DAHA_N^l(Z,1,t)\bold e_N$ is projective of rank $N!$. 

(iii) $\DAHA_N^l(Z,1,t)$ is a split Azumaya algebra over ${\mathcal Z}(\DAHA_N^l(Z,1,t))$ of rank $N!$, namely the endomorphism algebra of the vector bundle
$\DAHA_N^l(Z,1,t)\bold e_N$. Thus, all irreducible representations of $\DAHA_N^l(Z,1,t)$ have dimension $N!$ and are parametrized by points of $\Bbb M_N^l(Z,t)$. 
\end{theorem}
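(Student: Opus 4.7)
The plan is to assemble the three statements from the machinery developed in the subsection, with the isomorphism $\kappa\colon\Bbb M_N^l(Z,t)\to\mathcal{M}_N^l(Z,t)_*$ of Proposition \ref{isomor} doing most of the work. First, for (i), note that by Proposition \ref{isomo} the target $\mathcal{M}_N^l(Z,t)_*$ is a smooth connected affine variety, so via $\kappa$ the same is true of $\Bbb M_N^l(Z,t)$. This immediately upgrades the partial smoothness statement of Proposition \ref{smooth} to smoothness on the nose. (One could alternatively invoke the full multiplicative quiver variety $\mathcal{M}_N^l(Z,t)$, which is smooth by the complete-intersection argument using Corollary \ref{scalar} and \cite{CBS}, and recall that $\mathcal{M}_N^l(Z,t)_*$ is a union of connected components.)

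Next, for (ii), recall from Proposition \ref{cmac} that $\DAHA_N^l(Z,1,t)\bold{e}_N$ is a finitely generated Cohen--Macaulay module over its center and is projective of rank $N!$ over the smooth locus of $\Bbb M_N^l(Z,t)$. By part (i) just established, this smooth locus is all of $\Bbb M_N^l(Z,t)$, so projectivity of rank $N!$ holds globally. Equivalently, a Cohen--Macaulay module over a regular ring whose generic rank is constant (here $N!$) is automatically locally free of that rank, so this step is essentially automatic once (i) is in hand.

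Finally, (iii) is a formal consequence of (ii) together with Proposition \ref{endo}. Proposition \ref{endo}(ii) identifies the center $\mathcal{Z}(\DAHA_N^l(Z,1,t))$ with $\bold{e}_N\DAHA_N^l(Z,1,t)\bold{e}_N$, and Proposition \ref{endo}(i) identifies $\DAHA_N^l(Z,1,t)$ with $\mathrm{End}_{\bold{e}_N\DAHA_N^l(Z,1,t)\bold{e}_N}(\DAHA_N^l(Z,1,t)\bold{e}_N)$. Combined with (ii), this exhibits $\DAHA_N^l(Z,1,t)$ as the endomorphism algebra of a vector bundle of rank $N!$ over the smooth affine variety $\Bbb M_N^l(Z,t)$, which is the definition of a split Azumaya algebra of degree $N!$. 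The classification of irreducible representations then follows from the general theory of (split) Azumaya algebras: for each closed point $\chi\in\Bbb M_N^l(Z,t)$ the fiber $\DAHA_N^l(Z,1,t)\bold{e}_N\otimes_{\mathcal{Z}}\chi$ is a simple $\DAHA_N^l(Z,1,t)$-module of dimension $N!$, every simple module arises this way, and $\chi\mapsto\chi'$ yields nonisomorphic modules.

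I do not expect any real obstacle here; every ingredient has already been proved. The only subtlety worth a line of care is the passage from ``Cohen--Macaulay and generically of rank $N!$'' to ``locally free of rank $N!$,'' which is where smoothness from (i) is crucially used; one could phrase this via Auslander--Buchsbaum or directly via the fact that a finitely generated Cohen--Macaulay module over a regular local ring is free.
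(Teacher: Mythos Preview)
Your proof is correct and matches the paper's approach exactly: the paper presents this theorem with the phrase ``Thus, we obtain the following theorem'' immediately after Proposition~\ref{isomor}, leaving the reader to assemble (i) from the smoothness of $\mathcal{M}_N^l(Z,t)_*$ (Proposition~\ref{isomo}) via the isomorphism $\kappa$, (ii) from Proposition~\ref{cmac} once smoothness is known, and (iii) from Proposition~\ref{endo} combined with (ii). Your write-up makes explicit precisely the deductions the paper leaves implicit.
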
 

Thus, we see that irreducible representations of $\DAHA_N^l(Z,1,t)$ are parametrized by points of a connected component of the multiplicative quiver variety.
In fact, it turns out that this is the only connected component. Namely, we have 

\begin{theorem}\label{conne} The variety ${\mathcal M}_N^l(Z,t)$ is connected, i.e., ${\mathcal M}_N^l(Z,t)={\mathcal M}_N^l(Z,t)_*$.  
Thus, $\kappa: {\Bbb M}_N^l(Z,t)\to {\mathcal M}_N^l(Z,t)$ is an isomorphism. 
\end{theorem}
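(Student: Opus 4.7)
The plan is to leverage the isomorphism $\kappa\colon \Bbb M_N^l(Z,t)\iso\mathcal{M}_N^l(Z,t)_*$ of Proposition~\ref{isomor}, which already accounts for the second assertion of the theorem. The real content is the assertion $\mathcal{M}_N^l(Z,t)_*=\mathcal{M}_N^l(Z,t)$, i.e.\ that the multiplicative quiver variety is connected. Once this is known, the isomorphism $\kappa\colon \Bbb M_N^l(Z,t)\to\mathcal{M}_N^l(Z,t)$ follows.

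The first step I would carry out is to show that $\mathcal{M}_N^l(Z,t)_*$ is an \emph{entire} connected component. Under the standing hypothesis $Z_i/Z_j\notin t^{\BZ}$ with $t$ not a root of unity, the results of~\cite{CBS} imply that $\mathcal{M}_N^l(Z,t)$ is smooth symplectic of pure dimension $2N$; in particular each of its connected components is irreducible of dimension $2N$. On the other hand, $\Bbb M_N^l(Z,t)={\rm Specm}(\bold e_N\DAHA_N^l(Z,1,t)\bold e_N)$ is irreducible, its coordinate ring being a commutative subalgebra of the domain $\bold e_N\DAHA_N(1,t)\bold e_N$, and a direct filtration argument using the PBW-type description in Proposition~\ref{basi2} identifies its Krull dimension with $2N$. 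Therefore $\mathcal{M}_N^l(Z,t)_*$ is an irreducible closed subset of pure dimension $2N$ inside the smooth equidimensional variety $\mathcal{M}_N^l(Z,t)$, and hence is a single connected component.

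The main obstacle is then to rule out further components. My preferred route is to invoke the Appendix by Nakajima and Yamakawa, which identifies $\mathcal{M}_N^l(Z,t)$ with the multiplicative bow variety attached to the framed cyclic quiver with dimension vector $(N,\ldots,N)$ and framing $(1,0,\ldots,0)$, and this bow variety in turn with the $K$-theoretic Coulomb branch of the corresponding framed quiver gauge theory of affine type $A$. That Coulomb branch is by construction the spectrum of a commutative integral domain (namely an equivariant $K$-theory ring of a variety of triples), so it is irreducible and a fortiori connected.

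A self-contained alternative would be a deformation argument: embed $(Z,t)$ in a smooth flat one-parameter family degenerating at the other end to the additive Nakajima quiver variety for $\widehat{A}_{l-1}$ with dimensions $(N,\ldots,N)$ and framing $(1,0,\ldots,0)$, which is well known to be connected (it is isomorphic to a Hilbert scheme of $N$ points on the minimal resolution of $\BC^2/(\BZ/l\BZ)$); in such a smooth flat family over a connected base the number of connected components is constant, transporting connectedness to the generic fiber. The delicate point here is arranging the family so that smoothness is preserved throughout, for which our genericity assumptions on $(Z,t)$ are essential. Either way, combining with the first step yields $\mathcal{M}_N^l(Z,t)_*=\mathcal{M}_N^l(Z,t)$, completing the proof.
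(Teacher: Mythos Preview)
Your preferred route is exactly the paper's: the proof (deferred to Subsection~\ref{exaaa}) identifies $\mathcal{M}_N^l(Z,t)$ with a multiplicative bow variety, which by Section~\ref{Coul} is isomorphic to a $K$-theoretic Coulomb branch and hence connected as the spectrum of an integral domain; the relevant identification is actually in Sections~\ref{bow}--\ref{exaaa} rather than the Appendix, which treats more general dimension vectors. Your preliminary step (showing $\mathcal{M}_N^l(Z,t)_*$ is a full component via a dimension count) is harmless but unnecessary: once the smooth variety is known to be connected it is irreducible, so the closure of the nonempty open set $\psi^{-1}({\bold M}_N^l(t)^\circ)$ is automatically everything.
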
 

\begin{proof}
See Subsection \ref{exaaa} below. 
\end{proof} 

\begin{remark} 1. Recall that the multiplicative quiver variety
${\mathcal M}_N^l(Z,t)$ carries a Poisson structure (symplectic for generic parameters and generically symplectic for any parameters), 
coming from the quasi-Hamiltonian reduction procedure (\cite{VdB}).  
Our results imply that the algebra $\bold e_N\DAHA_N^l(Z,q,t)\bold e_N$ (where $q=e^\varepsilon$ and $\varepsilon$ is a formal parameter) 
is a deformation quantization of this Poisson variety (namely, the matching of Poisson brackets may be checked 
on the open set where $\bold X$ is invertible, using the results of \cite{Ob1}).  

2. We expect that the results of this subsection can be lifted to the quantum level. Namely, we expect that
the algebra $\bold e_N\DAHA_N^l(Z,q,t)\bold e_N$ is isomorphic to the quantization of the multiplicative quiver variety
${\mathcal M}_N^l(Z,t)$ defined by D.~Jordan in \cite{J}, via a quantization of the map $\kappa$.\footnote{We note that for $l=1$ we would need a slightly less localized version than that of \cite{J}, not requiring $\bold X$ to be invertible,
which should produce not the full DAHA but its subalgebra.} 
We note that this is known in the degenerate setting, see \cite{Ob2,EGGO}.
\end{remark} 

\begin{remark}\label{cha2} 
The functions ${\rm Tr}(\bold D^r)$, $r=1,...,N$, form 
a classical integrable system on the symplectic variety ${\mathcal M}_N^l(Z,t)$. This system is the classical limit of the quantum integrable system
$\lbrace{ D_1^r+...+D_N^r, r=1,...,N\rbrace}$ in the (spherical) cyclotomic DAHA discussed in Subsection \ref{insys}. 
These classical integrable systems have been studied independently by O. Chalykh and M. Fairon  (\cite{CF}). 
\end{remark} 

\subsection{Multiplicative bow varieties}
\label{bow}
In this section we follow the notations of~\cite{NT}. Given a bow diagram
with a balanced dimension vector as in~\cite[(6.1)]{NT} we consider the
auxiliary diagram of vector spaces and linear maps as in~\cite[6.1]{NT}.
%Let $\lambda_0,\ldots,\lambda_{n-1}$ be nonzero complex numbers, and
%$t:=\lambda_0\cdots\lambda_{n-1}$.
Let $Z_{r,i},\ i=0,\ldots,n-1;\
1\leq r\leq\bw_i$, be a collection of nonzero complex numbers, and let
$t$ be another nonzero complex number.

Let $\BM^\times_{\unl{Z},t}(\unl\bv,\unl\bw)$ be the variety of collections
$(A_i,B_i,B'_i,a_i,b_i,D_{r,i},C_{r,i})$ such
that (cf.~\cite[(6.3),~2.2]{NT})

(i) $B_i,B'_i$ are invertible for all $i=0,\ldots,n-1$;
%and $B_i=\lambda_iB'_i$;

(a) $B'_iA_{i-1}-A_{i-1}B_{i-1}+a_ib_{i-1}=0$;

(b) $(B'_i)^{-1}=t^{-1}Z_{1,i}^{-1}(1+D_{1,i}C_{1,i})$;

(c) $(1+C_{k,i}D_{k,i})=Z_{k,i}Z_{k+1,i}^{-1}(1+D_{k+1,i}C_{k+1,i})$;

(d) %$B_0^{-1}=Z_{\bw_0,0}^{-1}t^{-1}(1+C_{\bw_0,0}D_{\bw_0,0})$, and
$B_i^{-1}=Z_{\bw_i,i}^{-1}(1+C_{\bw_i,i}D_{\bw_i,i})$; 
%for $i\ne0$;

(S1) There is no nonzero subspace $0\ne S\subset V_i^{\bw_i}$ with
$B_i(S)\subset S,\ A_i(S)=0=b_i(S)$;

(S2) There is no proper subspace $T\subsetneq V^0_{i+1}$ with
$B'_{i+1}(T)\subset T,\ \on{Im}A_i+\on{Im}a_{i+1}\subset T$.

Then $\BM^\times_{\unl{Z},t}(\unl\bv,\unl\bw)$ is an affine algebraic variety
acted upon by $G=\prod_{i=0}^{n-1}\prod_{k=0}^{\bw_i}GL(V_i^k)$,
cf.~\cite[Proposition~3.2]{NT}.

We define a multiplicative bow variety $\CM^\times_{\unl{Z},t}(\unl\bv,\unl\bw)$
as the categorical quotient $\BM^\times_{\unl{Z},t}(\unl\bv,\unl\bw)/\!\!/G$,
cf.~\cite[2.2]{NT}.

\subsection{$K$-theoretic Coulomb branch}
\label{Coul}
Recall~\cite[Section~2]{BFN2} that given a representation $\bN$ of a 
reductive group $G$ one can consider the variety of triples 
$\CR$.\footnote{It is different from the one considered 
in~Section~\ref{triples}.}
According to~\cite[Remark~3.9(3)]{BFN2}, the equivariant $K$-theory
$K^{G_\CO}(\CR)$ is a commutative ring with respect to convolution.
Moreover, if the $G$-action on $\bN$ extends to an action of a larger
group $\tilde G$ containing $G$ as a normal subgroup, then the equivariant
$K$-theory $K^{\tilde{G}_\CO}(\CR)$ is a commutative ring with respect to 
convolution, a deformation of $K^{G_\CO}(\CR)$ over $\on{Spec}(K^{G_F}(\on{pt}))$
where $G_F=\tilde{G}/G$ is the flavor symmetry group. The affine variety
$\on{Spec}(K^{\tilde{G}_\CO}(\CR))$ is denoted $\CM_C^\times(\tilde{G},\bN)$
and called the $K$-theoretic Coulomb branch.

A framed oriented quiver representation gives rise to a representation
$\bN=\bigoplus_{i\to j}\on{Hom}(V_i,V_j)\oplus\bigoplus_i\on{Hom}(W_i,V_i)$ of 
$G=GL(V):=\prod_iGL(V_i)$. Choosing a maximal torus $T(W_i)\subset GL(W_i)$,
we consider the natural action of 
$\hat{G}:=\BC^\times\times G\times\prod_iT(W_i)$ on $\bN$ where $\BC^\times$ 
acts by dilation on the component 
$\bN_{\on{hor}}:=\bigoplus_{i\to j}\on{Hom}(V_i,V_j)$ of $\bN$. 
If $W:=\oplus_iW_i\ne0$, the action of scalars 
$\BC^\times\subset T(W):=\prod_iT(W_i)$ coincides with the action of scalars
$\BC^\times\subset G$. Hence if $W\ne0$, the action of $\hat{G}$ on $\bN$
factors through the action of 
$\tilde{G}:=\BC^\times\times(G\times T(W))/\BC^\times$. If $W=0$, we denote
$\tilde{G}:=\hat{G}$.
The corresponding $K$-theoretic Coulomb branch
$\CM_C^\times(\tilde{G},\bN)$ of a framed quiver gauge theory will be denoted
by $\CM_C^\times$ for short.

Similarly to~\cite[Theorem~6.18]{NT} one can construct an isomorphism
$\CM_C^\times\stackrel{\sim}{\longrightarrow}\CM^\times_{\unl{Z},t}(\unl\bv,\unl\bw)$
from the $K$-theoretic Coulomb branch of a framed quiver gauge theory of
affine type $A_{n-1}$ with dimension vectors $\unl\bv,\unl\bw$.
Here $\unl{Z},t$ in
the RHS are parameters corresponding to the equivariant flavor symmetry
parameters in the LHS, cf.~\cite[6.8.2]{NT}.
The proof of the above isomorphism in particular shows that 
$\CM^\times_{\unl{Z},t}(\unl\bv,\unl\bw)$ is connected 
(and $\CM_C^\times$ is connected similarly to~\cite[Corollary~5.22]{BFN2}).

%\begin{remark}
%It follows from~\cite{NT} that any {\em Nakajima} 
%quiver variety ${\mathfrak M}_0(\unl\bv,\unl\bw)$ for a cyclic quiver 
%is isomorphic to the Coulomb branch of a framed quiver gauge theory
%for a cyclic quiver. However, as we learned from H.~Nakajima, 
%the corresponding {\em multiplicative} 
%quiver variety is in general {\em not} isomorphic to the
%$K$-theoretic Coulomb branch of the corresponding framed quiver gauge theory.
%\end{remark}

\subsection{Example}\label{exaaa}
We consider a special case when a bow diagram has 1 cross and $l$ circles
(we allow $l=0$),
that is $n=1,\ \bw_0=l$, and $\dim V_0^k=N$ for any $k=0,\ldots,l$.
Then according to~\cite[Lemma~3.1]{NT}, $A_0\colon V_0^l\to V_0^0$ is an
isomorphism. We identify $V_0^l\equiv V_0^0$ with the help of $A_0$, and then
the definition of $\CM^\times_{\unl{Z},t}(N,l)$ of~Section~\ref{bow} becomes
nothing but the definition of $\CM^l_N(Z,t)$ of~Section~\ref{quiver},
that is $\CM^\times_{\unl{Z},t}(N,l)\simeq\CM^l_N(Z,t)$. In particular,
we conclude that $\CM^l_N(Z,t)$ is connected, which proves Theorem \ref{conne}.

\begin{remark}
The relations between general multiplicative bow varieties and (various
versions of) multiplicative quiver varieties for a cyclic quiver are explained
in~Appendix~\ref{apend}.
\end{remark}

\section{Application to $q$-deformed $m$-quasiinvariants}
\label{sec6}

\subsection{$q$-deformed quasiinvariants}
Let $m$ be a nonnegative integer, and $q\in \Bbb C^*$.

\begin{definition}\label{quasii} (\cite{C})
We call $F\in \Bbb C[X_1^{\pm 1},...,X_N^{\pm 1}]$ a $q$-deformed $m$-quasiinvariant if $(1-s_{ij})F$ is divisible by $\prod_{p=-m}^m (X_i-q^pX_j)$ for any $i<j$.
\end{definition}

The algebra of $q$-deformed $m$-quasiinvariant Laurent (or trigonometric) polynomials will be denoted by $Q_{m,q}^{\rm trig}$.

Let $Q_{m,q}\subset Q_{m,q}^{\rm trig}$ be the graded
algebra of $q$-deformed quasiinvariants inside $\Bbb C[X_1,...,X_N]$. By the Hilbert basis theorem, $Q_{m,q}$ is a finitely generated module
over the ring of symmetric polynomials $\Bbb C[X_1,...,X_N]^{S_N}$. Note that $Q_{m,1}=Q_m$, the usual space of $m$-quasiinvariants defined by Chalykh and Veselov \cite{CV}, i.e., polynomials $F$ such that $(1-s_{ij})F$ is divisible by $(X_i-X_j)^{2m+1}$.

\begin{theorem}\label{quasiinth} For all except countably many values of $q$, the algebra $Q_{m,q}$ has the same Hilbert series as $Q_m$ and is
Cohen-Macaulay, i.e., a free module over $\Bbb C[X_1,...,X_N]^{S_N}$.
\end{theorem}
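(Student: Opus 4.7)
The plan is to realize $Q_{m,q}$ as a natural module in a flat family of $q$-deformations of the rational Cherednik algebra, and then transfer the classical Cohen-Macaulay property of $Q_m = Q_{m,1}$ via flatness together with a Hilbert series comparison.

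First, I would identify $Q_{m,q}$ as a DAHA-theoretic object attached to the cyclotomic DAHA $\DAHA_N^{1,+}(1,q,q^{-m})$. By Proposition \ref{preser}, this algebra acts on $\bold P_+ = \Bbb C[X_1,\ldots,X_N]$, and the $q$-deformed Dunkl operators $D_i = D_i^{(1)}$ of Section \ref{sec3} give a $q$-analog of the rational Dunkl embedding into differential-reflection operators. In analogy with Lemma \ref{prese0} for the trigonometric case, $Q_{m,q}$ should be characterized as the subspace of $\bold P_+$ on which an appropriately twisted version of the Dunkl operators with parameter $t = q^{-m}$ extends without poles; the divisibility of $(1-s_{ij})F$ by $\prod_{p=-m}^m(X_i - q^p X_j)$ is precisely the condition guaranteeing this regularity.

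Second, I would invoke the flatness results of the paper. By Theorem \ref{main2} combined with Theorem \ref{main1}, the formal cyclotomic DAHA for $l=1$ is a flat deformation of the rational Cherednik algebra $\DAHA_N^{\rm rat}(\hbar,k)$ for $S_N$, and by Corollary \ref{freeness} the relevant bimodules are free over $\Bbb C[X_1,\ldots,X_N] \otimes \Bbb C[D_1,\ldots,D_N]$. Under this deformation the characterization of Step 1 specializes at $q=1$ to the standard characterization of the classical $m$-quasiinvariants $Q_m$. Since $Q_{m,q}[r]$ is cut out inside $\bold P_+[r]$ as the kernel of a matrix whose entries are polynomial in $q$, its dimension is upper semi-continuous, so $\dim Q_{m,q}[r] \ge \dim Q_m[r]$ outside a countable set; the reverse inequality comes from the flatness of the spherical module via Theorem \ref{spherr}, giving equality of Hilbert series for all but countably many $q$ (necessarily excluding roots of unity where factors $(X_i - q^p X_j)$ coalesce).

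Third, I would combine this with the classical theorem of Feigin-Veselov and Etingof-Ginzburg (see also Berest-Chalykh-Etingof-Ginzburg) that $Q_m$ is Cohen-Macaulay and free as a module over $\Bbb C[X_1,\ldots,X_N]^{S_N}$, with known Hilbert series. A standard flat-deformation argument (Nakayama applied to a projective resolution, combined with the constant Hilbert series established in Step 2) then transfers Cohen-Macaulayness from the $q=1$ fiber to $Q_{m,q}$ for $q$ in a co-countable set; freeness over $\Bbb C[X_1,\ldots,X_N]^{S_N}$ follows because a graded Cohen-Macaulay module over a graded polynomial ring is automatically free.

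The main obstacle is Step 1 — producing the precise DAHA-theoretic realization of $Q_{m,q}$. One must verify that the cyclotomic DAHA structure of Section \ref{sec3} naturally detects the divisibility conditions defining $Q_{m,q}$, through an explicit computation with the generators $D_i^{(1)}$ analogous to Lemma \ref{prese0}, and show that the cutoff at order $m$ (the parameter $t = q^{-m}$) corresponds to divisibility by exactly the $2m+1$ factors $(X_i - q^p X_j)$ for $p = -m,\ldots,m$. The remaining semi-continuity and deformation arguments are routine once this matching is in place.
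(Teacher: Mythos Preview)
Your overall strategy --- establish flatness of the deformation and transfer Cohen-Macaulayness from $q=1$ --- matches the paper, but Step 2 has a genuine gap in both directions.

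The semi-continuity runs the wrong way. If you encode the conditions ``$(1-s_{ij})F$ vanishes at $X_i = q^p X_j$'' as a matrix $M(q)$, then at $q=1$ these conditions collapse to vanishing at $X_i = X_j$, which is automatic for $(1-s_{ij})F$; so $M(1)$ cuts out all of $\bold P_+[r]$, not $Q_m[r]$, and upper semi-continuity of $\dim\Ker M(q)$ gives nothing. The correct easy inequality is the \emph{opposite} one: working formally with $q=e^\varepsilon$, divisibility of $(1-s_{ij})F$ by $\prod_p(X_i-e^{p\varepsilon}X_j)$ over $\Bbb C[[\varepsilon]]$ reduces mod $\varepsilon$ to divisibility by $(X_i-X_j)^{2m+1}$, so $Q_{m,q}/(\varepsilon)\subseteq Q_m$ and hence $\dim Q_{m,q}[r]\le\dim Q_m[r]$ generically.

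More seriously, your plan for the reverse inequality does not work: flatness of the algebra $\DAHA_N^{1,+}$ or of its spherical subalgebra (Corollary \ref{freeness}, Theorem \ref{spherr}) says nothing about the Hilbert series of the particular module $Q_{m,q}$. The paper's actual mechanism is different. One shows by direct computation with the Macdonald operator (Lemmas \ref{macd}, \ref{preserq}, Corollary \ref{preser2}) that the spherical cyclotomic DAHA $\bold e\DAHA_N^1(q,q^{-m})\bold e$ \emph{preserves} $Q_{m,q}$. Then $Q_{m,q}/(\varepsilon)$ is an $\bold e\DAHA_N^{\rm rat}(1,m)\bold e$-submodule of $Q_m$. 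By the decomposition of $Q_m$ into irreducibles from \cite{BEG}, any proper submodule would have strictly smaller leading Hilbert coefficient; but the leading coefficients agree (any polynomial divisible by $\prod_{i<j}\prod_p(X_i-q^pX_j)$ is automatically in $Q_{m,q}$), forcing $Q_{m,q}/(\varepsilon)=Q_m$. So the missing ingredients are not a regularity characterization of $Q_{m,q}$, but rather (a) preservation of $Q_{m,q}$ under the spherical DAHA and (b) the representation-theoretic rigidity of $Q_m$ from its irreducible decomposition.
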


In other words, Theorem \ref{quasiinth} says that any quasiinvariant polynomial can be $q$-deformed.

Note that the Hilbert series of $Q_m$ is known (see \cite{FeV}, \cite{BEG}).

\begin{remark} Theorem \ref{quasiinth} was conjectured by P.E. and E. Rains on the basis of a computer calculation.
\end{remark}

Theorem \ref{quasiinth} is proved in the next subsection.

\begin{remark} The algebra of $q$-deformed trigonometric quasiinvariants $Q_{m,q}^{\rm trig}(R)$ may be defined for any reduced root system $R$ with Weyl group $W$ and a $W$-invariant multiplicity function $m: R\to \Bbb Z_+$, see \cite{C}. Namely, it is the algebra of regular functions $F$ on the corresponding torus $T$ such that for each $\alpha\in R_+$ the function $F(X)-F(s_\alpha X)$ is divisible by $\prod_{i=-m_\alpha}^{m_\alpha} (e^\alpha-q^i)$. 
Moreover, this algebra is Cohen-Macaulay for generic $q$. To see this, note that by using the exponential map ${\rm exp}: {\rm Lie}(T)\to T$ and rescaling in ${\rm Lie}(T)$, we may identify formal neighborhoods of closed points of ${\rm Spec}(Q_{m,q}^{\rm trig}(R))$ with those of ${\rm Spec}(Q_m^{\rm trig}(R))$,
the usual trigonometric (a.k.a. non-homogeneous) quasiinvariants for $R$, see e.g. \cite{ER}, Remark 6.4. But the algebra $Q_m^{\rm trig}(R)$ is Cohen-Macaulay, see \cite{ER}, Proposition 6.5. Hence so is $Q_{m,q}^{\rm trig}(R)$, as desired. Note that this result also holds for $q=1$ since in this case formal neighborhoods are the same as for usual (rational) quasiinvariants  $Q_m(R)$. 

Theorem \ref{quasiinth} is a refinement of this result for $R=A_{N-1}$, as $Q_{m,q}^{\rm trig}$ is a localization of $Q_{m,q}$. 
\end{remark} 

\subsection{Proof of Theorem \ref{quasiinth}}
The Cohen-Macaulayness statement of Theorem \ref{quasiinth} holds for $q=1$ (i.e., for $Q_m$)
by the results of \cite{EG2}, \cite{BEG} (conjectured earlier in \cite{FV}). Thus it suffices to show that $Q_{m,q}$ is a flat deformation of $Q_m$
when $q=e^\varepsilon$ and $\varepsilon$ is a formal parameter.

Let $\bold e$ be the symmetrizer of the finite Hecke algebra generated by $T_i$, and consider the action of the spherical subalgebra
$\bold e \DAHA_N(q,t)\bold e$ on $\Bbb C[X_1^{\pm 1},...,X_N^{\pm 1}]^{S_N}$. This action is by $q$-difference operators. Consider the element
$Y_1+...+Y_N$. It is a central element of the affine Hecke algebra generated by $T_i$ and $Y_i$, hence commutes with $\bold e$.
Thus
$$
(Y_1+...+Y_N)\bold e\in \bold e\DAHA_N(q,t)\bold e.
$$
We will need the following lemma, due to Cherednik (\cite{Ch1}).

\begin{lemma}\label{macd} The element ${\mathcal M}:=(Y_1+...+Y_N)\bold e$ acts on $\Bbb C[X_1^{\pm 1},...,X_N^{\pm 1}]^{S_N}$ by the first Macdonald operator
$$
M:=\sum_{j=1}^N \left(\prod_{i\ne j}\frac{X_i-tX_j}{X_i-X_j}\right)\tau_j.
$$
where $\tau_iX_j=q^{\delta_{ij}}X_j$.
\end{lemma}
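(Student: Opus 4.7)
I would approach this in two stages: first confirm that $\mathcal{M}$ really does act on symmetric Laurent polynomials, then identify the action with $M$ by direct computation in the polynomial representation.

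For the first stage, I would check that $\sum_i Y_i$ commutes with every $T_j$. From (R6) we get $T_jY_jT_j=Y_{j+1}$, which, combined with the quadratic relation $T_j^{-1}=T_j-(\bold t-\bold t^{-1})$, yields $T_j Y_j = Y_{j+1}T_j-(\bold t-\bold t^{-1})Y_{j+1}$ and $T_jY_{j+1}=Y_jT_j+(\bold t-\bold t^{-1})Y_{j+1}$. Adding gives $T_j(Y_j+Y_{j+1})=(Y_j+Y_{j+1})T_j$; since $T_j$ commutes with the other $Y_i$ by (R7), the sum is central in the finite Hecke algebra. Hence $\sum_i Y_i$ commutes with $\bold e$, so $\mathcal{M}$ preserves $\bold e\bold P=\mathbb C[X_1^{\pm1},\dots,X_N^{\pm1}]^{S_N}$ and acts there as $\sum_i Y_i$.

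For the second stage, I would use the formula of Proposition~\ref{polyre}. For a symmetric $f$ one has $\rho(T_j)f=\bold t f$, and also $\omega f=\tau_N f$ directly from the definition of $\omega$ and the symmetry of $f$. Therefore
\[
\rho(Y_i)f=\bold t^{N+i-2}\,\rho(T_i^{-1}T_{i+1}^{-1}\cdots T_{N-1}^{-1})(\tau_N f).
\]
Using the Demazure--Lusztig form
\[
T_j^{-1}=\frac{\bold t X_j-\bold t^{-1}X_{j+1}}{X_j-X_{j+1}}\,s_j\;-\;\frac{(\bold t-\bold t^{-1})X_j}{X_j-X_{j+1}},
\]
together with the identity $s_j(\tau_k f)=\tau_{s_j(k)}f$ for symmetric $f$, one expands $\rho(T_i^{-1}\cdots T_{N-1}^{-1})(\tau_N f)$ inductively (in decreasing $i$) as a linear combination $\sum_{j\ge i}c_{i,j}\,\tau_j f$ of $q$-shifts.

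The last step is to sum these contributions over $i=1,\dots,N$ and identify the coefficient of each $\tau_j f$ with $\prod_{i\ne j}\frac{X_i-tX_j}{X_i-X_j}$. Two sanity checks localize the main work: the contribution of $\rho(Y_N)$ alone gives $\bold t^{2(N-1)}\tau_N f=t^{N-1}\tau_N f$, matching the $j=N$ Macdonald coefficient after noting that only the $s_j$-part of each $T_j^{-1}$ produces the ``top'' term; and the $N=2$ case already exhibits the full cancellation (the $(t-1)X_1/(X_1-X_2)$ term from $\rho(Y_1)$ conspires with $t\cdot\tau_2 f$ from $\rho(Y_2)$ to produce $\frac{X_1-tX_2}{X_1-X_2}\tau_2 f$). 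I expect the bookkeeping of these cascading coefficients to be the main obstacle; it is a standard telescoping (going back to Cherednik \cite{Ch1}) that one can also bypass by comparing eigenvalues on the symmetric Macdonald basis $\{P_\lambda\}$ of $\bold e\bold P$, both operators acting with eigenvalue $\sum_i q^{\lambda_i}t^{N-i}$.
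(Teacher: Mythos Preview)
Your outline is correct, but the paper's argument is organized differently and sidesteps exactly the bookkeeping you flag as the main obstacle. The paper first observes that $(\sum_i Y_i)\bold e$ acts by a difference operator $L=\sum_j f_j\tau_j$ whose coefficients satisfy $s_j f_j=f_{j+1}$ (and are fixed by the other simple reflections), so it suffices to compute the single coefficient $f_1$. It then extracts $f_1$ by evaluating on the power sums $p_n=X_1^n+\dots+X_N^n$: treating $q^n,X_1^n,\dots,X_N^n$ as independent indeterminates, $f_1$ is the coefficient of $q^n X_1^n$ in $(\sum_i Y_i)p_n$. Using the product form $\rho(Y_i)=t^{i-1}\prod_{j>i}(1+\frac{(1-t)X_i}{X_j-X_i}(1-s_{ij}))\,\tau_i\,\prod_{j<i}(\cdots)$, one sees at once that only the $i=1$ summand carries a $\tau_1$, and within it only the terms with no transposition contribute to the coefficient of $X_1^n$; this immediately yields $f_1=\prod_{j\ne 1}\frac{X_j-tX_1}{X_j-X_1}$. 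Your inductive Demazure--Lusztig expansion and the eigenvalue comparison on $P_\lambda$ both work, but the paper's coefficient-extraction trick replaces the telescoping by a two-line observation and requires no input from Macdonald polynomial theory.
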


\begin{proof} Denote the $q$-difference operator by which $(Y_1+...+Y_N)\bold e$ acts by $L$. Clearly, we have
$L=\sum_j f_j\tau_j$, where $f_j$ are rational functions such that $s_r f_j=f_j$ for $r\ne j,j+1$, and $s_jf_j=f_{j+1}$. Thus it suffices to show that
$$
f_1= \prod_{i\ne 1}\frac{X_i-tX_1}{X_i-X_1}.
$$
To this end, treat $n$ as a variable, and
note that
$$
L(X_1^n+...+X_N^n)=\sum_j f_j(X_1,...,X_N)(q^nX_j^n+\sum_{i\ne j}X_i^n),
$$
which we can treat as a polynomial in $q^n,X_1^n,...,X_N^n$ with coefficients in $\Bbb C(X_1,...,X_N)$.
Thus, $f_1$ may be obtained by extracting the coefficient of $q^nX_1^n$.

On the other hand, we have
$$
\rho(Y_i)=t^{i-1}\prod_{j=i+1}^N(1+\frac{(1-t)X_i}{X_j-X_i}(1-s_{ij}))\tau_i\prod_{j=1}^{i-1}(1+\frac{(1-t^{-1})X_i}{X_j-X_i}(1-s_{ij})).
$$
An easy direct computation using this formula shows that
$$
(Y_1+...+Y_n)(X_1^n+...+X_N^n)=
$$
$$
(\sum_{i=1}^N t^{i-1})(X_1^n+...+X_N^n)+(q^n-1)\sum_{i=1}^Nt^{i-1}\prod_{j=i+1}^N(1+\frac{(1-t)X_i}{X_j-X_i}(1-s_{ij}))X_i^n
$$
Thus, only the second summand contributes to the coefficient of $q^n$, and in the second summand
only $i=1$ contributes to the coefficient of $X_1^n$. Moreover, in this contribution, the transposition terms (involving $s_{ij}$) don't
contribute. Thus
$$
f_1(X_1,...,X_n)=\prod_{j=2}^N(1+\frac{(1-t)X_1}{X_j-X_1})=\prod_{j=2}^N\frac{X_j-tX_1}{X_j-X_1},
$$
as desired.
\end{proof}

We will also need the following lemma, which is a special case of Proposition 2.1 of \cite{C} (but we give a proof for reader's convenience).
Assume that $q$ is not a root of unity.

\begin{lemma}\label{preserq} If $t=q^{-m}$ then the operator $M$ preserves $Q_{m,q}^{\rm trig}$.
\end{lemma}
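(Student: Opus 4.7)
The plan is to exploit the fact that the symmetric Macdonald operator $M$ commutes with each transposition $s_{ab}$, which reduces the lemma to a divisibility statement about $MG$ for an antisymmetric $G$. Indeed, the coefficient $c_j = \prod_{i \ne j}\frac{X_i - tX_j}{X_i - X_j}$ is $s_{ab}$-invariant for $j \ne a, b$, and conjugation by $s_{ab}$ exchanges the $j = a$ and $j = b$ summands (since $s_{ab}c_a = c_b$ and $s_{ab}\tau_a s_{ab} = \tau_b$). Thus $(1-s_{ab})MF = M(1-s_{ab})F = MG$, where $G := (1-s_{ab})F$ is divisible by $H := \prod_{p=-m}^m H_p$ with $H_p := X_a - q^p X_b$. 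I reduce the claim to: for every Laurent polynomial $G$ with $s_{ab}G = -G$ and $H \mid G$, one has $H \mid MG$.

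Write $G = H g$, where $g$ is a Laurent polynomial; a direct computation gives $s_{ab}H_p = -q^p H_{-p}$, hence $s_{ab}H = -H$, and so $s_{ab}g = g$. To check $H \mid MG$, it suffices (as the $H_p$ are pairwise coprime irreducible factors) to verify that $MG$ vanishes on each hyperplane $\{H_p = 0\}$ for $p \in [-m,m]$. Expanding $MG = \sum_j c_j \tau_j(Hg)$ and using $\tau_a H = q^{2m+1}\prod_{p=-m-1}^{m-1}H_p$, $\tau_b H = \prod_{p=-m+1}^{m+1}H_p$, I organize the vanishing as follows.

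For $p \ne 0$: all summands with $j \ne a, b$ vanish on $\{H_p = 0\}$ because $\tau_j H = H$ in these variables. For the boundary value $p = m$, the prefactor $c_a$ contains the factor $\frac{X_b - t X_a}{X_b - X_a}$ which, at $t = q^{-m}$, vanishes exactly on $X_a = q^m X_b$, while $\tau_b H$ still vanishes on this hyperplane; similarly for $p = -m$, $c_b$ carries a vanishing factor and $\tau_a H$ continues to vanish. For interior $p \in [-m+1, m-1]\setminus\{0\}$, both $\tau_a H$ and $\tau_b H$ vanish on $H_p = 0$, killing both remaining terms. So $MG|_{H_p = 0} = 0$ for every $p \ne 0$.

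The main obstacle is the case $p = 0$. Here $c_a$ and $c_b$ each have a simple pole at $X_a = X_b$ (from the denominator $X_a - X_b$), while $\tau_a(Hg)$ and $\tau_b(Hg)$ each vanish at $X_a = X_b$ (because both $\tau_a H$ and $\tau_b H$ contain a factor vanishing there); the individual products $c_a\tau_a(Hg)$ and $c_b \tau_b(Hg)$ are therefore finite, and I must show their sum vanishes at $X_a = X_b$. Factoring $H$ out of the total expression, $MG = H \cdot [\,\sum_{j\ne a, b}c_j\tau_j g + q^{m+1}c'_1\tfrac{H_{-m-1}}{H_0}\tau_a g + c_1 \tfrac{H_{m+1}}{H_0}\tau_b g\,]$ with $c'_1, c_1$ regular at $X_a = X_b$. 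The key relation $H_{-m-1}|_{X_a=X_b} = -q^{-m-1}H_{m+1}|_{X_a=X_b}$ and $c'_1|_{X_a=X_b} = c_1|_{X_a=X_b}$ reduce the residue condition to $(\tau_a g)|_{X_a=X_b} = (\tau_b g)|_{X_a=X_b}$, which is precisely the content of $s_{ab}g = g$ (it asserts $g(qv, v, \ldots) = g(v, qv, \ldots)$). Hence the bracketed expression is regular across $H_0 = 0$, so $MG$ is divisible by $H$, completing the proof.
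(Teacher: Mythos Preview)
Your argument is correct and follows essentially the same route as the paper's proof: both check that $(1-s_{ab})MF$ vanishes along each hyperplane $X_a = q^p X_b$ for $|p|\le m$, handling the extremal $p=\pm m$ via vanishing of the Macdonald prefactors, the interior $p$ via the shifted divisibility of $(1-s_{ab})F$, and $p=0$ via an antisymmetry/residue computation. Your use of the commutation $[M,s_{ab}]=0$ together with the factorization $G=Hg$ (with $g$ symmetric in $X_a,X_b$) organizes the bookkeeping somewhat more cleanly than the paper's direct expansion, which in effect drops the $j\ne a,b$ summands as obviously divisible.
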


\begin{proof} Let $r>l$, and $F\in \Bbb C[X_1^{\pm 1},...,X_N^{\pm 1}]$. A direct computation shows that
$$
(1-s_{rl})(MF)(X_1,...,X_N)=
$$
$$
\prod_{j\ne r}\frac{X_j-q^{-m}X_r}{X_j-X_r}(F(...X_l...qX_r...)-F(...qX_r...X_l...))
$$
$$
-\prod_{j\ne l}\frac{X_j-q^{-m}X_l}{X_j-X_l}(F(...X_r...qX_l...)-F(...qX_l...X_r...)).
$$
If $F\in Q_{m,q}^{\rm trig}$ is a quasiinvariant then $F(...X_l...qX_r...)-F(...qX_r...X_l...)$ vanishes for $X_l=q^pX_r$ with $-m-1\le p\le m-1$,
while  $F(...X_r...qX_l...)-F(...qX_l...X_r...)$ vanishes for $X_l=q^pX_r$ with $-m+1\le p\le m+1$. Hence, both terms in the formula for
$(1-s_{rl})MF$ vanish when $X_l=q^pX_r$ with $-m\le p\le m$, $p\ne 0$ and are defined for $p=0$
(vanishing in the extremal cases $p=m,-m$ follows from vanishing of the prefactors).
Thus, $(1-s_{rl})MF(X_1,...,X_N)$ is a polynomial which is antisymmetric in $X_l,X_r$, so divisible by $X_r-X_l$.
This takes care of the case $p=0$.
\end{proof}

Now note that since the algebra $\bold e\DAHA_N(q,t)\bold e$ acts on $\Bbb C[X_1^{\pm 1},...,X_N^{\pm 1}]$ by $q$-difference operators,
this action can be extended tautologically to non-symmetric rational functions $\Bbb C(X_1^{\pm 1},...,X_N^{\pm 1})$ (by the same difference operators).

\begin{corollary}\label{preser2} If $t=q^{-m}$ for $m\in \Bbb Z_{\ge 0}$ then the spherical cyclotomic DAHA $\bold e\DAHA_N^1(q,t)\bold e$ (with $Z_1=1$) preserves the subspace of $q$-deformed
$m$-quasiinvariants $Q_{m,q}\subset  \Bbb C(X_1^{\pm 1},...,X_N^{\pm 1})$.
\end{corollary}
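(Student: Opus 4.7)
The plan is to exploit the equality $Q_{m,q}=Q_{m,q}^{\rm trig}\cap \bold P_+$ and, for each $a\in\bold e\DAHA_N^1(q,t)\bold e$, to show separately that the tautologically extended $q$-difference operator $L_a$ sends $F\in Q_{m,q}$ into $Q_{m,q}^{\rm trig}$ and into $\bold P_+$.

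First I would handle $Q_{m,q}^{\rm trig}$. Since $\bold e\DAHA_N^1(q,t)\bold e\subset\bold e\DAHA_N(q,t)\bold e$, it suffices to show the full spherical DAHA preserves $Q_{m,q}^{\rm trig}$. For this I would use a $q$-deformed version of Lemma~\ref{generat}: the spherical DAHA $\bold e\DAHA_N(q,t)\bold e$ is generated by multiplication by symmetric Laurent polynomials in the $X_i$ (which trivially preserves $Q_{m,q}^{\rm trig}$, since multiplying a quasiinvariant by a symmetric Laurent polynomial keeps the divisibility condition on $(1-s_{ij})F$) together with $(Y_1+\cdots+Y_N)\bold e$, which by Lemma~\ref{macd} acts as the first Macdonald operator $M$, and $M$ preserves $Q_{m,q}^{\rm trig}$ by Lemma~\ref{preserq}.

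Next I would establish that $L_aF\in\bold P_+$ whenever $F\in Q_{m,q}$. By Proposition~\ref{preser} applied with $l=1$ and $Z_1=1$ (so $z_1=0$ and $(\prod_j X_j)^{z_1}\bold P_+=\bold P_+$), the cyclotomic DAHA $\DAHA_N^1(q,t)$ preserves $\bold P_+$ in the polynomial representation $\rho$, and in particular, writing $a=\bold e b\bold e$, the operator $\rho(a)$ carries $\bold e\bold P_+$ into $\bold P_+$. This forces the coefficients of $L_a$, written as a sum $\sum_\mu g_\mu(X)\tau^\mu$ of $q$-difference-operator terms, to be rational functions in the $X_i$ whose only possible singularities lie along the diagonals $X_i=q^rX_j$ --- with no singularity at any coordinate hyperplane $X_i=0$. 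This property is manifest on the natural generators of the cyclotomic spherical DAHA (multiplication by nonnegative symmetric polynomials, and Macdonald-type difference operators in the $Y_i$) and is closed under composition. Applied to $F\in Q_{m,q}\subset Q_{m,q}^{\rm trig}$, the remaining diagonal singularities of the coefficients cancel against the divisibility of $(1-s_{ij})F$ by $\prod_{p=-m}^m(X_i-q^pX_j)$, by an argument strictly parallel to the proof of Lemma~\ref{preserq}; hence $L_aF\in\bold P_+$.

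Combining the two steps gives $L_aF\in Q_{m,q}^{\rm trig}\cap\bold P_+=Q_{m,q}$, which proves the corollary. The hardest ingredient is the generation result used in the second paragraph --- a $q$-deformation of Lemma~\ref{generat} stating that the spherical DAHA is generated by $M$ and the symmetric Laurent polynomials. This can be obtained either by a Poisson-bracket computation on the associated graded algebra (following the strategy of the proof of Lemma~\ref{generat}) or by a flat-deformation argument from the trigonometric case; both are routine but somewhat technical.
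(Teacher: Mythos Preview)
Your approach matches the paper's: both intersect $Q_{m,q}^{\rm trig}$ with polynomials, use Proposition~\ref{preser} for regularity at $X_i=0$, and handle $Q_{m,q}^{\rm trig}$ via generation of $\bold e\DAHA_N(q,t)\bold e$ by the Macdonald operator together with symmetric Laurent polynomials (the paper does this explicitly in the formal setting $q=e^\varepsilon$, reducing to Lemma~\ref{generat} via the element $\widehat H=\varepsilon^{-2}(\mathcal M-N-\varepsilon(\tilde Y-1))$, whose quasiclassical limit is $H$). One simplification: your third paragraph's appeal to cancellation of diagonal singularities is unnecessary, since once you know $L_aF\in Q_{m,q}^{\rm trig}\subset\bold P$ from your second paragraph and that the coefficients of $L_a$ are regular at $X_i=0$, you get $L_aF\in\bold P_+$ directly---this is exactly how the paper closes the argument, writing $Q_{m,q}=\Bbb C(X_1,\dots,X_N)_{\rm reg}\cap Q_{m,q}^{\rm trig}$.
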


\begin{proof} By Proposition \ref{preser}, the difference-reflection operators
$\rho(L)$, $L\in \DAHA_N^1(q,t)$ do not create poles at $X_i=0$. Therefore, the action of $\bold e \DAHA_N^1(q,t)\bold e\subset \bold e\DAHA_N(q,t)\bold e$ on $\Bbb C(X_1,...,X_N)$ preserves the subspace $\Bbb C(X_1,...,X_N)_{\rm reg}$ of functions regular at the hyperplanes $X_i=0$, $i=1,...,N$.

We now claim that the algebra $\bold e \DAHA_N(q,t)\bold e$ preserves $Q_{m,q}^{\rm trig}$.
It suffices to prove this in the formal setting $q=e^\varepsilon$, $t=e^{-m\varepsilon}$. By Lemma \ref{macd} and Lemma \ref{preserq}, the element ${\mathcal M}$ preserves
$Q_{m,q}^{\rm trig}$. Also, $\Bbb C[X_1^{\pm 1},...,X_N^{\pm 1}]^{S_N}$ and $\tilde Y:=\prod_i Y_i$ preserve $Q_{m,q}^{\rm trig}$.
But we claim that the element $\widehat{H}:=\varepsilon^{-2}({\mathcal{M}}-N-\varepsilon(\tilde Y-1))$ and  $\Bbb C[X_1^{\pm 1},...,X_N^{\pm 1}]^{S_N}$ (topologically) generate 
$\bold e \DAHA_N^{\rm formal}(1,k)\bold e$. Indeed, the quasiclassical limit of $\widehat{H}$ is the trigonometric Calogero-Moser operator
$H=\bold e\sum_i y_i^2\bold e=\sum_i y_i^2\bold e$, and by Lemma \ref{generat} $H$ and $\Bbb C[X_1^{\pm 1},...,X_N^{\pm 1}]^{S_N}$ generate
$\bold e \DAHA_{N,{\rm deg}}(1,k)\bold e$. This implies the claim.

Thus, $\bold e\DAHA_N^1(q,t)\bold e$ preserves $\Bbb C(X_1,...,X_N)_{\rm reg}\cap Q_{m,q}^{\rm trig}=Q_{m,q}$, as desired.
\end{proof}

Now we can complete the proof of the theorem. Since every polynomial divisible by
$\prod_{i<j}\prod_{p=-m}^m (X_i-q^pX_j)$ is automatically a $q$-deformed $m$-quasiinvariant, the leading coefficient of the Hilbert polynomial
of $Q_{m,q}$ is the same as that of $Q_m$. On the other hand, working in the formal setting ($q=e^\varepsilon$) and reducing Corollary \ref{preser2} modulo $\varepsilon$, we find that
$Q_{m,q}/(\varepsilon)$ is a submodule of the module $Q_m$ over the spherical rational Cherednik algebra
$\bold e\DAHA_N^{\rm rat}(1,m)\bold e$. But as shown in \cite{BEG}, $Q_m=\oplus_\lambda \bold e M_m(\lambda)\otimes \pi_\lambda$,
where $\lambda$ runs over partitions of $N$, $\pi_\lambda$ is the Specht module for $S_N$ corresponding to $\lambda$,
and $M_m(\lambda)$ is the Verma module over the  rational Cherednik algebra $\DAHA_N^{\rm rat}(1,m)$ attached to $\lambda$.
Since the $\bold e\DAHA_N^{\rm rat}(1,m)\bold e$-modules $\bold e M_m(\lambda)$ are irreducible (see \cite{BEG}), 
any proper $\bold e\DAHA_N^{\rm rat}(1,m)\bold e$-submodule of $Q_m$ would have a strictly smaller leading coefficient of the Hilbert series than that of $Q_m$.
This implies that $Q_{m,q}/(\varepsilon)=Q_m$, i.e. $Q_{m,q}$ is a flat deformation of $Q_m$ (i.e., has the same Hilbert series).
By standard abstract nonsense, this applies also to numerical values of $q$, excluding a countable set. The theorem is proved.

\subsection{Generalization: $\bold q$-deformed cyclotomic quasiinvariants}

Let us now use the cyclotomic DAHA to generalize Theorem \ref{quasiinth} to the cyclotomic case.

Let $m\ge 0,l\ge 1, \bold q\in \Bbb C^*$, and $q=\bold q^l$.
Let us first define the algebra of $\bold q$-deformed cyclotomic trigonometric quasiinvariants $Q_{m,\bold q}^{l,{\rm trig}}$.
We introduce variables $x_i$ such that $X_i=x_i^l$. We define $Q_{m,\bold q}^{l,{\rm trig}}\subset \Bbb C[x_1^{\pm 1},...,x_N^{\pm 1}]$
to be the subalgebra of Laurent polynomials $F$ such that for every $i,j,r$, the Laurent polynomial
$$
F(...,x_i,...,x_j,...)-F(...,\zeta^r x_j,...,\zeta^{-r}x_i,...)
$$
is divisible by $\prod_{p=-m}^m(x_i-\zeta^r\bold q^px_j).$

Now let $m,m_1,...,m_{l-1}$ be nonnegative integers. For $0\le r\le l-1$ let $\bold p_r$ be the homogeneous projector
$\Bbb C[x]\to x^r\Bbb C[x^l]$, and $\bold p_r^{(i)}$ denotes $\bold p_r$
acting with respect to $x_i$. By analogy with \cite{BC}, let $Q_{m,m_1,...,m_{l-1},\bold q}^l\subset Q_{m,\bold q}^{l,{\rm trig}}$
be the graded space of all $\bold q$-deformed cyclotomic trigonometric quasiinvariants  $F$ inside
$\Bbb C[x_1,...,x_N]$ such that
\begin{equation}\label{cyccon}
\bold p_r^{(i)}F\text{ is divisible by }x_i^{r+m_rl}\text{ for }i=1,...,N\text{ and }r=1,...,l-1.
\end{equation}

\begin{definition} The space $Q_{m,m_1,...,m_{l-1},\bold q}^l$ is called the space of $\bold q$-deformed cyclotomic $(m,m_1,...,m_{l-1})$-quasiinvariants.
\end{definition}

By the Hilbert basis theorem, $Q_{m,m_1,...,m_{l-1},\bold q}^l$ is a finitely generated module
over the ring of symmetric polynomials $\Bbb C[X_1,...,X_N]^{S_N}$. Note that $Q_{m,m_1,...,m_{l-1}}^l:=Q_{m,m_1,...,m_{l-1},1}^l$ is the usual space of
$(m,m_1,...,m_{l-1})$-quasiinvariants for the complex reflection group $S_N\ltimes (\Bbb Z/l\Bbb Z)^N$ defined in \cite{BC}.
Note also that similarly to \cite{BC}, $Q_{m,m_1,...,m_{l-1},\bold q}^l$ is not necessarily an algebra.

Our main result about cyclotomic $q$-deformed quasiinvariants is the following theorem, which is a generalization\footnote{Theorem \ref{quasiinth} is a special case of Theorem \ref{quasiinth1} for $l=1$, but it is convenient for us to treat this special case first, and then pass to the general case.}
of Theorem \ref{quasiinth}.

\begin{theorem}\label{quasiinth1} For all except countably many values of $\bold q$, the space
$Q_{m,m_1,...,m_{l-1},\bold q}^l$ has the same Hilbert series as $Q_{m,m_1,...,m_{l-1}}^l$, and is a free module over $\Bbb C[X_1,...,X_N]^{S_N}$.
\end{theorem}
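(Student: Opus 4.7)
The proof strategy will mirror that of Theorem~\ref{quasiinth}. I would (a) exhibit a suitable $q$-deformed Cherednik-type algebra acting on $\Bbb C[x_1,\ldots,x_N]$ which, for parameters $t$ and $Z_1,\ldots,Z_l$ depending on $m, m_1, \ldots, m_{l-1}, \bold q$, preserves $Q^l_{m,m_1,\ldots,m_{l-1},\bold q}$; (b) specialize $\bold q = e^\varepsilon$ formally; and (c) appeal to the known module structure of $Q^l_{m,m_1,\ldots,m_{l-1}}$ over the partially spherical cyclotomic rational Cherednik algebra from~\cite{BC} to conclude that the Hilbert series does not drop in the limit.

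Concretely, I would set $t = \bold q^{-m}$ (ensuring, as in Lemma~\ref{preserq}, that the Macdonald-type generator preserves the $\bold q$-quasiinvariance between distinct $x_i$'s) and $Z_i = \bold q^{z_i}$ where the $z_i$ are the images under~\eqref{ziform1} of the rational Cherednik parameters $c = (c_0, \ldots, c_{l-1})$ identified in~\cite{BC} as those making $\bDAHA_N^{l,{\rm cyc}}(c, 1, m)$ preserve $Q^l_{m,m_1,\ldots,m_{l-1}}$. Explicitly, the $c_j$ are determined by $\sum_{j=0}^{l-1} c_j \zeta^{rj} = r + m_r l$ for $r = 1, \ldots, l-1$, with $c_0$ a free parameter, and the roots $Z_i$ are tuned so that the new generator $\pi_-$ of cyclotomic DAHA, acting on the decomposition $\Bbb C[x_1,\ldots,x_N] = \bigoplus_{\chi \in \{0,\ldots,l-1\}^N} x^\chi \Bbb C[X_1,\ldots,X_N]$, annihilates precisely the boundary monomials $x_i^{r + m_r l}$ whose presence would violate~\eqref{cyccon}. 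This is the cyclotomic analog of the one-variable calculation of Proposition~\ref{1var}/Proposition~\ref{compar}. Preservation by the remaining generators $T_i$, $X_i$, $Y_i^{\pm 1}$ follows from Corollary~\ref{preser2} applied componentwise.

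With preservation in hand, flatness is established by the deformation argument from the proof of Theorem~\ref{quasiinth}. A lower bound on the Hilbert series is provided by the explicit ideal generated by $\prod_{i<j}\prod_{r=0}^{l-1}\prod_{p=-m}^m (x_i - \zeta^r \bold q^p x_j)$ together with $\prod_{i=1}^N\prod_{r=1}^{l-1} x_i^{r + m_r l}$, whose leading coefficient matches that of $Q^l_{m,m_1,\ldots,m_{l-1}}$. For the upper bound, reducing modulo $\varepsilon$ in the formal setting $\bold q = e^\varepsilon$ exhibits $Q^l_{m,m_1,\ldots,m_{l-1},\bold q}/(\varepsilon)$ as a submodule of $Q^l_{m,m_1,\ldots,m_{l-1}}$ over the partially spherical cyclotomic rational Cherednik algebra; by~\cite{BC}, the latter decomposes at generic parameters into a finite sum of irreducibles, so any proper submodule has strictly smaller leading coefficient of the Hilbert series, forcing equality. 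Flatness over $\Bbb C[[\varepsilon]]$ then descends, by the usual semicontinuity argument, to flatness for all but countably many numerical $\bold q$, and freeness over $\Bbb C[X_1,\ldots,X_N]^{S_N}$ follows from Cohen-Macaulayness of $Q^l_{m,m_1,\ldots,m_{l-1}}$.

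The main obstacle will be step (a), since the cyclotomic DAHA $\DAHA_N^l(Z, q, t)$ constructed in this paper acts naturally only on $\bold P = \Bbb C[X_1^{\pm 1},\ldots,X_N^{\pm 1}]$, i.e., on the $(\Bbb Z/l\Bbb Z)^N$-invariant part, whereas $Q^l_{m,m_1,\ldots,m_{l-1},\bold q}$ lives in the larger space $\Bbb C[x_1,\ldots,x_N]$ and has nontrivial components in every $(\Bbb Z/l\Bbb Z)^N$-isotypic subspace. This can be handled either by invoking Webster's $q$-deformation of the full cyclotomic Cherednik algebra from~\cite{W}, or by treating each $(\Bbb Z/l\Bbb Z)^N$-isotypic component $x^\chi \Bbb C[X_1,\ldots,X_N]$ separately via a character-twisted version of the cyclotomic DAHA, with the shifts of $Z_i$ between twists computed so as to match, in the quasiclassical limit, the action of the cyclotomic Dunkl--Opdam operators of Definition~\ref{cyclcher} on the corresponding character component of $Q^l_{m,m_1,\ldots,m_{l-1}}$.
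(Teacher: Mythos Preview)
Your overall deformation strategy---show that a spherical cyclotomic-DAHA-type algebra preserves the space, reduce modulo $\varepsilon$, and use the irreducible decomposition from \cite{BC} together with a leading-coefficient argument---is exactly the paper's approach. The parameters are essentially right (though note $t=q^{-m}$ with $q=\bold q^l$, not $t=\bold q^{-m}$; and the paper simply records $z_i=m_i/l$, $z_l=0$, hence $Z_i=\bold q^{m_i}$).

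The difference lies entirely in your ``main obstacle,'' which in the paper's treatment is not an obstacle at all. You try to make the full cyclotomic DAHA (with generators $T_i$, $\pi_-$, etc.) act on $\Bbb C[x_1,\dots,x_N]$, run into the problem that the $T_i$ involve reflections which do not respect the $(\Bbb Z/l\Bbb Z)^N$-isotypic decomposition, and then propose to fix this either via Webster's construction or via a component-by-component character twist. The paper sidesteps this completely by working only with the \emph{spherical} subalgebra $\bold e\DAHA_N^l(Z,q,t)\bold e$. The point is that elements of $\bold e\DAHA_N(q,t)\bold e$ act on $\Bbb C[X_1^{\pm1},\dots,X_N^{\pm1}]^{S_N}$ by honest symmetric $q$-difference operators (no reflection terms survive after applying $\bold e$), and any such operator extends tautologically to $\Bbb C(x_1,\dots,x_N)$ by declaring $\tau_i x_j=\bold q^{\delta_{ij}}x_j$ (so that $\tau_i X_j=q^{\delta_{ij}}X_j$ as before). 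Then the factorization $X_i-q^pX_j=\prod_{r=0}^{l-1}(x_i-\zeta^r\bold q^p x_j)$ lets the argument of Lemma~\ref{preserq} and Corollary~\ref{preser2} go through verbatim to show that $\bold e\DAHA_N(q,t)\bold e$ preserves $Q^{l,\mathrm{trig}}_{m,\bold q}$, while Proposition~\ref{preser} (applied to $(\prod_j X_j)^{z_i}\bold P_+ = (\prod_j x_j)^{m_i}\Bbb C[X_1,\dots,X_N]$) shows that the smaller algebra $\bold e\DAHA_N^l(Z,q,t)\bold e$ also preserves the conditions~\eqref{cyccon}. No appeal to \cite{W} or to isotypic bookkeeping is needed.

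In short: your outline is sound, but replace the full algebra by its spherical subalgebra and your ``main obstacle'' paragraph disappears. Your invocation of Corollary~\ref{preser2} for the generators $T_i,X_i,Y_i^{\pm1}$ individually is in any case not quite right, since that corollary is already a statement about the spherical subalgebra.
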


In other words, every quasiinvariant for $S_N\ltimes (\Bbb Z/l\Bbb Z)^N$ can be $\bold q$-deformed.

We note that the Hilbert series of $S_N\ltimes (\Bbb Z/l\Bbb Z)^N$ is computed in \cite{BC}.

Theorem \ref{quasiinth1} is proved in the next subsection.

\subsection{Proof of Theorem \ref{quasiinth1}}

The proof of Theorem \ref{quasiinth1} is parallel to the proof of Theorem \ref{quasiinth}, using the results of \cite{BC}. In fact, most of the technical statements we'll need have already been obtained in the proof of Theorem \ref{quasiinth}.

The freeness statement of Theorem \ref{quasiinth1} holds for $Q_{m,m_1,...,m_{l-1}}^l$ (i.e., for $\bold q=1$)
by the results of \cite{BC}. Thus it suffices to show that $Q_{m,m_1,...,m_{l-1},\bold q}^l$ is a flat deformation of $Q_{m,m_1,...,m_{l-1}}^l$
when $\bold q=e^{\varepsilon/l}$ and $\varepsilon$ is a formal parameter.

To this end, recall from \cite{BC} that the space $Q_{m,m_1,...,m_{l-1}}^l$ carries an action of the spherical cyclotomic Cherednik algebra
$\bold e\bDAHA_N^{l,{\rm psc}}(c,1,m)\bold e$, where $\bold e$ is the symmetrizer for $S_N$ and $c_i$ are certain linear functions of $m_j$
(in fact, this action is the main tool in \cite{BC} for proving that $Q_{m,m_1,...,m_{l-1}}^l$ is a free $\Bbb C[X_1,...,X_N]$-module).
Therefore, by Theorem \ref{main1}, this space carries an action of $\bold e\DAHA_N^l(z,1,m)\bold e$, and it is easy to compute that
$z_i=m_i/l$, $i=1,...,l-1$ and $z_l=0$.

The main idea of the proof is to show that this representation can be $\bold q$-deformed to a representation of
the spherical cyclotomic DAHA $\bold e\DAHA_N^l(Z,q,t)\bold e$ on $Q_{m,m_1,...,m_{l-1},\bold q}^l$, where $t=q^{-m}$
and $Z_i=q^{z_i}=\bold q^{m_i}$. Then, similarly to the proof of Theorem \ref{quasiinth}, the result will follow by looking at the leading coefficient of the Hilbert series and
using \cite[Theorem 8.2]{BC}, which gives a decomposition of the $\bold e\DAHA_N^l(z,1,m)\bold e$-module $Q_{m,m_1,...,m_{l-1}}^l$
into a direct sum of irreducible modules.

Finally, let us show that the representation of $\bold e\DAHA_N^l(z,1,m)\bold e$ on the space $Q_{m,m_1,...,m_{l-1}}^l$ admits a $\bold q$-deformation.
Recall from the proof of Theorem \ref{quasiinth} that the algebra $\bold e\DAHA_N(q,t)\bold e$ acts on $\Bbb C(x_1^l,...,x_N^l)$
by difference operators. This action can be straightforwardly extended to the field extension $\Bbb C(x_1,...,x_N)$ by using the same formulas,
where now $\tau_ix_j=\bold q^{\delta_{ij}}x_j$. Hence, the subalgebra $\bold e\DAHA_N^l(Z,q,t)\bold e\subset \bold e\DAHA_N(q,t)\bold e$
acts on $\Bbb C(x_1,...,x_N)$.

Since $X_i-q^pX_j=\prod_{r=0}^{l-1}(x_i-\zeta^r\bold q^px_j)$,
the argument in the proof of Theorem \ref{quasiinth} implies that $\bold e\DAHA_N(q,t)\bold e$ preserves the subspace
$Q_{m,\bold q}^{l,{\rm trig}}\subset \Bbb C(x_1,...,x_N)$. Hence, so does the subalgebra  $\bold e\DAHA_N^l(Z,q,t)\bold e$.
Also, by Proposition \ref{preser}, the algebra $\bold e\DAHA_N^l(Z,q,t)\bold e$ preserves the space
$(x_1...x_N)^{m_i}\Bbb C(X_1,...,X_N)_{\rm reg}$, where, as before,  the subscript 
``reg" means functions regular at the generic points
of the hyperplanes $X_i=0$. Therefore, the algebra $\bold e\DAHA_N^l(Z,q,t)\bold e$ preserves quasiinvariance conditions
\eqref{cyccon}.

Thus, we see that the algebra $\bold e\DAHA_N^l(Z,q,t)\bold e$ preserves the space $Q_{m,m_1,...,m_{l-1},\bold q}^l$.
Moreover, it is easy to see that the classical limit of this representation as $\bold q\to 1$ is exactly the
representation of $\bold e\DAHA_N^l(z,1,m)\bold e$ on the space $Q_{m,m_1,...,m_{l-1}}^l$ constructed in \cite{BC}.
This completes the proof of Theorem \ref{quasiinth1}.

\subsection{Twisted quasiinvariants}

Let $a_1,...,a_N\in \Bbb C$, $m\in \Bbb Z_+$. Let $Q_m(a_1,...,a_N)$ be the space of polynomials $F\in \Bbb C[X_1,...,X_N]$
such that the function
$$
\widetilde F(X_1,...,X_N):=(\prod_i X_i^{a_i})F(X_1,...,X_N)
$$
(regarded as a function of $X_i>0$) is $m$-quasiinvariant, in the sense that $(1-s_{ij})\widetilde{F}$ vanishes to order $2m+1$ at $X_i=X_j$ for all $i<j$.
Note that $Q_m(a_1,...,a_N)=Q_m(a_1-a,...,a_N-a)$, and $Q_m(a,...,a)=Q_m$ for all $a$. Obviously, $Q_m(a_1,...,a_N)$ is a graded $\Bbb C[X_1,...,X_N]^{S_N}$-module.
By the Hilbert basis theorem, this module is finitely generated.

\begin{definition} We will call $Q_m(a_1,...,a_N)$ the module of {\it twisted quasiinvariants}.
\end{definition}

\begin{theorem}\label{freenes} If $a_i-a_j\notin \Bbb Z\setminus \lbrace{0\rbrace}$ for all $i<j$, then $Q_m(a_1,...,a_N)$ is a free $\Bbb C[X_1,...,X_N]^{S_N}$-module
(of rank $N!$).
\end{theorem}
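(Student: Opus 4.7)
The strategy follows the template of Theorem~\ref{quasiinth}: I would bracket $Q_m(\ba)$ between two free rank-$N!$ modules over $R:=\Bbb C[X_1,\ldots,X_N]^{S_N}$, then equip $Q_m(\ba)$ with a Cherednik-type action so that a flat-deformation argument to the case $\ba=0$ yields freeness. For the rank: $Q_m(\ba)\subseteq \Bbb C[X_1,\ldots,X_N]$ which is free of rank $N!$ by Chevalley's theorem; for the other direction, for any $G\in\Bbb C[X_1,\ldots,X_N]$ a direct computation gives
\[
(1-s_{ij})\bigl(X^\ba\Delta^{2m+1}G\bigr)=\Delta^{2m+1}\bigl(X^\ba G+X^{s_{ij}\ba}(s_{ij}G)\bigr),
\]
which is divisible by $(X_i-X_j)^{2m+1}$ since $\Delta$ has $X_i-X_j$ as a factor. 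Hence $\Delta^{2m+1}\Bbb C[X_1,\ldots,X_N]\subseteq Q_m(\ba)$, and the former is free of rank $N!$ via $G\mapsto\Delta^{2m+1}G$, pinning the rank down.

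To produce the Cherednik-type action, I would conjugate the rational Dunkl operators $D_i$ of $\DAHA_N^{\rm rat}(1,m)$ by the non-symmetric twist $X^\ba=\prod_j X_j^{a_j}$:
\[
\widetilde D_i:=X^{-\ba}D_iX^\ba=\partial_i+\frac{a_i}{X_i}-\sum_{j\ne i}\frac{m}{X_i-X_j}\Bigl(1-(X_j/X_i)^{a_i-a_j}s_{ij}\Bigr).
\]
Individual factors $(X_j/X_i)^{a_i-a_j}$ are multivalued, but the hypothesis $a_i-a_j\notin\ZZ\setminus\{0\}$ avoids integer-shift resonances. Suitable $S_N$-symmetric combinations of the $\widetilde D_i$ (generalizing the Macdonald operator of Lemma~\ref{preserq}) should act on $Q_m(\ba)$ by single-valued regular operators preserving the twisted quasi-invariance, and together with multiplication by $R$ should generate a subalgebra of $\End_{\Bbb C}Q_m(\ba)$ which is a flat deformation, as $\ba\to0$, of the spherical rational Cherednik algebra $\bold e\DAHA_N^{\rm rat}(1,m)\bold e$. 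Verifying this preservation — explicitly on the generators, analogously to Lemma~\ref{preserq} and using the generation result Lemma~\ref{generat} — is the technical heart.

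At $\ba=0$ one has $Q_m(0)=Q_m$, and by \cite{BEG} $Q_m$ decomposes as $\bigoplus_{\lambda\vdash N}\bold e M_m(\lambda)\otimes\pi_\lambda$, a direct sum of free $R$-modules. A flat-deformation argument in the style of the end of the proof of Theorem~\ref{quasiinth} would then lift this decomposition to the whole allowable locus $\{a_i-a_j\notin\ZZ\setminus\{0\}\}$, producing a corresponding decomposition of $Q_m(\ba)$ and hence freeness. The main obstacle lies in the second step: the non-symmetric twist $X^\ba$ falls outside the uniform-twist setup of Proposition~\ref{prese} (and more generally of the cyclotomic DAHA framework), so one cannot simply quote the existing machinery. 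One must verify directly that the restriction $a_i-a_j\notin\ZZ\setminus\{0\}$ is precisely the condition that regularizes the symmetric combinations of $\widetilde D_i$, mirroring the aspherical-avoidance hypothesis in Theorem~\ref{charac}(ii) and in the flatness arguments leading to Theorem~\ref{main1}.
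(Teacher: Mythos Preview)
Your proposal has a genuine gap at what you correctly identify as the technical heart. The conjugated operators $\widetilde D_i=X^{-\ba}D_iX^\ba$ carry multivalued coefficients $(X_j/X_i)^{a_i-a_j}$, and ``$S_N$-symmetric combinations'' cannot cure this: $Q_m(\ba)$ is not $S_N$-stable. If $\ba$ has distinct values $z_1,\ldots,z_l$ with multiplicities $N_1,\ldots,N_l$, then $\sigma F\in Q_m(\sigma\!\cdot\!\ba)$ for $\sigma\in S_N$, so only $S_{N_1}\times\cdots\times S_{N_l}$ preserves $Q_m(\ba)$; the full symmetrizer $\bold e$ does not act there. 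Consequently the spherical rational Cherednik algebra $\bold e\DAHA_N^{\rm rat}(1,m)\bold e$ is not the right object, and your deformation path $\ba\to 0$ changes $l$ from $N$ (or whatever) to $1$, so the underlying algebraic structure does not vary flatly along it.

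The paper's argument supplies precisely the missing bridge, via the \emph{cyclotomic} machinery. Writing $(a_1,\ldots,a_N)=(z_1,\ldots,z_1,\ldots,z_l,\ldots,z_l)$ with $z_l=0$, one first treats the special values $z_r=\tfrac{r}{l}+m_r$, $m_r\in\Bbb Z_{\ge 0}$: setting $X_i=x_i^l$, the map $f\mapsto X^\ba f$ identifies $Q_m(\ba)$ with the $\chi$-eigenspace of $(\Bbb Z/l\Bbb Z)^N$ inside the cyclotomic quasi-invariants $Q^l_{m,m_1,\ldots,m_{l-1}}$ of \cite{BC}, where $\chi(\sigma_i)=\zeta^{la_i}$. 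By \cite[Theorem~8.2]{BC} this eigenspace decomposes as a direct sum of spherical Verma modules over the cyclotomic rational Cherednik algebra, hence is free over $R$. This handles a Zariski-dense set of $(z_1,\ldots,z_{l-1})$. For the general case one uses Theorem~\ref{main1} to interpret this as an action of $\bold e\DAHA^l_{N,{\rm deg}}(z,1,m)\bold e$ (with $l$ parameters, not zero), interpolates to arbitrary $z$, and invokes semisimplicity of category $\mathcal O$ under the hypothesis $z_i-z_j\notin\Bbb Z\setminus\{0\}$ to force $Q_m(\ba)$ to coincide with the expected direct sum. The correct deformation is in $z$ with the multiplicity pattern fixed; the correct algebra is the cyclotomic one, and the uniform twists $(\prod_j X_j)^{z_i}$ of Proposition~\ref{prese}, which you noted do not match your non-uniform $X^\ba$, are exactly what become $X^\ba$ after passing through the $\chi$-eigenspace of the $l$-fold cover $x_i\mapsto X_i=x_i^l$.
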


\begin{example}\label{exa} Let $N=2$. Then $Q_m(a,0)$ is a free module over $R:=\Bbb C[X_1,X_2]^{S_2}$ for any $a\in \Bbb C$. We show this by induction in $m$.
The base case $m=0$ is obvious. Let $m>0$. Then it is easy to show that the lowest degree $d$ of a nonzero element $P_{a,m}$ in $Q_m(a,0)$ is $a$ if $a=0,1,...,m-1$, and $m$ otherwise.
Moreover, we can uniquely choose $P_{a,m}$ so that $P_{a,m}(X,X)=X^d$; for example, if $0\le a\le m$ is an integer, then $P_{a,m}=X_2^a$, and
$$
P_{a,1}(X_1,X_2)=\frac{(a-1)X_1+(a+1)X_2}{2a}
$$
for $a\ne 0$.
Therefore,
\begin{equation}\label{gene}
Q_m(a,0)=RP_{a,m}+(X_1-X_2)^2Q_{m-1}(a,0).
\end{equation}
Indeed, given a nonzero homogeneous $F\in Q_m(a,0)$ such that $F(X,X)=\beta X^r$, consider $F':=F-2^{d-r}\beta(X_1+X_2)^{r-d}P_{a,m}$. Then
$F'\in Q_m(a,0)$ and is divisible by $X_1-X_2$, so it is in $(X_1-X_2)^2Q_{m-1}(a,0)$.

Consider first the case when $0\le a\le m-1$ is an integer. Then by the induction assumption, the module
$Q_{m-1}(a,0)$ is free (of rank 2), so it is generated
by $X_2^a$ and some homogeneous polynomial $f_{a,m}$ of degree $2m-1-a$. So by \eqref{gene}, $Q_m(a,0)$ is generated by $X_2^a$
and $f_{a,m+1}:=(X_1-X_2)^2f_{a,m}$, which validates the induction step.

Now consider the case $a\ne 0,...,m-1$. Then $Q_{m-1}(a,0)$ is free by the induction assumption, so it is generated by some homogeneous
polynomials of $P_{a,m-1}$, $T_{a,m-1}$ of degrees $m-1$ and $m$, respectively, such that $T_{a,m}(X,X)=0$ (as one can easily check that one always has such
generators). It is easy to see that there exists $\alpha_{a,m}\in \Bbb C$ such that
$$
P_{a,m}:=T_{a,m-1}+\frac{1}{2}\alpha_{a,m} (X_1+X_2)P_{a,m-1}\in Q_m(a,0),
$$
and $T_{a,m}:=(X_1-X_2)^2P_{a,m-1}\in Q_m(a,0)$. Moreover, by \eqref{gene}, these elements generate $Q_m(a,0)$.
This completes the induction step.

This argument also implies that the Hilbert series of $Q_m(a,0)$ is
$\frac{t^a+t^{2m+1-a}}{(1-t)(1-t^2)}$ if $0\le a\le m-1$ is an integer, and
$\frac{t^m}{(1-t)^2}$ otherwise.
\end{example}

\begin{example}\label{counter} In spite of Example \ref{exa}, for $N\ge 3$ the condition on the $a_i$ cannot be dropped. Indeed, for $N=3$, the computer calculation shows that the Hilbert series
of $Q_2(1,0,0)$ has the form
$$
h(t)=t^2(1+t+2t^2+3t^3+5t^4+7t^5+10t^6+15t^7+20t^8+26t^9+33t^{10}+...)=
$$
$$
\frac{t^2+t^6+t^7+2t^9+t^{10}-t^{12}+...}{(1-t)(1-t^2)(1-t^3)},
$$
and the minus sign in the numerator shows that $Q_2(1,0,0)$ cannot be a free module over symmetric polynomials.
Indeed, if this module were free, the numerator would have been the Hilbert polynomial of the generators.
\end{example}

\begin{proof} (of Theorem \ref{freenes}) By permuting $a_i$ we may assume that
$$
(a_1,...,a_N)=(z_1,...,z_1,...,z_l,...,z_l),
$$
where $z_i\ne z_j$ and $z_i$ occurs $N_i$ times, where $N=N_1+...+N_l$. By simultaneously shifting $a_i$ we may assume that $z_l=0$.

Assume first that $z_r=\frac{r}{l}+m_r$, $1\le r\le l-1$.
Let $\chi$ be the character of $(\Bbb Z/l\Bbb Z)^N$ given by $\chi(\sigma_i)=\zeta^{la_i}$. Denote by $Q^{l,\chi}_{m,m_1,...,m_{l-1}}$ the
 $\chi$-eigenspace of $(\Bbb Z/l\Bbb Z)^N$ in $Q^l_{m,m_1,...,m_{l-1}}$. Then it is easy to see that $F\in Q^{l,\chi}_{m,m_1,...,m_{l-1}}$ if and only if it has the form
 $$
 F(x_1,...,x_N)=X_1^{a_1}...,X_N^{a_N}f(X_1,...,X_N),
 $$
 where $X_i=x_i^l$, and satisfies the quasiinvariance condition saying that $(1-s_{ij})F$ vanishes to order $2m+1$ at $X_i=X_j$ (for $X_i>0$), i.e., if and only if
 $f\in Q_m(a_1,...,a_N)$.

 Let $G:=S_N\ltimes (\Bbb Z/l\Bbb Z)^N$.
 By \cite[Theorem 8.2]{BC},
\begin{equation}\label{charfor}
 Q^{l,\chi}_{m,m_1,...,m_{l-1}}=\oplus_{\tau\in {\rm Irrep}(G): \tau^\chi\ne 0} \bold e_G M_{c,k}(\tau^*)\otimes \tau^\chi,
\end{equation}
where $\bold e_G$ is the symmetrizer for  $G$,
$M_{c,k}(\tau)$ denotes the Verma module over the cyclotomic rational Cherednik algebra $\bDAHA_N^{l,{\rm cyc}}(c,1,k)$ for appropriate $c,k$,
and the superscript $\chi$ denotes the $\chi$-eigenspace. Here the grading on $Q^{l,\chi}_{m,m_1,...,m_{l-1}}$ is obtained from the grading on
the right hand side of \eqref{charfor} (given by the scaling element $\bold h$ of the rational Cherednik algebra) by dividing by $l$ and shifting by $m\frac{N(N-1)}{2}+\frac{N}{2l}$.

Thus, $Q_m(a_1,...,a_N)$ is a free module over $\Bbb C[X_1,...,X_N]^{S_N}$, with Hilbert series
independent of the numbers $m_r$ . Since this holds for a Zariski dense set of vectors $(z_1,...,z_{l-1})$ (namely, $z_r=\frac{r}{l}+m_r$, $m_r\in \Bbb Z_+$), this holds for Weil generic
$(z_1,...,z_{l-1})$.

It remains to show that the statement holds if $z_i-z_j$ is not a nonzero integer for $1\le i<j\le l$. Using Theorem \ref{main1} and formula \eqref{charfor}, we see
that $Q^{l,\chi}_{m,m_1,...,m_{l-1}}$ is a module over the spherical subalgebra $\bold e \DAHA^l_{N,{\rm deg}}(z,1,m)\bold e$, where $z_r=\frac{r}{l}+m_r$ for $1\le i\le l-1$ and $z_l=0$.
Interpolating to arbitrary complex values of $z_i$, we get that for any $z_1,...,z_{l-1}\in \Bbb C$, the algebra $\bold e \DAHA^l_{N,{\rm deg}}(z,1,m)\bold e$
with $z:=(z_1,...,z_{l-1},0)$ acts on $Q_m(a_1,...,a_N)$. If $z_i-z_j$ is not a nonzero integer, then the category ${\mathcal O}$ for the algebra $\bold e \DAHA^l_{N,{\rm deg}}(z,1,m)\bold e$
is semisimple (see \cite[Theorem 6.6]{BC}). Moreover, by a deformation argument, $Q_m(a_1,...,a_N)$ must contain representation \eqref{charfor}. Since every irreducible representation
in $\mathcal{O}$ has full support, this implies that $Q_m(a_1,...,a_N)$ coincides with \eqref{charfor}.
This proves the theorem.
\end{proof}

Formula \eqref{charfor} allows us to easily compute the character of $Q_m(a_1,...,a_N)$ as a graded $S_{N_1}\times...\times S_{N_{l}}$-module. Namely, given irreducible representations $\pi_r$ of $S_{N_r}$, we have from \eqref{charfor}:
$$
\Hom_{\prod_r S_{N_r}}(\pi_1\otimes...\otimes \pi_l,Q_m(a_1,...,a_N))=\bold e_G M_{c,k}(\tau(\pi_1,...,\pi_l,\chi)^*),
$$
where $\tau(\pi_1,...,\pi_l,\chi)={\rm Ind}_{(\Bbb Z/l\Bbb Z)^N\rtimes \prod_r S_{N_r}}^{(\Bbb Z/l\Bbb Z)^N\rtimes S_N}\chi$, and the grading is modified
as explained above. Therefore, we have

\begin{proposition}\label{hilser} If $(a_1,...,a_N)=(z_1,...,z_1,...,z_{l},...,z_{l})$ and $z_i-z_j$ are not nonzero integers, then
the Hilbert series of the graded vector space $\Hom_{\prod_r S_{N_r}}(\pi_1\otimes...\otimes \pi_l,Q_m(a_1,...,a_N))$ equals
$$
h_{\pi_1,...,\pi_l}(t)=t^{m(\frac{N(N-1)}{2}-\sum_{r=1}^l {\rm cont}(\pi_r))}h_{\pi_1}(t)...h_{\pi_l}(t),
$$
where  for an irreducible representation $\pi$
of $S_n$, ${\rm cont}(\pi)$ is the content of the Young diagram of $\pi$, and
$h_\pi(t)$ is the Hilbert series of the graded space
$(\pi\otimes \Bbb C[X_1,...,X_n])^{S_n}$, i.e.,
$$
h_\pi(t)=\frac{K_\pi(t)}{(1-t)...(1-t^n)},
$$
where $K_\pi(t)$ is the Kostka polynomial associated to $\pi$.
\end{proposition}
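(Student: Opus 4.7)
The plan is a direct application of formula \eqref{charfor} combined with Clifford theory and Frobenius reciprocity. Set $H=(\Bbb Z/l\Bbb Z)^N\rtimes\prod_r S_{N_r}\subset G$. Apply the functor $\Hom_{\prod_r S_{N_r}}(\pi_1\otimes\cdots\otimes \pi_l,-)$ to the decomposition
$$Q_m(a_1,\ldots,a_N) \cong \bigoplus_\tau \bold{e}_G M_{c,k}(\tau^*) \otimes \tau^\chi$$
from the proof of Theorem~\ref{freenes}. By Clifford theory the stabilizer of $\chi$ in $S_N$ is exactly $\prod_r S_{N_r}$, hence the irreducibles $\tau\in\Irr(G)$ with $\tau^\chi\ne 0$ are parametrized by tuples $(\pi_r)$ via $\tau(\pi_1,\ldots,\pi_l,\chi)=\Ind_H^G(\chi\otimes\pi_1\boxtimes\cdots\boxtimes\pi_l)$, with $\tau^\chi=\pi_1\boxtimes\cdots\boxtimes\pi_l$ as a $\prod_r S_{N_r}$-module. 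Consequently only one summand survives, giving
$$\Hom_{\prod_r S_{N_r}}(\pi_1\otimes\cdots\otimes\pi_l,\, Q_m(a_1,\ldots,a_N)) = \bold{e}_G M_{c,k}(\tau(\pi_1,\ldots,\pi_l,\chi)^*).$$

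To compute the Hilbert series of the right-hand side, I use Frobenius reciprocity for the invariants functor: for $\sigma=\chi^{-1}\otimes(\pi_1^*\boxtimes\cdots\boxtimes\pi_l^*)$ there is a graded isomorphism $(\Bbb C[x_1,\ldots,x_N]\otimes\Ind_H^G\sigma)^G\cong(\Bbb C[x_1,\ldots,x_N]\otimes\sigma)^H$ in the polynomial $x$-degree, so $\bold{e}_G M_{c,k}(\tau^*)$ equals this $H$-invariant space up to an overall grading shift $h_{\tau^*}$ coming from the scaling element $\bold{h}$ acting on the lowest weight. The subgroup $(\Bbb Z/l\Bbb Z)^N\subset H$ splits the variables into $l$ blocks of sizes $N_r$; its $\chi^{-1}$-eigenspace in $\Bbb C[x_1,\ldots,x_N]$ factorizes over the blocks and identifies, after absorbing a monomial factor, with $\bigotimes_r\Bbb C[X_i:i\in\mathrm{block}_r]$ (where $X_i=x_i^l$). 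Taking further $\prod_r S_{N_r}$-invariants with $\pi_r^*$-coefficients yields $\bigotimes_r(\Bbb C[X_i:i\in\mathrm{block}_r]\otimes\pi_r^*)^{S_{N_r}}$, whose Hilbert series in the $X$-degree variable $t$ is precisely $\prod_r h_{\pi_r}(t)$.

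It remains to verify that the total $X$-degree grading shift equals $m(\frac{N(N-1)}{2} - \sum_r\text{cont}(\pi_r))$. This is the sum of three contributions: the scalar $h_{\tau^*}$ by which $\bold{h}$ acts on the lowest weight of the Verma module, the monomial shift absorbed in passing to the $\chi^{-1}$-eigenspace, and the $\bold{h}$-to-$X$ grading conversion recorded at the end of the proof of Theorem~\ref{freenes} (dividing by $l$ with the indicated shift). The main obstacle is precisely this bookkeeping: one must check that the three contributions combine to yield the clean content formula, using the classical identity that in the rational Cherednik algebra for $S_{N_r}$ at parameter $m$ the scaling scalar on the lowest weight $\pi_r$ is an affine-linear function of $\text{cont}(\pi_r)$, and verifying that after summation across blocks and cancellation of the monomial and conversion shifts, one recovers exactly $m(\frac{N(N-1)}{2} - \sum_r\text{cont}(\pi_r))$.
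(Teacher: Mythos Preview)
Your proposal is correct and follows essentially the same approach as the paper: the paper's entire argument is the paragraph immediately preceding the proposition, which records the identification $\Hom_{\prod_r S_{N_r}}(\pi_1\otimes\cdots\otimes\pi_l,Q_m(a_1,\ldots,a_N))=\bold e_G M_{c,k}(\tau(\pi_1,\ldots,\pi_l,\chi)^*)$ obtained from \eqref{charfor} and then simply says ``Therefore, we have''. You have spelled out what the paper leaves implicit---the Clifford-theory parametrization of the relevant $\tau$, the Frobenius reciprocity step reducing $\bold e_G M_{c,k}(\tau^*)$ to a product of blockwise invariant spaces, and the grading bookkeeping via the $\bold h$-eigenvalue and the $x\to X$ conversion---so your outline is in fact more detailed than the paper's own treatment.
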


\begin{example}
1. If $l=1$ (i.e., $a_i=0$), we recover the standard formula for the character of $Q_m$, see e.g.
\cite{FeV}.

2. Let $N=2$, $l=2$, $N_1=N_2=1$.  We get from Proposition \ref{hilser} that the Hilbert series of $Q_m(a_1,a_2)$ is $h(t)=\frac{t^m}{(1-t)^2}$, i.e., we recover the formula of Example \ref{exa} for the case of generic $a$.

3. Let $N=3$, $l=3$, $N_1=N_2=N_3=1$. Then we get from Proposition \ref{hilser} that  the Hilbert series of $Q_m(a_1,a_2,a_3)$ is $h(t)=\frac{t^{3m}}{(1-t)^3}$.

 4. Let $N=3$, $l=2$, $N_1=1$, $N_2=2$. The space $Q_m(a,0,0)$ splits into the direct sum
 $$
 Q_m(a,0,0)=Q_m(a,0,0)_+\oplus Q_m(a,0,0)_-,
$$
the symmetric and antisymmetric part under $s_{23}$. Denoting the Hilbert series of these spaces by $h_+$ and $h_-$, we get
from Proposition \ref{hilser}:
$$
h_+(t)=\frac{t^{2m}}{(1-t)^2(1-t^2)},
$$
$$
h_-(t)=\frac{t^{4m+1}}{(1-t)^2(1-t^2)}.
$$
\end{example}

\subsection{$q$-deformed twisted quasiinvariants}

Keep the notation of the previous subsection, and let $q>0$. Define the module of {\it $q$-deformed twisted quasiinvariants} $Q_{m,q}(a_1,...,a_N)$
to be the space of polynomials $F\in \Bbb C[X_1,...,X_N]$
such that the function
$$
\widetilde F(X_1,...,X_N):=(\prod_i X_i^{a_i})F(X_1,...,X_N)
$$
(regarded as a function of $X_i>0$) is a $q$-deformed $m$-quasiinvariant, in the sense that $(1-s_{ij})\widetilde{F}$ is divisible by
$\prod_{p=-m}^m (X_i-q^pX_j)$ in the ring of analytic functions. If $q\ne 1$, this is equivalent to saying that
$(1-s_{ij})\widetilde{F}$ vanishes if $X_i=q^pX_j$, $-m\le p\le m$. Note that 
$$
Q_{m,1}(a_1,...,a_N)=Q_m(a_1,...,a_N),\
Q_{m,q}(a_1,...,a_N)=Q_{m,q}(a_1-a,...,a_N-a),$$
$$\text{ and }Q_{m,q}(a,...,a)=Q_{m,q}\text{ for all}\ a.
$$
Obviously, $Q_{m,q}(a_1,...,a_N)$ is a graded $\Bbb C[X_1,...,X_N]^{S_N}$-module.
By the Hilbert basis theorem, this module is finitely generated.

\begin{theorem}\label{qdef} If $a_i-a_j$ are not nonzero integers then for all but countably many $q$, the Hilbert series of $Q_{m,q}(a_1,...,a_N)$
coincides with the one for $Q_m(a_1,...,a_N)$ (i.e., $Q_{m,q}(a_1,...,a_N)$ is a flat $q$-deformation of $Q_m(a_1,...,a_N)$). Moreover,
$Q_{m,q}(a_1,...,a_N)$ is a free $\Bbb C[X_1,...,X_N]^{S_N}$-module (of rank $N!$).
\end{theorem}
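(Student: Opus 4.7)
The plan is to parallel the proof of Theorem~\ref{freenes} while replacing the input from~\cite{BC} with the $\bold{q}$-deformed input provided by Theorem~\ref{quasiinth1}.

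First I permute and shift the $a_i$ to reduce to the normal form $(a_1,\ldots,a_N)=(z_1,\ldots,z_1,\ldots,z_l,\ldots,z_l)$, where $z_i$ appears $N_i$ times and $z_l=0$, with all $z_i\ne z_j$ for $i\ne j$ and $z_i-z_j\notin\Bbb Z\setminus\{0\}$. Set $\bold{q}$ with $q=\bold{q}^l$, introduce $x_i$ with $X_i=x_i^l$, and let $\chi$ be the character of $(\Bbb Z/l\Bbb Z)^N$ defined by $\chi(\sigma_i)=\zeta^{la_i}$.

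I first treat the special integer case $z_r=\frac{r}{l}+m_r$ with $m_r\in\Bbb Z_{\ge 0}$. As in the proof of Theorem~\ref{freenes}, the $\chi$-eigenspace $Q^{l,\chi}_{m,m_1,\ldots,m_{l-1},\bold{q}}$ consists precisely of elements of the form $x_1^{la_1}\cdots x_N^{la_N}f(X_1,\ldots,X_N)$; the factorization $X_i-q^pX_j=\prod_{r=0}^{l-1}(x_i-\zeta^r\bold{q}^px_j)$ shows that the $\bold{q}$-deformed cyclotomic trigonometric quasiinvariance in the $x_i$'s translates exactly into the condition that $(1-s_{ij})\bigl(\prod_i X_i^{a_i}\cdot f\bigr)$ be divisible by $\prod_{p=-m}^m(X_i-q^pX_j)$, while the vanishing conditions \eqref{cyccon} on $\bold{p}_r^{(i)}F$ hold automatically for elements of the $\chi$-eigenspace when $m_r$ matches $z_r-r/l$. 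Thus $Q_{m,q}(a_1,\ldots,a_N)\cong Q^{l,\chi}_{m,m_1,\ldots,m_{l-1},\bold{q}}$, and Theorem~\ref{quasiinth1} gives the desired Hilbert series matching and freeness for generic $\bold{q}$.

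To extend to arbitrary $z$ with $z_i-z_j\notin\Bbb Z\setminus\{0\}$, I argue by deformation, mirroring the last paragraph of the proof of Theorem~\ref{freenes}. The same computation used in the proof of Theorem~\ref{quasiinth1} shows that $\bold e\DAHA_N^l(Z,q,t)\bold e$ with $Z_r=q^{z_r}$ and $t=q^{-m}$ preserves $Q_{m,q}(a_1,\ldots,a_N)$, since the action on $\Bbb C(x_1,\ldots,x_N)$ both respects cyclotomic trigonometric quasiinvariance (by the argument of Corollary~\ref{preser2}) and, by Proposition~\ref{preser}, preserves the relevant twist factors. Passing to the formal limit $q=e^\varepsilon$ and reducing modulo $\varepsilon$, we obtain a flat family whose classical fiber embeds as a graded $\bold e\DAHA^l_{N,\mathrm{deg}}(z,1,m)\bold e$-submodule of $Q_m(a_1,\ldots,a_N)$. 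Under the hypothesis $z_i-z_j\notin\Bbb Z\setminus\{0\}$, the category $\mathcal{O}$ of this spherical algebra is semisimple by~\cite[Theorem~6.6]{BC}, and by Theorem~\ref{freenes} the module $Q_m(a_1,\ldots,a_N)$ decomposes as in \eqref{charfor} into irreducibles of full support. Since every polynomial divisible by $\prod_{i<j}\prod_{p=-m}^m(X_i-q^pX_j)$ is automatically $\bold q$-deformed twisted quasiinvariant, the Hilbert series of $Q_{m,q}(a_1,\ldots,a_N)$ is at least that of $Q_m(a_1,\ldots,a_N)$; combined with the embedding, this forces equality on a countable-complement subset of $q$, completing the flat deformation and hence the freeness statement.

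The main obstacle will be cleanly verifying the $\chi$-eigenspace identification in the $\bold{q}$-deformed setting (checking that the cyclotomic vanishing factors collapse to exactly the twisted $q$-quasiinvariance condition after setting $X_i=x_i^l$) and justifying that the semisimplicity criterion of~\cite[Theorem~6.6]{BC} applied at $\varepsilon=0$ is strong enough to pin down the deformation uniquely, given that only the submodule containment is known a priori from the $q$-deformed action.
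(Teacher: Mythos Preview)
Your overall strategy matches the paper's: the paper's proof of Theorem~\ref{qdef} is simply ``parallel to the proof of Theorem~\ref{quasiinth1}, using Theorem~\ref{freenes}''---one works in the formal setting $q=e^\varepsilon$, shows that the action of $\bold e\DAHA_{N,{\rm deg}}^l(z,1,m)\bold e$ on $Q_m(a_1,\dots,a_N)$ (constructed in Theorem~\ref{freenes}) deforms to an action of $\bold e\DAHA_N^{l,{\rm formal}}(z,1,m)\bold e$ on $Q_{m,q}(a_1,\dots,a_N)$, and then finishes with the leading-coefficient/irreducibility argument. Your Step~2 (the special integer case via the $\chi$-eigenspace identification and Theorem~\ref{quasiinth1}) is not made explicit in the paper's short proof, but it is exactly the Zariski-dense base case needed to justify the $q$-deformed action for general $z$, so including it is a plus rather than a detour.

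There is one genuine imprecision in your Step~3. You justify that $\bold e\DAHA_N^l(Z,q,t)\bold e$ preserves $Q_{m,q}(a_1,\dots,a_N)$ for \emph{arbitrary} $z$ by appealing to ``the action on $\Bbb C(x_1,\ldots,x_N)$'' and cyclotomic trigonometric quasiinvariance. But the $x_i$-variables and the cyclotomic description are only available when $z_r=r/l+m_r$; for general $z$ the twist $\prod_i X_i^{a_i}$ is not $S_N$-symmetric, and neither Corollary~\ref{preser2} nor Proposition~\ref{preser} applies directly. The correct way to bridge this---and what ``mirroring the last paragraph of the proof of Theorem~\ref{freenes}'' should mean---is interpolation: the twisted quasiinvariance condition on $F$ at $X_i=q^pX_j$ reads $F(\dots,q^pX_j,\dots,X_j,\dots)=(Z_s/Z_r)^{p}F(\dots,X_j,\dots,q^pX_j,\dots)$, so the space $Q_{m,q}(a_1,\dots,a_N)$ depends \emph{algebraically} on the $Z_r=q^{z_r}$, as do the operators in $\bold e\DAHA_N^l(Z,q,t)\bold e$; preservation is thus an algebraic condition in the $Z_r$, verified on the Zariski-dense set of special integer $z_r$ by your Step~2. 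A second minor point: ``the Hilbert series of $Q_{m,q}(a_1,\dots,a_N)$ is at least that of $Q_m(a_1,\dots,a_N)$'' is not what the divisibility argument gives; it gives equality of the \emph{leading coefficient of the Hilbert polynomial}, which is exactly what is needed to rule out proper submodules once you know $Q_m(a_1,\dots,a_N)$ is a sum of irreducibles of full support. With these two fixes your argument goes through and coincides with the paper's.
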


\begin{proof} The proof is parallel to the proof of Theorem \ref{quasiinth1}, using Theorem \ref{freenes}.
Namely, the second statement follows from the first one, and it suffices to prove the first statement
in the formal setting, $q=e^\varepsilon$. In this case, one shows as in the proof of Theorem \ref{quasiinth1} that
the action of $\bold e\DAHA_{N,{\rm deg}}^l(z,1,m)\bold e$ on $Q_m(a_1,...,a_N)$ constructed in the proof of Theorem \ref{freenes}
can be $q$-deformed to an action of $\bold e\DAHA_N^{l,{\rm formal}}(z,1,m)\bold e$ on $Q_{m,q}(a_1,...,a_N)$.
The rest of the argument is the same as in the proof of Theorem \ref{quasiinth1}.
\end{proof}

\begin{remark} We expect that $Q_{m,q}(a_1,...,a_N)$ is a flat deformation of $Q_m(a_1,...,a_N)$ for all values of $a_i$,
in particular when the module $Q_m(a_1,...,a_N)$ is not necessarily free. This has been confirmed by a computer calculation in low degrees
in Example \ref{counter}.

Also, we expect that Theorems \ref{quasiinth}, \ref{quasiinth1}, \ref{qdef} hold for all nonzero values of $q$.
\end{remark}

\appendix
\newenvironment{NB}{
\color{red}{\bf NB}. \footnotesize
}{}
% The following is used inside NB
\newenvironment{NB2}{
\color{blue}{\bf NB2}. \footnotesize
}{}
% For a final version, NB disappears.
\excludeversion{NB}
\excludeversion{NB2}

\newcommand{\RR}{\mathbb R}
\newcommand{\tn}{{\text{n}}}
\newcommand{\Coker}{\operatorname{Coker}}
\newcommand{\Ima}{\operatorname{Im}}
\newcommand{\xl}{{%\color{purple}
x}}
\newcommand{\xp}{{%\color{green}
\sigma}}

\newcommand{\fus}{\mathrm{fus}}
\newcommand{\qQ}{\mathsf{Q}}
\newcommand{\qs}{\mathsf{s}}
\newcommand{\qt}{\mathsf{t}}
\newcommand{\qa}{h}
\newcommand{\qv}{i}
\newcommand{\vbv}{\underline{\mathbf{v}}}
\newcommand{\vbw}{\underline{\mathbf{w}}}

\newcommand{\MM}{\mathcal{M}^\times}
\newcommand{\MMreg}{\mathcal{M}^{\times\mathrm{reg}}}

\section{More general multiplicative quiver and bow varieties
  \\
by Hiraku Nakajima and Daisuke Yamakawa}
\label{apend}

In this appendix, we study more general multiplicative quiver and bow
varieties, as multiplicative analog of results in \cite{NT}.

Multiplicative bow varieties are defined as in Section~\ref{bow}
corresponding to more general bow diagrams with dimension vectors not
necessarily satisfying the balanced condition:
\begin{equation}\label{eq:App3}
%\begin{align*}
\begin{xy}
(5,3)*{\bv_{i-1}},
(10,0)*{\boldsymbol\times},
(10,-4)*{\xl_{i}},
(15,3)*{\bv_i},
(20,0)*{\boldsymbol\medcirc},
(20,-4)*{h_{j+1}},
(25,3)*{\bv_{i}'},
(30,0)*{\boldsymbol\medcirc},
(30,-4)*{h_{j}},
(35,3)*{\bv_{i}''},
(40,0)*{\boldsymbol\times},
(40,-4)*{\xl_{i+1}},
(45,3)*{\bv_{i+1}},
\ar @{-} (5,0);(45,0),
\ar @{.} (0,0);(4,0),
\ar @{.} (46,0);(50,0)
\end{xy}
\end{equation}
%  \end{align*}
Recall that the balanced condition is $\bv_i = \bv_i' = \bv_i''$ for
all $i$. We put $A$, $B$, $a$, $b$ at $\boldsymbol\times$, and $C$,
$D$ at $\boldsymbol\medcirc$ as%. See \cite[Figure~1]{NT}.
\begin{equation*}
  \begin{tikzcd}[column sep=small]
    \cdots \ 
    \CC^{\bv_{i-1}} \arrow[rr, "A_i"] \arrow[rd, "b_{i-1}"']
    \arrow[out=120,in=60,loop,looseness=3, "B_{i-1}"] &&
    \CC^{\bv_i} \arrow[shift left=1,rr,"C_{j+1}"]
    \arrow[out=120,in=60,loop,looseness=3, "B_{i}"] &&
    \CC^{\bv_i'} \arrow[shift left=1, ll, "D_{j+1}"]
    \arrow[shift left=1, rr, "C_j"] && 
    \CC^{\bv_i''} \arrow[shift left=1,ll ,"D_{j}"] \arrow[rd, "b_i"']
    \arrow[out=120,in=60,loop,looseness=3, "B''_{i}"] 
    \arrow[rr, "A_{i+1}"] &&
    \CC^{\bv_{i+1}}\arrow[out=120,in=60,loop,looseness=3, "B_{i+1}"] 
    \ \cdots \\
    & \CC\arrow[ru, "a_i"'] &&&&&& \CC\arrow[ru, "a_{i+1}"'] &
  \end{tikzcd}
\end{equation*}
We only consider the stability parameter $\nu^\RR = 0$ for brevity, as
generalization to arbitrary $\nu^\RR$ is straightforward.

The definition of multiplicative quiver varieties is more delicate
when $\dim W \neq 1$, hence will be given in Section~\ref{subsec:mq}.

\subsection{Hanany-Witten transition}

We first introduce Hanany-Witten transition for multiplicative bow
varieties.

Consider the following part of bow data:
\begin{equation*}
\xymatrix@C=1.2em{ V_1 \ar@(ur,ul)_{B_1} \ar@<-.5ex>[rr]_{C} 
  && V_2 \ar@(ur,ul)_{B_2}
  \ar@<-.5ex>[ll]_{D} \ar[rr]^{A} \ar[dr]_{b} && V_3 \ar@(ur,ul)_{B_3}
  \\
  &&& \CC \ar[ur]_a &}
\qquad
\begin{aligned}[m]
  & B_1 = t Z(1 + DC)^{-1} = tZ (1 - D(1+CD)^{-1}C),
  \\
  & B_2 = Z^{\prime}(1 + CD)^{-1} = Z'(1 - C(1+DC)^{-1}D),
  \\
  & B_3A - AB_2 + ab = 0,
\end{aligned}
\end{equation*}
where $t$, $Z$, $Z'$ are fixed nonzero scalars.
\begin{NB}
  $(1+DC)(1 - D(1+CD)^{-1}C) = 1 - D(1+CD)^{-1}C + DC - DCD(1+CD)^{-1} C
  = 1 + DC - D(1+CD)(1+CD)^{-1}C = 1$.
\end{NB}%
We consider another bow data with the same $V_1$, $V_3$, $B_1$, $B_3$:
\begin{equation*}
\xymatrix@C=1.2em{ V_1 \ar@(ur,ul)_{B_1} \ar[rr]^{A^\tn} \ar[dr]_{b^\tn} &&
  V_2^\tn
  \ar@(ur,ul)_{B_2^\tn} \ar@<-.5ex>[rr]_{C^\tn} && V_3 \ar@(ur,ul)_{B_3}
  \ar@<-.5ex>[ll]_{D^\tn}
  \\
  & \CC \ar[ur]_{a^\tn} &&&}
\qquad
\begin{aligned}[m]
    & B_2^\tn = tZ(1+D^\tn C^\tn)^{-1} = tZ(1 - D^\tn(1 + C^\tn
    D^\tn)^{-1} C^\tn),
    \\
    & B_3 = Z'(1+C^\tn D^\tn)^{-1} = Z'(1 - C^\tn(1 + D^\tn
    C^\tn)^{-1} D^\tn),
    \\
    & B_2^\tn A^\tn - A^\tn B_1 + a^\tn b^\tn = 0.
\end{aligned}
\end{equation*}
We take quotients by $\GL(V_2)$, $\GL(V_2^\tn)$ respectively. (But not
by $\GL(V_1)$, $\GL(V_3)$.)

\begin{proposition}\label{prop:HW}
  There exists a $\GL(V_1)\times \GL(V_3)$-equivariant isomorphism
  between two varieties above.
\end{proposition}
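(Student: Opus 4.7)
The plan is to construct the $\GL(V_1)\times\GL(V_3)$-equivariant isomorphism by writing down explicit transition formulas, adapting the strategy used in the additive case in~\cite[Section~4]{NT} to the multiplicative setting. The overall architecture will be: produce formulas giving $(V_2^\tn, A^\tn, a^\tn, b^\tn, C^\tn, D^\tn)$ from $(V_2, A, a, b, C, D)$ (and the symmetric inverse assignment), check that the relations and stability conditions of the transitioned diagram are satisfied, and show the two constructions are mutually inverse modulo the $\GL(V_2)\times\GL(V_2^\tn)$ action. The $\GL(V_1)\times\GL(V_3)$-equivariance will be manifest from the formulas since those groups are untouched by the construction.

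First I would exploit the fact that invertibility of $B_2$ and $B_3$ (built into the definition in Section~\ref{bow}(i)) forces $1+CD$ and $1+DC$ to be invertible, so the Sylvester-type identity
\[
  (1+DC)\bigl(1 - D(1+CD)^{-1}C\bigr) = 1,
\]
recorded above, is at our disposal. Using this identity together with the relation $B_3 A - A B_2 + ab = 0$, I would define the transitioned maps as suitable composites of $A$, $a$, $b$, $C$, $D$ with the scalars $t$, $Z$, $Z'$, in the same combinatorial pattern as in the additive case of~\cite{NT}, but with each $B_i$ replaced by its multiplicative expression $(1+CD)^{-1}$ or $(1+DC)^{-1}$. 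A natural choice of $V_2^\tn$ will appear as a kernel/cokernel of an appropriate map built from this data, whose dimension is shifted from $\dim V_2$ in the way predicted by Hanany-Witten.

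Next I would verify step by step that: (a)~the new quintuple satisfies $B_2^\tn A^\tn - A^\tn B_1 + a^\tn b^\tn = 0$, converting the original relation using the Sylvester identities; (b)~the stability conditions (S1), (S2) for the transitioned diagram follow from those of the original, the key point being that ``no nonzero subspace of $V_2$ killed by $A,b$ and preserved by $B_2$'' and ``no proper subspace of $V_3$ containing $\Ima A + \Ima a$ and preserved by $B_3$'' are interchanged with their analogs at $V_2^\tn$ under the transition, reflecting the swap of cross and circle; (c)~the symmetric inverse construction is well-defined, and composing the two recovers the original data up to a canonical element of $\GL(V_2)$.

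The main obstacle will be the bookkeeping in step~(a): in the additive setting of~\cite{NT} the relation $B_3 A - A B_2 + ab = 0$ is a purely linear equation, whereas here one must carefully propagate the $(1+CD)^{-1}$ and $(1+DC)^{-1}$ factors through the transition and check that the scalars $t$, $Z$, $Z'$ end up in the correct positions so that $B_1$ and $B_3$ are recovered consistently from both diagrams. Once these identities are verified, the $\GL(V_1)\times\GL(V_3)$-equivariance is automatic and the inverse property follows, giving the desired isomorphism of quotients.
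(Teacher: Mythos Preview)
Your proposal is correct and takes essentially the same approach as the paper: the paper defines $V_2^\tn$ as the cokernel of the three-term complex $V_2\xrightarrow{\alpha=[D,A,b]^T} V_1\oplus V_3\oplus\CC\xrightarrow{\beta=[AB_2C,\,B_3-Z',\,a]} V_3$, reads off $A^\tn,D^\tn,a^\tn$ as the components of the projection to $\Coker\alpha$, sets $b^\tn=bCB_1$ and $C^\tn$ induced from $-B_3^{-1}\beta$, then verifies the relations via the Sylvester identities and checks (S1),(S2) exactly as you outline. The inverse is recovered from $\Ker\beta^\tn$, just as you anticipate.
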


The isomorphism is explicitly given during the proof.

\begin{proof}
We consider a three term complex
\begin{equation*}
    \begin{CD}
        V_2 @>\alpha = \left[
          \begin{smallmatrix}
              D \\ A \\ b
          \end{smallmatrix}\right]>> 
        V_1\oplus V_3 \oplus \CC @>{\beta = \left[
          \begin{smallmatrix}
              AB_2C & (B_3 - Z')& a 
          \end{smallmatrix}\right]}>>
        V_3,
    \end{CD}
\end{equation*}
where $\beta\alpha = 0$ follows from one of the definining equation.
\begin{NB}
  $AB_2CD + (B_3 -Z')A + ab = A(Z' - B_2) + (B_3 - Z') A + ab = B_3 A
  - A B_2 + ab = 0$.
\end{NB}%
We claim that $\alpha$ is injective. We consider $S =
\Ker\alpha$. Then $A(S) = 0 = b(S)$ and $B_2(S) = (1 - C(1+DC)^{-1}D)(S)
= S$. Therefore the condition (S1) implies $S = 0$.
        
Let 
\begin{equation*}
  V_2^\tn := \Coker\alpha.
\end{equation*}
We define new bow data as
\begin{itemize}
\item $A^\tn$, $D^\tn$, $a^\tn$ are composition of inclusions of $V_1$,
$V_3$, $\CC$, to $V_1\oplus V_3\oplus\CC$ and the projection
$V_1\oplus V_3\oplus\CC\twoheadrightarrow V_2^\tn$ respectively.
\item $b^\tn = bCB_1$, and $C^\tn$ is a homomorphism
  induced from $-B_3^{-1}\beta$.
\item $B_2^\tn = tZ(1 + D^\tn C^\tn)^{-1} = tZ (1 - D^\tn(Z^{\prime-1} B_3)
C^\tn)$. 
\end{itemize}
The last definition requires checking of the invertibility of
$1 + D^\tn C^\tn$.
Let us consider $1 + C^\tn D^\tn$. From the definition it is
$1 - B_3^{-1}(B_3 - Z') = Z' B_3^{-1}$. This is the second of the
defining equation. Thus $1+C^\tn D^\tn$ is invertible, and hence
$1 + D^\tn C^\tn$ is also invertible. (In fact,
$(1 + D^\tn C^\tn)^{-1} = 1 - D^\tn(1 + C^\tn D^\tn)^{-1} C^\tn$.)
Hence the above $B_2^\tn$ is well defined.

In order to check the remaining defining equation, we consider
\begin{equation*}
  \begin{CD}
    V_1 @>\alpha^\tn =\left[
      \begin{smallmatrix}
        tZ-B_1 \\ -C^\tn B_2^\tn A^\tn \\ b^\tn
      \end{smallmatrix}\right]>>
    V_1 \oplus V_3\oplus \CC @>{\beta^\tn = \left[
        \begin{smallmatrix}
          A^\tn & D^\tn & a^\tn
        \end{smallmatrix}\right]}>> V_2^\tn.
  \end{CD}
\end{equation*}
This is a complex if and only if the last defining equation holds.
\begin{NB}
  $A^\tn(tZ - B_1) - D^\tn C^\tn B_2^\tn A^\tn + a^\tn b^\tn
  = B_2^\tn A^\tn - A^\tn B_1 + a^\tn b^\tn$, as
  $D^\tn C^\tn B_2^\tn = (1 + D^\tn C^\tn) B_2^\tn - B_2^\tn = tZ - B_2^\tn$.
\end{NB}%
Observe that $\beta^\tn$ is nothing but the natural projection. We
also have
\begin{equation}\label{eq:App1}
  \alpha^\tn =
  \begin{bmatrix}
    tZ-B_1 \\ -t Z Z^{\prime-1} B_3 C^\tn A^\tn \\ bC B_1
  \end{bmatrix}
  =
  \begin{bmatrix}
    DC B_1 \\ t Z Z^{\prime-1} AB_2 C \\ b C B_1
  \end{bmatrix}
  = \alpha CB_1,
\end{equation}
where we have used $C B_1 = tZ Z^{\prime-1} B_2 C$,
$C^\tn B_2^\tn = tZ Z^{\prime-1} B_3 C^\tn$.
\begin{NB}
  $CB_1^{-1} = t^{-1} Z^{-1} C(1+DC) = t^{-1} Z^{-1} (1 + CD) C
  = t^{-1} Z^{-1} Z' B_2^{-1} C$.
\end{NB}%
Therefore $\beta^\tn\alpha^\tn = 0$.

\begin{NB}
    In summary, we have
    \begin{equation*}
        \left[V_1\xrightarrow{\alpha^\tn} V_1\oplus V_3\oplus\CC
        \xrightarrow{\beta^\tn} V_2^\tn\right]
      = \left[V_1\xrightarrow{\alpha C B_1} V_1\oplus V_3\oplus\CC
        \xrightarrow{\text{projection}} \Coker\alpha\right].
    \end{equation*}
\end{NB}%

Let us check the condition (S1) for new data. Take a subspace
$S\subset V_1$ such that $B_1(S)\subset S$, $A^\tn(S) = 0 =
b^\tn(S)$. Observe that $A^\tn(S) = 0$ means
$S \oplus 0\oplus 0\subset \Ima\alpha$. Let us consider
$\tilde S = \alpha^{-1}(S \oplus 0\oplus 0)$. Then $D(\tilde S) = S$
and $A(\tilde S) = 0 = b(\tilde S)$. Therefore
\begin{equation*}
  \alpha (Z' - B_2)(\tilde S) = \alpha B_2 CD(\tilde S)
  = \alpha B_2 C(S) = \alpha CB_1(S) 
    = \alpha^\tn (S) =
    \begin{bmatrix}
        (Z-B_1)(S) \\ 0 \\ 0
    \end{bmatrix}.
\end{equation*}
The condition $B_1(S)\subset S$ implies
$B_2(\tilde S)\subset \tilde S$. Hence $\tilde S = 0$ thanks to (S1)
for the original data. We have $S=0$ as well.

Let us check the condition (S2). Suppose we have a subspace $T\subset
V_2^\tn$ such that $B_2^\tn(T)\subset T$, $\Ima A^\tn + \Ima a^\tn
\subset T$. We take its inverse image $\tilde T = (\beta^\tn)^{-1}(T)$
in $V_1\oplus V_3\oplus \CC$. By the second assumption, it contains
$V_1\oplus \{0\}\oplus \CC$. Hence $\tilde T$ is a form of $V_1 \oplus \bar
T\oplus\CC$ for $\bar T\subset V_3$.
We also have $\Ima\alpha\subset\tilde T$.
\begin{NB}
    as $\beta^\tn\alpha = 0$ by the definition of $\beta^\tn$.
\end{NB}%
Hence $A(V_2)\subset \bar T$.  As $B_2^\tn = tZ (1 - D^\tn(Z^{\prime-1} B_3)
C^\tn)$, 
\begin{NB}
    and $D^\tn B_3 C^\tn$ is the composite
    $\Coker\alpha\xrightarrow{-\beta} V_3 \xrightarrow{
      \begin{bmatrix}
          0\\ \id \\ 0
      \end{bmatrix}
      } V_1\oplus V_3\oplus \CC \xrightarrow{\beta^\tn} \Coker\alpha$.
\end{NB}%
the condition $B_2^\tn(T)\subset T$ implies
$0 \oplus \beta(\tilde T)\oplus 0\subset \tilde T$, i.e.,
$AB_2C(V_1)+(B_3-Z')(\bar T)+a(\CC)\subset \bar T$.
Hence $\bar T = V_3$ thanks to (S2) for the original data. We have
$T=V_2^\tn$ as well.

The inverse construction is clear. The original vector space $V_2$ is
recovered from the new data as $\Ker \beta^\tn$. Note also $\beta^\tn$
is surjective thanks to (S2).
\begin{NB}
  Consider $T = \Ima \beta^\tn$. We have
  $\Ima A^\tn + \Ima a^\tn \subset T$, and
  $B_2^\tn(T) = tZ(1-D^\tn(Z^{\prime-1} B_3)C^\tn)(T)\subset
  T$. Therefore $T = V_2$ by (S2).
\end{NB}%
Then $A$, $b$, $D$ are given as restrictions of projections
$V_1\oplus V_3\oplus\CC$ to $V_3$, $\CC$, $V_1$ to $\Ker\beta^\tn$
respectively, $a$ is $-B_3 C^\tn a^\tn$, and $C$ is
$\alpha^\tn B_1^{-1}$ by \eqref{eq:App1}.
\begin{NB}
  Note $\alpha$ now becomes the inclusion.
\end{NB}%
\begin{NB}
  Let us check that the first defining relation. We have
  $1 + D C = 1 + (t Z - B_1) B_1^{-1} = tZ B_1^{-1}$.
\end{NB}%
Finally we set $B_2 = Z'(1 + CD)^{-1}$.
\begin{NB}
  Then we have
  \begin{equation*}
    \left[V_2 \xrightarrow{\alpha} V_1\oplus V_3\oplus \CC
      \xrightarrow{\beta} V_3
    \right]
    = \left[\Ker \beta^\tn \xrightarrow{\text{inclusion}}
      V_1\oplus V_3\oplus \CC
      \xrightarrow{-B_3 C^\tn \beta^\tn} V_3
    \right].
  \end{equation*}
  In fact,
  \begin{equation*}
    \begin{split}
    \beta &= \left[
      \begin{smallmatrix}
        AB_2 C & (B_3- Z') & a
      \end{smallmatrix}
    \right]
    = \left[
      \begin{smallmatrix}
        (t Z)^{-1} Z' A C B_1 & -B_3 C^\tn D^\tn & -B_3 C^\tn a^\tn
      \end{smallmatrix}
    \right] \\
    &= \left[
      \begin{smallmatrix}
        - B_3 C^\tn A^\tn & -B_3 C^\tn D^\tn & -B_3 C^\tn a^\tn
      \end{smallmatrix}
    \right].
    \end{split}
  \end{equation*}
\end{NB}%

The conditions (S1),(S2) for $(A,B_2,B_3,a,b)$ follow from the
conditions (S1),(S2) for $(A^\tn,B_1,B_2^\tn,a^\tn,b^\tn)$. We leave
the details to the reader as an exercise.
\begin{NB}
  Suppose we have a subspace $S\subset V_2 = \Ker\beta^\tn$ such that
  $B_2(S)\subset S$ and $A(S) = 0 = b(S)$. Since $A$, $b$ are
  projections of $\Ker\beta^\tn\to V_3$, $\CC$ respectively, the
  second condition means $S\subset V_1$. Therefore $\beta^\tn(S) = 0$
  means $A^\tn(S) = 0$. Since
  $B_2 - Z' = - Z' C(1+DC)^{-1}D = - Z' (tZ)^{-1} C B_1 D =
  -Z'(tZ)^{-1} \alpha^\tn\circ (\Ker\beta^\tn\to V_1)$,
  $B_2(S)\subset S$ implies $B_1(S)\subset S$, $b^\tn(S) =
  0$. Therefore the condition (S1) for the `new' data implies $S=0$.

    Suppose we have a subspace $T\subset V_3$ such that $B_3(T)\subset
    T$, $\Ima A + \Ima a \subset T$. The second condition means $\Ima
    (\Ker\beta^\tn\to V_3) + \Ima B_3 C^\tn a^\tn \subset T$.
    We consider $\tilde T := \beta^\tn(V_1\oplus T\oplus \CC)$. It
    contains $\Ima A^\tn + \Ima a^\tn$. Let us check $B_2^\tn(\tilde
    T) \subset \tilde T$. Note $B_2^\tn\beta^\tn(V_1\oplus T\oplus\CC)
    = B_2^\tn A^\tn(V_1) + B_2^\tn D^\tn(T) + B_2^\tn a^\tn(\CC)$. We
    have $B_2^\tn A^\tn = A^\tn B_1 - a^\tn b^\tn$. Therefore $B_2^\tn
    A^\tn(V_1) \subset \Ima A^\tn + \Ima a^\tn \subset \widetilde
    T$. We also have
    \begin{equation*}
      B_2^\tn D^\tn
    = tZ (1 - D^\tn Z^{\prime-1} B_3 C^\tn) D^\tn
    = tZ D^\tn - tZ D^\tn Z^{\prime-1} B_3 (Z'B_3^{-1} - 1)
    = tZ Z^{\prime-1} D^\tn B_3.
    \end{equation*}
    Therefore $B_2^\tn D^\tn(T) \subset D^\tn T$. We have
    \begin{equation*}
      B_2^\tn a^\tn(\CC) = tZ (1 - D^\tn Z^{\prime-1} B_3 C^\tn) a^\tn(\CC)
      \subset \Ima a^\tn + D^\tn T.
    \end{equation*}
    By the definition of $\beta^\tn$, we have $D^\tn T\subset \tilde
    T$. Thus $\tilde T = V_2^\tn$ by (S2) for the `new' data.

    Take $v_3\in V_3$. Then there exists
    $v_1\oplus v_3'\oplus \lambda \in V_1\oplus T\oplus\CC$ such that
    $\beta^\tn(v_1\oplus v_3'\oplus\lambda) = \beta^\tn(0\oplus v_3
    \oplus 0)$ by the above discussion. Therefore
    $v_1\oplus (v_3'-v_3)\oplus \lambda\in\Ker\beta^\tn$. Since
    $T\supset \Ima (\Ker\beta^\tn\to V_3)$, we have $v_3' - v_3\in
    T$. Therefore $v_3\in T$ as well. It means $T = V_3$.
\end{NB}%
\begin{NB}
Let us check the stability condition. Note that the isomorphism
respects the group action by $\GL(V_1)\times \GL(V_3)$. As we can
choose any segment $\zeta$ from a wavy line $\xp$ in the numerical
criterion, we could also choose any $\zeta$ in the definition of
$\chi$ for the stability condition. In particular, if we choose
$\GL(V_1)$ and $\GL(V_3)$ for $\chi$, it is clear that the stability
condition is unchanged under the transition.
\end{NB}%
\end{proof}

Once this isomorphism is established, the remaining arguments of
\cite[\S7]{NT} only use dimension vectors, hence work for the
multiplicative case. To state the result, we recall invariants (see
\cite[\S 7.3]{NT}):
Let $N_{\xl_i}$ be the difference (left minus right) of entries of the
dimension vector at $\xl_i$. Then set $N(\xl_i,\xl_{i+1}) = N_{\xl_i}
- N_{\xl_{i+1}}$ plus the number of $\boldsymbol\medcirc$ between
$\xl_i\to \xl_{i+1}$.
It is invariant under Hanany-Witten transition.
Now by \cite[Prop.~7.20]{NT} we have

\begin{proposition}\label{prop:dominant}
  If $N(\xl_i,\xl_{i+1})\ge 0$ for any $i$, the multiplicative bow
  variety is isomorphic to another multiplicative bow variety with a
  cobalanced dimension vector by successive applications of
  Hanany-Witten transitions of Proposition~\ref{prop:HW}.
\end{proposition}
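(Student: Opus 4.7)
The proof plan is to reduce this statement to the combinatorial argument in the proof of \cite[Prop.~7.20]{NT}, observing that the only structural input needed for that argument is (a) the existence of a Hanany-Witten transition moving an adjacent $(\boldsymbol\times,\boldsymbol\medcirc)$ pair past each other, and (b) the resulting transformation rule on the dimension vector. In the additive setting of \cite{NT}, both ingredients are supplied by the complex-based construction via cokernels, and for our multiplicative version these are supplied by Proposition \ref{prop:HW}. So my task is not to re-derive the combinatorics, but to verify that Proposition \ref{prop:HW} applied repeatedly implements the same combinatorial move.

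First, I would check that the dimension change under the multiplicative Hanany-Witten transition agrees with the additive one. In the proof of Proposition \ref{prop:HW}, the new middle space is $V_2^\tn = \Coker\alpha$ where $\alpha\colon V_2\hookrightarrow V_1\oplus V_3\oplus\CC$ is injective (this injectivity was established using (S1)), and $\beta^\tn$ is surjective (established using (S2)). Hence $\dim V_2^\tn = \dim V_1+\dim V_3+1-\dim V_2$, matching the additive formula used in \cite{NT}. The transition in the opposite direction, swapping a $(\boldsymbol\medcirc,\boldsymbol\times)$ pair rather than $(\boldsymbol\times,\boldsymbol\medcirc)$, is obtained by reading Proposition \ref{prop:HW} backwards (the construction there is explicitly invertible), and again the dimension rule is the standard one.

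Second, I would verify that each invariant $N(\xl_i,\xl_{i+1})$ is unaltered under any such swap: when a circle crosses from the right of $\xl_i$ to its left, both $N_{\xl_i}$ and the circle count between $\xl_i$ and $\xl_{i+1}$ shift by compensating amounts, while $N_{\xl_{i+1}}$ is untouched; symmetrically for crossings on the other side. This is the same elementary check as in \cite[\S7.3]{NT}, and only involves the dimension rule established in the previous step.

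With these two points in place, the combinatorial algorithm of the proof of \cite[Prop.~7.20]{NT} applies verbatim: starting from any bow diagram with $N(\xl_i,\xl_{i+1})\ge 0$ for all $i$, one proceeds from left to right, at each step using Hanany-Witten transitions to push circles across crosses so as to equalize the dimensions on the two sides of each circle in a prescribed way, terminating in a cobalanced configuration. The hypothesis $N(\xl_i,\xl_{i+1})\ge 0$ guarantees that the requisite number of circles can be moved in the correct direction at every stage (this is where the sign of the invariant enters the combinatorics, and this is the one step where care is required, though the argument is purely numerical and identical to the additive case). The main point to emphasize is that Proposition \ref{prop:HW} is a clean algebraic-geometric isomorphism of varieties (not merely a bijection of points or an identity of stratifications), so every combinatorial move in \cite{NT} lifts to a genuine isomorphism of multiplicative bow varieties, and composing them yields the desired isomorphism to a cobalanced multiplicative bow variety.
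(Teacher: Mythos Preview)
Your proposal is correct and follows essentially the same approach as the paper. The paper's proof is simply the one-line observation ``the remaining arguments of \cite[\S7]{NT} only use dimension vectors, hence work for the multiplicative case,'' followed by a direct citation of \cite[Prop.~7.20]{NT}; you have spelled out explicitly the two verifications (dimension formula for $V_2^\tn$ and invariance of $N(\xl_i,\xl_{i+1})$) that justify this reduction, which the paper leaves implicit.
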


Recall the cobalanced condition is $N_{\xl_i} = 0$ for all $i$.

Recall (Section~\ref{Coul}) a $K$-theoretic Coulomb branch for a
framed quiver gauge theory of an affine type with dimension vectors
$\underline{\bv}$, $\underline{\bw}$ is isomorphic to a multiplicative
bow variety with the balanced condition. In this case, the condition
$N(\xl_i,\xl_{i+1})\ge 0$ for all $i$ is equivalent to the dominance
condition $\underline{\bw} - C\underline{\bv} \in \ZZ_{\ge 0}^n$, as
the number of $\boldsymbol\medcirc$ between $\xl_i\to \xl_{i+1}$ is
$\bw_i$.

\subsection{Multiplicative bow varieties with cobalanced dimension
  vector}\label{subsec:cobalanced}

Now we study a multiplicative bow variety with cobalanced
condition. Consider the following part of a bow diagram:
\begin{equation*}
  \xymatrix@C=1.2em{ V_1 %\ar@(ur,ul)_{B_1}
    \ar@<-.5ex>[rr]_{C_1} 
  && V_2 \ar@(ur,ul)_{B_2}
  \ar@<-.5ex>[ll]_{D_1} \ar[rr]^{A_2} \ar[dr]_{b_2}
  && V_3 \ar@(ur,ul)_{B_3} 
  \ar[rr]^{A_3} \ar[dr]_{b_3}
  &&
  V_4 \ar@(ur,ul)_{B_4} \ar@<-.5ex>[rr]_{C_4}
  &&
  V_5 \ar@<-.5ex>[ll]_{D_4}
  \\
  &&& \CC \ar[ur]_{a_3} && \CC \ar[ur]_{a_4} & }
\end{equation*}
We assume $\dim V_2 = \dim V_3 = \dim V_4$ by the cobalanced
condition.  By \cite[Lemma~2.18]{Ta}, $A_2$, $A_3$ are
isomorphisms. So we normalize $A_2 = A_3 = \id$ by the action of
$\GL(V_3)\times\GL(V_4)$. Then the defining equation becomes
\begin{equation*}
  \begin{split}
  & 1 + C_1 D_1 = Z_2 B_2^{-1} = Z_2 (B_3 + a_3 b_2)^{-1}
  = Z_2 (1 + B_3^{-1} a_3 b_2)^{-1} B_3^{-1}\\
  =\; & t^{-1} Z_2 (1 + B_3^{-1} a_3 b_2)^{-1} (1 + B_4^{-1} a_4 b_3)^{-1}
  B_4^{-1} \\
  =\; & t^{-2} Z_4^{-1} Z_2 
  (1 + B_3^{-1} a_3 b_2)^{-1} (1 + B_4^{-1} a_4 b_3)^{-1}
  (1 + D_4 C_4).
  \end{split}
\end{equation*}
This can be regarded as the defining equation of a multiplicative
quiver variety with
\begin{equation}\label{eq:App4}
  \xymatrix@C=1.2em{ V_1 %\ar@(ur,ul)_{B_1}
    \ar@<-.5ex>[rr]_{C_1} 
  && V_2 %\ar@(ur,ul)_{B_2}
  \ar@<-.5ex>[ll]_{D_1} \ar@<-.5ex>[rr]_{C_4} %\ar[rr]^{A_2} \ar[dr]_{b_2}
  \ar@<-.5ex>[dl]_(.7){b_2} \ar@<-.5ex>[dr]_(.6){b_3}
  &&
  V_5 \ar@<-.5ex>[ll]_{D_4}
  \\
  & \CC \ar@<-.5ex>[ur]|{\hole\hole B_3^{-1}a_3} &&
  \CC \ar@<-.5ex>[ul]|{\hole\hole B_4^{-1}a_4} &
  },
\end{equation}
where we do not take the quotient by $\CC^\times\times\CC^\times$ at
bottom vertices. We can further make
\begin{equation}\label{eq:App5}
  \xymatrix@C=1.2em{ V_1 %\ar@(ur,ul)_{B_1}
    \ar@<-.5ex>[rr]_{C_1} 
  && V_2 %\ar@(ur,ul)_{B_2}
  \ar@<-.5ex>[ll]_{D_1} \ar@<-.5ex>[rr]_{C_4} %\ar[rr]^{A_2} \ar[dr]_{b_2}
  \ar@<-.5ex>[d]_{\left[\begin{smallmatrix} b_2 \\ b_3\end{smallmatrix}\right]} 
  &&
  V_5 \ar@<-.5ex>[ll]_{D_4}
  \\
  && \CC^2 \ar@<-.5ex>[u]_{\left[
      \begin{smallmatrix}
        \tilde a_3 & \tilde a_4
      \end{smallmatrix}\right]}
  &&
},
\end{equation}
where $\tilde a_3 = (1 + B_4^{-1} a_4 b_3) B_3^{-1} a_3$,
$\tilde a_4 = B_4^{-1} a_4$. 
\begin{NB}
as $(1 + B_4^{-1} a_4 b_3)(1 + B_3^{-1} a_3 b_2) = 1 + \left[
  \begin{smallmatrix}
    (1 + B_4^{-1} a_4 b_3) B_3^{-1} a_3 & B_4^{-1} a_4
  \end{smallmatrix}
\right]\left[
\begin{smallmatrix}
  b_2 \\ b_3
\end{smallmatrix}\right]$.
\end{NB}%
For additive quiver varieties \eqref{eq:App4} and \eqref{eq:App5} give
isomorphic varieties. But in the multiplicative case, \eqref{eq:App4}
gives an open subvariety in \eqref{eq:App5} as the invertibility of
both $(1 + B_3^{-1} a_3 b_2)$, $(1 + B_4^{-1} a_4 b_3)$ is stronger than the
invertibility of $(1 + \tilde a_3 b_2 + \tilde a_4 b_3)$. In fact, we
have an additional requirement that $(1 + \tilde a_4 b_3)$ is invertible.
Note that the difference between \eqref{eq:App4} and \eqref{eq:App5}
disappears when we only have one $\CC$.

\subsection{Multiplicative quiver varieties}\label{subsec:mq}

Before generalizing the definition of multiplicative quiver varieties 
given in Section~\ref{quiver},
let us recall the notion of quasi-Hamiltonian spaces 
and fusion/reduction procedure introduced in \cite{AMM}.

Let $G$ be a complex reductive group with Lie algebra $\mathfrak{g}$ 
and fix a non-degenerate $\Ad_G$-invariant symmetric bilinear form 
$(\phantom{a},\phantom{a})$ on $\mathfrak{g}$.
Let $\theta$ (resp.\ $\ol{\theta}$) be the left (resp.\ right) invariant 
Maurer--Cartan form on $G$.

A \emph{quasi-Hamiltonian $G$-space} is 
a smooth $G$-variety $M$ equipped with 
a $G$-invariant two-form $\omega$ on $M$ and 
a $G$-equivariant morphism $\mu \colon M\to G$ (where $G$ acts on $G$ by conjugation) 
satisfying the following three axioms:
\begin{enumerate}
\item[(QH1)]
$12d\omega =  - \mu^*(\theta \wedge [\theta \wedge \theta])$.
\begin{NB}
    This is a mixture of the exterior product $\wedge$ and $(\ ,\
    )$.
\end{NB}%
\item[(QH2)]
$2 \iota(\xi^*) \omega = \mu^* (\theta+\ol{\theta} , \xi)$
for all $\xi \in \mathfrak{g}$. 
\begin{NB}
    The same applies.
\end{NB}%
Here $\xi^*$ is the fundamental vector field:
\[
\xi^*_x = \left. \frac{d}{dt} \exp (t\xi) \cdot x \,\right|_{t=0}
\quad (x \in M).
\]
\item[(QH3)]
$\Ker \omega_x = \{\,\xi^*_x \mid \xi \in \Ker (\Ad_{\mu(x)} + 1)\,\}$
%$\Ker \omega_x \cap \Ker (d\mu)_x = \{ 0 \}$ 
for all $x \in M$.
\end{enumerate} 
The map $\mu$ is called the \emph{group-valued moment map}.

\begin{example}\label{ex:conjugacy}
Any conjugacy class $\mathcal{C}$ of $G$ has a structure of 
quasi-Hamiltonian $G$-space whose group-valued moment map is just    
the inclusion $\mathcal{C} \hookrightarrow G$.
This is a multiplicative analogue of a coadjoint orbit of $\mathfrak g^*$.

Also, the \emph{double} $G \times G$ is a quasi-Hamiltonian $G \times G$-space,
which is a multiplicative analogue of $T^*G$.
\end{example}

\begin{example}
In \cite{VdB} Van den Bergh introduced a multiplicative analogue of 
the Hamiltonian $\GL(V) \times \GL(W)$-space $T^*\Hom(V,W)$,   
where $V, W$ are finite-dimensional $\CC$-vector spaces.
It is defined to be the quasi-Hamiltonian $\GL(V) \times \GL(W)$-space  
\[
\mathcal{B}(V,W) = \{\,(X,Y) \in \Hom(V,W) \times \Hom(W,V) \mid \det(1+XY) \neq 0\,\},
\]
where the two-form and the group-valued moment map are given by
\begin{gather*}
\omega = \frac12 \Tr \left( (1+XY)^{-1}dX \wedge dY \right)
-\frac12 \Tr \left( (1+YX)^{-1}dY \wedge dX \right), \\
\mu(X,Y) =((1+YX)^{-1},1+XY).
\end{gather*}
This is called the \emph{Van den Bergh space}.
\end{example}

Next we introduce the internal fusion of quasi-Hamiltonian spaces.
Let $H$ be another complex reductive group equipped with 
a non-degenerate $\Ad_H$-invariant symmetric bilinear form on 
its Lie algebra $\mathfrak{h}$.

\begin{theorem}[{\cite{AMM}}]\label{thm:fusion}
Let $(M,\omega,(\mu_1,\mu_2,\nu))$ be a 
quasi-Hamiltonian $G\times G\times H$-space. 
Let $G\times H$ act on $M$ through the diagonal embedding $(g,h)\mapsto (g,g,h)$. 
Then $M$ equipped with the two-form 
\[ 
\omega_\fus :=\omega + \frac12 (\mu_1^* \theta \wedge \mu_2^* \ol{\theta})
\]
and the map
\[
(\mu_\fus,\nu) := (\mu_1\cdot \mu_2,\nu) \colon M \to G \times H
\]
is a quasi-Hamiltonian $G\times H$-space. 
\end{theorem}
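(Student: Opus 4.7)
The plan is to verify the three quasi-Hamiltonian axioms (QH1), (QH2), (QH3) for the new data $(M,\omega_\fus,(\mu_\fus,\nu))$ on the nose, using the Maurer--Cartan structure on $G$. Since $H$ only enters as a spectator (the $H$-action is unchanged, $\nu$ is unchanged, and the correction term $\tau:=\frac12(\mu_1^*\theta\wedge\mu_2^*\ol\theta)$ is $H$-invariant because both $\mu_i$ are $H$-invariant and take values in $G$), it suffices to handle the $G$-factor: this reduces the problem to checking that $(M,\omega_\fus,\mu_\fus)$ is a quasi-Hamiltonian $G$-space (relative to the diagonal action) starting from the $G\times G$-space $(M,\omega,(\mu_1,\mu_2))$. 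The $H$-moment map equation (QH2) for $\nu$ and the $\nu$-part of (QH3) will then hold essentially by the same identities as for $\omega$ plus the observation that $\iota(\xi^*_H)\tau=0$ for $\xi\in\mathfrak h$.

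The first main step is to compute $d\omega_\fus=d\omega+\frac12 d(\mu_1^*\theta\wedge\mu_2^*\ol\theta)$. Using the Maurer--Cartan equations $d\theta=-\frac12[\theta,\theta]$ and $d\ol\theta=\frac12[\ol\theta,\ol\theta]$, and the identity $(\mu_1\cdot\mu_2)^*\theta=\Ad_{\mu_2^{-1}}\mu_1^*\theta+\mu_2^*\theta$, $(\mu_1\cdot\mu_2)^*\ol\theta=\mu_1^*\ol\theta+\Ad_{\mu_1}\mu_2^*\ol\theta$, one expands $\mu_\fus^*(\theta\wedge[\theta\wedge\theta])$ into terms involving only $\mu_1^*\theta,\mu_2^*\theta$ (and their bar versions via $\ol\theta=\Ad_\mu\theta$). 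Plugging in (QH1) for $\omega$ at both $\mu_1$ and $\mu_2$ and matching terms with the correction $\frac12 d(\mu_1^*\theta\wedge\mu_2^*\ol\theta)$ via the Maurer--Cartan equations gives the desired $12\,d\omega_\fus=-\mu_\fus^*(\theta\wedge[\theta\wedge\theta])$.

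For (QH2) with the diagonal $G$-action, the fundamental vector field of $\xi\in\mathfrak g$ on $M$ is $\xi^\sharp:=\xi^*_1+\xi^*_2$, where $\xi^*_i$ is the fundamental vector field of the $i$-th copy of $G$. Applying $\iota(\xi^\sharp)$ to $\omega_\fus$ and using (QH2) for $\omega$ (which gives $2\iota(\xi^*_i)\omega=(\mu_i^*\theta+\mu_i^*\ol\theta,\xi)$), I need to show the result equals $(\mu_\fus^*\theta+\mu_\fus^*\ol\theta,\xi)$. The cross term $\iota(\xi^\sharp)\tau$ is evaluated using $\iota(\xi^*_i)\mu_i^*\theta=\xi-\Ad_{\mu_i^{-1}}\xi$ and $\iota(\xi^*_i)\mu_i^*\ol\theta=\Ad_{\mu_i}\xi-\xi$, which are standard; combining with the decomposition of $\mu_\fus^*\theta$ and $\mu_\fus^*\ol\theta$ noted above, the identity follows from bilinearity and $\Ad$-invariance of $(\,,\,)$.

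The hardest axiom, and the genuine obstacle, will be (QH3): one must show that $\Ker(\omega_\fus)_x$ consists exactly of the diagonal fundamental vectors $\xi^\sharp_x$ with $\xi\in\Ker(\Ad_{\mu_\fus(x)}+1)$. One direction (that such vectors lie in the kernel) follows from (QH2) combined with a short computation using the $\Ad$-invariance of $(\,,\,)$. For the reverse inclusion, given $v\in\Ker(\omega_\fus)_x$, I will analyze $v$ via the two-form $\omega$ on the $G\times G$-space: using (QH2) for $\omega$ against all $(\xi_1,\xi_2)\in\mathfrak g\oplus\mathfrak g$ together with the explicit form of the correction $\tau$, one shows that $v$ must lie in the span of the old fundamental vector fields for $G\times G$, and then (QH3) for $\omega$ forces $v$ to be of diagonal form $\xi^\sharp$ with $\xi$ satisfying $(\Ad_{\mu_1\mu_2}+1)\xi=0$; this is the step where the multiplicativity of $\mu_\fus=\mu_1\cdot\mu_2$ is used essentially, and where care must be taken to distinguish nondegeneracy directions of $\omega$ from those of $\omega_\fus$. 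Once (QH3) is verified, Theorem~\ref{thm:fusion} is proved.
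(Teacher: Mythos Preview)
The paper does not contain its own proof of this statement: Theorem~\ref{thm:fusion} is quoted from \cite{AMM} and used as a black box, so there is nothing in the paper to compare against. Your outline is essentially the direct verification carried out in \cite{AMM} (their Theorem~6.1), where (QH1) and (QH2) follow from the Maurer--Cartan identities together with the decomposition of $(\mu_1\mu_2)^*\theta$ and $(\mu_1\mu_2)^*\ol\theta$, and (QH3) is the substantive step. Your sketch of (QH1) and (QH2) is accurate; for (QH3) your strategy is on the right track but the final reduction is not quite as you describe it: in \cite{AMM} one does not first show that a vector $v\in\Ker(\omega_\fus)_x$ lies in the span of the $G\times G$ fundamental vector fields, but rather one contracts $\omega_\fus$ against both arbitrary tangent vectors and against the individual $\xi_i^*$ to extract linear relations on $\iota(v)\mu_i^*\theta$ and $\iota(v)\mu_i^*\ol\theta$, and then combines these with the nondegeneracy axiom (QH3) for the original $G\times G$-structure to pin down $v$. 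If you fill in that step along those lines, your argument goes through.
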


The quasi-Hamiltonian $G \times H$-space 
$(M,\omega_\fus,(\mu_\fus,\nu))$ is called the \emph{internal fusion} 
and denoted by $M_\fus$.

The internal fusion procedure is associative in the following sense: 
If $M$ is a quasi-Hamiltonian 
$G\times G\times G\times H$-space,
then two quasi-Hamiltonian $G\times H$-spaces 
$M_{(12)3}$ obtained by first fusioning 
the first two $G$-factors and $M_{1(23)}$ 
obtained by first fusioning the last two $G$-factors are identical.
Therefore, if $I$ is a non-empty totally ordered finite set
and $M$ is a quasi-Hamiltonian $G^I \times H$-space  
with group-valued moment map $((\mu_i)_{i \in I}, \nu)$,
then we can define its internal fusion $M_\fus$   
as a quasi-Hamiltonian $G \times H$-space in a canonical way so that 
its group-valued moment map is $(\mu,\nu)$, where $\mu := \prod_{i \in I}^< \mu_i$.

A multiplicative analogue of the Marsden--Weinstein theorem is the following theorem:
\begin{theorem}[{\cite{AMM}}]\label{thm:qhamred}
Let $(M,\omega,(\mu,\nu))$ be 
a quasi-Hamiltonian $G \times H$-space
and $\mathcal{C}$ be a conjugacy class of $G$.
If the $G$-action on $\mu^{-1}(\mathcal{C})$ is free, 
then $\mu^{-1}(\mathcal{C})$ is a smooth subvariety of $M$,
and if furthermore the action has a geometric quotient 
$\mu^{-1}(\mathcal{C})/G$, then
$\omega$ and $\nu$ induce a quasi-Hamiltonian $H$-structure on 
$\mu^{-1}(\mathcal{C})/G$. 
\end{theorem}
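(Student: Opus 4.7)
The plan is to follow the strategy of Alekseev--Malkin--Meinrenken and first reduce to the case of a trivial conjugacy class $\mathcal{C}=\{1\}$ via the fusion trick, then verify the three axioms by direct computation. By Example~\ref{ex:conjugacy}, the conjugacy class $\mathcal{C}^{-1}$ is a quasi-Hamiltonian $G$-space with group-valued moment map the inclusion $\iota\colon \mathcal{C}^{-1}\hookrightarrow G$. Hence $M\times\mathcal{C}^{-1}$ is a quasi-Hamiltonian $G\times G\times H$-space, and by Theorem~\ref{thm:fusion} its internal fusion of the two $G$-factors is a quasi-Hamiltonian $G\times H$-space whose group-valued moment map to $G$ is $(x,c)\mapsto \mu(x)\cdot c$. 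Under the isomorphism $\mu^{-1}(\mathcal{C})\simeq \{(x,c)\in M\times\mathcal{C}^{-1}:\mu(x)c=1\}$ this identifies $\mu^{-1}(\mathcal{C})/G$ with the reduction at $1\in G$ of the fusion, so it suffices to treat the case $\mathcal{C}=\{1\}$.

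So assume $\mathcal{C}=\{1\}$ and let $x\in \mu^{-1}(1)$. For the first step---smoothness---I would show that $d\mu_x\colon T_xM\to T_1G=\mathfrak{g}$ is surjective. Pairing via $(\phantom{a},\phantom{a})$, a vector $\xi\in\mathfrak{g}$ lies in the annihilator of $\operatorname{Im}d\mu_x$ iff $\mu^*(\theta+\bar\theta,\xi)_x=0$ on $T_xM$, which by axiom (QH2) is equivalent to $2\iota(\xi^*_x)\omega_x=0$, i.e. $\xi^*_x\in\ker\omega_x$. By axiom (QH3), $\ker\omega_x=\{\eta^*_x\mid \eta\in\ker(\mathrm{Ad}_{\mu(x)}+1)\}=\{\eta^*_x\mid \mathrm{Ad}_1\eta=-\eta\}=\{\eta^*_x\mid \eta=-\eta\}=\{0\}$ since $\mu(x)=1$, so $\xi^*_x=0$ and freeness of the $G$-action forces $\xi=0$. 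Hence $d\mu_x$ is surjective and $\mu^{-1}(1)\subset M$ is a smooth submanifold; if the geometric quotient exists, $\mu^{-1}(1)/G$ is smooth as well.

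For the second step, I would check that the restriction $\omega|_{\mu^{-1}(1)}$ descends to a well-defined two-form $\bar\omega$ on $\mu^{-1}(1)/G$. $G$-invariance is built in. To see that the tangent vectors to $G$-orbits lie in the kernel, note that for $\xi\in\mathfrak{g}$ and $v\in T_x\mu^{-1}(1)=\ker d\mu_x$, axiom (QH2) gives $2\omega_x(\xi^*_x,v)=((\theta+\bar\theta)_1(d\mu_x v),\xi)=0$. The remaining part of (QH3) at the quotient level---that $\ker\bar\omega_{[x]}=0$, i.e. that $\ker\omega_x\cap T_x\mu^{-1}(1)$ consists exactly of the orbit directions---follows by tracing through (QH3) at $x\in\mu^{-1}(1)$: for $\eta\in\ker(\mathrm{Ad}_1+1)=0$ the original kernel is trivial, so any $v\in T_x\mu^{-1}(1)$ with $\omega_x(v,\cdot)|_{T_x\mu^{-1}(1)}=0$ must pair trivially with all of $T_xM$ modulo orbit directions, and unwinding this via (QH2) and surjectivity of $d\mu_x$ yields $v\in T_x(G\cdot x)$.

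The third step is axiom (QH1) and the moment map property for the residual $H$-action. Since $\mu$ is constant equal to $1$ on $\mu^{-1}(1)$, the pullback $\mu^*(\theta\wedge[\theta\wedge\theta])$ vanishes on $\mu^{-1}(1)$, and (QH1) for $M$ forces $d\omega|_{\mu^{-1}(1)}=-\tfrac{1}{12}\nu^*(\theta_H\wedge[\theta_H\wedge\theta_H])|_{\mu^{-1}(1)}$, where we split off the $H$-contribution of the moment map from the $G\times H$-version of the axiom. Passing to the quotient gives (QH1) for $(\mu^{-1}(1)/G,\bar\omega,\nu)$. Axiom (QH2) for $\nu$ is inherited directly from (QH2) on $M$ restricted to $H$-fundamental vector fields. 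The main technical subtlety, and the place where I expect to spend most of the effort, is the verification of (QH3) for the reduced form, because it requires an exact identification of $\ker\bar\omega$ at each $[x]$; one carries this out by a dimension count using surjectivity of $d\mu_x$ together with the precise description of $\ker\omega_x$ from (QH3) on $M$, as sketched above.
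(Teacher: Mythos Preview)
The paper does not give its own proof of this statement: it is quoted verbatim from \cite{AMM} and used as a black box. So there is nothing to compare your argument against within the paper itself; your plan is essentially the standard argument from \cite{AMM}.

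That said, your write-up has a genuine gap. In Step~2 you invoke axiom (QH3) as if the moment map were $\mu\colon M\to G$ alone, and conclude that $\ker\omega_x=\{\eta^*_x:\eta\in\ker(\Ad_{\mu(x)}+1)\}=0$ when $\mu(x)=1$. But $M$ is a quasi-Hamiltonian $G\times H$-space with moment map $(\mu,\nu)$, so (QH3) actually gives
\[
\ker\omega_x=\{(\zeta,\eta)^*_x:\ (\zeta,\eta)\in\ker(\Ad_{(\mu(x),\nu(x))}+1)\}=\{\eta^*_x:\ \eta\in\fh,\ \Ad_{\nu(x)}\eta=-\eta\},
\]
which need not be zero. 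This breaks both your surjectivity argument for $d\mu_x$ and your sketch of (QH3) on the quotient, since you use ``$\ker\omega_x=0$'' in both places.

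The fix for surjectivity: if $\xi\in\fg$ annihilates $\operatorname{Im}d\mu_x$, your computation gives $\xi^*_x\in\ker\omega_x$, hence $\xi^*_x=\eta^*_x$ for some $\eta\in\fh$ with $\Ad_{\nu(x)}\eta=-\eta$. Now use that $\nu$ is $G$-invariant, so $d\nu_x(\xi^*_x)=0$; on the other hand $H$-equivariance of $\nu$ gives $\theta_H(d\nu_x(\eta^*_x))=(\Ad_{\nu(x)^{-1}}-1)\eta=-2\eta$. Equating forces $\eta=0$, hence $\xi^*_x=0$, and freeness of the $G$-action yields $\xi=0$. For (QH3) on the quotient, the same corrected description of $\ker\omega_x$ shows that $\ker(\omega_x|_{T_x\mu^{-1}(1)})=\fg\cdot x+\{\eta^*_x:\Ad_{\nu(x)}\eta=-\eta\}$, which after dividing by the $G$-orbit directions is exactly the kernel predicted by (QH3) for the residual $H$-structure; so the axiom holds and you do \emph{not} want $\ker\bar\omega=0$ in general.
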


The above quasi-Hamiltonian $H$-space $\mu^{-1}(\mathcal{C})/G$ is 
called the \emph{quasi-Hamiltonian reduction} of $M$ by $G$
along $\mathcal{C}$ and denoted by $M /\!\!/_{\!\mathcal{C}}\, G$.

Note that in the above situation if $H$ is abelian then $M /\!\!/_{\!\mathcal{C}}\, G$ 
is symplectic. The following fact is also useful.

\begin{theorem}\label{thm:qhampoi}
Let $(M,\omega,(\mu,\nu))$ be 
a quasi-Hamiltonian $G \times H$-space.
Assume that $H$ is abelian and $M$ has a good quotient $\pi \colon M \to M/G$.
Then for any open $U \subset M/G$ and $f \in \Gamma(U,\CO_{M/G})$, 
there exists a unique vector field $v_f \in \Gamma(\pi^{-1}(U),\Theta_M)$ such that
\[
\iota(v_f)\omega = d\pi^*f, \quad \iota(v_f)\mu^*\theta =0.
\]
Each $v_f$ is $G$-invariant and preserves $\omega$ and $\mu$.
Also, $\pi^*\{ f,g \}=v_f(\pi^*g)$ ($f,g \in \CO_{M/G}$) defines 
an $H$-invariant Poisson structure on $M/G$.  
\end{theorem}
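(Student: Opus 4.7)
The plan is to establish the theorem in four logically sequential steps, mirroring the structure of the classical Marsden-Weinstein reduction but working with the quasi-Hamiltonian axioms (QH1)-(QH3) in place of their ordinary Hamiltonian analogues.

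First, I would establish uniqueness of $v_f$. Suppose $v$ and $v'$ both satisfy the defining conditions, and set $u = v-v'$. Then $\iota(u)\omega = 0$ and $\iota(u)\mu^*\theta = 0$. By (QH3), $u = \xi^*$ for some $\xi \in \ker(\Ad_{\mu(x)}+1)$. Unpacking $\iota(\xi^*)\mu^*\theta = \theta_{\mu(x)}(d\mu(\xi^*)) = \Ad_{\mu(x)^{-1}}\xi - \xi$, the vanishing forces $\xi \in \ker(\Ad_{\mu(x)}-1)$. These two kernels intersect trivially in characteristic zero, so $\xi = 0$ and $u = 0$. For existence, I would work pointwise with the linear map $\Phi_x\colon T_xM\to T_x^*M\oplus\mathfrak{g}$, $v\mapsto(\iota(v)\omega,\iota(v)\mu^*\theta)$. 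The uniqueness argument shows $\Phi_x$ is injective. Any pair $(\alpha,\eta)$ in its image satisfies the compatibility $\alpha(\xi^*_x) = -\tfrac12(\eta+\Ad_{\mu(x)}\eta,\xi)$ obtained from (QH2). For $\eta=0$ this becomes $\alpha(\xi^*_x)=0$, which is exactly $G$-invariance of $\alpha$; since $\pi^*f$ is $G$-invariant by construction, $d\pi^*f|_x$ vanishes on $G$-orbit directions. A dimension count (using that the codimension of the image equals $\dim\mathfrak{g}$ minus the dimensions contributed by the stabilizer and $\ker(\Ad_{\mu(x)}+1)$) confirms this is the only obstruction, yielding existence.

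Next I would show $v_f$ is $G$-invariant and preserves $\omega$ and $\mu$. $G$-invariance is immediate from uniqueness since $g_*v_f$ solves the same equations (by $G$-invariance of $\omega$, $G$-equivariance of $\mu$, and $G$-invariance of $\pi^*f$). Preservation of $\omega$ is a Cartan calculation: $L_{v_f}\omega = d(\iota(v_f)\omega)+\iota(v_f)d\omega = d^2\pi^*f + \iota(v_f)\bigl(-\tfrac{1}{12}\mu^*(\theta\wedge[\theta\wedge\theta])\bigr)$ by (QH1); the first term vanishes, and the second vanishes term-by-term because $\iota(v_f)\mu^*\theta=0$ and the 3-form is a wedge of three copies of $\mu^*\theta$. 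Preservation of $\mu$ is automatic from $d\mu(v_f)=0$. A parallel Cartan computation using the Maurer-Cartan equation $d\theta = -\tfrac12[\theta\wedge\theta]$ shows $L_{v_f}\mu^*\theta=0$.

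Third, I would verify the Poisson axioms for $\{f,g\} := v_f(\pi^*g)$. This function is $G$-invariant (both $v_f$ and $\pi^*g$ are), hence descends to $U$. Antisymmetry follows from $\{f,g\}+\{g,f\} = d\pi^*g(v_f)+d\pi^*f(v_g) = \omega(v_g,v_f)+\omega(v_f,v_g)=0$, and Leibniz is immediate. For the Jacobi identity, the key intermediate step is to show $[v_f,v_g] = v_{\{f,g\}}$. Applying $L_{v_f}$ to $\iota(v_g)\omega = d\pi^*g$ and using $L_{v_f}\omega=0$ gives $\iota([v_f,v_g])\omega = L_{v_f}d\pi^*g = d\pi^*\{f,g\}$; applying $L_{v_f}$ to $\iota(v_g)\mu^*\theta=0$ and using $L_{v_f}\mu^*\theta=0$ gives $\iota([v_f,v_g])\mu^*\theta=0$. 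Uniqueness then forces $[v_f,v_g]=v_{\{f,g\}}$, and the Jacobi identity follows from the standard identity $[v_f,v_g](\pi^*h) = v_f v_g\pi^*h - v_g v_f\pi^*h$ applied cyclically.

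Finally, $H$-invariance: for $h\in H$, the pullback $h^*v_f$ satisfies the defining conditions for $v_{h^*f}$ because $h$ preserves $\omega$ ($H$-invariance of $\omega$), preserves $\mu^*\theta$ (since $H$ is abelian, conjugation by $H$ on $G$ is trivial, so $\mu$ is $H$-invariant), and commutes with $\pi^*$. By uniqueness $h^*v_f = v_{h^*f}$, whence $\{h^*f,h^*g\}=h^*\{f,g\}$. The main obstacle is the existence half of Step 1: the dimension count reconciling (QH2) and (QH3) to show that $G$-invariance of $d\pi^*f$ is the only compatibility needed. Once this is done, the remaining arguments are formal consequences of Cartan calculus together with the uniqueness clause.
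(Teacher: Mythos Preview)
The paper does not supply its own proof of this theorem: it simply refers to \cite[Proposition~4.6]{AMM}, remarking that the argument there (stated for trivial $H$) carries over when $H$ is abelian. Your proposal is therefore not to be compared against an argument in the paper but against the one in \cite{AMM}; what you have written is a faithful and correct reconstruction of that argument.

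Two small comments on your outline. First, remember that the moment map is $(\mu,\nu)\colon M\to G\times H$, so axioms (QH1)--(QH3) are stated for the full $G\times H$-structure. Your use of them with only the $G$-component is justified precisely because $H$ is abelian: the $\mathfrak h$-part of the Cartan $3$-form in (QH1) vanishes since $[\theta_H\wedge\theta_H]=0$, and the $\mathfrak h$-part of $\Ker(\Ad+1)$ in (QH3) is zero since $\Ad$ is trivial on $\mathfrak h$. It is worth saying this explicitly. Second, your existence argument via a raw dimension count on $\Phi_x$ is a little delicate at points where the $G$-action is not free or $d\mu$ drops rank. A cleaner route is to restrict $\omega$ to $\ker d\mu_x$ and observe, using (QH2), that $\iota(v)\omega$ annihilates every $\xi^*_x$ whenever $v\in\ker d\mu_x$; combined with the injectivity you already proved, this shows the image of $\ker d\mu_x\to T_x^*M$ is exactly the annihilator of the $G$-orbit directions, which contains $d\pi^*f|_x$. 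This avoids having to analyze the stabilizer separately.
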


See \cite[Proposition~4.6]{AMM}, where $H$ is assumed to be trivial 
but the arguments in the proof work in the general case.

Since each $v_f$ preserves $\mu$, 
for any conjugacy class $\mathcal{C} \subset G$
the closed subvariety 
$M /\!\!/_{\!\ol{\mathcal{C}}}\, G :=\mu^{-1}(\ol{\mathcal{C}})/G \subset M/G$ is Poisson. 
We call it the quasi-Hamiltonian reduction of $M$ by $G$ along $\ol{\mathcal{C}}$.
On the other hand, 
if there exists an open $U \subset \mu^{-1}(\ol{\mathcal{C}})/G$ such that 
$\mu(\pi^{-1}(U)) \subset \mathcal{C}$ and $G$ acts freely on $\pi^{-1}(U)$,
then Theorem~\ref{thm:qhamred} shows that $U$ is a quasi-Hamiltonian $H$-space,
in particular symplectic. 
By the definition, 
the Poisson structure on $U$ given by Theorem~\ref{thm:qhampoi} 
coincides with one induced from the symplectic structure.

Now we introduce the multiplicative quiver varieties.
Let $\qQ=(\qQ_0,\qQ_1,\qs,\qt)$ be a finite quiver. Let 
$V=\bigoplus_{\qv \in \qQ_0} V_\qv$ and $W=\bigoplus_{\qv \in \qQ_0} W_\qv$ 
be two finite-dimensional $\qQ_0$-graded $\CC$-vector spaces
with dimension vectors 
$\vbv=(\bv_\qv)_{\qv \in \qQ_0}$ and $\vbw=(\bw_\qv)_{\qv \in \qQ_0}$,
respectively.  
For each $\qv \in \qQ_0$ fix a decomposition of $W_\qv$ 
into one-dimensional pieces 
\[
W_\qv = \bigoplus_{j=1}^{\bw_\qv} W_{\qv,j}, \quad \dim W_{\qv,j} =1,
\] 
and let $T(W) \subset \GL(W):=\prod_{\qv \in \qQ_0} \GL(W_\qv)$
be the associated maximal torus.
Define 
\[
\widetilde{\mathcal{B}}_\qQ(V,W) = \prod_{\qa \in \qQ_1} \mathcal{B}(V_{\qs(\qa)},V_{\qt(\qa)}) 
\times \prod_{\qv \in \qQ_0} \prod_{j=1}^{\bw_\qv} \mathcal{B}(W_{\qv,j},V_\qv),
\]
which is a quasi-Hamiltonian $\mathbb{G} \times T(W)$-space, where
\[
\mathbb{G} := \prod_{\qa \in \qQ_1} 
\left( \GL(V_{\qs(\qa)}) \times \GL(V_{\qt(\qa)}) \right)
\times \prod_{\qv \in \qQ_0} \GL(V_\qv)^{\bw_\qv}.
\]
Using the double $\ol{\qQ}$ of $\qQ$ 
with involution $* \colon \ol{\qQ}_1 \to \ol{\qQ}_1$ 
(so $\ol{\qQ}_1 = \qQ_1 \sqcup \qQ^*_1$ 
and $\qs \circ * = \qt$, $\qt \circ * = \qs$), 
we denote an element of $\widetilde{\mathcal{B}}_\qQ(V,W)$ by 
$(C,a,b)$, where $C=(C_\qa)_{\qa \in \ol{\qQ}_1}$,
$a=(a_{\qv,j})$, $b=(b_{\qv,j})$ and
\begin{equation*}
\begin{split}
(C_\qa, C_{\qa^*}) &\in \mathcal{B}(V_{\qs(\qa)},V_{\qt(\qa)}) \quad (\qa \in \qQ_1), \\
(a_{\qv,j}, b_{\qv,j}) &\in \mathcal{B}(W_{\qv,j},V_\qv) \quad 
(\qv \in \qQ_0,\, j=1, \dots ,\bw_\qv).
\end{split}
\end{equation*}
(In other places, $C_\qa$ is denoted by $C_i$ or $D_i$ according to whether
$\qa$ is in $\qQ_1$ or not.)
Fix a total ordering $<$ on $\ol{\qQ}_1$ and define 
a quasi-Hamiltonian $\GL(V) \times T(W)$-space by
\[
\mathcal{B}_\qQ(V,W) = \widetilde{\mathcal{B}}_\qQ(V,W)_{\fus,<},
\]
where the internal fusion is taken 
with respect to the diagonal embedding $\GL(V) \hookrightarrow \mathbb{G}$ 
so that the resulting group-valued moment map 
$(\mu,\nu)=((\mu_\qv),(\nu_{\qv,j}))$ is
\begin{equation*}
\begin{split}
\mu_\qv(B,a,b) &= \prod^{<}_{\qa \in \ol{\qQ}_1; \qt(\qa)=\qv} 
(1 + C_\qa C_{\qa^*})^{\epsilon(\qa)} 
\prod_{j=1}^{\bw_\qv} (1 + a_{\qv,j} b_{\qv,j}), \\
\nu_{\qv,j}(C,a,b) &= (1 + b_{\qv,j} a_{\qv,j})^{-1},
\end{split}
\end{equation*}
where $\epsilon(\qa)=1$ if $\qa \in \qQ_1$ and 
$\epsilon(\qa)=-1$ if $\qa \in \qQ^*_1$.

\begin{definition}\label{dfn:mqv}
For $\gamma=(\gamma_\qv)_{\qv \in \qQ_0} \in (\CC^*)^{\qQ_0}$, 
put $\gamma_V = (\gamma_\qv\,1_{V_\qv})_{\qv \in \qQ_0} \in \GL(V)$ and define
\[
\MM_\gamma(\vbv,\vbw) = \mu^{-1}(\gamma_V)/\GL(V),
\] 
where the quotient is taken as the affine GIT quotient.
We call $\MM_\gamma(\vbv,\vbw)$ the \emph{multiplicative quiver variety}.
\end{definition}

By Theorem~\ref{thm:qhampoi}, 
$\MM_\gamma(\vbv,\vbw)= \mathcal{B}_\qQ(V,W) /\!\!/_{\!\gamma_V} \GL(V)$ 
is a Poisson variety.
Also, as in the case of additive quiver varieties, the $\GL(V)$-action on the open subset 
$\mu^{-1}(\gamma_V)^s \subset \mu^{-1}(\gamma_V)$  
consisting of stable points has stabilizer $\CC^* 1_V$ everywhere 
(where the definition of stability is exactly the same as in the additive case), 
and Theorem~\ref{thm:qhamred} implies that the open subset 
$\MMreg_\gamma(\vbv,\vbw) := \mu^{-1}(\gamma_V)^s/\GL(V)$ of $\MM_\gamma(\vbv,\vbw)$ is 
a quasi-Hamiltonian $T(W)$-space.
We also call $\MMreg_\gamma(\vbv,\vbw)$ the multiplicative quiver variety.

\begin{example}
The multiplicative quiver variety 
${\mathcal M}_N^l(Z,t)$ introduced in Section~\ref{quiver} coincides with 
$\MM_\gamma(\vbv,\vbw)$ with $\qQ = \hat{A}_{l-1}$ (the orientation is 
$1 \to l \to \cdots \to 2 \to 1$) and 
\[
\vbv =(N,N, \dots ,N), \quad \vbw = (1,0, \dots ,0), \quad 
\gamma = \left( \frac{Z_l}{Z_1}t,\frac{Z_1}{Z_2}, \dots ,\frac{Z_{l-1}}{Z_l} \right).
\]
\end{example}

\begin{remark}
Definition~\ref{dfn:mqv} is close to the definition of 
\emph{framed multiplicative quiver variety} introduced in \cite{Y}.
Let us identify all $W_{\qv,j}$ with $\CC$. 
Then the framed multiplicative quiver variety is obtained by  
replacing $\mathcal{B}_\qQ(V,W)$ with its internal fusion  
for the diagonal subgroup $\CC^* \subset T(W)$ 
in the definition of $\MM_\gamma(\vbv,\vbw)$.
It is the same as $\MM_\gamma(\vbv,\vbw)$ as a variety,
but the Poisson brackets are different in general.
\end{remark}

\begin{remark}\label{rem:another}
There is another multiplicative analogue of quiver variety. 
It is obtained by replacing $\widetilde{\mathcal{B}}_\qQ(V,W)$ with the quasi-Hamiltonian $\mathbb{G} \times \GL(W)$-space
\[
\widetilde{\mathcal{B}}'_\qQ(V,W) := \prod_{\qa \in \qQ_1} \mathcal{B}(V_{\qs(\qa)},V_{\qt(\qa)}) 
\times \prod_{\qv \in \qQ_0} \mathcal{B}(W_\qv,V_\qv) 
\]
in the definition of $\MM_\gamma(\vbv,\vbw)$.
But note that in general the two-form on $\widetilde{\mathcal{B}}'_\qQ(V,W)_{\fus,<}$ does not induce a Poisson structure 
on the resulting variety. 
It is why we decompose each $W_i$ into one-dimensional pieces.
Of course if $\dim W_\qv \leq 1$ for all $\qv \in \qQ_0$ then the two definitions coincide.
\end{remark}

\begin{example}\label{ex:cell}
  Consider $\qQ = A_{\ell-1}$ with $\vbw = (\ell,0,\dots,0)$. The
  usual quiver variety $\mathfrak M_\zeta(\vbv,\vbw)$, if it is not
  $\emptyset$, is a semisimple coadjoint orbit of
  $\mathfrak{gl}(\ell)$ for generic $\zeta$. The multiplicative quiver
  variety in the definition in Remark~\ref{rem:another} is a conjugacy
  class of $\GL(\ell)$. (See e.g., \cite[\S8]{CBS}.) On the other
  hand, Definition~\ref{dfn:mqv} gives its open subset, the
  intersection with a big Bruhat cell of $\GL(\ell)$. To see this,
  take a decomposition $W = \bigoplus_{j=1}^\ell W_j$ into 
  one-dimensional spaces, and write $a = (a_j)$, $b = (b_j)$. Then
  $\tilde a = (\tilde a_1,\dots,\tilde a_\ell)$,
  $\tilde b = {}^t(\tilde b_1,\dots,\tilde b_\ell)$ with
  $\tilde a_j = (1 + a_1 b_1)\dots (1 + a_{j-1} b_{j-1})a_j$,
  $\tilde b_j = b_j$ satisfy the defining equation in
  Remark~\ref{rem:another} as
  \begin{equation*}
    1 + \sum_{j=1}^\ell \tilde a_j \tilde b_j = (1 + a_1 b_1)\dots
    (1 + a_\ell b_\ell).
  \end{equation*}
  The isomorphism to a conjugacy class is given by the group-valued
  moment map $(1 + \tilde b\tilde a)^{-1}$. If an element
  $1 + \tilde b\tilde a$ is coming from $(a_j)$, $(b_j)$, we have
  additional constraint $\det(1 + a_j b_j)\neq 0$ for
  $j=1,\dots,\ell$, which is equivalent to require that every leading
  principal minor of $1 + \tilde b\tilde a$ is nonzero.
\end{example}

Now we understand the correct definition of multiplicative quiver
varieties, and the argument in Section~\ref{subsec:cobalanced} gives
the following.

\begin{theorem}\label{thm:cobalanced}
  Consider a multiplicative bow variety with a cobalanced dimension
  vector. It is isomorphic to a multiplicative quiver variety
  $\MM_{\gamma}(\underline{\bv}, \underline{\bw})$ such
  that $\mathbf v_j$ \textup(resp.\ $\mathbf w_j$\textup) is the
  dimension of vector spaces \textup(resp.\ the number of
  $\boldsymbol\times$\textup) between $h_j$ and $h_{j+1}$, and
  $\gamma_j = t^{\bw_j} Z_j/Z_{j+1}$.
\end{theorem}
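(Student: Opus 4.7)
The plan is to generalize the explicit local analysis of Section~\ref{subsec:cobalanced}, which treated the pattern $\bigcirc\times\times\bigcirc$, to an arbitrary cobalanced cyclic bow diagram. The proof proceeds in three steps: normalize all $A$-maps to the identity, telescope the cross-relations within each block to obtain the multiplicative moment-map equation at the corresponding quiver vertex, and finally assemble and match with Definition~\ref{dfn:mqv}.

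First I would apply the cobalanced hypothesis $N_{\xl_i}=0$ together with the multiplicative analog of \cite[Lemma~2.18]{Ta} (the special case of which is already invoked in Section~\ref{subsec:cobalanced}) to conclude that the vector spaces sitting between two consecutive circles $h_j$ and $h_{j+1}$ all have a common dimension $\bv_j$, and that every intervening $A$-map is an isomorphism. The gauge subgroup $\prod_{i,k}\GL(V_i^k)$ then normalizes each such $A$-map to the identity, canonically identifying all intermediate spaces in the $j$-th block with a single $V_j$. After this normalization the residual data in the block consist of: the $\bw_j$ framing pairs $(a_{j,k},b_{j,k})$ contributed by its crosses; the pairs $(C_j,D_j)$ and $(C_{j-1},D_{j-1})$ at the bounding circles $h_{j+1}$ and $h_j$; and the $B$-operators at each cross-position, which by relation~(a) satisfy the simple recursion $B_{\mathrm{left}}=B_{\mathrm{right}}+ab$. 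The outermost $B$-operators of the block are pinned down by equations~(b) and~(d) at the two bounding circles, and each invocation of~(b) brings in a factor of $t^{-1}$. Telescoping the cross recursions and rearranging by means of the standard identity $(1+YX)^{-1}=1-Y(1+XY)^{-1}X$, exactly as in Section~\ref{subsec:cobalanced}, produces at each vertex $j$ a single identity of the shape
\[
(1+D_{j-1}C_{j-1})^{-1}\,\prod_{k=1}^{\bw_j}\bigl(1+\tilde a_{j,k}\tilde b_{j,k}\bigr)\,(1+C_jD_j)=\gamma_j\,1_{V_j},\qquad \gamma_j=t^{\bw_j}Z_j/Z_{j+1},
\]
where the $t^{\bw_j}$ tracks the $\bw_j$ insertions of~(b) inside the block and $(\tilde a,\tilde b)$ are renormalized framings in the spirit of the passage from~\eqref{eq:App4} to~\eqref{eq:App5}.

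Assembling these identities over all blocks with $V=\bigoplus_j V_j$ and $W_j=\bigoplus_{k=1}^{\bw_j}W_{j,k}$, each $W_{j,k}=\CC$, I then recognize the whole system as exactly the level set $\mu^{-1}(\gamma_V)$ of the group-valued moment map of Definition~\ref{dfn:mqv} for the cyclic double quiver $\overline{\qQ}=\hat A_{n-1}$ with the stated dimension and framing vectors. The residual gauge group after Step~1 is precisely $\GL(V)=\prod_j\GL(V_j)$, so the GIT quotient $\mu^{-1}(\gamma_V)/\GL(V)$ is the claimed $\MM_\gamma(\vbv,\vbw)$.

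The main obstacle, already flagged in the final remark of Section~\ref{subsec:cobalanced}, is the distinction between the ``decomposed-framing'' picture~\eqref{eq:App4} (which requires individual invertibility of each $1+B^{-1}a_k b_{k-1}$) and the looser picture~\eqref{eq:App5} (requiring only the product to be invertible). The internal fusion of Theorem~\ref{thm:fusion} that enters Definition~\ref{dfn:mqv} uses a fixed total order on $\overline{\qQ}_1$ and on the framing factors, and one must verify that the ordering induced by the linear position of the crosses along the cyclic bow matches this, so that the moment map, two-form, and Poisson bracket produced by the above telescoping agree on the nose with those of $\MM_\gamma(\vbv,\vbw)$. Once the equations coincide, the matching of quasi-Hamiltonian two-forms (and hence of Poisson structures) is formal, by the same reasoning as in the additive case treated in \cite{NT}; the combinatorics of orderings and sign conventions is where the care lies.
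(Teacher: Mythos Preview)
Your approach is the same as the paper's: the authors' proof is literally the sentence ``the argument in Section~\ref{subsec:cobalanced} gives the following,'' after having set up Definition~\ref{dfn:mqv} precisely so that the telescoped bow equations match it. Your three-step outline (normalize the $A$'s via \cite[Lemma~2.18]{Ta}, telescope, identify with the moment-map equation) is exactly what Section~\ref{subsec:cobalanced} does for one block, extended in the obvious way.

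Two points to correct. First, you overstate what the theorem asserts: it claims only an isomorphism of \emph{varieties}. The compatibility with quasi-Hamiltonian two-forms and Poisson structures that you describe as ``formal, by the same reasoning as in the additive case'' is \emph{not} proved here---it is the content of the Conjecture stated immediately after the theorem. So your final paragraph should be removed from the proof and left as a remark or expectation. Second, your description of the ``main obstacle'' slightly inverts the logic. The whole point of Definition~\ref{dfn:mqv} (decomposing each $W_i$ into one-dimensional pieces and fusing the factors $\mathcal{B}(W_{i,j},V_i)$ individually) is that it \emph{is} the picture~\eqref{eq:App4}, with each $(1+a_{j,k}b_{j,k})$ individually invertible---exactly what the bow side produces, since each intermediate $B$-operator is invertible. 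The picture~\eqref{eq:App5} corresponds to the alternative definition in Remark~\ref{rem:another}, which the paper deliberately avoids. So there is nothing further to resolve beyond matching the linear order of crosses with the chosen total order on $\ol{\qQ}_1$; this is bookkeeping, not an obstacle.
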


Combining this with Proposition~\ref{prop:dominant}, we obtain an
isomorphism between a $K$-theoretic Coulomb branch and a
multiplicative quiver variety when the dominance condition is
satisfied. We conjecture that this is an isomorphism of Poisson
varieties. More generally we conjecture

\begin{conjecture}
  \textup{(1)} The space consisting of $A_{i-1}$, $B_{i-1}$, $B'_i$,
  $a_i$, $b_{i-1}$ with \textup{(i)}, \textup{(a)}, \textup{(S1)},
  \textup{(S2)} in Section~\ref{bow} is a quasi-Hamiltonian
  $\GL(V_{i-1})\times \GL(V_i)\times\CC^*$-space with the
  group-valued moment map $B_{i-1}^{-1}$, $B'_i$,
  $\det B_{i-1} \det B_i^{\prime-1}$.
    Therefore a multiplicative bow variety is a quasi-Hamiltonian
    reduction.

    \textup{(2)} The Hanany-Witten transition is an isomorphism of
    quasi-Hamiltonian $\GL(V_1)\times\GL(V_3)\times\CC^*$-spaces.

    \textup{(3)} The isomorphism of Theorem~\ref{thm:cobalanced}
    is an isomorphism of quasi-Hamiltonian $T(W)$-spaces.
\end{conjecture}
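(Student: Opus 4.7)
The three parts of the conjecture all concern the preservation of quasi-Hamiltonian structure. My strategy is to view a multiplicative bow variety as built by fusing elementary quasi-Hamiltonian building blocks---a Van den Bergh space $\mathcal{B}$ for each $\boldsymbol\medcirc$, and a ``cross piece'' for each $\boldsymbol\times$---and then performing quasi-Hamiltonian reduction by the gauge group $G=\prod_{i,k}\GL(V_i^k)$ along an appropriate conjugacy class. Once Part (1) is established, Parts (2) and (3) become statements that two different quasi-Hamiltonian presentations of the same reduction agree, which will follow from the associativity of fusion, the uniqueness of quasi-Hamiltonian reductions, and the variety-level bijections of Proposition~\ref{prop:HW} and Theorem~\ref{thm:cobalanced}.

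\textbf{Part (1): the cross is quasi-Hamiltonian.} The plan is to realize the cross piece as a quasi-Hamiltonian reduction of an auxiliary product of Van den Bergh spaces and doubles. Concretely, I would begin with a quasi-Hamiltonian space of the form
\begin{equation*}
\mathcal{B}(V_{i-1},V'_i)\times \mathcal{B}(\CC,V'_i)\times \mathcal{B}(V_{i-1},\CC),
\end{equation*}
fused with two copies of the double of $\GL$, and then recognize relation (a) as the multiplicative moment-map equation for a diagonal $\GL$-reduction. After this reduction the surviving moment map on $\GL(V_{i-1})\times\GL(V'_i)\times\CC^*$ takes the asserted form $(B_{i-1}^{-1},B'_i,\det B_{i-1}\det(B'_i)^{-1})$; the $\CC^*$ factor records the flavor symmetry $a_i\mapsto \lambda a_i$, $b_{i-1}\mapsto \lambda^{-1}b_{i-1}$, and its moment value is read off by taking determinants of (a) via the matrix determinant lemma. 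The stability conditions (S1), (S2) exactly cut out the open locus on which the intermediate reduction is free, so that Theorem~\ref{thm:qhamred} applies. Gluing crosses and circles along the bow diagram by fusion along shared $\GL$-factors then exhibits the multiplicative bow variety as a quasi-Hamiltonian reduction by $G$ of a product of such building blocks.

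\textbf{Part (2): Hanany--Witten is quasi-Hamiltonian.} The variety-level isomorphism of Proposition~\ref{prop:HW} is constructed via the three-term complex $(\alpha,\beta)$ on one side and its transpose $(\alpha^\tn,\beta^\tn)$ on the other, with $V_2^\tn=\Coker\alpha$ and $V_2=\Ker\beta^\tn$. To upgrade this identification, I would view both sides as two presentations of a single quasi-Hamiltonian $\GL(V_1)\times\GL(V_3)\times\CC^*$-space obtained by quasi-Hamiltonian reduction of the relevant cross-circle-circle fused product by $\GL(V_2)$ on one side and $\GL(V_2^\tn)$ on the other, along the conjugacy classes forced by equations (b)--(d). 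The formulas relating $(A,C,D,b)$ to $(A^\tn,C^\tn,D^\tn,b^\tn)$ via the complexes are precisely the rational maps of AMM spaces that interchange the roles of the two $\GL$'s inside the fusion, so preservation of the two-form and moment map reduces to a direct computation in the $\mathcal{B}$-calculus, analogous in spirit to the additive computation in \cite{NT}.

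\textbf{Part (3) and main obstacle.} For (3), the normalization $A_2=A_3=\id$ used in the proof of Theorem~\ref{thm:cobalanced} is, in quasi-Hamiltonian language, reduction at the identity conjugacy class for $\GL(V_3)\times\GL(V_4)$ acting on the relevant segment of the cobalanced bow; the residual data $(C_1,D_1,b_2,b_3,C_4,D_4)$ then assemble into exactly the fused product of Van den Bergh spaces appearing in Definition~\ref{dfn:mqv}, with moment map $\gamma_j=t^{\bw_j}Z_j/Z_{j+1}$ emerging from equations (b)--(d). Tracking the two-forms through this reduction yields (3). The main obstacle throughout is Part (1): exhibiting an explicit two-form on the cross piece and verifying axioms (QH1)--(QH3) of Alekseev--Malkin--Meinrenken. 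Axioms (QH1) and (QH2) should follow from a bookkeeping calculation in the fused product of Van den Bergh spaces, but (QH3)---non-degeneracy modulo the kernel of the moment map---will depend essentially on (S1), (S2) and is the most delicate step; once it is settled, Parts (2) and (3) should follow formally from associativity of fusion and uniqueness of quasi-Hamiltonian reductions.
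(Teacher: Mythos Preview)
The statement you are attempting to prove is presented in the paper as a \emph{conjecture}, not a theorem: the paper does not supply a proof, so there is no ``paper's own proof'' to compare your proposal against. The authors explicitly leave all three parts open.

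That said, your sketch contains a substantive gap in Part~(1). You propose to realize the cross piece as a quasi-Hamiltonian reduction of a fused product of Van den Bergh spaces and doubles, and to ``recognize relation~(a) as the multiplicative moment-map equation for a diagonal $\GL$-reduction.'' But relation~(a) is the \emph{additive} equation $B'_iA_{i-1}-A_{i-1}B_{i-1}+a_ib_{i-1}=0$, carried over unchanged from the additive bow-variety setting of \cite{NT}; it is not of the form $\prod(1+XY)^{\pm 1}=\gamma$ that arises from group-valued moment maps of Van den Bergh spaces. The multiplicative bow data mix an additive constraint at each $\boldsymbol\times$ with multiplicative constraints (b)--(d) at each $\boldsymbol\medcirc$, and it is precisely this mixture that makes the quasi-Hamiltonian structure on the cross piece non-obvious---hence the conjectural status. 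Your reduction-from-Van-den-Bergh-spaces mechanism would naturally produce a multiplicative relation at the cross, not relation~(a), so the construction as stated does not go through. A correct approach to~(1) would require either an explicit two-form on the $(A,B,B',a,b)$-space verified directly against (QH1)--(QH3), or a genuinely new building block whose reduction yields the additive equation~(a) together with the multiplicative moment map $(B_{i-1}^{-1},B'_i,\det B_{i-1}\det B_i^{\prime-1})$.

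Your outlines for Parts~(2) and~(3) are reasonable in spirit---once~(1) is known, tracking two-forms through the explicit formulas of Proposition~\ref{prop:HW} and Theorem~\ref{thm:cobalanced} is the natural route---but they are contingent on~(1), which remains the essential difficulty.
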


\begin{remark}
  Both homological and $K$-theoretic Coulomb branches have a torus
  action induced from $\Hom(\pi_1(G),\mathbb Z)$. See
  \cite[4(iii)(c)]{N} and the original physics literature therein. It
  is expected that the torus action extends to a nonabelian group
  action for homological Coulomb branches \cite[4(iii)(d)]{N}, and
  proved for $ADE$ quiver gauge theories in
  \cite[Remark~3.12]{BFN3}. On the other hand, it is not \emph{true}
  for $K$-theoretic Coulomb branches as Example~\ref{ex:cell} shows.
\end{remark}

%\begin{thebibliography}{99}
%\end{thebibliography}

%%% Local Variables:
%%% mode: latex
%%% TeX-master: "cyclo.tex"
%%% End:

\end{document}